\newtheorem{thm}{Theorem}[section]
\newtheorem{lemma}[thm]{Lemma}
\newtheorem{lem}[thm]{Lemma}
\newtheorem{prop}[thm]{Proposition}
\newtheorem{cor}[thm]{Corollary}
\newtheorem{observation}[thm]{Observation}
\newtheorem*{thm*}{Theorem}
\theoremstyle{remark}
\newtheorem{remark}[thm]{Remark}
\theoremstyle{definition}
\newtheorem{defi}[thm]{Definition}
\numberwithin{equation}{section}
\newcommand{\R}{\mathbb{R}}
\DeclareMathOperator{\re}{Re}
\DeclareMathOperator{\im}{Im}
\DeclareMathOperator{\Log}{Log}
\DeclareMathOperator{\sign}{sign}
\DeclareMathOperator{\argg}{Arg}
\newcommand{\M}{\mathcal{M}}
\newcommand{\Ds}{(-\Delta)^{s}}
\newcommand{\angles}[1]{\left\langle{#1}\right\rangle}
\newcommand{\floors}[1]{\left\lfloor{#1}\right\rfloor}
\newcommand{\ceils}[1]{\left\lceil{#1}\right\rceil}
\newcommand{\abs}[1]{\left\lvert{#1}\right\rvert}
\newcommand{\norm}[2][]{\left\|{#2}\right\|_{#1}}
\newcommand{\set}[1]{\left\{#1\right\}}
\newcommand{\eps}{\varepsilon}
\newcommand{\p}{\partial}
\newcommand{\bC}{\mathbb{C}}
\newcommand{\bN}{\mathbb{N}}
\newcommand{\bS}{\mathbb{S}}
\newcommand{\bZ}{\mathbb{Z}}
\newcommand{\cC}{\mathcal{C}}
\newcommand{\cF}{\mathcal{F}}
\newcommand{\cG}{\mathcal{G}}
\newcommand{\cH}{\mathcal{H}}
\newcommand{\cL}{\mathcal{L}}
\newcommand{\cM}{\mathcal{M}}
\newcommand{\cU}{\mathcal{U}}
\newcommand{\be}{\begin{equation}}
\newcommand{\ee}{\end{equation}}
\newcommand{\bee}{\begin{equation*}}
\newcommand{\eee}{\end{equation*}}
\newcommand{\bea}{\begin{eqnarray}}
\newcommand{\eea}{\end{eqnarray}}
\newcommand{\bs}{\begin{split}}
\newcommand{\es}{\end{split}}
\DeclareMathOperator{\Hyperg}{\mbox{ }_2 F_{1}}
\DeclareMathOperator{\divergence}{div}
\DeclareMathOperator{\Real}{Re}
\DeclareMathOperator{\Res}{Res}
\begin{document}

\title{Spectral properties of L\'evy Fokker--Planck equations}

\author[H. Chan]{Hardy Chan}

\address{Hardy Chan
\hfill\break\indent
Insituto de Ciencias Matem\'aticas,
\hfill\break\indent 28049 Madrid, Spain}
\email{hardy.chan@icmat.es}

\author[M. Fontelos]{Marco A. Fontelos}

\address{Marco Antonio Fontelos
\hfill\break\indent
Insituto de Ciencias Matem\'aticas,
\hfill\break\indent
C. Nicol\'as Cabrera 13-15,
Campus de Cantoblanco, UAM, 28049 Madrid, SPAIN}
\email{marco.fontelos@icmat.es}

\author[M.d.M. Gonz\'alez]{Mar\'ia del Mar Gonz\'alez}

\address{Mar\'ia del Mar Gonz\'alez
\hfill\break\indent
Universidad Aut\'onoma de Madrid
\hfill\break\indent
Departamento de Matem\'aticas, and ICMAT.  28049 Madrid, Spain}
\email{mariamar.gonzalezn@uam.es}


\maketitle
\begin{abstract}
Hermite polynomials, which are associated to a Gaussian weight and solve the Laplace equation with a drift term of linear growth, are classical in analysis and well-understood via ODE techniques.  Our main contribution is to give explicit Euclidean formulae of the fractional analogue of Hermite polynomials, which appear as eigenfunctions of a L\'evy Fokker--Planck equation.  We will restrict, without loss of generality, to radially symmetric functions.

A crucial tool in our analysis is the Mellin transform, which is essentially the Fourier transform in  logarithmic variable and which turns weighted derivatives into multipliers. This allows to write the weighted space in the fractional case that replaces the usual $L_r^2(\mathbb R^n, e^{|x|^2/4})$.

After proving compactness, we obtain a exhaustive description of the spectrum of the L\'evy Fokker--Planck equation and its dual, the fractional Ornstein--Uhlenbeck problem, which forms a basis thanks to the spectral theorem for self-adjoint operators. As a corollary,
we obtain a full asymptotic expansion for solutions of the fractional heat equation.
\end{abstract}

\section{Introduction and statement of results}

Given $s\in(0,1)$, $n\geq 2s$, we would like to describe the asymptotic behavior of the fractional heat equation
\begin{equation}\label{heat-equation}
\begin{cases}
\varphi_t+(-\Delta)^s \varphi=0,& x\in\mathbb R^n, t>0,\\
\varphi|_{t=0}=\varphi_0,
&x\in \mathbb R^n,
\end{cases}
\end{equation}
in terms of convergence to its fundamental solution.  In this paper we will always restrict to radially symmetric functions, that is, they depend only on the variable $r=|x|$, $x\in\mathbb R^n$.

In the classical case, that is, for $s=1$, it is well known that solutions of the heat equation converge to a Gaussian and there are several proofs of this fact (see the survey \cite{Vazquez1}, for example). The functional analytical approach is to study the heat equation in self-similar variables, which yields a Fokker--Plank equation. A precise description of the spectrum for this Fokker--Planck equation in $L^2$ spaces with Gaussian weights, and its relation to an Ornstein--Uhlenbeck process,  allows to obtain more precise asymptotics in terms of higher order moments.

In the fractional case, while one still obtains convergence to the fundamental solution in relative error (see, for instance, \cite{Vazquez2} and the references therein), the open question of finding the right analytic framework for refining this convergence has persisted  for some time.


In this paper we give an answer to this problem, calculating in Theorem \ref{thm1} the full spectrum of the L\'evy Fokker--Planck operator
\begin{equation}\label{FP-introduction}
L_s u:=(-\Delta)^s u-\frac{1}{2s}x\cdot \nabla u-\frac{n}{2s}u,\quad x\in\mathbb R^n,
\end{equation}
and of its dual, the fractional Ornstein--Uhlenbeck equation, in Theorem \ref{thm2}.  For this, we provide a suitable function space 
where this operator is self-adjoint. As a consequence, we immediately obtain asymptotics of all orders for \eqref{heat-equation}.

Let us state our precise results. 
The fundamental solution for the fractional heat equation is given by
\begin{equation}\label{G}
G_s(t,x)
=\frac{1}{(2\pi)^{\frac{n}{2}}}
\int_{\mathbb R^n} e^{-ix\cdot\xi}e^{-t|\xi|^{2s}}\,d\xi.
\end{equation}
It is well-known (e.g. \cite{BG}) that $G_s$ is  
not of Gaussian type, but a radially decreasing function with polynomial tails:
\[
G_s\left(t,x\right)=\frac{1}{t^{\frac{n}{2s}}}\mathcal G_s\left(\frac{|x|}{t^\frac{1}{2s}}\right)
\asymp
\dfrac{t}{(t^{\frac{1}{s}}+|x|^2)^{\frac{n+2s}{2}}}.
\]
Long-time behavior and asymptotic convergence to (a multiple) of the fundamental solution can be seen by writing  equation \eqref{heat-equation} in self-similar variables

\begin{equation}\label{selfsimilar-variables}
\tilde x=\frac{x}{(t+1)^{\frac{1}{2s}}}, \quad \tilde t=\log (t+1),\quad \varphi(t,x)=\frac{1}{(1+t)^{\frac{n}{2}}}u(\tilde t,\tilde x),
\end{equation}
obtaining
\begin{equation*}\label{self-similar}
u_{\tilde t}+(-\Delta_{\tilde x})^s u-\frac{1}{2s}\tilde x\cdot \nabla_{\tilde x} u-\frac{n}{2s}u=0,\quad \tilde x\in\mathbb R^n,\,\tilde{t}>0,
\end{equation*}
which is the fractional version of the Fokker--Planck equation, usually termed  L\'evy Fokker--Planck. For simplicity, in most of our upcoming discussion, we will drop the tilde in the self-similar variables and just denote them by $(t,x)$. Thus, it is natural to consider the fractional Fokker--Planck operator $L_s$ introduced in \eqref{FP-introduction} which, for  a radially symmetric function $u=u(r)$, is given by
\begin{equation}\label{LFK}
L_s u=(-\Delta)^s u-\frac{1}{2s}r\partial_r u-\frac{n}{2s}u,\quad r>0.
\end{equation}
This operator  contains a fractional diffusion and a first order drift in competition (the zeroth order term does not play a significant role in our discussion, although it simplifies under Fourier transform).

\medskip

Our first Theorem deals with the eigenvalue problem
\begin{equation}\label{eigenvalue-problem}
	L_su	=\nu u,\quad u=u(r),
\end{equation}
which is a fractional version of the Hermite differential equation.

In the local case $s=1$, solutions to \eqref{eigenvalue-problem} in the space $L_r^2(\mathbb R^n, e^{|x|^2/4})$ of radial functions with an inverse Gaussian weight are given in terms of Hermite ($n=1$) or Laguerre ($n\geq 2$) polynomials
-- we refer to Section \ref{subsection:local-Hermite}
 for more details. For general $s\in(0,1)$ we introduce a suitable functional space $\mathds L^2_\dag(s)$ in which the operator $L_s$ still enjoys good compactness properties and one can use the spectral theorem to produce a  (countable) basis of eigenfunctions for \eqref{eigenvalue-problem}. We characterize the full spectrum in Theorem \ref{thm1} below.

The main idea behind   $\mathds L^2_\dag(s)$ is that, changing the radial Fourier variable $\zeta$ to its power
\begin{equation}\label{change}
\vartheta=\zeta^{s}
\end{equation}
brings the non-local operator $L_s$ to the local $L_1$. More precisely, let $u^\dag(\rho)$ to be the inverse Fourier transform of $\widehat u(\zeta)$ in the new variable $\vartheta$, that is,
\begin{equation*}
u^\dag(\rho)
=\cF^{-1}_{\vartheta\to\rho}\left\{\widehat{u}(\vartheta^{1/s})\right\}(\rho).
\end{equation*}
Then, the crucial observation (see \Cref{lem:Ls-L1}) is that the operation $u\mapsto u^\dag$ relates $L_s$ to $L_1$ by 
\begin{equation}\label{eq:intro-Ls-L1}
L_s u=f
\qquad \Longleftrightarrow \qquad
L_1 u^\dag=f^\dag.
\end{equation}
That is, we can use the classical spectral theorem for $u^\dag$ in $L_r^2(\mathbb R^n, e^{|x|^2/4})$ and then translate it back to $u$. As a consequence, it is natural to define the space $\mathds L^2_\dag(s)$ by the scalar product
\begin{equation*}\begin{split}
		\angles{u_1,u_2}_\dag
		&=\int_0^\infty
		u_1^\dag(\rho)
		\overline{u_2^\dag(\rho)}
		e^{\frac{\rho^{2}}{4}}
		\rho^{n-1}
		\,d\rho.
\end{split}\end{equation*}

Nevertheless, we have found that working with Mellin transform, rather than in Fourier variables, yields more information on the fractional case. For this, we first realize that this scalar product may be written as
\begin{equation}\label{space-introduction}
\angles{u_1,u_2}_\dag
=\int_0^\infty
	\Phi_s^{-1}u_1(r)\,
	\overline{\Phi_s^{-1}u_2(r)}
	\,e^{\frac{r^{2s}}{4}}
	r^{n(2-s)-1}
\,dr,
\end{equation}
where $\Phi_s$ is a ``quasi"-isometry  defined through a Mellin multiplier, and described precisely in Section \ref{subsection:multiplier}.    The relevant observation is that the transformation \eqref{change} is encoded in $\Phi_s$. Indeed, we
 will show in Proposition \ref{lemma:equivalence-new} that
\begin{equation*}\label{passage}
\Phi_s^{-1}u(r)=\left (\tfrac{r^{2s}}{4}\right )^{\frac{n}{2}-\frac{n}{2s}}u^\dag(r^s).
\end{equation*}
Even though the Mellin formulation may seem a bit obscure at this point, we will show in Section \ref{subsection:Gevrey} that $\mathds L^2_\dag(s)$ can be interpreted as a fractional version of a Gevrey--Sobolev space. More significantly, this approach allows to prove Theorem \ref{thm1} without relying on previous knowledge of the spectrum in the local case. Instead, we give an independent calculation of \emph{all} solutions of the eigenvalue problem \eqref{eigenvalue-problem}.

Finally, note that when $s\to 1$, we recover the usual $L_r^2(\mathbb R^n, e^{|x|^2/4})$ space with a Gaussian weight. From now on, unless there is a risk of confusion, we will simply write  $\mathds L^2_\dag:=\mathds L^2_\dag(s)$, dropping the dependence on $s$.

\begin{thm}\label{thm1}
For any $s\in(0,1)$, $n\geq 2$, the Hilbert space $\mathds L^2_\dag$ has a complete orthonormal basis
$\{e_k^{(s)}(r)\}_{k\in\mathbb N}$ of eigenfunctions of $L_s$, that is,
\begin{equation*}
L_s e_k^{(s)}=\nu_k e_k^{(s)},\quad \nu_k=k, \quad k\in\mathbb N.
\end{equation*}
The eigenfunctions are given uniquely (up to a scalar multiple) by
	\begin{equation}\label{eigenfunction-introduction}
	e_k^{(s)}(r)=
		\Phi_s 	\left\{\big (
			\tfrac{r^{2s}}{4}
		\big)^{-\frac{n(1-s)}{2s}}
		e^{-\frac{r^{2s}}{4}}
		L_k^{(\frac{n-2}{2})}
		\big(
			\tfrac{r^{2s}}{4}
		\big)\right\},
	\end{equation}
or, alternatively, in terms of special functions by \eqref{eq:expl-eig-func}.

Here $L_k^{(\alpha)}$ is the generalized Laguerre polynomial of degree $k$ with parameter $\alpha$, as defined in \Cref{section:Hermite}.
In the Fourier domain, setting $\zeta=|\xi|$ to be the radial Fourier variable, one has the equivalent formula
\begin{equation}\label{eigen-int}
\widehat{e_k^{(s)}}(\zeta)
=e^{-\zeta^{2s}}
	\zeta^{2sk},
\quad k\in\mathbb N.
\end{equation}
\end{thm}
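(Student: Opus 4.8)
The plan is to exploit the conjugation \eqref{eq:intro-Ls-L1} together with the explicit, classical solution of the Hermite/Laguerre eigenvalue problem in the Gaussian-weighted space, and then transport everything back through $\Phi_s$. First I would recall (or prove in Section~\ref{section:Hermite}) that in $L_r^2(\mathbb R^n, e^{|x|^2/4})$ the operator $L_1$ has a complete orthonormal basis of eigenfunctions with eigenvalues $\nu_k = k$, $k\in\mathbb N$, given up to normalization by $\rho \mapsto \big(\tfrac{\rho^2}{4}\big)^{-\frac{n(1-s)}{2s}} e^{-\rho^2/4} L_k^{(\frac{n-2}{2})}\big(\tfrac{\rho^2}{4}\big)$ — wait, more precisely the genuine $L_1$-eigenfunctions are $e^{-\rho^2/4} L_k^{(\frac{n-2}{2})}(\rho^2/4)$, and the extra power prefactor is the Jacobian weight coming from the substitution $\rho = r^s$ encoded in $\Phi_s^{-1}$. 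I would then set $v_k(\rho) := e^{-\rho^2/4} L_k^{(\frac{n-2}{2})}(\rho^2/4)$ so that $L_1 v_k = k v_k$, and define $e_k^{(s)}$ by declaring $(e_k^{(s)})^\dag = v_k$, i.e.\ $\Phi_s^{-1} e_k^{(s)}(r) = \big(\tfrac{r^{2s}}{4}\big)^{\frac n2 - \frac n{2s}} v_k(r^s)$ via Proposition~\ref{lemma:equivalence-new}. Unwinding the definition of $\Phi_s$ then yields exactly formula \eqref{eigenfunction-introduction}.

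Next I would verify the three assertions. \emph{Eigenvalue relation:} by \eqref{eq:intro-Ls-L1}, $L_s e_k^{(s)} = \nu e_k^{(s)}$ is equivalent to $L_1 (e_k^{(s)})^\dag = \nu (e_k^{(s)})^\dag$, i.e.\ $L_1 v_k = \nu v_k$; since $v_k$ is the $k$-th Laguerre eigenfunction of $L_1$, $\nu = k$. \emph{Completeness and orthonormality:} the scalar product $\angles{\cdot,\cdot}_\dag$ was designed in \eqref{space-introduction} precisely so that $u \mapsto u^\dag$ is an isometry from $\mathds L^2_\dag$ onto $L_r^2(\mathbb R^n, e^{|x|^2/4})$ (after the $\rho = r^s$ change of variables, the weight $e^{\rho^2/4}\rho^{n-1}$ and Jacobian produce $e^{r^{2s}/4} r^{n(2-s)-1}$, matching \eqref{space-introduction}). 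Hence orthonormality of $\{v_k\}$ and its completeness in the Gaussian space — a classical fact about Laguerre polynomials, which I would cite or reprove from the density of polynomials in Gaussian-weighted $L^2$ — transfer verbatim to $\{e_k^{(s)}\}$ in $\mathds L^2_\dag$. Uniqueness up to scalar follows because each eigenvalue $k$ of $L_1$ (hence of $L_s$) is simple on radial functions, which is immediate from the fact that the $v_k$ are an orthogonal system exhausting the space. \emph{Fourier formula:} here I would apply the Fourier transform directly. From the definition of $u^\dag$, $\widehat{e_k^{(s)}}(\zeta) = \widehat{v_k}(\zeta^s)$ with the Fourier transform taken in the $n$-dimensional radial sense against the new variable $\vartheta = \zeta^s$; since the Hermite/Laguerre functions $v_k$ are (up to constants) eigenfunctions of the Fourier transform and the fundamental solution structure gives $\widehat{v_k}(\vartheta) = c_k\, e^{-\vartheta^2}\vartheta^{2k}$ — equivalently one checks that $e^{-\zeta^{2s}}\zeta^{2sk}$ satisfies $\widehat{L_s}$ acting as the Fourier-multiplier form of \eqref{FP-introduction} with eigenvalue $k$ — one obtains \eqref{eigen-int} after absorbing constants into the normalization.

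A cleaner route for the Fourier formula, which I would actually use to \emph{derive} rather than merely verify, is to conjugate $L_s$ by the Fourier transform: under $\widehat{\cdot}$, $(-\Delta)^s$ becomes multiplication by $\zeta^{2s}$ and the drift $-\frac1{2s} x\cdot\nabla - \frac n{2s}$ becomes $\frac1{2s}\zeta\partial_\zeta$ (the $n$ terms cancelling), so the eigenvalue problem \eqref{eigenvalue-problem} becomes the first-order ODE $\zeta^{2s}\widehat u + \frac1{2s}\zeta\widehat u{}' = \nu \widehat u$, i.e.\ after $\vartheta = \zeta^s$ the Kummer-type equation whose polynomial-times-Gaussian solutions are exactly $e^{-\vartheta^2}\vartheta^{2k}$ with $\nu = k$. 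This simultaneously gives \eqref{eigen-int}, pins down the spectrum $\{k\}$, and — upon inverse Fourier transform in $\vartheta$ and the passage formula of Proposition~\ref{lemma:equivalence-new} — reproduces \eqref{eigenfunction-introduction}.

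The main obstacle is not the formal algebra but the functional-analytic bookkeeping: one must be scrupulous that $\Phi_s$ (a Mellin multiplier, only a ``quasi''-isometry) and the change $\vartheta = \zeta^s$ interact correctly with the measures, so that $\{e_k^{(s)}\}$ is genuinely a complete orthonormal system in $\mathds L^2_\dag$ and not merely a formal family of eigenfunctions — in particular one needs the earlier compactness result to guarantee the spectrum is exactly $\{e_k^{(s)}\}$ with no residual or continuous part, and one needs to know the Laguerre functions actually lie in (and span) the target Gaussian space, which requires $n \geq 2$ (so the parameter $\alpha = \frac{n-2}{2} \geq 0$) and the integrability of the prefactor $(\tfrac{r^{2s}}{4})^{-\frac{n(1-s)}{2s}}$ against the weight in \eqref{space-introduction}. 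Checking these integrability/density conditions, and confirming that \eqref{eq:intro-Ls-L1} holds on the relevant domains rather than merely formally, is where the real work lies.
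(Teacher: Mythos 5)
Your proposal is essentially correct as a strategy, but it is a genuinely different route from the one the paper takes for the heart of the theorem. You transfer the whole classical Laguerre spectral theory for $L_1$ in $L^2_r(\mathbb R^n,e^{|x|^2/4})$ through the correspondence $u\mapsto u^\dag$ of \eqref{eq:intro-Ls-L1} and \Cref{lemma:equivalence-new}: this is exactly how the paper sets up the space, the compactness and the completeness (Sections \ref{section:motivation} and \ref{section:functional-new}), and your identification $(e_k^{(s)})^\dag=e^{-\rho^2/4}L_k^{(\frac{n-2}{2})}(\rho^2/4)$ does reproduce \eqref{eigenfunction-introduction} and \eqref{eigen-int} correctly (the Fourier-side conjugation, with the $n$-terms cancelling, is also right). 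The paper, however, deliberately does \emph{not} derive the eigenfunctions or the uniqueness by quoting the local case: in Section \ref{section:eigenvalue} it solves the Mellin functional equation \eqref{eq:eigen-M} directly (\Cref{lemma:eigenfunction1}, \Cref{lem:U-k}), computes by contour integration \emph{all} solutions $U_\nu$ for every real $\nu$ together with their asymptotics \eqref{U-asymptotics} — which is what justifies that only $\nu=k\in\bN$ is compatible with the weight defining $\mathds L^2_\dag$ — and proves uniqueness by the complex-analytic Liouville-type argument of \Cref{prop:uniq}, which is where the restriction $n\geq 2$ (or $\nu\in\bZ$ when $n=1$) genuinely enters. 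What your route buys is brevity and transparency; what it loses is precisely what the Mellin route supplies: (i) the alternative explicit representation \eqref{eq:expl-eig-func} in terms of Fox--Wright functions, which is part of the statement and in the paper requires \Cref{thm:explicit}; (ii) a uniqueness proof that does not hinge on knowing that the correspondence $u\mapsto u^\dag$ and the conjugation \eqref{eq:intro-Ls-L1} are exact bijections between the operator domains (your "simplicity of eigenvalues of $L_1$" argument also silently uses self-adjointness of $L_1$ on its Gaussian domain, and the fact that the explicitly defined $e_k^{(s)}$ actually lies in the closure defining $\mathds L^2_\dag$); and (iii) the classification of non-integer $\nu$, which motivates the choice of the space in the first place. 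So your proof is acceptable in outline, provided you make the domain bookkeeping you yourself flag rigorous, but be aware that it proves a slightly smaller statement than \Cref{thm1} as written unless you add the Mellin-inversion computation behind \eqref{eq:expl-eig-func}.
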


We remark that even if we have an explicit formula for the eigenfunctions \eqref{eq:expl-eig-func}, it is useful consider different points of view. Working on the Fourier side, we first realize that
the change of variable \eqref{change}
transforms the eigenfunction \eqref{eigen-int} into
$$\widehat{e_k^{(s)}}(\vartheta)=e^{-\vartheta^{2}}\vartheta^{2k},$$
which is the eigenfunction in the local case $s=1$. In particular, for $k=0$, this change brings the non-local heat kernel \eqref{G} to the Gaussian  $G_1(t,r)=\frac{1}{(4\pi t)^{n/2}}e^{-\frac{r^2}{4t}}$.

However, working with the transformation $\Phi_s$ in Mellin variables brings forward the analogy with the local case. Indeed, for any $\nu\in\mathbb R$, any (reasonable) solution of the eigenvalue problem \eqref{eigenvalue-problem} is of the form
$$e^{(s)}_\nu=\Phi_s U_\nu$$
for some $U_\nu$ (given precisely in Theorem \ref{thm:u-nu}) with the asymptotic behavior as $r\to\infty$
\begin{equation}\label{asymptotics-introduction}
\left\{\begin{split}
&U_k(r)
\sim
	\,c_{s,k} e^{-\frac{r^{2s}}{4}}
	\left (
		\tfrac{r^{2s}}{4}
	\right )^{k+\frac{n}{2}-\frac{n}{2s}}
\quad \text{ for } \quad
\nu=k\in\bN,\\
&U_\nu(r)
\asymp
	\left (
		\tfrac{r^{2s}}{4}
	\right )^{-\nu-\frac{n}{2s}}
\quad \text{ for } \quad
\nu\notin \bN.
\end{split}\right.
\end{equation}
The distinctiveness of the case $\nu\in\bN$ is seen in the reduction of Kummer functions to Laguerre polynomials; see \Cref{rmk:kummer}. This motivates the choice of scalar product in \eqref{space-introduction}, since only eigenfunctions for $\nu=k\in\mathbb N$ have exponential decay.
Note also that this the most technical step in the proof of Theorem \ref{thm1} and it uses complex analytic methods.

Our results include a statement regarding uniqueness. Note that, as in the classical case $s=1$,  additional energy bounds are required in lower dimensions. This can be seen, in the local case, from the fact that solutions of an ODE can be characterized in terms of the asymptotic behavior both at $r\to 0$ and $r\to\infty$ (this is a classical result; see, for instance, \cite[Proposition 2.2]{Mizoguchi}). However, in the non-local setting this simple approach is not valid; we use again Mellin transform. Finally, remark that the uniqueness result \Cref{prop:uniq} could become pathological when $n=1$, where we need to impose $\nu\in\bZ$.

\medskip

As a direct application of our arguments we can apply separation of variables to the fractional heat equation \eqref{heat-equation} in the space $\mathds L^2_\dag$ to produce a solution of the equation in terms of the eigenvalue expansion from Theorem \ref{thm1}:

\begin{cor} Let $\varphi_0 \in \mathds L^2_\dagger$ be a radially symmetric function. Then the Cauchy problem \eqref{heat-equation} has a unique (radially symmetric) solution in $\mathds L^2_\dagger$ given by
\begin{equation*}
\varphi(t,r)=\sum_{k=0}^\infty a_k e^{-\nu_k t} e_k^{(s)}(r),
	\qquad
a_k:=\langle \varphi_0, e_k^{(s)}\rangle_\dag,\ \nu_k:=k.
\end{equation*}
\end{cor}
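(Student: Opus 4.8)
The plan is to exploit the spectral decomposition furnished by Theorem~\ref{thm1}. Since $\{e_k^{(s)}\}_{k\in\bN}$ is a complete orthonormal basis of $\mathds L^2_\dag$, the initial datum expands as $\varphi_0=\sum_{k\ge 0}a_k e_k^{(s)}$ with $a_k=\langle\varphi_0,e_k^{(s)}\rangle_\dag$ and $\sum_k|a_k|^2=\|\varphi_0\|_\dag^2<\infty$. Passing to self-similar variables via \eqref{selfsimilar-variables} turns \eqref{heat-equation} into $u_t+L_su=0$, so one first verifies that the series $u(t,r)=\sum_k a_k e^{-kt}e_k^{(s)}(r)$ converges in $\mathds L^2_\dag$ for every $t\ge 0$; this is immediate because $|e^{-kt}|\le 1$, so the partial sums are Cauchy uniformly on $[0,\infty)$ in the $\mathds L^2_\dag$-norm, and $\|u(t)\|_\dag^2=\sum_k|a_k|^2e^{-2kt}\le\|\varphi_0\|_\dag^2$. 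For $t>0$ the exponential factors improve this to convergence in the domain of every power of $L_s$, so $u$ is a genuine (strong) solution of $u_t+L_su=0$ with $u(0)=\varphi_0$; differentiating term by term in $t$ is justified by the same exponential decay (dominated convergence applied to $\sum_k (-k)a_k e^{-kt}e_k^{(s)}$, whose $\mathds L^2_\dag$-norm is $\sum_k k^2|a_k|^2e^{-2kt}<\infty$ for $t>0$).

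Next I would translate back to the original variables: undoing \eqref{selfsimilar-variables} gives $\varphi(t,x)=(1+t)^{-n/2}u(\log(1+t),x/(1+t)^{1/2s})$, and substituting the series yields exactly the stated formula $\varphi(t,r)=\sum_k a_k e^{-\nu_k t}e_k^{(s)}(r)$ with $\nu_k=k$ (here I am using that the change of variables is purely cosmetic at the level of the eigenvalue problem, so the eigenfunctions $e_k^{(s)}$ appear on both sides). One should check that this series solves \eqref{heat-equation} in the classical sense as well: since $\widehat{e_k^{(s)}}(\zeta)=e^{-\zeta^{2s}}\zeta^{2sk}$ by \eqref{eigen-int}, each $e_k^{(s)}$ lies in the domain of $(-\Delta)^s$ and the identity $(-\Delta)^s e_k^{(s)}=\ldots$ follows on the Fourier side; term-by-term application of $\varphi_t+(-\Delta)^s\varphi$ then produces a telescoping/vanishing sum, with convergence guaranteed by the rapid decay of $e^{-\zeta^{2s}}$.

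Finally, uniqueness in $\mathds L^2_\dag$: if $\varphi$ and $\psi$ are two solutions with the same initial datum, then $w=\varphi-\psi$ solves the homogeneous problem with $w(0)=0$, and projecting onto $e_k^{(s)}$ gives the scalar ODE $\tfrac{d}{dt}\langle w,e_k^{(s)}\rangle_\dag=-k\langle w,e_k^{(s)}\rangle_\dag$ with zero initial condition (after returning to self-similar variables), forcing every coefficient, hence $w$ itself, to vanish. The main obstacle I anticipate is not the algebra but the \emph{regularity bookkeeping}: one must confirm that a general $\varphi_0\in\mathds L^2_\dag$ produces a solution regular enough for $(-\Delta)^s\varphi$ to make pointwise (or at least distributional) sense and for the self-similar change of variables to be legitimate — this is where the Gevrey--Sobolev interpretation of $\mathds L^2_\dag$ from Section~\ref{subsection:Gevrey} and the smoothing effect of $e^{-kt}$ for $t>0$ do the real work; once $t>0$ everything is Schwartz-class in $x$, so the only delicate point is continuity down to $t=0$, which is handled by the uniform $\mathds L^2_\dag$-convergence noted above.
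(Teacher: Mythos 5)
Your first paragraph is the right argument and is essentially the proof the paper intends: expand $\varphi_0$ in the complete orthonormal basis $\{e_k^{(s)}\}$ of Theorem \ref{thm1}, note $\|u(t)\|_\dag^2=\sum_k|a_k|^2e^{-2kt}\le\|\varphi_0\|_\dag^2$, and use the exponential factors to justify term-by-term differentiation in $t$ for $t>0$; together with the projection argument for uniqueness this produces the unique $\mathds L^2_\dag$-solution of the self-similar (L\'evy Fokker--Planck) evolution $u_{t}+L_su=0$, which is what the stated series is.

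The gap is in your ``translating back'' step, which as written is false. Undoing \eqref{selfsimilar-variables} does \emph{not} reproduce the stated series in the original variables: since $\tilde t=\log(1+t)$, one gets
\begin{equation*}
\varphi(t,x)=(1+t)^{-\frac{n}{2}}\sum_{k\ge 0}a_k\,(1+t)^{-k}\,e_k^{(s)}\Big(\tfrac{x}{(1+t)^{1/(2s)}}\Big),
\end{equation*}
not $\sum_k a_k e^{-kt}e_k^{(s)}(r)$; the change of variables is not ``cosmetic''. Relatedly, the proposed classical verification fails: the eigenfunctions satisfy $L_se_k^{(s)}=k\,e_k^{(s)}$, \emph{not} $(-\Delta)^se_k^{(s)}=k\,e_k^{(s)}$ (on the Fourier side $(-\Delta)^s$ multiplies \eqref{eigen-int} by $\zeta^{2s}$, which is not a scalar multiple of $\widehat{e_k^{(s)}}$), so applying $\partial_t+(-\Delta)^s$ term by term to $\sum_k a_ke^{-kt}e_k^{(s)}$ does not telescope or vanish. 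Equivalently, on the Fourier side the heat flow maps $\widehat\varphi_0=\sum_k a_ke^{-\zeta^{2s}}\zeta^{2sk}$ to $\sum_k a_ke^{-(1+t)\zeta^{2s}}\zeta^{2sk}$, which is not $\sum_k a_ke^{-kt}e^{-\zeta^{2s}}\zeta^{2sk}$. The corollary's formula must be read in the self-similar variables themselves (the paper has dropped the tildes, so \eqref{heat-equation} is here understood as the equation $u_t+L_su=0$); with that reading your first paragraph already proves the statement and the ``translation'' step should simply be removed, the genuine return to original variables being the rescaled asymptotics $\varphi(t,r)\sim t^{-\frac{n}{2s}}e_0^{(s)}(r\,t^{-\frac{1}{2s}})$ that the paper records after the corollary.
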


The first consequence of this expansion is that, as $t\to\infty$, the solution $\varphi$ behaves as the self-similar solution $e_0^{(s)}=G_s(1,\cdot)$ since the first eigenvalue is $\nu_0=0$, that is,
\begin{equation*}
\varphi(t,r)\sim \frac{1}{t^{\frac{n}{2s}}}e_0^{(s)}\left(\frac{r}{t^{\frac{1}{2s}}}\right),
\end{equation*}
and similar formulas hold for all higher moments.

\bigskip

The adjoint of $L_s$ in $L^2(\mathbb R^n)$ is the fractional Ornstein--Uhlenbeck operator, given by
\begin{equation*}\label{problem-adjoint-0}
L_s^*w:=(-\Delta)^s w+\frac{1}{2s}x\cdot \nabla w,
\end{equation*}
which in the radially symmetric case reduces to
\begin{equation*}\label{problem-adjoint-operator}
L_s^*w:=(-\Delta)^s w+\frac{1}{2s}r \partial_r w.
\end{equation*}
Note that the $L^*_s$ operator appears naturally when we consider solutions to the heat equation which blows up in finite time. Indeed, if we change variables
\begin{equation*}
\tilde x=\frac{x}{(T-t)^{\frac{1}{2s}}}, \quad \tilde t=-\log (T-t),\quad \varphi(t,x)=w(\tilde t,\tilde x),
\end{equation*}
in equation \eqref{heat-equation} we obtain
\begin{equation*}
w_{\tilde t}
+(-\Delta_{\tilde x})^s w+\frac{1}{2s}\tilde x\cdot \nabla_{\tilde x} w
=0.
\end{equation*}

The $L^2$-dual space of $\mathds L^2_\dag$ will be denoted by $\mathds L^2_\ddag$ and it will be characterized in Section \ref{subsection:dual-space}. It is defined by the scalar product
\begin{equation*}
\begin{split}
	\angles{w_1,w_2}_\ddag
	&=\int_0^\infty
		(\Phi_s^*)^{-1}w_1(r)
\overline{		(\Phi_s^*)^{-1}w_2(r)}
		\,e^{-\frac{r^{2s}}{4}}
		r^{sn-1}
	\,dr
\end{split}
\end{equation*}
for a ``quasi"-isometry $\Phi_s^*$. The results for $\mathds L^2_\ddag$ are parallel to those for  $\mathds L^2_\ddag$ and we refer the reader to Section \ref{section:adjoint} for the precise statements. Nevertheless, in the latter case one has a more user-friendly formula for the scalar product in $\mathds L^2_\ddag$. Indeed, formally
\begin{equation*}
\angles{u_1,u_2}_\ddag=\int_0^\infty G^\ddag_s(r_1,r_2)w_1(r_1)w_2(r_2)\,dr_1 dr_2,
\end{equation*}
for a kernel given in \eqref{formula-G}.

Our second Theorem characterizes the dual basis  to that of Theorem \ref{thm1}:

\begin{thm}\label{thm2}
The Hilbert space $\mathds L^2_\ddag$ has a complete (orthonormal) basis
$\{\omega_k^{(s)}(r)\}_{k\in\mathbb N}$ of eigenfunctions of $L^*_s$, that is,
\begin{equation*}
L^*_s \,\omega_k^{(s)}=\nu_k \omega_k^{(s)},\quad \nu_k=k\in\mathbb N.
\end{equation*}
Here
\begin{equation}\label{eigenfunction-introduction-adjoint}
\omega_k^{(s)}(r)
=\Phi^*_s \left\{\frac{2s}{\Gamma(k+\frac{n}{2})}
L_k^{(\frac{n-2}{2})}(\tfrac{r^{2s}}{4})\right\},\quad k\in\mathbb N,
\end{equation}
up to multiplicative constant.
\end{thm}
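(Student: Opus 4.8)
The plan is to establish Theorem \ref{thm2} by exploiting the duality relation between $L_s$ and $L_s^*$ together with the characterization of $L_1^*$ (the classical Ornstein--Uhlenbeck operator) via the transformation $w\mapsto w^\ddag$, in complete parallel with the proof of Theorem \ref{thm1}. First I would record the analogue of \eqref{eq:intro-Ls-L1}, namely that $L_s^* w = g$ if and only if $L_1^* w^\ddag = g^\ddag$, where the transformation underlying $\Phi_s^*$ is again encoded by the change of Fourier variable $\vartheta=\zeta^s$; this is the dual statement to \Cref{lem:Ls-L1} and should follow by taking Fourier transforms and observing that conjugation by $\Phi_s^*$ intertwines the two operators. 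Granting this, the spectral theory of $L_1^*$ on $L^2_r(\mathbb R^n)$ against the Gaussian density $e^{-|x|^2/4}r^{n-1}\,dr$ is classical: its eigenfunctions at eigenvalue $\nu_k=k$ are (constant multiples of) the generalized Laguerre polynomials $L_k^{(\frac{n-2}{2})}(\tfrac{r^2}{4})$, since $L_1^*$ is precisely the Laguerre differential operator in the radial variable.

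The second step is to pull this basis back through $\Phi_s^*$. Define $\omega_k^{(s)}:=\Phi_s^*\{c_k L_k^{(\frac{n-2}{2})}(\tfrac{r^{2s}}{4})\}$ with $c_k=\frac{2s}{\Gamma(k+n/2)}$ as in \eqref{eigenfunction-introduction-adjoint}; by the intertwining relation these are eigenfunctions of $L_s^*$ with eigenvalue $k$. Orthonormality in $\mathds L^2_\ddag$ is immediate from the very definition of the scalar product $\angles{\cdot,\cdot}_\ddag$ (which is, by construction, $\Phi_s^*$ transported from the Gaussian-weighted $L^2$ norm) together with the classical orthogonality relation $\int_0^\infty L_j^{(\alpha)}(t)L_k^{(\alpha)}(t)\,t^\alpha e^{-t}\,dt=\frac{\Gamma(k+\alpha+1)}{k!}\delta_{jk}$ after the substitution $t=\tfrac{r^{2s}}{4}$; the constant $c_k$ is chosen precisely so that this normalization works out. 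Completeness of $\{\omega_k^{(s)}\}$ follows from completeness of the Laguerre polynomials in the Gaussian-weighted space $L^2((0,\infty), t^{(n-2)/2}e^{-t}\,dt)$, transported back through the quasi-isometry $\Phi_s^*$; alternatively, and more in the spirit of the rest of the paper, one invokes the spectral theorem for the self-adjoint operator $L_s^*$ on $\mathds L^2_\ddag$ once its compact resolvent is known, which is itself inherited from the compactness already established for $L_s$ via the $L^2$-duality between $\mathds L^2_\dag$ and $\mathds L^2_\ddag$.

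The main obstacle I anticipate is making the duality between $\mathds L^2_\dag$ and $\mathds L^2_\ddag$, and between $L_s$ and $L_s^*$, genuinely rigorous rather than merely formal: one must check that $\Phi_s^*$ is the correct adjoint-side quasi-isometry (so that $\langle L_s u,w\rangle_{L^2}=\langle u, L_s^* w\rangle_{L^2}$ lifts to the intertwining with $L_1^*$), and that the domains match up so that the spectral theorem genuinely applies. In particular the relation $\Phi_s^{-1}u(r)=(\tfrac{r^{2s}}{4})^{\frac n2-\frac{n}{2s}}u^\dag(r^s)$ from Proposition \ref{lemma:equivalence-new} has a dual counterpart for $\Phi_s^*$ that carries the weight $r^{sn-1}e^{-r^{2s}/4}$ instead of $r^{n(2-s)-1}e^{r^{2s}/4}$, and one should verify this explicitly; the asymptotic/growth analysis is milder here than in Theorem \ref{thm1} because the Ornstein--Uhlenbeck eigenfunctions are polynomials rather than exponentially decaying Kummer functions, so the complex-analytic estimates needed for \eqref{asymptotics-introduction} have no analogue on the dual side. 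Once these structural points are in place, the theorem follows from the classical Laguerre theory transported through $\Phi_s^*$.
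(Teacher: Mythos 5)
Your proposal is correct and follows essentially the same route as the paper: the intertwining $L_s^*w=g \Leftrightarrow L_1^*w^\ddag=g^\ddag$ (the paper's \Cref{lem:Ls-L1-adj}), the identification $(\Phi_s^*)^{-1}w(r)=w^\ddag(r^s)$ (\Cref{lem:mult-FM-adjoint}), transport of the classical Laguerre eigenbasis of $L_1^*$ through $\Phi_s^*$, orthogonality in $\mathds L^2_\ddag$ from \eqref{Laguerre-orthogonality}, and completeness inherited from the local theory by duality with $\mathds L^2_\dag$. The only cosmetic difference is that the paper packages the final identification of the eigenfunctions through the duality pairing $(e_j,\omega_k)_*$ with the basis of Theorem \ref{thm1} (precisely to sidestep the distributional issues in applying Mellin arguments directly to polynomials), which is the same structural point you flag as the step needing rigor.
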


Similarly to \eqref{eq:intro-Ls-L1}, we observe (in \Cref{lem:Ls-L1-adj}) that $w\mapsto w^\ddag$ relates $L_s^*$ to $L_1^*$:
\begin{equation}\label{eq:intro-Ls*-L1*}
L_s^* w=g
\qquad \Longleftrightarrow \qquad
L_1 w^\ddag=g^\ddag.
\end{equation}

\medskip

In the classical case, it is known that, if $w$ is a solution to the Ornstein--Uhlenbeck problem
$$L_1^*w=-\cG_1^{-1}\divergence(\cG_1\nabla w)=0,$$ then $u=w\mathcal G_1$ is a solution to the Fokker--Planck equation $$L_1u=-\divergence(\cG_1\nabla(\cG_1^{-1}u))=0$$ (recall that $\cG_1$ is a multiple of $e^{-|x|^2/4}$). For other values of $s$ we have an analogous relation, which can be obtained by combining \eqref{eq:intro-Ls-L1}--\eqref{eq:intro-Ls*-L1*}. Indeed, if $u^\dag=w^\ddag\cG_1$, then
\[
L_s^*w=0
	\quad \Longleftrightarrow
	\quad
L_1^*w^\ddag=0
	\quad \Longleftrightarrow \quad
L_1(w^\ddag \cG_1)=L_1u^\dag=0
	\quad \Longleftrightarrow \quad
L_su=0.
\]

\medskip

Let us comment on some relevant bibliography. Related papers on different aspects of the fractional heat equation are \cite{Bonforte-Sire-Vazquez,BG,Metzler-Klafter,Valdinoci,Barrios-Peral-Soria-Valdinoci}, although by no means is this list exhaustive. From the probabilistic side, L\'evy Fokker--Planck equations describe the evolution of the probability distribution of a nonlinear stochastic differential equation driven by
non-Gaussian L\'evy stable noises (see \cite{SLDYL} and the references therein). Convergence to equilibrium was considered in the papers \cite{Biler-Karch,Gentil-Imbert} using the method of entropies. Other convergence results are: \cite{Tristani}, for convergence in spaces with power weights;  \cite{Mischler-Tristani}, where they use the semigroup approach; and \cite{Toscani} on the fractional Fisher information.

For other drift terms, we cite \cite{Bogdan-Jakubowski}, where they give heat kernel estimates for first order terms in the Kato class, and  \cite{Epstein-Pop} where they show regularity using pseudodifferential analysis when the drift coefficient is bounded.

The approach of finding eigenfunctions in Fourier variables by the change \eqref{change} was known in the Physics community (see  \cite{TSP1,TSP2} and \cite{Janakiraman-Sebastian,Janakiraman-Sebastian:erratum}). However, their results are formal and do not consider functional analytic aspects.

Note that our eigenfunction \eqref{eigenfunction-introduction} is written in terms of Laguerre polynomials. Such orthogonal polynomials have been used in many contexts, for instance, in the paper  \cite{Mazzitelli-Stinga-Torrea} where they study fractional powers of first order differential operators such as $\left(\pm\frac{d}{dx}+x\right)$, obtaining a generalized  Rodrigues formula.

The higher order local problem was considered in \cite{EGKP}, where the diffusion term is given by $\Delta^{m}$, $m\in \mathbb N$.  Fractional powers of the  Ornstein--Uhlenbeck operator, that is, $(-\Delta+2x\cdot\nabla+n)^s$ were studied in \cite{Ganguly-Manna-Thangavelu}.

Let us also mention that in this paper we are restricting to the simplest linear case. However, a Mellin transform approach may be also applicable in the presence of Hardy type terms, that is, a potential term of the form $\frac{u}{r^{2s}}$, since this term has the same homogeneity as $(-\Delta)^s$, although explicit formulas will not be available anymore. In addition, our approach could be extended to non-radially symmetric solutions by projecting over spherical harmonics (the necessary modifications are only needed in Proposition \ref{prop:M-Ds}, and these can be found in \cite{DelaTorre-Gonzalez}.) Note also that there are many possible non-linear generalizations. We hope to return to these problems elsewhere.

\medskip

This paper is structured as follows:  Section \ref{section:motivation} is included as a motivation of our results but not strictly needed for the proofs in the rest of the paper; however, it provides a good intuition of the problem. Section  \ref{section:transforms} contains some standard background on the Mellin transform in order to make this paper self-contained. In Section \ref{section:functional-new} we characterize the function space $\mathds L^2_\dag(s)$ and show compactness.In Section \ref{section:eigenvalue} we prove Theorem \ref{thm1}, finding precise formulas for the eigenfunctions. The proof of Theorem \ref{thm2} on the adjoint problem is contained in Section \ref{section:adjoint}. Finally, the Appendix summarizes some well known results on orthogonal polynomials.

\medskip

Some comments on notation:
\begin{itemize}
\item Here $f \sim g$ means that $\lim\limits_{r\to+\infty} \frac{f}{g}=1$, while $f\asymp g$ means that $C^{-1}g \leq f \leq Cg$ for a positive constant $C$ that may depend on $s$ and $n$.

\item The notation $\mathbb N$ for natural numbers also includes the zero.

\item The integer part (floor) of a real number will be denoted by $\floors{\cdot}$.
\end{itemize}

\section{Motivation}\label{section:motivation}

This Section is not necessary in the proofs and the expert reader may skip it. However, we decided to include a preliminary discussion in order to illustrate our approach in a simple setting. Our objective is to find a precise asymptotic expansion as $t\to +\infty$ for the solution to the heat equation \eqref{heat-equation}, and to relate it to an eigenfunction  decomposition. Recall that we are only considering radially symmetric functions. The radial coordinate is denoted by $r=|x|$.

\subsection{A review of the local case $s=1$}\label{subsection:local-Hermite}
Calculations are very transparent in the case $n=1$, $s=1$, so we describe first the classical approach to the  heat equation (see \cite{Vazquez1} for a nice presentation on the subject).  The asymptotic behavior of a solution of the heat equation
\begin{equation}\label{heat-equation1}
\begin{cases}
\varphi_t+(-\Delta) \varphi=0,\quad r>0, t>0,\\
\varphi|_{t=0}=\varphi_0,
\end{cases}
\end{equation}
is given by the self-similar solution
\begin{equation}\label{Green1}
G_1(t,r)=\frac{1}{t^{\frac{1}{2}}}\frac{1}{(4\pi)^{\frac{1}{2}}}\mathcal G_1\left(\frac{r}{t^{\frac{1}{2}}}\right),\quad \text{for} \quad \mathcal G_1(r)=e^{-\frac{r^2}{4}}.
\end{equation}
Indeed, as $t\to +\infty$,
\begin{equation*}
\varphi (r,t)\sim M G_1(r,t),
\end{equation*}
where $M$ is the mass of the initial data $M=2\int_{0}^\infty \varphi_0(r)\,dr$. More precise asymptotics can be obtained by expanding via higher order moments, and we quote here the following formula  \cite[Theorem 4]{Duoandikoetxea-Zuazua} (based on the results in \cite{Escobedo-Zuazua}), which gives a precise  asymptotic expansion for $\varphi$,
\begin{equation*}
\varphi(t,r)=\sum_{k=0}^\infty a_k \frac{\partial^k}{\partial r^k} G_1(r,t),
\end{equation*}
where the constants $a_k$, $k=0,1,\ldots$ depend on the initial data. Recalling \eqref{Green1}, we can rewrite this expansion as
\begin{equation}\label{asymptotic-varphi}
\varphi(t,r)=\frac{1}{(4\pi)^{\frac{1}{2}}}\sum_{k=0}^\infty \frac{1}{t^{\frac{1}{2}+\frac{k}{2}}}a_k \frac{d^k}{dr^k} \mathcal G_1\Big(\frac{r}{t^{\frac{1}{2}}}\Big).
\end{equation}
Another way to understand this formula is to use Rodrigues' formula for the Hermite polynomials $\mathcal H_k(r)$, which  is
\begin{equation}\label{Rodrigues0}
e^{-\frac{r^2}{4}}\mathcal H_k(r)=(-1)^k2^k \frac{d^k}{dr^k}\mathcal G_1(r).
\end{equation}
A quick review of Hermite and Laguerre polynomials is given in Appendix \ref{section:Hermite}.

We define the Hilbert space $\mathds L^2_\dag(1):=L^2_r(\mathbb R,e^{\frac{r^2}{4}})$ as the set of even functions on $\mathbb R $ with the scalar product
\begin{equation*}
\langle u_1,u_2\rangle_\dag=\int_{0}^\infty u_1(r)\overline{u_2(r)}\,e^{\frac{r^2}{4}}\,dr
\end{equation*}
and the weighted $L^2$-norm
\begin{equation*}
\|u\|^2_\dag=\int_{0}^\infty |u(r)|^2\,e^{\frac{r^2}{4}}\,dr.
\end{equation*}
Note that if we are looking for even functions, we need to restrict to even values of $k$ and thus we define
\begin{equation*}
e_k(r):=e^{-\frac{r^2}{4}}\mathcal H_{2k}(r).
\end{equation*}
An important property of the set $\{e_k(r)\}_{k=1}^\infty$ is that they form a complete set of eigenfunctions for Fokker--Planck operator
\begin{equation}\label{eigenvalue-L1}
L_1u:=-u''-\frac{1}{2}r u'-\frac{1}{2}u=\nu u,\quad u=u(r),
\end{equation}
with eigenvalues $\nu_k=k$, $k=0,1,2,\ldots$ (see Appendix \ref{section:Hermite}, taking into account the shift of notation $\nu=\frac{\lambda}{2}$, and the fact that we are restricting to even functions).

Now, the key observation is that  \eqref{asymptotic-varphi} is just a separation of variables formula for the solution of \eqref{heat-equation1} in terms of the eigenfunctions of the Fokker--Planck operator $L_1$, that is,
\begin{equation}
u(t,r)=\sum_{k=0}^\infty \alpha_k e^{-kt} e_k(r),
\end{equation}
where the $\alpha_k$ depend on the initial data. We recover \eqref{asymptotic-varphi} by going back to the original variables \eqref{selfsimilar-variables}, up to the usual shifts
 $t\leftrightarrow t+1$ and $k\leftrightarrow 2k$.

As a side comment, recall that Hermite polynomials are special cases of Laguerre polynomials and Kummer's functions (see \eqref{Hermite-Laguerre} and \eqref{Laguerre}), more precisely,
\begin{equation*}
\begin{split}
\mathcal H_{2k}(r)&=H_{2k}\big( \tfrac{r}{2}\big)
=(-1)^{k}2^{2k}k!L_{k}^{(-1/2)}\big(\tfrac{r^{2}}{4}\big)
=(-1)^{k}2^{2k}\frac{\Gamma (k+1/2)}{\Gamma
(1/2)}M\big({-k},\tfrac{1}{2};\tfrac{r^{2}}{4}\big).
\end{split}
\end{equation*}
This relation will be useful to understand the  higher dimensional generalization.

Next, one may also look at the Ornstein--Uhlenbeck version, which is a crucial ingredient in the study of  convergence in relative error $\varphi/G_1$.
The $L^2$-adjoint of the operator $L_1$ is given by
\begin{equation*}
L_1^*w :=-w''+\frac{1}{2}rw'=\mu w, \quad w=w(r),
\end{equation*}
defined on the dual space of $\mathds L^2_\dag(1)$, which is denoted by $\mathds L^2_\ddag(1):=L^2_r(\mathbb R,e^{-\frac{r^2}{4}})$, with the scalar product
\begin{equation*}
\langle w_1,w_2\rangle_\ddag=\int_{\mathbb R}w_1(r)\overline{w_2(r)}\,e^{-\frac{r^2}{4}}\,dr
\end{equation*}
and the norm
\begin{equation*}
\|w\|^2_\ddag=\int_{0}^\infty |w(r)|^2\,e^{-\frac{r^2}{4}}\,dr.
\end{equation*}
It is well known that the set $\{\mathcal H_{2k}(r)\}_{k=0}^\infty$ forms a complete set of eigenfunctions for $L^*_1$ in the space $\mathds L^2_\ddag(1)$ with eigenvalues $\mu_k=0,1,\ldots$ and, in addition, they form an orthogonal basis. This last property  can be checked directly using the properties of the Hermite polynomials. \\

From another point of view, let us look at the above calculations in  Fourier variables.
Take Fourier transform of  \eqref{heat-equation1} in $\mathbb R$ and $\zeta=|\xi|$, the radial Fourier variable. Then, for the problem
\begin{equation*}
\begin{cases}
\widehat \varphi_t+|\xi|^2\widehat \varphi =0,\quad \xi\in\mathbb R,t>0, \\
\widehat\varphi|_{t=0}=\widehat\varphi_0,
\end{cases}
\end{equation*}
it is well known that the solution is given by
\begin{equation*}
\widehat \varphi(t,\xi)=e^{-|\xi|^{2}t}\widehat\varphi_0(\xi)
\end{equation*}
so that
\begin{equation}\label{varphi}
\begin{split}
\varphi(t,x)&=\frac{1}{(2\pi)^{\frac{1}{2}}}\int_{\mathbb R} e^{i\xi x}e^{-|\xi|^{2}t}\widehat \varphi_0(\xi)\,d\xi=\frac{1}{(2\pi)^{\frac{1}{2}}}\int_0^\infty 2\cos(x\zeta)e^{-\zeta^{2}t}\widehat \varphi_0(\zeta)\,d\zeta.
\end{split}
\end{equation}
 On the Fourier side, since $\varphi_0$ is an even function,
\begin{equation*}
\widehat \varphi_0(\zeta)
=\sum_{k=0}^\infty   \alpha_k
\zeta^{2k} e^{-\zeta^2}.
\end{equation*}
Formally substituting into \eqref{varphi} we obtain
\begin{equation*}
\varphi(t,r)=\frac{1}{(2\pi)^{\frac{1}{2}}}\sum_{k=0}^\infty \alpha_k
\int_0^\infty \cos(\zeta r)e^{-\zeta^{2}(t+1)}\zeta^{2k}\,d\zeta.
\end{equation*}
The change of variable $\zeta \mapsto \zeta (t+1)^{\frac{1}{2}}$ gives
\begin{equation}\label{formula-dimension1}
\begin{split}
\varphi(t,r)
&=\frac{1}{(2\pi)^{\frac{1}{2}}}\sum_{k=0}^\infty  \alpha_k \frac{
1}{(t+1)^{\frac{1}{2}+\frac{2k}{2}}}\int_0^\infty \cos\Big(\zeta \frac{x}{(t+1)^{1/2}}\Big)e^{-\zeta^{2}}\zeta^{2k}\,d\zeta,\\
\end{split}
\end{equation}
which is the Fourier version of the asymptotic expansion \eqref{asymptotic-varphi} for an even function $\varphi(t,\cdot)$.
To see this, note that an important property of Hermite polynomials is that the
Fourier transform of $e_{k}(r)$  is
another polynomial of degree $2k$ up to a complex multiplicative constant, more precisely,
\begin{equation*}\label{Fourier-ek}
\widehat e_{k}(\zeta):=\cF(e_{k})(\zeta)=
\zeta^{2k}e^{-\zeta^{2}},
\end{equation*}
 which is just the Fourier representation of Rodrigues' formula \eqref{Rodrigues0}.
This implies that, for the Fourier counterpart of \eqref{eigenvalue-L1}, given by
\[
\zeta^{2}\widehat u(\zeta)+\frac{\zeta}{2}\partial_\zeta\widehat u(\zeta)=\nu\widehat u(\zeta),\quad \widehat u=\widehat u(\zeta),
\]%
the set $\{ \widehat e_k(\zeta)\}_{k=0}^\infty$ is an orthogonal, complete set of eigenfunctions with eigenvalues $\nu _{k}=0,1,2,....$.

\subsection{The local case $s=1$ in higher dimensions $n>1$}\label{subsection:local-n}

Let us go now to general $n$ and consider the eigenvalue equation
\begin{equation}\label{eq:70}
L_1u:=-\Delta u
-\frac{x}{2}\cdot \nabla u
-\frac{n}{2}u
=\nu u,\quad x\in\mathbb R^n.
\end{equation}%
Our scheme is to take Fourier transform of the eigenvalue equation. Assume that $u$ is radially symmetric, so that $\widehat u$ only depends on the variable $|\xi|$. For simplicity, we will denote $\zeta=|\xi|$
which, for radially symmetric functions leads to the Fourier transform
version of \eqref{eq:70},
\begin{equation*}\label{fourier-ek-n-dim}
\zeta^{2}\widehat u(\zeta)
+\frac{\zeta}{2}\partial_\zeta\widehat u(\zeta)
=\nu \widehat u(\zeta),\quad \zeta>0,
\end{equation*}
which is independent of the dimension $n$. Hence, as above, eigenvalues are also $\nu_k=0,1,2,\dots$, with eigenfunctions
\begin{equation}\label{eq:80}
\widehat e_k(\zeta)=\zeta^{2k}e^{-\zeta^{2}}
\end{equation}
again. Note, also, that we are only interested in the even powers.

Let us find an explicit expression for $e_{k}$, taking the inverse Fourier transform in $\mathbb R^n$ (which is the Hankel transform given in  \eqref{Hankel}), and use the formula \eqref{eq:J-exp-2}:
\begin{equation}\label{eigenfunctions-adjoint}
e_{k}(r)=r^{\frac{2-n}{2}%
}\int_{0}^{\infty }\zeta^{2k+\frac{n}{2}}e^{-\zeta^{2}%
}J_{\frac{n-2}{2}}(\zeta r)\,d\zeta=2^{-n/2}k!L_{k}^{(\frac{n-2}{2})}\big(\tfrac{r^{2}}{4}\big)e^{-\frac{r^{2}}{4}}.
\end{equation}
One can also use the relation to Kummer's function from \eqref{Laguerre} to write
\begin{equation*}
e_{k}(r)
=2^{-\frac{n}{2}}e^{-\frac{r^2}{4}}
\frac{\Gamma (k+\frac{n}{2})}{\Gamma (\frac{n}{2})}M\big(-k,\tfrac{n}{2};\tfrac{r^2}{4}\big).
\end{equation*}
The crucial observation here is that the Laguerre polynomials $\{L_{k}^{(\alpha )}(r)\}_{k=0}^\infty$ form an orthogonal basis of $L^2(\mathbb R_+)$ with the appropriate weight. Indeed, recalling the product formula from \eqref{Laguerre-orthogonality}, we have
\[
2^{-(n-1)}\int_{0}^{\infty
}L_{k}^{(\frac{n-2}{2})}\big(\tfrac{r^{2}}{4}\big)L_{m}^{(\frac{n-2}{2})}\big(\tfrac{r^{2}}{4}\big)e^{-\frac{r^{2}}{4}}r^{n-1}\,dr=\frac{%
\Gamma (k+\frac{n}{2})}{\Gamma (k+1)\Gamma (\frac{n}{2})}\delta _{m,k}.
\]%
The natural space  for the problem is thus $\mathds L^2_\dag(1):= L^2_r(\mathbb R^n,e^{\frac{r^{2}}{4}})$ with the scalar product
\[
\langle u_1,u_2\rangle_\dag=\int_0^\infty u_1(r)u_2(r)e^{\frac{r^{2}}{4}}r^{n-1}\,dr.
\]%

Now  \eqref{varphi} is replaced by the Hankel transform formula \eqref{Hankel} in order to obtain the precise asymptotic expansion for the solution of the heat equation $\varphi$ in terms of our eigenbasis $\{e_k(r)\}_k$,
\begin{equation*}
\begin{split}
\varphi(t,x)=&\frac{1}{(2\pi)^{\frac{n}{2}}}\int_{\mathbb R^n} e^{i\xi \cdot x}e^{-|\xi|^{2}t}\widehat \varphi_0(\xi)\,d\xi=r^{\frac{2-n}{2}}\int_0^\infty J_{\frac{n-2}{2}}(\zeta r)\zeta^{\frac{n-2}{2}}e^{-\zeta^{2}t}\widehat \varphi_0(\zeta)\zeta\,d\zeta\\
=&r^{\frac{2-n}{2}}\sum_{k=0}^\infty \alpha_k\int_0^\infty J_{\frac{n-2}{2}}(\zeta r)\zeta^{\frac{n-2}{2}}e^{-\zeta^{2}(t+1)}\zeta^{2k}\zeta\,d\zeta.
\end{split}
\end{equation*}

\subsection{The fractional setting}\label{subsection:fractional-introduction}

The approach described above is suited to be generalized to the fractional case, for any $s\in(0,1)$. Taking Fourier transform of the heat equation \eqref{heat-equation} we obtain
\begin{equation*}
\begin{cases}
\widehat \varphi_t+|\xi|^{2s}\widehat \varphi =0,\quad \xi\in\mathbb R,\,t>0, \\
\widehat\varphi|_{t=0}=\widehat\varphi_0.
\end{cases}
\end{equation*}
Similarly, for even functions, setting $\zeta=|\xi|$,
\begin{equation}\label{varphi-s}
\begin{split}
\varphi(t,x)&=\frac{1}{(2\pi)^{\frac{n}{2}}}\int_{\mathbb R} e^{i\xi \cdot x}e^{-|\xi|^{2s}t}\widehat \varphi_0(\xi)\,d\xi
=
r^{\frac{2-n}{2}}\int_0^\infty J_{\frac{n-2}{2}}(\zeta r)\zeta^{\frac{n-2}{2}}e^{-\zeta^{2s}t}\widehat \varphi_0(\zeta)\zeta\,d\zeta\\
&=\frac{1}{s}\,r^{\frac{2-n}{2}}\int_0^\infty J_{\frac{n-2}{2}}(\vartheta^{1/s} r)\vartheta^{\frac{n}{2s}}e^{-\vartheta^{2}t}\widehat \varphi_0(\vartheta^{1/s})
\vartheta^{1/s-1}
\,d\vartheta
\end{split}
\end{equation}
where, in the last step, we have made the change of variable $\vartheta=\zeta^{s}$. Let $\widehat \varphi_0(\vartheta)$ be the Fourier transform of $\varphi_0$ and denote by $\varphi_0^\dag$  the inverse Fourier transform of the function $\widehat \varphi_0(\vartheta^{1/s})$. If $\varphi_0^\dag$ belongs to the space $\mathds L^2_\dag(1)$, then we can expand it in eigenbasis as
\begin{equation*}
\varphi_0^\dag(r)=\sum_{k=0}^\infty  \alpha_k e_k(r),
\end{equation*}
as explained in Subsection \ref{subsection:local-n}. In Fourier variables, taking into account \eqref{eq:80},
\begin{equation*}
\widehat\varphi_0(\zeta)=\widehat \varphi_0(\vartheta^{1/s})=\sum_{k=0}^\infty \alpha_k \vartheta^{2k} e^{-\vartheta^{2}}=\sum_{k=0}^\infty \alpha_k \zeta^{2ks} e^{-\zeta^{2s}}.
\end{equation*}
Substituting in \eqref{varphi-s}, we have that
\begin{equation*}\label{varphi-s-2}
\begin{split}
\varphi(t,r)&=r^{\frac{2-n}{2}}\sum_{k=0}^\infty \alpha_k \int_0^\infty J_{\frac{n-2}{2}}(\zeta r)\zeta^{\frac{n}{2}}e^{-\zeta^{2s}(t+1)}\zeta^{2ks} \,d\zeta\\
&=r^{\frac{2-n}{2}}\sum_k \alpha_k \frac{1}{(t+1)^{k+\frac{n+2}{4s}}}\int_0^\infty J_{\frac{n-2}{2}}\Big(\zeta\frac{r}{(t+1)^{\frac{1}{2s}}} \Big)\zeta^{\frac{n}{2}}e^{-\zeta^{2s}}\zeta^{2ks} \,d\zeta,
\end{split}
\end{equation*}
which is the non-local generalization of the asymptotic expansion \eqref{asymptotic-varphi} in Fourier variables. Remark that this formula reduces to \eqref{formula-dimension1} when $n=1$ and $s=1$  by taking into account that
\begin{equation*}
\lim_{\alpha\to -1/2} J_\alpha(z)=\sqrt{\frac{2}{\pi z}}\cos z.
\end{equation*}

The underlying idea in many the proofs in this paper is that the change of variable $\vartheta=\zeta^{s}$ reduces a non-local problem to a local eigenvalue expansion. Thus,  defining $e_k^{(s)}(\zeta)$ by
\begin{equation*}
\widehat e_k^{(s)}(\zeta)=\zeta^{2ks} e^{-\zeta^{2s}}, \quad k=0,1,...
\end{equation*}
then the $\{e_k^{(s)}(r)\}_{k=0}^\infty$  are  eigenfunctions for the L\'evy Fokker-Planck operator \eqref{LFK}. Indeed,  for any $\nu\in\mathbb R$ the functions
\begin{equation}\label{hat-e-nu}
\widehat e_{\nu}^{(s)}(\zeta)=\zeta^{2s\nu }e^{-\zeta^{2s}}
\end{equation}
solve the eigenvalue equation \eqref{LFK} in  Fourier variables, that is,
\begin{equation*}
\zeta^{2s}\widehat u(\zeta)
+\frac{1}{2s}\zeta\partial_\zeta\widehat u(\zeta)
=\nu\widehat u(\zeta).
\end{equation*}
 Unfortunately, this Fourier approach gives no information on which are admissible eigenfunctions or what is the right function space in the fractional case. The aim of our work is to show that the $\{e_k^{(s)}(r)\}_{k=0}^\infty$ form an orthonormal basis for a Hilbert space that we  denote by $\mathds L^2_\dagger(s)$. We will see that Mellin transform, rather than Fourier transform,  gives us a precise answer to this question, which is the main ingredient in the proof of Theorem \ref{thm1}.

\section{Fourier, Hankel and Mellin transforms}\label{section:transforms}

\subsection{Fourier and Hankel}

The Fourier transform on $\mathbb R$ will be
denoted by $\mathcal F_1$, with normalization constant $\sqrt{2\pi}$:
\begin{equation}\label{eq:F1}
\cF_1 u(\xi)
=\cF_1\set{u(x)}(\xi)
:=\frac{1}{\sqrt{2\pi}}
\int_{\R}
	e^{-x \xi i}
	u(x)
\,dx,\quad \xi\in\mathbb R.
\end{equation}
Similarly, the Fourier transform on $\mathbb R^n$ is defined to be
\begin{equation*}
\widehat u(\xi)=\cF u(\xi)
=\frac{1}{(2\pi)^{\frac{n}{2}}}
\int_{\R^n}
	e^{-x\cdot\xi i}
	u(x)
\,dx,\quad\xi\in\mathbb R^n.
\end{equation*}
If $u$ is a radially symmetric function in $\mathbb R^n$, then its Fourier transform is radially symmetric, and it
can be written in terms of the Hankel transform as
\begin{equation}\label{Hankel}
	\begin{split}
		&\mathcal Fu(\zeta)
		=(2\pi )^{-\frac{n}{2}}
		\int_{\mathbb R^n} e^{i\xi\cdot x}u(r)\,dx
		=\zeta^{\frac{2-n}{2}}\int_0^\infty J_{\frac{n-2}{2}}(\zeta r)r^{\frac{n-2}{2}}u(r)r\,dr,\\
		&\mathcal F^{-1}w(r)
		=(2\pi )^{-\frac{n}{2}}\int_{\mathbb R^n} e^{-i\xi\cdot x}w(\zeta)\,dx
		=r^{\frac{2-n}{2}}\int_0^\infty J_{\frac{n-2}{2}}(\zeta r)\zeta^{\frac{n-2}{2}}w(\zeta)\zeta\,d\zeta,
	\end{split}
\end{equation}
where we have denoted $\zeta=|\xi|$ for $\xi\in\mathbb R^n$, $r=|x|$ for $x\in\mathbb R^n$.

\subsection{Mellin transform}

Here we review the properties of the classical Mellin transform (see, for instance, the book \cite{book:Mellin}). For a function $u=u(r)$ defined on the half-line $\R_+=(0,\infty)$, its Mellin transform, whenever defined, is given by the integral\footnote{The classical notation for the  Mellin variable is $s=\sigma+it \in \bC$, as widely used in analytic number theory (note in particular that the Riemann zeta function is expressible in terms of the Mellin transform of $(e^r-1)^{-1}$). In this work we reserve the letter $s$ as the power of the fractional Laplacian, and $t$ as time variable.}
\begin{equation*}
	\widetilde{u}(z)
	=\cM u(z)
	=\cM\set{u(r)}(z)
	:=
	\int_0^\infty
	r^{z} u(r)
	\,\frac{dr}{r}.
\end{equation*}
This integral is absolutely convergent on each line $\re z=\sigma$ for which $u\in L^1(\R_+,r^{\sigma-1})$.
In \Cref{sec:M-inv} we briefly discuss the domain and range of the Mellin transform. Under appropriate mild conditions (see \Cref{thm:M-inv}), the original function is recoverable by its (analytic) Mellin transform along a complex line parallel to the imaginary axis.

Note that if $u$ is a smooth function with compact support outside the origin, then its Mellin transform is analytic on $\mathbb C$. If $u(r)$ decays exponentially at infinity but grows like $r^{-\sigma_0}$ for some $\sigma_0\in \mathbb R$ as $r\to 0$, then $\mathcal M u(z)$ is a holomorphic function on the right half-plane $\Real z > \sigma_0$. Similarly, if $u$ decays fast as $r\to 0$ but grows as $r^{-\sigma_1}$ at infinity for some $\sigma_1\in\mathbb R$, then $\mathcal M u(z)$  is holomorphic on the left half-plane
$\Real z < \sigma_1$. If $u$ has such polynomial growth at both ends, then it is only holomorphic on the strip $\sigma_0<\Real z<\sigma_1$. In addition, there are possible meromorphic extensions to a larger set \cite{Zagier} and generalized Mellin transforms. However, we will not need those.


\subsubsection{Mellin vs. Fourier}

\label{sec:M-vs-F}

The Mellin transform on $\re z=0$ is exactly the Fourier transform in the Emden--Fowler variable $\varrho=-\log r$, for $z=\lambda i$ ($\lambda\in\R$). More precisely,
\begin{equation}\label{eq:M-F-1}
	\cM u(\lambda i)
	=\int_{0}^{\infty}
	r^{\lambda i}
	u(r)
	\,\frac{dr}{r}
	=\int_{\R}
	e^{-\varrho\lambda i }
	u(e^{-\varrho})
	\,d\varrho
	=\sqrt{2\pi}
	\cF_1\set{
		u(e^{-\varrho})
	}(\lambda),
\end{equation}
where $\mathcal{F}_1$ denotes the one-dimensional Fourier transform given in \eqref{eq:F1}.

More generally, the Mellin transform of $u(r)$ on $\re z=\sigma$ is the Fourier transform of the  conjugation $r^{\sigma}u(r)$ in the logarithmic variable: for $z=\sigma+\lambda i$ ($\sigma,\lambda\in\R$) and $r=e^{-\varrho}$,
\begin{equation}\label{eq:M-F-2}
	\cM u(\sigma+\lambda i)
	=\int_{0}^{\infty}
	r^{\sigma+\lambda i}
	u(r)
	\,\frac{dr}{r}
	=\int_{\R}
	e^{-\varrho\lambda i}
	e^{-\sigma \varrho}
	u(e^{-\varrho})
	\,d\varrho
	=\sqrt{2\pi}
	\cF_1\set{
		e^{-\sigma \varrho}
		u(e^{-\varrho})
	}(\lambda).
\end{equation}
We are primarily interested in the case $\sigma=\frac{n}{2}$, since the factor of isometry (see \eqref{eq:M-isom-1} below) equals $r^{n-1}$, the Jacobian in polar coordinates.

By \eqref{eq:M-F-1}--\eqref{eq:M-F-2}, the Mellin transform along each vertical line can be simply viewed as the Fourier transform up to a change of variable. Consequently, they enjoy similar elementary properties, convolution identities, inverse transforms and Plancherel theorems. Specifically, we emphasize that the differential operator $-r\p_r=\p_\varrho$ Mellin-transforms to multiplication by $z$, demonstrating the suitability convenience of the Mellin transform in studying the Hermite differential equation.

\subsubsection{Inverse transform and uniqueness}
\label{sec:M-inv}

\begin{thm}[Mellin inversion]
\label{thm:M-inv}
	Suppose that
	\begin{itemize}
		\item $\cM u(z)$ is analytic in some infinite vertical strip $a< \re z <b$, and
		\item $|\cM u(z)| \leq C|z|^{-2}$ in the strip $a< {\rm Re\,} z<b$.
	\end{itemize}
	Then for any $\sigma\in(a,b)$,
	\begin{equation}\label{eq:M-inv}
		u(r)
		=
		\dfrac{1}{2\pi i}
		\int_{\sigma-i\infty}^{\sigma+i\infty}
		r^{-z}
		\cM u(z)
		\,dz
	=\dfrac{1}{2\pi}
	\int_{-\infty}^{\infty}
	r^{-\sigma-\lambda i}
	\cM u(\sigma+\lambda i)
	\,d\lambda.
	\end{equation}
In other words, the inverse Mellin transform of $\widetilde{u}(z)$ along $\re z=\sigma \in (a,b)$ is given by
\begin{equation*}\label{eq:M-inv-2}
\cM_\sigma^{-1} \widetilde{u}(r)
=		\dfrac{1}{2\pi i}
\int_{\sigma-i\infty}^{\sigma+i\infty}
r^{-z}
\widetilde{u}(z)
\,dz.
\end{equation*}
\end{thm}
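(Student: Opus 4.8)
The statement to prove is the Mellin inversion theorem (Theorem~\ref{thm:M-inv}). The plan is to reduce it to the classical Fourier inversion theorem on $\mathbb R$ via the logarithmic change of variable $\varrho = -\log r$, exactly as set up in equations~\eqref{eq:M-F-1}--\eqref{eq:M-F-2}.

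First I would fix $\sigma \in (a,b)$ and introduce the auxiliary function $g(\varrho) := e^{-\sigma\varrho}\, u(e^{-\varrho})$ on $\mathbb R$. By \eqref{eq:M-F-2} we have $\mathcal M u(\sigma + \lambda i) = \sqrt{2\pi}\,\mathcal F_1\{g\}(\lambda)$. The two hypotheses translate into statements about $g$: analyticity of $\mathcal M u$ in the strip, together with the decay bound $|\mathcal M u(z)| \le C|z|^{-2}$ on vertical lines in the strip, shows that $\widehat g = \mathcal F_1\{g\}$ is continuous and satisfies $|\widehat g(\lambda)| \le C'(1+|\lambda|)^{-2}$, hence $\widehat g \in L^1(\mathbb R)$. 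Since $\widehat g \in L^1$, the Fourier inversion formula applies in its pointwise form: for a.e.\ $\varrho$ (and at every point of continuity of $g$),
\begin{equation*}
g(\varrho) = \frac{1}{\sqrt{2\pi}} \int_{\mathbb R} e^{\varrho\lambda i}\, \widehat g(\lambda)\, d\lambda = \frac{1}{2\pi} \int_{\mathbb R} e^{\varrho\lambda i}\, \mathcal M u(\sigma + \lambda i)\, d\lambda.
\end{equation*}
Undoing the substitution $r = e^{-\varrho}$, so that $e^{\varrho \lambda i} = r^{-\lambda i}$ and $e^{-\sigma\varrho} = r^\sigma$, multiplying through by $r^{-\sigma}$ to recover $u(r) = r^{-\sigma} g(-\log r)$, yields the second equality in \eqref{eq:M-inv}. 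The first equality in \eqref{eq:M-inv} is then just a rewriting: parametrizing the vertical contour by $z = \sigma + \lambda i$, $dz = i\, d\lambda$, turns $\frac{1}{2\pi i}\int_{\sigma - i\infty}^{\sigma + i\infty} r^{-z}\,\mathcal M u(z)\, dz$ into $\frac{1}{2\pi}\int_{-\infty}^\infty r^{-\sigma - \lambda i}\, \mathcal M u(\sigma + \lambda i)\, d\lambda$.

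One point deserves care: the formula \eqref{eq:M-inv} asserts that the integral is independent of the choice of $\sigma \in (a,b)$. This follows from Cauchy's theorem applied to the holomorphic function $z \mapsto r^{-z}\,\mathcal M u(z)$ on the strip: for $\sigma_1 < \sigma_2$ in $(a,b)$, integrate over the boundary of the rectangle $[\sigma_1,\sigma_2] \times [-R,R]$ and let $R \to \infty$; the two horizontal sides contribute integrals bounded by (length $\cdot \sup$) $\lesssim (\sigma_2 - \sigma_1)\cdot R^{-2} \to 0$ using the decay hypothesis, so the two vertical integrals agree in the limit. This also implicitly confirms that $u$ itself is well-defined by the original Mellin transform only up to the ambiguity already inherent in the problem, and that $g$ is genuinely continuous (indeed real-analytic after inversion), so the pointwise Fourier inversion is valid at every $r > 0$.

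The main obstacle — really the only nontrivial verification — is justifying that $\widehat g \in L^1(\mathbb R)$ rigorously from the stated hypotheses, i.e.\ that the hypothesis is phrased precisely so that Fourier inversion applies in its elementary $L^1$ form rather than requiring distributional or $L^2$ arguments; the quadratic decay $|z|^{-2}$ is exactly what makes $\int (1+|\lambda|)^{-2}\,d\lambda < \infty$. Everything else is bookkeeping with the change of variables. I would present the proof in the order: (1) set up $g$ and identify $\widehat g$ with $\mathcal M u$ on the line; (2) deduce $\widehat g \in L^1$ from the decay bound; (3) apply classical Fourier inversion; (4) change variables back to obtain \eqref{eq:M-inv}; (5) remark on $\sigma$-independence via Cauchy's theorem.
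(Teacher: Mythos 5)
Your proposal is correct, and it is exactly the argument the paper has in mind: the paper does not prove Theorem~\ref{thm:M-inv} at all, presenting it as classical background (citing the Mellin literature) and pointing to the dictionary of Section~\ref{sec:M-vs-F}, by which the Mellin transform on the line $\re z=\sigma$ is the one-dimensional Fourier transform of $g(\varrho)=e^{-\sigma\varrho}u(e^{-\varrho})$ as in \eqref{eq:M-F-2}; your reduction to $L^1$ Fourier inversion, the $\sigma$-independence via Cauchy's theorem with the $|z|^{-2}$ decay on the horizontal sides, and the identification of the contour integral with the $\lambda$-integral are precisely the intended steps. The only blemish is the parenthetical claim that $g$ is ``real-analytic after inversion'': the bound $|\widehat g(\lambda)|\lesssim(1+|\lambda|)^{-2}$ gives $\widehat g\in L^1$ and hence only continuity of the inverted function, not analyticity; nothing in your argument uses this, and the conclusion should simply be read as holding a.e.\ in $r$ and at every point of continuity of $u$.
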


%

\begin{cor}[Uniqueness]
	\label{cor:uniq}
Suppose $\cM u_1$, $\cM u_2$ are analytic in the \emph{same} strip $a<\re z<b$ on which $|\cM u_1(z)|+|\cM u_2(z)| \leq C|z|^{-2}$. If
\[
\cM u_1(z)=\cM u_2(z)
	\quad \text{ for } \re z=\sigma,
\]
for some $\sigma\in(a,b)$, then
\[
u_1(r)=u_2(r)
	\quad \text{ on } \R.
\]
\end{cor}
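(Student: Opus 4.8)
The plan is to derive the uniqueness corollary as an immediate consequence of the Mellin inversion formula from \Cref{thm:M-inv}. The hypotheses are precisely tailored so that the theorem applies to both $u_1$ and $u_2$: each $\cM u_i$ is analytic in the common strip $a<\re z<b$ and satisfies $|\cM u_i(z)|\le C|z|^{-2}$ there, so \eqref{eq:M-inv} gives, for the chosen $\sigma\in(a,b)$,
\[
u_i(r)=\dfrac{1}{2\pi}\int_{-\infty}^{\infty}r^{-\sigma-\lambda i}\,\cM u_i(\sigma+\lambda i)\,d\lambda,\qquad i=1,2.
\]

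The key step is then to subtract these two identities. Since $\cM u_1(\sigma+\lambda i)=\cM u_2(\sigma+\lambda i)$ for all $\lambda\in\R$ by hypothesis (this is exactly the assumption that the two transforms agree on the line $\re z=\sigma$), the integrands coincide pointwise in $\lambda$, and hence
\[
u_1(r)-u_2(r)=\dfrac{1}{2\pi}\int_{-\infty}^{\infty}r^{-\sigma-\lambda i}\,\big(\cM u_1(\sigma+\lambda i)-\cM u_2(\sigma+\lambda i)\big)\,d\lambda=0
\]
for every $r>0$. This yields $u_1\equiv u_2$ on $\R_+$, and since both functions are defined on the half-line (or, in the even-extension convention used elsewhere in the paper, on all of $\R$ by symmetry), the conclusion follows.

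There is essentially no obstacle here; the only point worth a word is that one should note the decay bound $|\cM u_1(z)|+|\cM u_2(z)|\le C|z|^{-2}$ guarantees absolute convergence of the inversion integrals, so the subtraction of the two integral representations is legitimate and no interchange-of-limits subtlety arises. In a companion remark one could also observe that, by analyticity, agreement of $\cM u_1$ and $\cM u_2$ on the single vertical line $\re z=\sigma$ already forces agreement throughout the strip $a<\re z<b$ via the identity theorem for holomorphic functions, so the hypothesis could equivalently be stated as equality of the two Mellin transforms as analytic functions on the strip; but for the proof of the corollary the pointwise equality on one line is all that is used.
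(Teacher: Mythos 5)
Your proof is correct and is essentially the paper's intended argument: the corollary is stated without proof precisely because it follows immediately from applying the inversion formula of \Cref{thm:M-inv} to both $u_1$ and $u_2$ on the common line $\re z=\sigma$ and subtracting, exactly as you do. Your closing remarks (absolute convergence from the $|z|^{-2}$ bound, and the identity-theorem observation) are accurate but not needed beyond what the theorem already supplies.
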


\begin{remark}[On the domain of $\widetilde{u}$]
\label{remark:poles-inverse}
We note that the location of the strip is crucial for the uniqueness of the inverse Mellin transform. In particular, if $\widetilde u$ has a pole at a point $z=\sigma_0+i\lambda$, then shifting the vertical line of integration may create an extra power $r^{-\sigma_0}$ in $\mathcal M^{-1} \widetilde u$. For instance,
\[
\cM\set{
	e^{-r}
}(z)
=\Gamma(z)
	\quad \text{ for } \re z>0,
\]
while
\[
\cM\set{
	e^{-r}-1
}(z)
=\Gamma(z)
	\quad \text{ for } -1<\re z<0.
\]
\end{remark}

\begin{remark}[Generalized Mellin pairs]
For $u\in L^1(\R_+,r^{\sigma-1})$, $(u,\cM u)$ is an admissible Euclidean--Mellin pair. Thanks to \Cref{thm:M-inv}, one is able to enlarge this class and consider pairs $(\cM_{\sigma}^{-1}\widetilde{u},\widetilde{u})$ for  $\widetilde{u}$ satisfying the assumptions of the theorem.
In this way, polynomials arising from a shift of $\sigma$ are still paired with meromorphic functions having the corresponding poles. (This generalization is different from \cite{Zagier}, though.) In a similar spirit, one has defined large classes of special functions such as the Meijer $G$-function and, more generally, the Fox $H$-function \cite{book:Mellin}.
\end{remark}

%
%

\subsubsection{Elementary properties of $\cM$}
\label{sec:M-prop}

\begin{prop}\label{prop:M-elem}
$\cM$ satisfies the following:
\begin{itemize}
\item Linearity.
\item Differentiation
\begin{equation}\label{eq:M-elem-1}
\cM\set{ru'(r)}(z)
=-z\cM\set{u(r)}(z).
\end{equation}
\item Monomial multiplication in $r$ $\Longleftrightarrow$ (real) translation in $z$,
\begin{equation}\label{eq:M-trans}
\cM\set{u(r)}(z)
=\cM\set{
	r^{a} u(r)
}
(z-a),
	\quad a\in \R.
\end{equation}
\item Rescaling in $r$ $\Longleftrightarrow$ multiplication by exponential in $z$,\footnote{Recall the identity $e^{a+b}=e^a\,e^b$ holds for all $a,b\in\bC$.}
\begin{equation}\label{eq:M-mult-exp}
\cM\set{u(r)}(z)
=a^z \cM\set{u(a r)}(z),
	\quad a>0.
\end{equation}
\item Composition with monomial in $r$ $\Longleftrightarrow$ rescaling in $z$,\footnote{Recall that $e^{ab}=(e^a)^b$ does not always hold, but the principal branch has to be chosen due to \Cref{rmk:exp}.}
\begin{equation}\label{eq:M-rescale}
\cM\set{u(r)}(z)
=a \cM\set{u(r^a)}(a z),
	\quad a>0.
\end{equation}
\end{itemize}
Similar properties for inverse:
\begin{itemize}
	\item (Real) translation in $z$ and $\sigma$ $\Longleftrightarrow$ monomial multiplication in $r$,
	\begin{equation}\label{eq:M-inv-trans}
		\cM_{\sigma}^{-1}
		\set{
			\widetilde{u}(z)
		}(r)
		=r^a
		\cM_{\sigma+a}^{-1}
		\set{\widetilde{u}(z-a)}(r),
			\quad a\in\R.
	\end{equation}
	\item Multiplication by exponential in $z$ $\Longleftrightarrow$ rescaling in $r$,
	\begin{equation}\label{eq:M-inv-exp}
	\cM_{\sigma}^{-1}
	\set{
		\widetilde{u}(z)
	}(r)
	=\cM_{\sigma}^{-1}
	\set{
		a^{z}
		\widetilde{u}(z)
	}\left (
		a r
	\right ),
		\quad a>0.
	\end{equation}
	\item Rescaling in $z$ and $\sigma$ $\Longleftrightarrow$ composition with monomial in $r$,
	\begin{equation}\label{eq:M-inv-rescale}
	\cM_{\sigma}^{-1}
		\set{
			\widetilde{u}(z)
		}(r)
	=a
	\cM_{\frac{\sigma}{a}}^{-1}
	\set{
			\widetilde{u}(a z)
	}\left (
		r^{a}
	\right ),
		\quad a>0.
	\end{equation}
\end{itemize}

\end{prop}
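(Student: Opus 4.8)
The statement to prove is Proposition \ref{prop:M-elem}, which collects elementary properties of the Mellin transform. This is a routine list of formal identities, each following from a direct change of variables in the defining integral, so the proof will be a sequence of short computations rather than a single conceptual argument.

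\medskip

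\noindent\textbf{Proof plan.} The plan is to verify each identity by directly manipulating the defining integral $\cM u(z)=\int_0^\infty r^z u(r)\,\tfrac{dr}{r}$, justifying each manipulation under the stated hypothesis that $u\in L^1(\R_+, r^{\sigma-1})$ on the relevant vertical line (so that the integral converges absolutely and Fubini/substitution are legitimate). For the differentiation rule \eqref{eq:M-elem-1}, I would integrate by parts: $\int_0^\infty r^z u'(r)\,dr = [r^z u(r)]_0^\infty - z\int_0^\infty r^{z-1}u(r)\,dr$, and observe that $\int_0^\infty r^z u'(r)\,dr = \int_0^\infty r^{z+1}u'(r)\,\tfrac{dr}{r} = \cM\{r u'(r)\}(z)$, with the boundary terms vanishing in the strip of convergence. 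For the translation rule \eqref{eq:M-trans}, simply absorb $r^a$ into $r^z$: $\int_0^\infty r^z u(r)\,\tfrac{dr}{r} = \int_0^\infty r^{z-a}\,(r^a u(r))\,\tfrac{dr}{r}$. For the rescaling rule \eqref{eq:M-mult-exp}, substitute $r\mapsto r/a$ and note $\tfrac{dr}{r}$ is scale-invariant, producing the factor $a^z$. For the monomial-composition rule \eqref{eq:M-rescale}, substitute $r\mapsto r^a$, so $\tfrac{dr}{r}\mapsto a\,\tfrac{dr}{r}$ and $r^z\mapsto r^{az}$, giving $\cM u(z) = a\int_0^\infty r^{az}u(r^a)\,\tfrac{dr}{r} = a\,\cM\{u(r^a)\}(az)$; here one must be slightly careful about branch choices when $a$ is not an integer, as flagged in the footnote, but for $a>0$ and real $r$ this is unambiguous.

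\medskip

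\noindent\textbf{The inverse identities.} The three statements for $\cM_\sigma^{-1}$ follow either by applying $\cM$ to both sides and invoking the corresponding forward identity together with the uniqueness/inversion statement of \Cref{thm:M-inv}, or equally directly by changing variables in the contour integral $\cM_\sigma^{-1}\widetilde u(r) = \tfrac{1}{2\pi i}\int_{\sigma-i\infty}^{\sigma+i\infty} r^{-z}\widetilde u(z)\,dz$. For \eqref{eq:M-inv-trans}, shift the contour variable $z\mapsto z+a$ (which moves $\re z = \sigma+a$ to $\re z=\sigma$, hence the subscript change) and factor out $r^a$. For \eqref{eq:M-inv-exp}, absorb $a^z$ into $r^{-z}$ as $(ar)^{-z}$. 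For \eqref{eq:M-inv-rescale}, substitute $z\mapsto z/a$, which rescales $d z$ by $1/a$ (giving the factor $a$ after accounting for orientation), sends $r^{-z}$ to $r^{-z/a}=(r^a)^{-z/a}$... more carefully, one writes $r^{-z} = (r^{1/a})^{-az}$ and reads off the claimed form; the new contour is $\re z = \sigma/a$.

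\medskip

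\noindent\textbf{Expected obstacle.} There is no serious obstacle: the only points requiring care are (i) the vanishing of boundary terms in the integration by parts for \eqref{eq:M-elem-1}, which uses that $r^\sigma u(r)\to 0$ at both ends within the open strip of absolute convergence — a standard consequence of $r^{\sigma-1}u\in L^1$ together with the assumed regularity; (ii) the branch conventions for non-integer powers in \eqref{eq:M-rescale} and \eqref{eq:M-inv-rescale}, which are handled by restricting to $a>0$ and $r>0$ and using the principal branch throughout (as noted in \Cref{rmk:exp}); and (iii) tracking how the abscissa $\sigma$ transforms in each inverse identity, which I would state explicitly each time. Since all identities are formal consequences of one-line substitutions, I would present the proof compactly, grouping the forward identities and then the inverse ones, and remarking that each inverse identity is dual to its forward counterpart via \Cref{thm:M-inv}.
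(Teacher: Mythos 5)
Your proof is correct, and it is exactly the standard argument the paper implicitly relies on: the paper states these identities without proof, flagging only the branch-of-complex-power subtlety in Remark \ref{rmk:exp}, which you address in the same way (positive real base, principal branch, $a>0$). The direct change-of-variables/integration-by-parts verifications and the contour substitutions for the inverse identities, with the abscissa tracking you mention, are all that is needed.
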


\begin{remark}\label{rmk:exp}
We emphasize that great care must be used in proving \eqref{eq:M-rescale} and \eqref{eq:M-inv-rescale}, as the exponential function with complex arguments does not follow satisfy the same relation as its real counterpart. The two sides of \eqref{eq:M-rescale} can be written respectively as
\begin{align*}
\cM\set{u(r)}(z)
&=\int_0^\infty	r^{\sigma+\lambda i} u(r) \frac{dr}{r},\\
a\cM\set{u(r^a)}(az)
&=a\int_0^\infty r^{a\sigma+a\lambda i} u(r^a) \frac{dr}{r}
=\int_0^\infty (r^{\frac1a})^{a\sigma+a\lambda i} \,dr.
\end{align*}
As a matter of fact, for $a'=\frac{1}{a}$, $b=a\sigma+a\lambda i\in\bC$, the multi-valued sets of complex numbers
\[
	(r^{a'})^b=e^{b\log(r^{a'})}=e^{a'b\Log r+b\cdot 2\pi i\,\bZ}
\quad \text{ and } \quad
	r^{a'b}=e^{\log (r^{a'b})}=e^{a'b\Log r+a'b\cdot 2\pi i\,\bZ}
\]
do not coincide.

Fortunately, in taking Mellin transform one requires a (uniform) decay as $\lambda \to \pm\infty$. This implies that in both of these exponentials one must only take the principal branch in such a way that the term $(\lambda i)(2\pi i\,\bZ)=-2\pi \lambda \,\bZ$ vanishes. The consideration for \eqref{eq:M-inv-rescale} is similar. In all the rest of the paper, the above implicit choice will remain in force.
\end{remark}

\subsubsection{Multiplicative convolution identity}
\label{sec:M-conv}

Similarly to Fourier transform, Mellin transform also satisfies some kind of convolution property.

Recall from \eqref{eq:M-F-1} that Mellin and Fourier transforms are related by
\[
\cM u(\lambda i)=\sqrt{2\pi}\cF_1\{u(e^{-\varrho})\}(\lambda).
\]
Because of the logarithmic change of variable, the convolution identity for Mellin transform is valid for the multiplicative convolution instead. We denote it by the the operation $\star$ that satisfies
\[
(u_1 \star u_2)(e^{-\varrho})
= [u_1(e^{-\cdot})\ast u_2(e^{-\cdot})](\varrho),
\]
where $\ast$ denotes the standard convolution in $\R$. In other words, $\star$ is defined by
\[
(u_1\star u_2)(r)
=\int_{0}^{\infty}
u_1\left(\frac{r}{r'}\right)
u_2(r')
\dfrac{dr'}{r'}
=(u_2\star u_1)(r).
\]
Then, by the convolution identity of the Fourier transform,
\begin{align*}
	\cM\set{u_1\star u_2}(\lambda i)
	&=\sqrt{2\pi}\cF_1
	\left(u_1(e^{-\cdot}) \ast u_2(e^{-\cdot})\right)(\lambda)\\
	&=\sqrt{2\pi}\cF_1\left(u_1(e^{-\cdot})\right)(\lambda)
	\sqrt{2\pi}\cF_1\left(u_2(e^{-\cdot})\right)(\lambda)
	=\cM u_1(\lambda i)
	\cM u_2(\lambda i).
\end{align*}
More generally, for any $\sigma\in\R$, since $e^{-\sigma \varrho}(u_1\ast u_2)(t)=[(e^{-\sigma \cdot}u_1)\ast (e^{-\sigma \cdot}u_2)](\varrho)$, there holds
\begin{equation*}\label{eq:M-conv}
	\cM\set{u_1 \star u_2}(\sigma+\lambda i)
	=\cM u_1(\sigma+\lambda i)
	\cM u_2(\sigma+\lambda i).
\end{equation*}
By duality, given Mellin transforms $\tilde{u}_1(z)$, $\tilde{u}_2(z)$ of admissible functions,
\begin{equation*}\label{M-conv-dual}
\cM_{\sigma}^{-1}\set{
	\widetilde{u}_1
	\widetilde{u}_2
}(r)
=[
	\cM_{\sigma}^{-1} \widetilde{u}_1
	\ast \cM_{\sigma}^{-1} \widetilde{u}_2
](r).
\end{equation*}

\normalcolor

\subsubsection{Isometry and Plancherel identity}

\label{sec:M-isom}

The Mellin transform yields an identification between
$$L^2_r(\R^n) \longrightarrow L^2(\tfrac{n}{2}+\R i) \cong L^2(\R),$$
given by the identity
\begin{equation}\label{eq:M-isom-1}
\int_{\R}
	|\cM u(\tfrac{n}{2}+\lambda i)|^2
\,d\lambda
=\int_{0}^{\infty}
	|u(r)|^2
	r^{n-1}
\,dr
=\frac{1}{\cH^{n-1}(\bS^{n-1})}
\|u\|^2_{L^2(\mathbb R^n)}.
\end{equation}
More generally, for functions $u_1,u_2$ and parameters $\sigma_1,\sigma_2\in\R$\footnote{We keep in mind that if $u_j$ ($j=1,2$) are smooth radial functions with $u_j(0)>0$, then one is not allowed to take $\sigma_1=\sigma_2=0$ in \eqref{eq:M-isom-gen}. Indeed, not only the right hand side but also each Mellin transform on the left hand side would be all undefined.
}
such that the respective Mellin transforms are defined, it holds
\begin{equation}\label{eq:M-isom-gen}
\int_{\R}
	\cM u_1(\sigma_1+\lambda i)
	\cM u_2(\sigma_2-\lambda i)
\,d\lambda
=\int_{0}^{\infty}
	u_1(r)
	u_2(r)
	r^{\sigma_1+\sigma_2-1}
\,dr.
\end{equation}
Note that $\cM u_2(\sigma_2-\lambda i)$ is simply the complex conjugate $\overline{\cM u_2(\sigma_2+\lambda i)}$.

The Plancherel identity \eqref{eq:M-isom-1} is particular useful in understanding the function space $\cH_{\mathcal M}^{\beta}$ described below.

\subsection{Mellin--Sobolev function spaces}


We work on the space of radially symmetric functions $u=u(r)$ on $\mathbb R^n$. For such functions we define the space $\cH_{\mathcal M}^\beta$, for  $\beta\geq 0$, by the norm 
\begin{equation*}\label{norm}
	\|u\|^2_{\cH_{\mathcal M}^\beta}
	=\int_{\R}
		(1+|\lambda|^2)^{\beta}
		\abs{\cM u(\tfrac{n}{2}+\lambda i)}^2
	\,d\lambda.
\end{equation*}
In the particular case that $\beta=0$ we recover the standard $L_r^2(\mathbb R^n)$ thanks to  \eqref{eq:M-isom-1}.

For $\beta=1$ we obtain a variation of the usual Sobolev space $H^1_r(\mathbb R^n)$ with different scaling properties. Indeed,
\begin{align*}
-(\tfrac{n}{2}+\lambda i)\cM u(\tfrac{n}{2}+\lambda i)
&=-(\tfrac{n}{2}+\lambda i)
\int_{0}^{\infty}
	r^{\frac{n}{2}+\lambda i}
	u(r)
\,\frac{dr}{r}
=-\int_{0}^{\infty}
	(r^{\frac{n}{2}+\lambda i})'
	u(r)
\,dr\\
&=\int_{0}^{\infty}
	r^{\frac{n}{2}+\lambda i}
	u'(r)
\,dr
=\int_{0}^{\infty}
	r^{\frac{n}{2}+\lambda i}
	(ru'(r))
\,\frac{dr}{r}
=\cM\set{ru'(r)}(\tfrac{n}{2}+\lambda i),
\end{align*}
so that
\begin{align*}
\int_{\R}
	\abs{\tfrac{n}{2}+\lambda i}^2
	\abs{\cM u(\tfrac{n}{2}+\lambda i)}^2
\,d\lambda
&=\int_{\R}
	\abs{\cM\set{ru'(r)}(\tfrac{n}{2}+\lambda i)}^2
\,d\lambda\\
&=\int_{\R}
	(r u'(r))^2
	r^{n-1}
\,dr
=\frac{1}{\cH^{n-1}(\bS^{n-1})}\int_{\R^n}
	|x|^2|\nabla u|^2
\,dx.
\end{align*}
We conclude that the norm in $\cH_{\mathcal M}^{1}$ is equivalent to the norm
\begin{equation*}
\left (
	\int_{\mathbb R^n}|u|^2\,dx
	+ \int_{\mathbb R^n} r^2|\nabla u|^2\,dx
\right )^{\frac12},
\end{equation*}
which is closely related, but not exactly equal, to the usual Sobolev norm of $H^1(\mathbb R^n)$.

Finally, we remark here that the $\mathcal H^\beta_\M$ are the $Z$ spaces from \cite{FV}. As in that paper, one can show that smooth, compactly supported functions on $(0,\infty)$ are dense in $\mathcal H^\beta_{\mathcal M}$.


\subsection{The Mellin symbol of the fractional Laplacian}

A key point in our discussion is to understand the fractional Laplacian in Mellin variables. We will show that Mellin transform converts it into a multiplication operator with a nicely behaved symbol:

\begin{prop}\label{prop:M-Ds}
For $s\in(0,\frac{n}{2})$, we have
\begin{equation*}\label{eq:M-Ds}
	\cM\set{
		\Ds u(r)
	}(z)
=\Theta_s(z)\cM u(z-2s),
	\qquad z\in\bC,
\end{equation*}
where the Mellin symbol is given by
\begin{equation}\label{eq:M-Ds-sym}
\Theta_{s}(z)
:=
2^{2s}
\dfrac{
	\Gamma\left (
	\frac{
		z
	}{2}
	\right )
}{
	\Gamma\left (
	\frac{
		n-z
	}{2}
	\right )
}
\dfrac{
	\Gamma\left (
	\frac{
		n+2s-z
	}{2}
	\right )
}{
	\Gamma\left (
	\frac{
		z-2s
	}{2}
	\right )
},
	\qquad z\in\bC.
\end{equation}
On any fixed vertical strip,
\begin{equation}\label{symbol-asymptotics}
\Theta_s(z)\asymp |z|^{2s}\quad \text{as}\quad |z|\to\infty.
\end{equation}
In addition, the fractional seminorm is written as
\begin{equation}\label{formula:seminorm}
\begin{split}
\int_{0}^{\infty} |(-\Delta)^{\frac{s}{2}} u|^2r^{n-1}\,dr
= \int_{\mathbb R} \Theta_s(\tfrac{n}{2}+s+i\lambda)|\mathcal Mu(\tfrac{n}{2}-s+i\lambda)|^2\,d\lambda.
\end{split}
\end{equation}
\end{prop}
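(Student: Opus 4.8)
The plan is to read off the Mellin symbol from the Fourier side, where $\Ds$ acts as the multiplier $|\xi|^{2s}$. The only analytic input needed is the classical Fourier transform of a homogeneous function (equivalently, the Mellin transform of a Bessel function): with the normalisation of \eqref{Hankel},
\[
\cF\set{|x|^{w-n}}=\Xi(w)\,|x|^{-w},
	\qquad
\Xi(w):=2^{\,w-\frac n2}\,\frac{\Gamma(w/2)}{\Gamma((n-w)/2)},
\]
valid for $0<\re w<n$ as an identity of functions and for all $w\in\bC$ as tempered distributions. Since $\cF^{-1}=\cF$ on radial functions and $\cF\{\Ds v\}=|\xi|^{2s}\cF v$, applying $\cF$ to $|\xi|^{2s}\Xi(z)|\xi|^{-z}$ gives $\Xi(z)\,\Xi(n-z+2s)\,|x|^{z-2s-n}$; cancelling the powers of $2$ and rearranging the four Gamma factors one checks that $\Xi(z)\,\Xi(n-z+2s)=\Theta_s(z)$, i.e.\ $\Ds\set{|x|^{z-n}}=\Theta_s(z)\,|x|^{z-2s-n}$.

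\textbf{The identity.} To pass from powers to a general $u$ I would test against $|x|^{z-n}$. For $u$ smooth and compactly supported in $(0,\infty)$ (a dense class, cf.\ \Cref{prop:M-elem} and the remark following it), both $\Ds u$ and its pairing with $|x|^{z-n}$ are well behaved, and since $\Ds$ is self-adjoint on $L^2(\R^n)$,
\[
\cH^{n-1}(\bS^{n-1})\,\cM\set{\Ds u}(z)
=\angles{\Ds u,\,|x|^{z-n}}
=\angles{u,\,\Ds|x|^{z-n}}
=\Theta_s(z)\,\cH^{n-1}(\bS^{n-1})\,\cM u(z-2s)
\]
on the strip $0<\re z<n+2s$, where $\cM\{\Ds u\}$ is holomorphic. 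As $\Theta_s$ is meromorphic on $\bC$ and $\cM u(z-2s)$ entire, this extends to $z\in\bC$, and then to general $u$ by density. (One can instead avoid distributional pairings and work directly from the Bessel--Mellin formula $\int_0^\infty J_\nu(x)x^{z-1}\,dx=2^{z-1}\Gamma(\tfrac{\nu+z}2)/\Gamma(\tfrac{\nu-z}2+1)$ together with the rules \eqref{eq:M-trans}--\eqref{eq:M-rescale}, at the cost of tracking strips of absolute convergence and respecting the branch conventions of \Cref{rmk:exp}.) The hypothesis $s<\tfrac n2$ enters only when we later specialise to $\re z=\tfrac n2$, so that no Gamma factor of $\Theta_s$ is singular there.

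\textbf{Asymptotics.} For \eqref{symbol-asymptotics} I would write
\[
\Theta_s(z)=2^{2s}\,\frac{\Gamma(z/2)}{\Gamma(z/2-s)}\cdot\frac{\Gamma((n-z)/2+s)}{\Gamma((n-z)/2)}
\]
and apply the Stirling ratio estimate $\Gamma(w+a)/\Gamma(w)=w^{a}\bigl(1+O(|w|^{-1})\bigr)$, uniform for $w$ in a vertical strip with $|\im w|\to\infty$. This gives $\Theta_s(z)=2^{2s}(z/2)^s((n-z)/2)^s\bigl(1+O(|z|^{-1})\bigr)=\bigl(z(n-z)\bigr)^s\bigl(1+O(|z|^{-1})\bigr)$, so that $|\Theta_s(z)|/|z|^{2s}\to1$ as $|z|\to\infty$ along any fixed vertical strip; in particular $\Theta_s$ has neither poles nor zeros for $|z|$ large in the strip, which is \eqref{symbol-asymptotics}. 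For \eqref{formula:seminorm} I would apply the Mellin--Plancherel identity \eqref{eq:M-isom-1} to the radial function $(-\Delta)^{s/2}u$ and insert the identity just proved, with $s$ replaced by $s/2$:
\[
\int_0^\infty|(-\Delta)^{s/2}u|^2r^{n-1}\,dr
=\int_{\R}\bigl|\cM\set{(-\Delta)^{s/2}u}(\tfrac n2+i\lambda)\bigr|^2\,d\lambda
=\int_{\R}\bigl|\Theta_{s/2}(\tfrac n2+i\lambda)\bigr|^2\,\bigl|\cM u(\tfrac n2-s+i\lambda)\bigr|^2\,d\lambda.
\]
It then remains to check the algebraic identity $|\Theta_{s/2}(\tfrac n2+i\lambda)|^2=\Theta_s(\tfrac n2+s+i\lambda)$: from \eqref{eq:M-Ds-sym} one reads off the reflection $\Theta_t(z)=\Theta_t(n+2t-z)$ and the reality $\overline{\Theta_t(z)}=\Theta_t(\bar z)$, while applying the main identity twice to $\Ds=(-\Delta)^{s/2}(-\Delta)^{s/2}$ forces $\Theta_s(z)=\Theta_{s/2}(z)\,\Theta_{s/2}(z-s)$; evaluating this at $z=\tfrac n2+s+i\lambda$ and using the reflection with $t=s/2$ to write $\Theta_{s/2}(\tfrac n2+s+i\lambda)=\Theta_{s/2}(\tfrac n2-i\lambda)=\overline{\Theta_{s/2}(\tfrac n2+i\lambda)}$ yields the claim.

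\textbf{Main obstacle.} The delicate point is the passage from powers to a general $u$: one must pin down a nonempty vertical strip on which $\cM\{\Ds u\}$ is holomorphic and the self-adjoint pairing (or, in the alternative route, Fubini in the Bessel integral) is legitimate, and only then invoke analytic continuation in $z$ together with the density of smooth functions compactly supported in $(0,\infty)$; the branch-of-exponential subtleties of \Cref{rmk:exp} must also be honoured in any rescaling of the Mellin variable.
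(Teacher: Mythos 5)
Your proposal is correct, but it is organized differently from the paper's proof, and the comparison is instructive. The paper first proves \Cref{lem:MF-M} (the Mellin transform of the Hankel transform, via the Bessel integral \eqref{eq:Bessel}) and then applies it twice to $\Ds u=\cF^{-1}\{\zeta^{2s}\cF u\}$, which forces the restriction $s\in(0,\tfrac12)$ and the strip $\tfrac{n-1}{2}+2s<\re z<\tfrac{n+1}{2}$ in the written argument (the general case being quoted from the literature). You instead let $\Ds$ act on the Mellin kernels $|x|^{z-n}$ through the classical Fourier transform of homogeneous functions --- an input essentially equivalent to \Cref{lem:MF-M} --- and transfer to general $u$ by pairing and symmetry of $\Ds$; this gives the nonempty strip $2s<\re z<n$ for every $s\in(0,\tfrac n2)$, at the cost of having to justify the pairing $\angles{\Ds u,|x|^{z-n}}=\angles{u,\Ds|x|^{z-n}}$ for the non-$L^2$ weight, which you correctly single out as the delicate step (your aside that the homogeneous formula holds ``for all $w\in\bC$ as tempered distributions'' should exclude the exceptional poles, but you only use the function-range identity anyway). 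The asymptotics argument is the same Stirling-ratio computation as in the paper. For \eqref{formula:seminorm}, you follow the paper's alternative proof (Plancherel \eqref{eq:M-isom-1} applied to $(-\Delta)^{s/2}u$), but where the paper checks the doubling identity $\Theta_{s/2}(\tfrac n2+\lambda i)\Theta_{s/2}(\tfrac n2-\lambda i)=\Theta_s(\tfrac n2+s+\lambda i)$ by a direct Gamma-function manipulation, you derive it structurally from the reflection $\Theta_t(z)=\Theta_t(n+2t-z)$, the conjugation symmetry, and the composition law $\Theta_s(z)=\Theta_{s/2}(z)\Theta_{s/2}(z-s)$ coming from $(-\Delta)^s=(-\Delta)^{s/2}(-\Delta)^{s/2}$; all three identities do hold, so this is a clean and slightly more conceptual route to the same formula (the paper's first proof, via \eqref{eq:M-isom-gen} applied to $\int u\,(-\Delta)^su\,r^{n-1}\,dr$, is yet another option you did not need).
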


Before we give the proof of the Proposition, we need to introduce some special formulas. First we write the composition of Mellin transform with the Hankel transform in terms of a Mellin transform. For this purpose, we recall the following formula for the integral of Bessel functions \cite[formula 14 in section 6.561]{Gradshteyn-Ryzhik}
\begin{equation}\label{eq:Bessel}
	\int_{0}^{\infty}
	r^{\mu} J_{\nu}(\zeta r)
	\,dr
	=2^{\mu}\zeta^{-\mu-1}
	\dfrac{
		\Gamma\left(
		\frac{\mu+\nu+1}{2}
		\right)
	}{
		\Gamma\left(
		\frac{-\mu+\nu+1}{2}
		\right)
	},
\end{equation}
for $-{\rm Re\,} \nu -1 < {\rm Re\,} \mu <\frac12$, $\zeta>0$.
\begin{lemma}\label{lem:MF-M}
For $0<{\rm Re\,} z <\frac{n+1}{2}$, there holds
\begin{equation}\label{eq:MF-M}
\begin{split}
\cM \set{
	\cF u(\zeta)
}(z)
=
\cM \set{
	\cF^{-1} u(r)
}(z)
&=
2^{z-\frac{n}{2}}
\frac{
	\Gamma\left (
	\frac{
		z
	}{2}
	\right )
}{
	\Gamma\left (
	\frac{
		n-z
	}{2}
	\right )
}
\cM u(n-z).
		\end{split}
\end{equation}
\end{lemma}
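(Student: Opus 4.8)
The identity to establish is \eqref{eq:MF-M}, namely that composing the (radial) Fourier or inverse Fourier transform with the Mellin transform produces, up to an explicit Gamma-quotient factor, the Mellin transform evaluated at the reflected point $n-z$. My plan is to work entirely in the Hankel picture using the formula \eqref{Hankel}. Since the Fourier transform of a radial function and its inverse differ only by the sign in the oscillatory kernel $e^{\pm i\xi\cdot x}$, and the Hankel transform is insensitive to this sign (the Bessel kernel $J_{(n-2)/2}(\zeta r)$ is the same in both lines of \eqref{Hankel}), the two sides $\cM\{\cF u\}$ and $\cM\{\cF^{-1}u\}$ are manifestly equal, so it suffices to treat one of them, say $\cF^{-1}u(r)=r^{\frac{2-n}{2}}\int_0^\infty J_{\frac{n-2}{2}}(\zeta r)\zeta^{\frac{n-2}{2}}u(\zeta)\zeta\,d\zeta$.

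\textbf{Key steps.} First I would write out $\cM\{\cF^{-1}u\}(z)=\int_0^\infty r^{z-1}\,r^{\frac{2-n}{2}}\Big(\int_0^\infty J_{\frac{n-2}{2}}(\zeta r)\,\zeta^{\frac{n}{2}}u(\zeta)\,d\zeta\Big)\,dr$ and, assuming enough decay/regularity on $u$ so that Fubini applies, interchange the order of integration to get $\int_0^\infty \zeta^{\frac{n}{2}}u(\zeta)\Big(\int_0^\infty r^{z-\frac{n}{2}}J_{\frac{n-2}{2}}(\zeta r)\,dr\Big)d\zeta$. Next, I would evaluate the inner $r$-integral with the Bessel integral formula \eqref{eq:Bessel}, applied with $\mu=z-\frac n2$ and $\nu=\frac{n-2}{2}$: this gives $2^{z-\frac n2}\zeta^{-z+\frac n2-1}\,\dfrac{\Gamma\!\left(\frac{z}{2}\right)}{\Gamma\!\left(\frac{n-z}{2}\right)}$, where one checks that the parameters match since $\frac{\mu+\nu+1}{2}=\frac{z}{2}$ and $\frac{-\mu+\nu+1}{2}=\frac{n-z}{2}$, and the admissibility condition $-\re\nu-1<\re\mu<\frac12$ reads precisely $-\frac{n}{2}<\re z-\frac n2<\frac12$, i.e. $0<\re z<\frac{n+1}{2}$, which is the stated hypothesis. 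Substituting back, I would collect the powers of $\zeta$: $\zeta^{\frac n2}\cdot\zeta^{-z+\frac n2-1}=\zeta^{n-z-1}$, so the outer integral becomes $2^{z-\frac n2}\dfrac{\Gamma\!\left(\frac{z}{2}\right)}{\Gamma\!\left(\frac{n-z}{2}\right)}\int_0^\infty \zeta^{n-z-1}u(\zeta)\,d\zeta=2^{z-\frac n2}\dfrac{\Gamma\!\left(\frac{z}{2}\right)}{\Gamma\!\left(\frac{n-z}{2}\right)}\,\cM u(n-z)$, which is exactly \eqref{eq:MF-M}.

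\textbf{Main obstacle.} The computation above is formal in two places that need care, and I expect the justification of the interchange of integrals to be the main technical point. The double integral $\int_0^\infty\int_0^\infty r^{z-\frac n2}J_{\frac{n-2}{2}}(\zeta r)\,\zeta^{\frac n2}u(\zeta)\,d\zeta\,dr$ is not absolutely convergent for a general admissible $u$ — the Bessel integral \eqref{eq:Bessel} itself converges only conditionally (oscillatory cancellation at $r\to\infty$), so Fubini's theorem does not apply directly. The clean way around this is the standard density argument already available in the paper: prove \eqref{eq:MF-M} first for $u\in C_c^\infty((0,\infty))$, where all integrals are genuinely convergent and all manipulations (including a regularization $\zeta\mapsto \zeta e^{-\delta r^2}$ or a Riemann--Lebesgue type truncation, then $\delta\to0$) are justified; then extend to the general admissible class by continuity, using the density of $C_c^\infty((0,\infty))$ in the relevant Mellin--Sobolev / weighted $L^2$ spaces noted after the definition of $\cH^\beta_{\mathcal M}$, together with the continuity of $\cF^{\pm1}$ and $\cM$ on these spaces. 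A secondary, purely bookkeeping, point is the branch of $2^{z-n/2}$ and $\zeta^{n-z-1}$ for complex $z$: as flagged in \Cref{rmk:exp}, one takes principal branches throughout, and since $z$ ranges over a vertical strip with bounded real part this causes no ambiguity once the decay in $\im z$ is used.
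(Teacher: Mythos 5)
Your proposal is correct and follows essentially the same route as the paper: write the composed transform as a double integral via the Hankel formula \eqref{Hankel}, interchange the order of integration, and evaluate the resulting Bessel integral with \eqref{eq:Bessel} (your parameter check $\mu=z-\tfrac n2$, $\nu=\tfrac{n-2}{2}$ and the admissibility strip $0<\re z<\tfrac{n+1}{2}$ match the paper's computation, which simply performs the Bessel integral in the dual variable). Your additional remark on justifying the interchange by density/regularization is a reasonable refinement of a point the paper leaves implicit.
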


\begin{proof}
In a recent revision of the paper, we have found that this is a result known from 1979 \cite{Ormerod}. We have decided to keep our proof here for convenience of the reader.

First, note that Hankel transform is self-inverse. Using \eqref{eq:Bessel},
\begin{equation*}
\begin{split}
\cM\set{
	\cF u(\zeta)
}(z)
&=\int_0^\infty
	\zeta^{z}
	\widehat u(\zeta)
\,\frac{d\zeta}{\zeta}
=\int_0^\infty
	\left[
		\int_{0}^{\infty}
			\zeta^{z-1}
			\zeta^{-\frac{n-2}{2}}
			J_{\frac{n-2}{2}}(\zeta r)
		\,d\zeta
	\right]
	r^{\frac{n}{2}}
	u(r)
	\,dr\\
&=\int_0^\infty
\left [
	2^{z-\frac{n}{2}}
	r^{-z+\frac{n-2}{2}}
	\frac{
		\Gamma\left (
			\frac{
				z
			}{2}
		\right )
	}{
		\Gamma\left (
			\frac{
				n-z
			}{2}
		\right )
	}
\right ]
r^{\frac{n}{2}}
u(r)
\,dr
=
2^{z-\frac{n}{2}}
\frac{
	\Gamma\left (
	\frac{
		z
	}{2}
	\right )
}{
	\Gamma\left (
	\frac{
		n-z
	}{2}
	\right )
}
\cM u(n-z),
\end{split}
\end{equation*}
as desired.
\end{proof}

\begin{proof}[Proof of Proposition \ref{prop:M-Ds}]
This symbol \eqref{eq:M-Ds-sym} was first calculated in the paper \cite{Frank} for $n=3$ and $s=\frac{1}{2}$ in the context of the intermediate long-wave equation. A general formula for any $s\in(0,\frac{n}{2})$ was given in  \cite{DelaTorre-Gonzalez} using an extension argument to calculate the conformal fractional Laplacian on the cylinder (this is, in terms of Emden-Fowler coordinate $\varrho=-\log r$, recall the discussion in Section \ref{sec:M-vs-F}). A more recent reference, using Mellin transform, is \cite{Pagnini-Runfola}. Here we give a  proof for
 for $s\in(0,\frac{1}{2})$ and $\frac{n-1}{2}+2s<{\rm Re\,} z<\frac{n+1}{2}$.\\

Using \eqref{eq:MF-M} twice,
\begin{equation*}
\begin{split}
\cM\set{
	\cF^{-1}\set{
		\zeta^{2s}
		\cF u(\zeta)
	}(r)
}(z)
&=2^{z-\frac{n}{2}}
\frac{
	\Gamma\left (
	\frac{
		z
	}{2}
	\right )
}{
	\Gamma\left (
	\frac{
		n-z
	}{2}
	\right )
}
\cM \set{
	\zeta^{2s} \cF u(\zeta)
}(n-z)\\
&=2^{z-\frac{n}{2}}
\frac{
	\Gamma\left (
	\frac{
		z
	}{2}
	\right )
}{
	\Gamma\left (
	\frac{
		n-z
	}{2}
	\right )
}
\cM\set{\cF u(\zeta)}(n+2s-z)\\
&=2^{z-\frac{n}{2}}
\frac{
	\Gamma\left (
	\frac{
		z
	}{2}
	\right )
}{
	\Gamma\left (
	\frac{
		n-z
	}{2}
	\right )
}
2^{(n+2s-z)-\frac{n}{2}}
\frac{
	\Gamma\left (
	\frac{
		n+2s-z
	}{2}
	\right )
}{
	\Gamma\left (
	\frac{
		n-(n+2s-z)
	}{2}
	\right )
}
\cM u\bigl(n-(n+2s-z)\bigr)\\
&=2^{2s}
\frac{
	\Gamma\left (
	\frac{
		z
	}{2}
	\right )
}{
	\Gamma\left (
	\frac{
		n-z
	}{2}
	\right )
}
\frac{
	\Gamma\left (
	\frac{
		n+2s-z
	}{2}
	\right )
}{
	\Gamma\left (
	\frac{
		z-2s
	}{2}
	\right )
}
\cM u(z-2s),
\end{split}
\end{equation*}
and this yields the formula for the symbol $\Theta_s(z)$. To show \eqref{symbol-asymptotics}, just use \eqref{asymptotics-gamma0} to see that for $z$ in a bounded vertical strip,
\begin{align*}
2^{2s}
\frac{
	\Gamma\left (
	\frac{
		z
	}{2}
	\right )
}{
	\Gamma\left (
	\frac{
		z-2s
	}{2}
	\right )
}
\frac{
	\Gamma\left (
	\frac{
		n+2s-z
	}{2}
	\right )
}{
	\Gamma\left (
	\frac{
		n-z
	}{2}
	\right )
}
&\sim
2^{2s}
\left(
	\tfrac{z-2s}{2}
\right)^{s}
\left(
	\tfrac{n-z}{2}
\right)^{s}
\sim
(\im z)^s(-\im z)^s
=\abs{\im z}^{2s}
\sim |z|^{2s}.
\end{align*}

Now, for the proof of \eqref{formula:seminorm} simply observe that the isometry property \eqref{eq:M-isom-gen} implies that
\begin{equation}\label{fractional-seminorm}
\begin{split}
\int_{\mathbb R^n} u(-\Delta)^su\,r^{n-1}\,dr
&=\int_{0}^{\infty} \mathcal Mu(\tfrac{n}{2}-s+i\lambda) \cdot\mathcal M\{(-\Delta)^s u\}(\tfrac{n}{2}+s-i\lambda)\,d\lambda\\
&=\int_{\mathbb R} \mathcal Mu(\tfrac{n}{2}-s+i\lambda) \Theta_s(\tfrac{n}{2}+s-i\lambda)\mathcal Mu(\tfrac{n}{2}-s-i\lambda)\,d\lambda\\
&= \int_{\mathbb R} \Theta_s(\tfrac{n}{2}+s+i\lambda)|\mathcal Mu(\tfrac{n}{2}-s+i\lambda)|^2\,d\lambda.
\end{split}
\end{equation}
An alternative proof of \eqref{formula:seminorm} can be given as follows. We start with
\begin{equation}\label{fractional-seminorm1}\begin{split}
\int_{0}^{\infty} |(-\Delta)^{\frac{s}{2}} u|^2r^{n-1}\,dr
&=
\int_{0}^{\infty}
	\Theta_{\frac{s}{2}}(\tfrac{n}{2}+\lambda i)
	\cM u(\tfrac{n}{2}-s+\lambda i)
	\Theta_{\frac{s}{2}}(\tfrac{n}{2}-\lambda i)
	\cM u(\tfrac{n}{2}-s-\lambda i)
\,d\lambda.
\end{split}
\end{equation}
Taking into account that
\begin{equation*}\label{doubling}
\Theta_{\frac{s}{2}}(\tfrac{n}{2}+\lambda i)
\Theta_{\frac{s}{2}}(\tfrac{n}{2}-\lambda i)
=2^s
\frac{
	\Gamma(\frac{n+2\lambda i}{4})
	\Gamma(\frac{n+2s-2\lambda i}{4})
}{
	\Gamma(\frac{n-2\lambda i}{4})
	\Gamma(\frac{n-2s+2\lambda i}{4})
}
\cdot
2^s
\frac{
	\Gamma(\frac{n-2\lambda i}{4})
	\Gamma(\frac{n+2s+2\lambda i}{4})
}{
	\Gamma(\frac{n+2\lambda i}{4})
	\Gamma(\frac{n-2s-2\lambda i}{4})
}
=\Theta_s(\tfrac{n}{2}+s+\lambda i),
\end{equation*}
we recover the well known fact that  \eqref{fractional-seminorm} and \eqref{fractional-seminorm1} yield the same fractional seminorm.

\end{proof}

\section{Functional analysis}\label{section:functional-new}

Here we give the functional analysis framework for the eigenvalue problem \eqref{eigenvalue-problem}, and the precise definition of the function space $\mathds L^2_\dag$.

\subsection{The Fourier approach}

Let us make more precise the change of variable  \eqref{change}.  Let $\widehat u(\zeta)$ be the Fourier/Hankel transform of $u$.  Recalling that $\vartheta=\zeta^s$, we  set
\begin{equation}\label{f-sharp}
u^\sharp(\vartheta )=\widehat{u}(\vartheta^{1/s})=\vartheta^{\frac{2-n}{2s}}\int_0^\infty J_{\frac{n-2}{2}}(\vartheta^{1/s}r)r^{\frac{n}{2}}u(r)\,dr.
\end{equation}
Fix $u^\dag=u^\dag(\rho)$ to be the inverse Fourier/Hankel transform of $u^\sharp$, that is,
\begin{equation*}\label{equation1000}
u^\dag (\rho)
=\rho^{\frac{2-n}{2}}
\int_{0}^{\infty }
	J_{\frac{n-2}{2}}(\vartheta \rho)
	\vartheta^{\frac{n}{2}}
	u^\sharp(\vartheta)
\,d\vartheta.
\end{equation*}
Using again equation \eqref{Hankel} for the Fourier (Hankel) transform of $u$ we can give an integral formula for $u^\dag$. Indeed,
\begin{equation}\label{f-dag}\begin{split}
u^\dag(\rho)
&=\rho^{\frac{2-n}{2}}
\int_0^\infty
	J_{\frac{n-2}{2}}(\vartheta \rho)
	\vartheta^{\frac{n}{2}}
	\left(
		\vartheta^{\frac{2-n}{2s}}
		\int_0^\infty
			J_{\frac{n-2}{2}}(\vartheta^{1/s}\rho_1)
			u(\rho_1)
		\,d\rho_1
	\right)
\,d\vartheta\\
&=
\int_0^\infty
	A_s(\rho,\rho_1)
	u(\rho_1)
\,d\rho_1,
\end{split}
\end{equation}
where we have defined the kernel
\begin{align*}
A_s(\rho,\rho_1)
:=\rho^{\frac{2-n}{2}}
\rho_1^{\frac{n}{2}}
\int_0^\infty
	J_{\frac{n-2}{2}}(\vartheta \rho)
	J_{\frac{n-2}{2}}(\vartheta^{1/s}\rho_1)
	\vartheta^{\frac{n}{2}-\frac{n-2}{2s}}
\,d\vartheta.
\end{align*}
Note that the integral above is convergent as $\vartheta\to\infty$. Indeed, looking at the asymptotics of the Bessel function, the asymptotic behavior of the integrand (for fixed $\rho,\rho_1$) is given by
\begin{equation*}
J_{\frac{n-2}{2}}(\vartheta\rho)J_{\frac{n-2}{2}}(\vartheta^{1/s}\rho_1)
	\vartheta^{\frac{n}{2}-\frac{n-2}{2s}}
=  
\vartheta^{-\frac{n-1}{2s}(1-s)}
\left(
	\cos\left(\vartheta \rho-\tfrac{n-1}{4}\pi\right)
	\cos\left(\vartheta^{1/s}\rho_1-\tfrac{n-1}{4}\pi\right)
	+O\left(\vartheta^{-1}\right)
\right),
\end{equation*}
as $\vartheta\to\infty$, and the oscillating part is
\begin{multline*}
\int_{1}^{\infty}
	\tau^{-\frac{n-1}{2}(1-s)}
	\cos\left(\tau^s\rho-\tfrac{n-1}{4}\pi\right)
	\cos\left(\tau\rho_1-\tfrac{n-1}{4}\pi\right)
	\cdot s\tau^{s-1}\,d\tau
\\
=\frac{s}{2}
\int_{1}^{\infty}
	\tau^{-\frac{n+1}{2}(1-s)}
\,d\left(
	\frac{\cos(\tau^s\rho-\tau\rho_1)}{s\tau^{s-1}\rho-\rho_1}
	+\frac{\cos(\tau^s \rho+\tau\rho_1-\frac{n-1}{2}\pi)}{s\tau^{s-1}\rho+\rho_1}
\right),
\end{multline*}
which is integrable (after an integration by parts).
%

\medskip

The crucial observation is encoded in the following Lemma (the function spaces are indicated below):

\begin{lem}\label{lem:Ls-L1}
If $u$ is a solution to
\begin{equation}\label{eq:100}
L_su=f,\quad u=u(r),\, r>0,
\end{equation}
then $u^\dag$ is a solution to
\begin{equation}\label{eq:101}
L_1 u^\dag =f^\dag, \quad u^\dag=u^\dag(\rho),\, \rho>0.
\end{equation}
\end{lem}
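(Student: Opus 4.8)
The statement claims that the ``dagger'' transformation $u \mapsto u^\dag$ intertwines $L_s$ with $L_1$. The cleanest route is to work entirely on the Fourier side, where both operators become first-order ODEs in the radial Fourier variable. Recall from \eqref{hat-e-nu} and the surrounding discussion that, for radial functions, $L_s$ acts in Fourier variables as $\widehat{L_s u}(\zeta) = \zeta^{2s}\widehat u(\zeta) + \frac{1}{2s}\zeta\,\partial_\zeta \widehat u(\zeta)$ (the zeroth-order term $-\frac{n}{2s}u$ is already folded into this normalization; if one keeps it explicitly it simply shifts both sides of the final identity by the same constant multiple of $u$, so it is harmless). Thus the plan is: (i) rewrite \eqref{eq:100} on the Fourier/Hankel side as $\zeta^{2s}\widehat u(\zeta) + \frac{1}{2s}\zeta\,\partial_\zeta\widehat u(\zeta) = \widehat f(\zeta)$; (ii) perform the substitution $\vartheta = \zeta^s$, equivalently $\zeta = \vartheta^{1/s}$, and compute how the differential operator transforms; (iii) recognize the result as the Fourier-side form of $L_1$ applied to $u^\dag$.

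For step (ii), the key computation is the chain rule: if $g(\vartheta) := \widehat u(\vartheta^{1/s})$, then $\partial_\vartheta g(\vartheta) = \frac{1}{s}\vartheta^{1/s - 1}\,(\partial_\zeta \widehat u)(\vartheta^{1/s})$, so that $\vartheta\,\partial_\vartheta g(\vartheta) = \frac{1}{s}\vartheta^{1/s}\,(\partial_\zeta\widehat u)(\vartheta^{1/s}) = \frac{1}{s}\,\zeta\,(\partial_\zeta\widehat u)(\zeta)$ with $\zeta = \vartheta^{1/s}$. Also $\zeta^{2s} = \vartheta^2$. Therefore, evaluating \eqref{eq:100} in Fourier variables at $\zeta = \vartheta^{1/s}$ and substituting,
\begin{equation*}
\vartheta^2 g(\vartheta) + \frac{1}{2s}\cdot s \cdot \bigl(\vartheta\,\partial_\vartheta g(\vartheta)\bigr) = \vartheta^2 g(\vartheta) + \frac{1}{2}\vartheta\,\partial_\vartheta g(\vartheta) = \widehat f(\vartheta^{1/s}).
\end{equation*}
Since $g(\vartheta) = u^\sharp(\vartheta) = \widehat{u^\dag}(\vartheta)$ by the very definition \eqref{f-sharp} of $u^\sharp$ and $u^\dag$ (the latter is the inverse Hankel transform of the former, so $\widehat{u^\dag} = u^\sharp$), and likewise $\widehat f(\vartheta^{1/s}) = f^\sharp(\vartheta) = \widehat{f^\dag}(\vartheta)$, this is precisely $\zeta^2\widehat{u^\dag}(\zeta) + \frac{1}{2}\zeta\,\partial_\zeta\widehat{u^\dag}(\zeta) = \widehat{f^\dag}(\zeta)$ (renaming $\vartheta$ back to $\zeta$), which is the Fourier-side statement of $L_1 u^\dag = f^\dag$. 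Inverting the Hankel transform gives \eqref{eq:101}.

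\textbf{Main obstacle.} The conceptual content is short; the real work is in justifying the formal manipulations, i.e.\ specifying the function spaces so that all the transforms (Hankel transform of $u$, the substitution, the inverse Hankel transform) are well-defined and the ODE in Fourier variables genuinely holds. In particular one must check that $\widehat u$ is differentiable in $\zeta$ in the appropriate sense, that the boundary terms arising when one passes between $r\partial_r$ in physical space and the multiplication/differentiation in $\zeta$ do not contribute (this is where radial symmetry and decay of $u$ matter, and why the hypotheses on $\mathds L^2_\dag$ and the convergence of the kernel integral for $A_s$ discussed just above the lemma are needed), and that the substitution $\vartheta = \zeta^s$ — a smooth orientation-preserving diffeomorphism of $(0,\infty)$ — commutes with the $\zeta$-derivative as claimed, with the principal-branch caveat from \Cref{rmk:exp} in force. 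One should also record that the zeroth-order term, if written explicitly, transforms as $-\frac{n}{2s}u \mapsto$ (a term that must match $-\frac n2 u^\dag$ on the $L_1$ side); checking the constants there is the one place a sign or factor of $s$ could slip. Modulo these routine but necessary verifications — most of which are inherited from the Mellin/Fourier background in Section~\ref{section:transforms} — the proof is the one-line chain-rule computation above.
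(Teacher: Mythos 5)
Your proof is correct and follows essentially the same route as the paper's: take the Fourier/Hankel transform so that $L_s$ becomes $\zeta^{2s}\widehat u+\tfrac{1}{2s}\zeta\partial_\zeta\widehat u$, change variable $\vartheta=\zeta^s$ to reduce to the $s=1$ symbol, and invert the transform using $\widehat{u^\dag}=u^\sharp$. The chain-rule computation and the cancellation of the zeroth-order term match the paper's argument, so no further comparison is needed.
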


\begin{proof}
Taking Fourier transform of \eqref{eq:100} yields
\begin{equation*}
\zeta^{2s}\widehat u+\tfrac{1}{2s}\zeta\partial_\zeta \widehat u=\widehat f,\quad \widehat u=\widehat u(\zeta),\, \zeta>0.
\end{equation*}
The change of variable $\vartheta=\zeta^s$  reduces this equation to a local one
\begin{equation*}
\vartheta^{2}u^\sharp+\tfrac{1}{2}\vartheta\partial_\vartheta u^\sharp= f^\sharp,\quad  u^\sharp= u^\sharp(\vartheta),\, \vartheta>0.
\end{equation*}
Now we Fourier back to arrive to
\begin{equation*}
-\Delta u^\dag-\tfrac{1}{2}\rho \partial_\rho u^\dag-\tfrac{n}{2}u^\dag=f^\dag, \quad u^\dag=u^\dag(\rho),\,\rho>0,
\end{equation*}
which is precisely \eqref{eq:101}.
\end{proof}

The relevant function spaces for equation \eqref{eq:101} are $L^2({\rm exp}):=L_r^2(\mathbb R^n,e^{\frac{\rho^2}{4}})$ and $H^1({\rm exp}):=H^1_r(\mathbb R^n,e^{\frac{\rho^2}{4}})$. It is well known that we have compactness so that the spectral theorem can be applied
\cite{Kavian,Escobedo-Kavian}.

Now we translate those spaces to our setting. The space $L^2({\rm exp})$ is equipped with the norm
\begin{equation*}
\|u\|^2_{L^2({\rm exp})}=\int_0^\infty |u(\rho)|^2 e^{\frac{\rho^2}{4}} \rho^{n-1}\,d\rho.
\end{equation*}
In the light of \eqref{simple-relation} it is natural to define, for each $s\in(0,1)$,  the Hilbert space $\mathds L^2_\dag:=\mathds L^2_\dag(s)$ with the norm induced by the scalar product
\begin{equation}\label{defi:scalar-product-dag}\begin{split}
\angles{u_1,u_2}_\dag
=\int_{0}^{\infty}
	u_1^{\dagger}(\rho)
	\overline{u_2^{\dagger}(\rho)}
	e^{\frac{\rho^{2}}{4}}
	\rho^{n-1}
\,d\rho,
	\quad \text{i.e.} \quad
\norm[\mathds L^2_\dag]{u}^2
:=\int_{0}^{\infty}
	\abs{u^\dag(\rho)}^2
	e^{\frac{\rho^2}{4}}
	\rho^{n-1}
\,d\rho,
\end{split}\end{equation}
and similarly the (energy) seminorm for $\ell\in \bN$ by
\begin{equation*}\label{defi:seminorm}
\|u\|^2_{\dot{\mathds H}^\ell_\dag}
:=\int_0^\infty
	|\partial_\rho^\ell u^\dag(\rho)|^2 e^{\frac{\rho^2}{4}}
	\rho^{n-1}
\,d\rho,
\end{equation*}
and the energy space $\mathds H^\ell_\dag:p=\mathds H^\ell_\dag(s)$ by
\begin{equation*}
\|u\|^2_{{\mathds H^\ell_\dag}}=\|u\|^2_{{\mathds L^2_\dag}}+\|u\|^2_{\dot{\mathds H}^\ell_\dag}.
\end{equation*}
Then, automatically, if $f\in \mathds L^2_\dag$, then $u\in \mathds H^1_\dag$. Moreover, compactness for the operator $L_s$ is also inherited from the local case.

\begin{remark}\label{remark:interpret}
We may observe that
\begin{equation*}
\int_0^\infty |u^\sharp(\vartheta)|^2\vartheta^{n-1}\,d\vartheta=s\int_0^\infty |\widehat u(\zeta)|^2 \zeta^{sn-1}\,d\zeta,
\end{equation*}
so that the transformation $u^\dag$ can be interpreted as a differentiation of order $\frac{sn-n}{2}$ in $\mathbb R^n$ (i.e., fractional integration of order $\frac{n-sn}{2}$).
\end{remark}

Nevertheless, the Fourier approach gives little information on how the space $\mathds L^2_\dag$ looks like. Instead, we will rewrite them using Mellin transform.

\subsection{The ``isometry" $\Phi_s$}\label{subsection:multiplier}

Consider the  Mellin multiplier $\Lambda_s$ defined by
\begin{equation}\label{multiplier-Lambda}
\Lambda_s(z)
:=2^{z}
\frac{
	\Gamma\left (
		\frac{
			z
		}{2}
	\right )
}{
	\Gamma\left (
		\frac{
			n-z
		}{2}
	\right )
}
2^{-\frac{z}{s}}
\frac{\Gamma(\tfrac{n-z}{2s})}{	\Gamma(\tfrac{n}{2}-\tfrac{n}{2s}+\tfrac{z}{2s})}.
\end{equation}
Observe that it  simplifies to $\Lambda_1(z)\equiv 1$ in the limit $s\to 1$.

We first show that it admits an inverse Mellin transform on certain vertical line.

\begin{lem}[Asymptotics of multiplier]
\label{lem:asymp-mult}
As $\lambda \to \pm\infty$,
\begin{equation}\label{eq:multiplier-asymp}
|\Lambda_s(\sigma+\lambda i)|
=2^{-(\frac1s-1)n}
	s^{\frac{n}{2}-\frac{n-\sigma}{s}}
	|\lambda|^{(\frac1s-1)(n-\sigma)}
	(1+O(|\lambda|^{-1})).
\end{equation}
In particular, when $\sigma > n+\frac{2s}{1-s}$ and $\sigma \notin n+2s\bN$, $\cM_{\sigma}^{-1}\Lambda_s(r)$ is well defined.
\end{lem}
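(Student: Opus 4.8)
The strategy is to reduce the asymptotics of $\Lambda_s$ to the standard asymptotic expansion of ratios of Gamma functions along vertical lines, and then to apply the Mellin inversion theorem (\Cref{thm:M-inv}) on a suitable line. First I would recall the classical estimate: for fixed real $a,b$ and $|\lambda|\to\infty$,
\[
\frac{\Gamma(\tfrac{\sigma-a}{2}+\tfrac{\lambda}{2}i)}{\Gamma(\tfrac{\sigma-b}{2}+\tfrac{\lambda}{2}i)}
= \left(\tfrac{\lambda}{2}i\right)^{\frac{b-a}{2}}(1+O(|\lambda|^{-1})),
\]
which follows from Stirling's formula (this is essentially \eqref{asymptotics-gamma0} as already invoked in the proof of \Cref{prop:M-Ds}). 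Writing $\Lambda_s(z)$ as in \eqref{multiplier-Lambda}, the four Gamma factors pair up naturally: $\Gamma(\tfrac z2)/\Gamma(\tfrac{n}{2}-\tfrac{n}{2s}+\tfrac{z}{2s})$ contributes one power, $\Gamma(\tfrac{n-z}{2s})/\Gamma(\tfrac{n-z}{2})$ contributes another. Tracking the exponents: the first ratio behaves like $|\lambda|^{\frac{1}{2}-\frac{1}{2s}}$ times a unimodular factor (up to the $2^z 2^{-z/s}$ prefactor), and the second like $|\lambda|^{\frac{n-\sigma}{2s}-\frac{n-\sigma}{2}}$; combining, and being careful to take absolute values so the $(\lambda i/2)^{\cdots}$ factors become $|\lambda/2|^{\cdots}$, one collects the total power $(\frac1s-1)(n-\sigma)$ and the constant $2^{-(\frac1s-1)n} s^{\frac n2 - \frac{n-\sigma}{s}}$ after evaluating the powers of $2$ and the powers of $\tfrac12$ at the matching rate. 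This gives \eqref{eq:multiplier-asymp}.

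Once \eqref{eq:multiplier-asymp} is established, the inversion claim is almost immediate: $|\Lambda_s(\sigma+\lambda i)|$ grows like $|\lambda|^{(\frac1s-1)(n-\sigma)}$, which is a negative power precisely when $(\frac1s-1)(n-\sigma)<-2$, i.e. $n-\sigma < -\frac{2s}{1-s}$, i.e. $\sigma > n + \frac{2s}{1-s}$. Under this condition $|\Lambda_s(\sigma+\lambda i)| \leq C|\sigma+\lambda i|^{-2}$ on the line $\re z=\sigma$, and $\Lambda_s$ is holomorphic in a neighborhood of that line — the poles of the numerator Gamma functions $\Gamma(\tfrac z2)$ and $\Gamma(\tfrac{n-z}{2s})$ sit at $z\in -2\bN$ and $z \in n+2s\bN$ respectively, both avoided when $\sigma > n+\frac{2s}{1-s}$ and $\sigma\notin n+2s\bN$ (the zeros of the denominator Gammas only help). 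So $\Lambda_s$ meets the hypotheses of \Cref{thm:M-inv} in a thin vertical strip around $\re z=\sigma$, and $\cM_\sigma^{-1}\Lambda_s(r)$ is well defined by the contour integral \eqref{eq:M-inv}.

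The main obstacle is purely bookkeeping: correctly matching the powers of $2$ (note the $2^z 2^{-z/s}$ prefactor, which on $\re z = \sigma$ carries modulus $2^{\sigma(1-1/s)} = 2^{-(\frac1s-1)\sigma}$, to be combined with the $2^{\mp(\cdots)}$ coming from the $(\lambda i/2)^{\cdots}$ factors) and the branch/argument conventions so that the final constant comes out exactly as $2^{-(\frac1s-1)n} s^{\frac n2 - \frac{n-\sigma}{s}}$, and in particular checking the consistency limit $s\to 1$ where $\Lambda_1 \equiv 1$ forces the exponent and constant to degenerate to $0$ and $1$ respectively (here $\tfrac1s-1\to 0$ kills the $|\lambda|$-power, and $s^{\frac n2-\frac{n-\sigma}{s}}\to 1$). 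I would double-check the sign of the exponent $\frac n2 - \frac{n-\sigma}{s}$ on $s$ against the two Gamma-ratio computations rather than trusting a single pass. No genuinely hard analytic step is involved beyond Stirling's formula, which is already in use in the paper.
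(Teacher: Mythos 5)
Your overall strategy is the same as the paper's: Stirling-type asymptotics for the Gamma factors along the vertical line $\re z=\sigma$ (i.e.\ \eqref{estimate-Gamma-2}--\eqref{estimate-Gamma-2s}), followed by bookkeeping of the constants, and then a direct verification of the hypotheses of \Cref{thm:M-inv} for the ``in particular'' claim. Your treatment of that last claim is in fact more explicit than the paper's (which stops after deriving \eqref{eq:multiplier-asymp}): you correctly locate the poles of $\Lambda_s$ at $z\in-2\bN$ and $z\in n+2s\bN$, and correctly translate the decay requirement $|\Lambda_s(\sigma+\lambda i)|\lesssim|\lambda|^{-2}$ into $(\tfrac1s-1)(n-\sigma)<-2$, i.e.\ $\sigma>n+\tfrac{2s}{1-s}$.

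However, the intermediate step of your asymptotic computation, as written, would fail. The ratio formula you quote requires the numerator and denominator Gammas to have the \emph{same} argument $\tfrac{\lambda}{2}i$ up to a fixed real shift, but in your pairing $\Gamma(\tfrac z2)/\Gamma(\tfrac n2-\tfrac n{2s}+\tfrac z{2s})$ the imaginary parts are $\tfrac\lambda2$ and $\tfrac\lambda{2s}$, which grow at different rates; each of your two ratios therefore carries an exponential factor $e^{\pm\frac{\pi}{4}(\frac1s-1)|\lambda|}$ (coming from the $e^{-\frac\pi4|\lambda|}$ versus $e^{-\frac{\pi}{4s}|\lambda|}$ decay of the respective Gammas), not a unimodular one, and these exponentials only cancel in the product of the two ratios. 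Relatedly, your stated exponents do not add up: $\bigl(\tfrac12-\tfrac1{2s}\bigr)+\bigl(\tfrac{n-\sigma}{2s}-\tfrac{n-\sigma}{2}\bigr)=\tfrac{1-s}{2s}(n-\sigma-1)$, which is not the claimed $(\tfrac1s-1)(n-\sigma)$. The repair is exactly what the paper does: estimate the moduli of the four Gamma factors separately via \eqref{estimate-Gamma-2}--\eqref{estimate-Gamma-2s} (equivalently, pair factors with the same scale, $\Gamma(\tfrac z2)/\Gamma(\tfrac{n-z}{2})$ and $\Gamma(\tfrac{n-z}{2s})/\Gamma(\tfrac n2-\tfrac n{2s}+\tfrac z{2s})$, so all exponentials cancel), and then collect the powers of $|\lambda|$, $2$ and $s$ together with the prefactor $|2^{z}2^{-z/s}|=2^{\sigma(1-\frac1s)}$; this yields the exponent $(\tfrac1s-1)(n-\sigma)$ and the constant $2^{-(\frac1s-1)n}s^{\frac n2-\frac{n-\sigma}{s}}$ of \eqref{eq:multiplier-asymp}.
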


\begin{proof}
Using the asymptotics of the Gamma function from \eqref{estimate-Gamma-2}--\eqref{estimate-Gamma-2s}, we see that as for $\sigma,\lambda\in\R$ with $|\lambda|\to\infty$,
\begin{align*}
|\Lambda_s(\sigma+\lambda i)|
&=2^{\sigma-\frac{\sigma}{s}}
\dfrac{
	\sqrt{2\pi}e^{-\frac{\pi}{4}|\lambda|}
	\left(\frac{|\lambda|}{2}\right)^{\frac{\sigma}{2}-\frac12}
	\cdot
	\sqrt{2\pi}e^{-\frac{\pi}{4s}|\lambda|}
	\left(\frac{|\lambda|}{2s}\right)^{\frac{n-\sigma}{2s}-\frac12}	
}{
	\sqrt{2\pi}e^{-\frac{\pi}{4}|\lambda|}
	\left(\frac{|\lambda|}{2}\right)^{\frac{n-\sigma}{2}-\frac12}
	\cdot
	\sqrt{2\pi}e^{-\frac{\pi}{4s}|\lambda|}
	\left(\frac{|\lambda|}{2s}\right)^{\frac{n}{2}-\frac{n-\sigma}{2s}-\frac12}
}
(1+O(|\lambda|^{-1})).
\end{align*}
This simplifies to \eqref{eq:multiplier-asymp}.
\end{proof}


We are actually more interested in the multiplier $\Lambda_s^{-1}(z)$, which has poles at the points
\begin{equation}\label{poles-Lambda}
z_j=n+2j, \,j\in\mathbb N, \quad\text{and}\quad z'_\ell=n(-s+1)-2s\ell,\,  \ell\in\mathbb N.
\end{equation}
 Thus, if $u$ is, for instance, a smooth function with compact support outside the origin (which implies that $\mathcal M u(z)$ is holomorphic on the whole complex plane), then one may define $\Phi_s^{-1}u$ by
	\begin{equation*}
		\cM\set{
			\Phi_s^{-1} u(r)
		}(z)
		=\Lambda_s^{-1}(z)
		\cM u(z),
	\end{equation*}
and then take inverse Mellin on any vertical line in the region $\{n(1-s)<\Real z<n\}$. For non-smooth functions, $\Phi^{-1}_s u$ is defined by approximation (this argument will be made precise later). Note that $\Phi_s u$ may be introduced in a similar manner.

In the following Lemma we will show that  $\Phi_s$ can be seen as  a (well behaved) differentiation operator in Mellin--Sobolev spaces:

\begin{lemma}\label{lemma:isometry}
Let $\beta\geq 0$. Then $\Phi_s:\cH_{\mathcal M}^{\beta+\frac{n}{2s}-\frac{n}{2}}\to \cH_{\mathcal M}^{\beta}$ is a quasi-isometry, namely,
\begin{equation*}
\|\Phi_s  u\|^2_{ \cH_{\mathcal M}^\beta}\asymp\|  u\|^2_{ \cH_{\mathcal M}^{\beta+\frac{n}{2s}-\frac{n}{2}}},
	\qquad \text{for all } u\in  \cH_{\mathcal M}^{\beta+\frac{n}{2s}-\frac{n}{2}},
\end{equation*}
where the implicit constants depend only on $n$, $s$ and $\beta$.
\end{lemma}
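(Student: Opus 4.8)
The plan is to reduce the claimed quasi-isometry to the already-established asymptotics of the multiplier $\Lambda_s$ on vertical lines, together with the Plancherel identity for the Mellin transform. First I would recall that by definition of $\Phi_s$ one has, in Mellin variables,
\[
\cM\set{\Phi_s u(r)}(z) = \Lambda_s(z)\,\cM u(z),
\]
so that on the vertical line $\re z = \tfrac{n}{2}$, writing $z = \tfrac{n}{2}+\lambda i$, the norm of $\Phi_s u$ in $\cH_{\mathcal M}^\beta$ is
\[
\|\Phi_s u\|^2_{\cH_{\mathcal M}^\beta}
=\int_{\R}(1+|\lambda|^2)^\beta\,|\Lambda_s(\tfrac{n}{2}+\lambda i)|^2\,|\cM u(\tfrac{n}{2}+\lambda i)|^2\,d\lambda.
\]
Thus the statement will follow once we know that the weight $(1+|\lambda|^2)^\beta|\Lambda_s(\tfrac{n}{2}+\lambda i)|^2$ is comparable, uniformly in $\lambda\in\R$, to $(1+|\lambda|^2)^{\beta+\frac{n}{2s}-\frac{n}{2}}$, i.e. that
\[
|\Lambda_s(\tfrac{n}{2}+\lambda i)|^2 \asymp (1+|\lambda|^2)^{\frac{n}{2s}-\frac{n}{2}}
\qquad\text{uniformly in }\lambda\in\R.
\]

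To get this, I would apply \Cref{lem:asymp-mult} with $\sigma = \tfrac{n}{2}$: the lemma gives
\[
|\Lambda_s(\tfrac{n}{2}+\lambda i)| = C_{n,s}\,|\lambda|^{(\frac1s-1)\frac{n}{2}}\,(1+O(|\lambda|^{-1}))
\qquad\text{as }|\lambda|\to\infty,
\]
and since $(\tfrac1s-1)\tfrac{n}{2} = \tfrac{n}{2s}-\tfrac{n}{2}$, this matches the desired power exactly for large $|\lambda|$; the error term $(1+O(|\lambda|^{-1}))$ is bounded above and below by positive constants once $|\lambda|$ is large enough, say $|\lambda|\geq M$. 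For the bounded range $|\lambda|\leq M$, I would note that $\Lambda_s(z)$ is a ratio of Gamma functions that is finite and nonvanishing on the line $\re z = \tfrac{n}{2}$ — one must check that none of the poles or zeros in \eqref{poles-Lambda} (which sit at $\re z = n$ or $\re z = n(1-s)$, both distinct from $\tfrac{n}{2}$ for $s\in(0,1)$, $n\geq 2$) lie on this line, and that the relevant Gamma arguments $\tfrac{z}{2}, \tfrac{n-z}{2}, \tfrac{n-z}{2s}, \tfrac{n}{2}-\tfrac{n}{2s}+\tfrac{z}{2s}$ avoid the non-positive integers there — so $\lambda\mapsto|\Lambda_s(\tfrac{n}{2}+\lambda i)|$ is a continuous, strictly positive function on the compact set $[-M,M]$, hence bounded above and below by positive constants there, and the same is trivially true of $(1+|\lambda|^2)^{\frac{n}{2s}-\frac{n}{2}}$. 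Combining the two ranges yields the two-sided bound with constants depending only on $n$, $s$ (and, after multiplying through by $(1+|\lambda|^2)^\beta$, on $\beta$ as well), which is the asserted quasi-isometry.

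Finally, the identity defining $\Phi_s$ in Mellin variables was initially stated only for nice functions (smooth, compactly supported away from the origin), so I would close by remarking that, since such functions are dense in $\cH_{\mathcal M}^{\beta+\frac{n}{2s}-\frac{n}{2}}$ (as noted in the excerpt, these are the $Z$-spaces of \cite{FV}), the two-sided bound just established shows $\Phi_s$ extends to a bounded operator with bounded inverse between the stated spaces, and the norm equivalence persists on the whole space by density. The main obstacle — really the only nontrivial point — is the verification that $\Lambda_s$ has neither a pole nor a zero on the critical line $\re z=\tfrac{n}{2}$, so that the compact-range estimate is genuinely a statement about a continuous positive function; everything else is bookkeeping with \Cref{lem:asymp-mult} and Plancherel \eqref{eq:M-isom-1}.
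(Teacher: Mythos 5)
Your proposal follows the same route as the paper: express the $\cH^\beta_{\cM}$ norm of $\Phi_s u$ through the multiplier relation $\cM\{\Phi_s u\}=\Lambda_s\,\cM u$ on the line $\re z=\tfrac n2$ and invoke the asymptotics of \Cref{lem:asymp-mult} at $\sigma=\tfrac n2$. You are in fact more careful than the paper, which passes directly from the $|\lambda|\to\infty$ asymptotics to the equivalence on all of $\R$; your instinct that the bounded range $|\lambda|\le M$, i.e.\ the non-vanishing of $\Lambda_s$ on the critical line, is ``the only nontrivial point'' is exactly right.

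However, your verification of that point is not correct as written, and this is a genuine gap. The poles of $\Lambda_s^{-1}$ (equivalently the zeros of $\Lambda_s$) listed in \eqref{poles-Lambda} are isolated \emph{real points} $z_j=n+2j$ and $z'_\ell=n(1-s)-2s\ell$, $j,\ell\in\bN$, not the two vertical lines $\re z=n$ and $\re z=n(1-s)$; since the line $\re z=\tfrac n2$ meets the real axis only at $z=\tfrac n2$, the question is whether $\tfrac n2$ belongs to one of these families. The family $n+2j\ge n$ never does, and the poles $-2j$ and $n+2s\ell$ of $\Lambda_s$ never do either, but $n(1-s)-2s\ell=\tfrac n2$ occurs precisely when $s=\tfrac{n}{2(n+2\ell)}$ for some $\ell\in\bN$ (for instance $s=\tfrac12$ for any $n$, or $n=2$, $s=\tfrac14$); note in particular that your claim ``$n(1-s)$ is distinct from $\tfrac n2$ for all $s\in(0,1)$'' already fails at $s=\tfrac12$. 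At such exceptional $(n,s)$ one has $\Gamma(\tfrac n2-\tfrac{n}{4s})=\infty$ in the denominator of \eqref{multiplier-Lambda}, so $\Lambda_s(\tfrac n2)=0$ and $|\Lambda_s(\tfrac n2+\lambda i)|$ vanishes (to first order) at $\lambda=0$; then the lower bound in the asserted equivalence degenerates for functions whose Mellin transform concentrates near $\lambda=0$, so the two-sided pointwise comparison you reduce to simply does not hold there. Your argument (and the compact-range continuity/positivity step) is complete for all non-exceptional $(n,s)$, but you must either exclude the values $s=\tfrac{n}{2(n+2\ell)}$, $\ell\in\bN$, or treat them separately; the paper's own proof is silent on this issue, so it is worth flagging rather than assuming it away. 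The closing density remark extending the bounds from smooth compactly supported functions to the whole space is fine.
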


\begin{remark}
There are other possible normalizations, which involve different choices of $\Lambda_s$.
We have chosen this particular one due to the simplicity of relation \eqref{simple-relation}, even if other choices would also yield a quasi-isometry $\cH_{\mathcal M}^{\beta'}\to \cH_{\mathcal M}^\beta$.
\end{remark}

\begin{proof}[Proof of Lemma \ref{lemma:isometry}]
Using \eqref{eq:multiplier-asymp},
we see that for 
$\lambda\in\mathbb R$,
\begin{equation*}
|{\Lambda}_s(\tfrac{n}{2}+\lambda i)|\asymp  |\lambda|^{\frac{n}{2s}-\frac{n}{2}} \quad\text{as}\quad |\lambda|\to\infty.
\end{equation*}
Then
\begin{equation*}
		\begin{split}
			\|\Phi_s  u\|^2_{ \cH_{\mathcal M}^\beta}
			&=\int_{\R}
				(1+|\lambda|^2)^{\beta}
				\abs{
					\cM\set{\Phi_s u}(\tfrac{n}{2}+\lambda i)
				}^2
			\,d\lambda\\
          &=
 			\int_{\R}
				(1+|\lambda|^2)^{\beta}|\Lambda_s(\tfrac{n}{2}+\lambda i)|^2
				\abs{
					\cM u(\tfrac{n}{2}+\lambda i)
				}^2
			\,d\lambda\\
			&\asymp
			\int_{\R}
				(1+|\lambda|^2)^{\beta+\frac{n}{2s}-\frac{n}{2}}
				\abs{
					\cM u(\tfrac{n}{2}+\lambda i)
				}^2
			\,d\lambda
			=
			\| u\|^2_{ \cH_{\mathcal M}^{\beta+\frac{n}{2s}-\frac{n}{2}}},
		\end{split}
	\end{equation*}
	as desired.
\end{proof}

\medskip

Now we can give a precise interpretation of the isometry $\Phi_s$. Indeed, it encodes the change of variable \eqref{change} from the Mellin point of view:

\begin{lemma} \label{lemma:equivalence-new}
It holds
\begin{equation}\label{simple-relation}
\Phi_s^{-1}u(r)=\left (\tfrac{r^{2s}}{4}\right )^{\frac{n}{2}-\frac{n}{2s}}u^\dag(r^s).
\end{equation}
\end{lemma}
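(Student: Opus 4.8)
The identity \eqref{simple-relation} is essentially a bookkeeping statement: it asserts that the abstract Mellin multiplier $\Lambda_s^{-1}$ acting on $u$ produces exactly the explicit composition-with-monomial transformation appearing on the right-hand side. Since both sides are defined (for smooth, compactly supported $u$ away from the origin) through their Mellin transforms, the strategy is to compute $\cM$ of both sides and verify they agree on a suitable vertical line, then invoke the uniqueness statement \Cref{cor:uniq}. On the left, by definition $\cM\{\Phi_s^{-1}u\}(z)=\Lambda_s^{-1}(z)\,\cM u(z)$. On the right, I will repeatedly apply the elementary Mellin rules from \Cref{prop:M-elem}: the monomial-multiplication rule \eqref{eq:M-trans} to absorb the prefactor $\left(\tfrac{r^{2s}}{4}\right)^{\frac{n}{2}-\frac{n}{2s}}=4^{-(\frac n2-\frac n{2s})}r^{2s(\frac n2-\frac n{2s})}=4^{-(\frac n2-\frac n{2s})}r^{sn-n}$, and the composition-with-monomial rule \eqref{eq:M-rescale} to handle $\rho\mapsto r^s$ inside $u^\dag$. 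Combining these gives $\cM\{\left(\tfrac{r^{2s}}{4}\right)^{\frac n2-\frac n{2s}}u^\dag(r^s)\}(z)$ as a rescaling and shift of $\cM u^\dag$, with the constant $4^{-(\frac n2-\frac n{2s})}$ tracked carefully.

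\textbf{Computing $\cM u^\dag$.} The remaining ingredient is to express $\cM u^\dag$ in terms of $\cM u$. Recall $u^\dag=\cF^{-1}\{u^\sharp\}$ with $u^\sharp(\vartheta)=\widehat u(\vartheta^{1/s})$. Applying \Cref{lem:MF-M} (Mellin of Hankel transform composed with Mellin) gives $\cM u^\dag(w)=\cM\{\cF^{-1}u^\sharp\}(w)=2^{w-\frac n2}\frac{\Gamma(w/2)}{\Gamma((n-w)/2)}\,\cM u^\sharp(n-w)$. Then $\cM u^\sharp(\cdot)=\cM\{\widehat u(\vartheta^{1/s})\}(\cdot)$ is computed by the composition-with-monomial rule \eqref{eq:M-rescale} with exponent $1/s$, yielding $\cM u^\sharp(w')=s\,\cM\widehat u(w'/s)=s\cdot 2^{w'/s-\frac n2}\frac{\Gamma(w'/(2s))}{\Gamma((n-w'/s)/2)}\cM u(n-w'/s)$, again using \Cref{lem:MF-M} for $\cM\widehat u$. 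Chaining these two steps and substituting into the right-hand side of \eqref{simple-relation} produces a product of Gamma quotients and powers of $2$ that must, after the shift-and-rescale from the outer monomials, collapse to exactly $\Lambda_s^{-1}(z)\,\cM u(z)$ with $\Lambda_s$ as in \eqref{multiplier-Lambda}. This is the computational heart of the proof.

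\textbf{Main obstacle and finishing.} The principal source of difficulty is keeping the arguments of the Gamma functions and, crucially, the powers of $2$ and factors of $s$ aligned through the nested substitutions $z\mapsto n-z$ (Hankel), $w'\mapsto w'/s$ (power change), and the outer shift $z\mapsto z-(sn-n)$ followed by rescaling $z\mapsto z/s$ (from $u^\dag(r^s)$) — getting any sign or a factor of $2$ wrong breaks the match with $\Lambda_s$. The branch-of-logarithm subtlety flagged in \Cref{rmk:exp} applies to every use of \eqref{eq:M-rescale} and must be invoked to justify that only the principal branch appears, which is legitimate here because $u$ (hence all the transforms) decays suitably in $\lambda$. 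Once the two Mellin transforms are shown to agree on one vertical line in the strip $\{n(1-s)<\re z<n\}$ where both sides are holomorphic and satisfy the $|z|^{-2}$ decay bound required by \Cref{thm:M-inv} (using the polynomial growth of $\Lambda_s^{-1}$ from \Cref{lem:asymp-mult} together with the rapid decay of $\cM u$), \Cref{cor:uniq} yields equality of the functions on $\R_+$. The general (non-smooth) case then follows by the density of smooth compactly supported functions in the Mellin--Sobolev spaces and the quasi-isometry bound of \Cref{lemma:isometry}.
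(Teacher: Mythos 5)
Your plan is correct and follows essentially the same route as the paper's proof: both reduce the identity to a Mellin-side computation using \Cref{lem:MF-M} twice together with the rescaling/translation rules of \Cref{prop:M-elem}, match the two transforms on a vertical line in the strip $\{n(1-s)<\re z<n\}$, and conclude by Mellin inversion. The only difference is organizational (you compute both sides and compare, while the paper transforms $\cM\{\Phi_s^{-1}u\}(n-z)$ step by step into $\cM\{r^{ns-n}u^\dag(r^s)\}(n-z)$), which is not a substantive departure.
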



\begin{proof}
We will need \eqref{eq:MF-M} in the following forms:
\begin{equation}\label{eq:MF-M-1}
\cM \set{
	\cF^{-1} u(r)
}(n-z)
=
2^{\frac{n}{2}-z}
\frac{
	\Gamma\left (
		\frac{n}{2}
		-\frac{z}{2}
	\right )
}{
	\Gamma\left (
	\frac{
		z
	}{2}
	\right )
}
\cM u(z),
	\quad \text{ for } \quad
0<\re z<\tfrac{n+1}{2},
\end{equation}
\begin{equation}\label{eq:MF-M-2}
\cM \set{
	\cF^{-1} u(r)
}\left (n-\tfrac{z}{s}\right )
=
2^{\frac{n}{2}-\frac{z}{s}}
\frac{
	\Gamma\left (
		\frac{n}{2}
		-\frac{z}{2s}
	\right )
}{
	\Gamma\left (
	\frac{
		z
	}{2s}
	\right )
}
\cM u(\tfrac{z}{s}),
	\quad \text{ for } \quad
0<\re \tfrac{z}{s}<\tfrac{n+1}{2}.
\end{equation}
Using \eqref{eq:MF-M-1}--\eqref{eq:MF-M-2}, for $0<\re z<\frac{n+1}{2}s$, we have
\begin{align*}
\cM\set{\Phi_s^{-1}u}(n-z)
&=\Lambda_s(n-z)^{-1} \cM u(n-z)\\
&=2^{\frac{n}{s}-n}
	2^z
	\frac{
		\Gamma(\frac{z}{2})
	}{
		\Gamma(\frac{n}{2}
		-\frac{z}{2})
	}
	2^{-\frac{z}{s}}
	\frac{
		\Gamma(\frac{n}{2}
		-\frac{z}{2s})
	}{
		\Gamma(\frac{z}{2s})
	}
	\cM u(n-z)\\
&=2^{\frac{n}{s}-n}
	2^{-\frac{z}{s}}
	\frac{
		\Gamma(\frac{n}{2}
		-\frac{z}{2s})
	}{
		\Gamma(\frac{z}{2s})
	}
	\cdot
\left (
	2^z
	\frac{
		\Gamma(\frac{z}{2})
	}{
		\Gamma(\frac{n}{2}
		-\frac{z}{2})
	}
	\cM\set{\cF^{-1}\widehat{u}}(n-z)
\right )\\
&=2^{\frac{n}{s}-n}
	2^{-\frac{z}{s}}
	\frac{
		\Gamma(\frac{n}{2}
		-\frac{z}{2s})
	}{
		\Gamma(\frac{z}{2s})
	}
	\cdot
	2^{\frac{n}{2}}
	\cM\set{\widehat{u}(\zeta)}(z)\\
&=2^{\frac{n}{s}-n}
	2^{-\frac{z}{s}}
	\frac{
		\Gamma(\frac{n}{2}
		-\frac{z}{2s})
	}{
		\Gamma(\frac{z}{2s})
	}
	\cdot
	2^{\frac{n}{2}}
	\frac{1}{s}
	\cM\set{\widehat{u}(\zeta^{\frac{1}{s}})}\left (\tfrac{z}{s}\right ),
\end{align*}
and recalling the definition of $u^\sharp$ from \eqref{f-sharp},
\begin{align*}
\cM\set{\Phi_s^{-1}u}(n-z)
&=\frac{2^{\frac{n}{s}-n}}{s}
	\cdot
	2^{\frac{n}{2}-\frac{z}{s}}
	\frac{
		\Gamma(\frac{n}{2}
		-\frac{z}{2s})
	}{
		\Gamma(\frac{z}{2s})
	}
	\cM\set{u^\sharp(\zeta)}\left (\tfrac{z}{s}\right )\\
&=\frac{2^{\frac{n}{s}-n}}{s}
	\cM\set{(\cF^{-1}u^\sharp)(r)}
	\left (n-\tfrac{z}{s}\right )\\
&=\frac{2^{\frac{n}{s}-n}}{s}
	\cM\set{u^\dag(r)}
	\left (n-\tfrac{z}{s}\right )\\
&=2^{\frac{n}{s}-n}
	\cM\set{u^\dag(r^s)}(ns-z)\\
&=2^{\frac{n}{s}-n}
	\cM\set{r^{ns-n}u^\dag(r^s)}(n-z).
\end{align*}
Hence by Mellin inversion, taking into account that $\Real (n-z)$ can be chosen in the interval $(n(1-s),n)$, we obtain the conclusion of the Lemma.
\end{proof}

In the light of \eqref{simple-relation}, from the expression \eqref{defi:scalar-product-dag} it is natural to set  the Hilbert space $\mathds L^2_\dag:=\mathds L^2_\dag(s)$ with the norm induced by the scalar product (up to multiplicative constant)
\begin{equation*}\label{defi:scalar-product}\begin{split}
\angles{u_1,u_2}_\dag
&:=\int_0^\infty
	\Phi_s^{-1}u_1(r)\,
	\overline{\Phi_s^{-1}u_2(r)}
	\,e^{\frac{r^{2s}}{4}}
	r^{n(2-s)-1}
\,dr
=2^{\frac{n}{s}-n}
\int_{0}^{\infty}
	u_1^{\dagger}(r^s)
	\overline{u_2^{\dagger}(r^s)}
	e^{\frac{r^{2s}}{4}}
	r^{sn-1}
\,dr\\
&=2^{\frac{n}{s}-n}s^{-1}\int_{0}^{\infty}
	u_1^{\dagger}(\rho)
	\overline{u_2^{\dagger}(\rho)}
	e^{\frac{\rho^{2}}{4}}
	\rho^{n-1}
\,d\rho.
\end{split}\end{equation*}
That is,
\begin{equation}\label{defi:norm}\begin{split}
\|u\|^2_\dag
:=\int_0^\infty
	|\Phi_s^{-1}u(r)|^2\,
	e^{\frac{r^{2s}}{4}}
	r^{n(2-s)-1}
\,dr=2^{\frac{n}{s}-n}s^{-1}\int_{0}^{\infty}
	|u^{\dagger}(\rho)|^2
	e^{\frac{\rho^{2}}{4}}
	\rho^{n-1}
\,d\rho.
\end{split}\end{equation}

Note that $\Phi^{-1}u$ is well defined if, for instance, $u$ is smooth with compact support outside the origin. $\mathds L^2_\dag$ is more precisely defined as the closure of such functions with respect to the $\|\cdot\|_\dag$ norm.

\begin{lemma}\label{lemma:u-holomorphic}
If $u$ belongs to $\mathds L^2_\dag$, then $u$ is holomorphic in a vertical strip of width at least $2s$.
\end{lemma}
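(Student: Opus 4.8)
The plan is to trace the definition of $\mathds L^2_\dag$ through the relation \eqref{simple-relation} and understand what finiteness of $\|u\|_\dag$ imposes on the decay of $u^\dag$, then transfer this to holomorphy of $\widehat u$, and finally to holomorphy of $u$ via \Cref{lem:MF-M}. First I would observe that, by \eqref{defi:norm}, $u\in\mathds L^2_\dag$ means $u^\dag\in L^2_r(\mathbb R^n,e^{\rho^2/4})$, i.e. $u^\dag$ decays faster than any inverse-Gaussian growth; since in the local case the functions $e_k^{(1)}(\rho)=e^{-\rho^2/4}L_k^{(\frac{n-2}2)}(\rho^2/4)$ span $L^2(\mathrm{exp})$ and have Fourier transforms $\zeta^{2k}e^{-\zeta^2}$ that are entire and of the appropriate order, one expects $\widehat{u^\dag}=u^\sharp$ to extend holomorphically to a horizontal strip (this is the standard Paley--Wiener-type principle: Gaussian-type decay of a function forces analyticity of its Fourier transform in a strip). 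By the change of variable $\vartheta=\zeta^s$, the holomorphy of $u^\sharp(\vartheta)=\widehat u(\vartheta^{1/s})$ in a strip $\{|\operatorname{Im}\vartheta|<\delta\}$ translates to holomorphy of $\widehat u(\zeta)$ in a sectorial neighborhood of the positive real axis.

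Next I would work on the Mellin side, which is cleaner for tracking strip widths. Using \Cref{lem:MF-M}, $\cM\{u\}(z)$ is, up to the explicit meromorphic Gamma factor $2^{z-n/2}\Gamma(z/2)/\Gamma((n-z)/2)$, equal to $\cM\{\widehat u\}(n-z)$. The Gamma factor is holomorphic and non-vanishing in any fixed vertical strip bounded away from its poles (at $z\in -2\bN$ and $z\in n+2\bN$), so the holomorphy domain of $\cM u$ coincides, in the relevant region, with that of $\cM\widehat u$. Then the task reduces to showing that membership in $\mathds L^2_\dag$ forces $\cM\widehat u$ (equivalently $\cM u$) to be holomorphic in a vertical strip of width at least $2s$. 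This I would extract from the weighted-$L^2$ bound on $u^\dag$ together with \eqref{simple-relation}: the relation $\Phi_s^{-1}u(r)=(r^{2s}/4)^{n/2-n/2s}u^\dag(r^s)$ together with the $e^{\rho^2/4}\rho^{n-1}$ weight shows $\Phi_s^{-1}u$ has $e^{r^{2s}/4}$-type integrability, hence $\cM\{\Phi_s^{-1}u\}$ is holomorphic on a \emph{half}-plane $\operatorname{Re}z > c$ for every $c$ (super-exponential decay at infinity in $r$, i.e. $r\to0$ in the original variable) \emph{and} on some left region dictated by the $r\to\infty$ behavior of $u^\dag$ — and since $u^\dag$ is Schwartz-like (being an $L^2(\mathrm{exp})$ element expanded in the Laguerre basis), $\cM\{\Phi_s^{-1}u\}$ is in fact entire or holomorphic in a wide strip. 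Applying the multiplier $\Lambda_s$, whose poles are located at \eqref{poles-Lambda}, namely $z_j=n+2j$ and $z'_\ell=n(1-s)-2s\ell$, the first obstruction to holomorphy of $\cM u=\Lambda_s\cM\{\Phi_s^{-1}u\}$ that one meets when enlarging the natural strip $\{n(1-s)<\operatorname{Re}z<n\}$ is at $z=n$ on the right and $z=n(1-s)$ on the left; the gap between consecutive relevant poles is governed by $2s$, giving a strip of width at least $2s$.

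Concretely, the steps in order: (1) unwind $u\in\mathds L^2_\dag$ via \eqref{defi:norm} to get $u^\dag\in L^2_r(\mathbb R^n,e^{\rho^2/4})$; (2) use the Laguerre eigenbasis expansion of $u^\dag$ and the known holomorphy/decay of the $e_k^{(1)}$ to conclude $u^\sharp=\widehat{u^\dag}$ is holomorphic in a horizontal strip, equivalently that $\cM\{u^\dag\}$ extends meromorphically with poles only at the expected lattice; (3) via \eqref{simple-relation}, rewrite in terms of $\Phi_s^{-1}u$ and then $\cM\{\Phi_s^{-1}u\}$; (4) multiply by $\Lambda_s$, whose pole set \eqref{poles-Lambda} is explicit, and read off that $\cM u$ is holomorphic on a vertical strip whose width is at least the spacing $2s$ between the nearest poles $z=n$ and $z=n(1-s)$ — hence $u$ itself, via Mellin inversion (\Cref{thm:M-inv}) applied on any line inside that strip, is holomorphic there. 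The main obstacle I anticipate is step (2)–(3): making rigorous the passage from the weighted-$L^2$ bound to a pointwise holomorphy statement — one must handle the density argument (the space is defined as a closure of smooth compactly-supported functions, whose Mellin transforms are entire, so one needs that the holomorphy in a fixed strip persists under $\|\cdot\|_\dag$-limits, which follows from the quasi-isometry \Cref{lemma:isometry} and the fact that $\mathcal H^\beta_\M$-convergence controls $\cM u$ locally uniformly on the line $\operatorname{Re}z=n/2$, combined with the explicit Gamma-factor bounds to propagate off that line). The width ``at least $2s$'' should come out of being careful about exactly which poles of $\Lambda_s^{\pm1}$ lie closest to the base line $\operatorname{Re}z=n/2$.
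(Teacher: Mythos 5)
Your overall skeleton — deduce from the $\dag$-norm a region of holomorphy for $\cM\{\Phi_s^{-1}u\}$, multiply by $\Lambda_s$, track its poles, and close with a density argument — is the paper's strategy, but the proposal has a genuine gap exactly at the step where the paper does its quantitative work. You assert that $\cM\{\Phi_s^{-1}u\}$ is ``entire or holomorphic in a wide strip'' because $u^\dag$ is ``Schwartz-like''. This is false for a general element: by \eqref{defi:norm} the weight $e^{\rho^2/4}\rho^{n-1}$ only controls decay as $\rho\to\infty$; near $\rho=0$ (equivalently $r\to 0$ for $\Phi_s^{-1}u$ via \eqref{simple-relation}) you have nothing beyond plain $L^2(\rho^{n-1}\,d\rho)$ integrability, so $u^\dag$ need not be bounded, let alone Schwartz, and $\cM\{\Phi_s^{-1}u\}$ is in general holomorphic only in a right half-plane. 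The paper's proof consists precisely of the Cauchy--Schwarz estimate $|\cM\{\Phi_s^{-1}u\}(\sigma+i\lambda)|\le \big(\int_0^\infty r^{2\sigma-1+ns-2n}e^{-r^{2s}/4}\,dr\big)^{1/2}\,\|u\|_\dag$, finite exactly for $\sigma>\tfrac{n(2-s)}{2}$; this single bound fixes the left edge of the strip \emph{and} supplies the uniformity (in $\sigma$ on substrips) that makes the closure/limit step work via Weierstrass. Your substitute mechanisms do not: Paley--Wiener analyticity of $u^\sharp(\vartheta)$ in the physical variable says nothing about which vertical lines carry the Mellin transform (that is governed by the power behavior of $u^\dag$ at $0$ and $\infty$), and ``$\cH^\beta_\cM$-convergence on $\re z=\tfrac n2$ propagated off the line by Gamma-factor bounds'' is not valid — control on one vertical line does not yield holomorphy off it.

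There is also a concrete bookkeeping error: \eqref{poles-Lambda} lists the poles of $\Lambda_s^{-1}$, not of $\Lambda_s$. Since $\cM u=\Lambda_s\,\cM\{\Phi_s^{-1}u\}$, the relevant singularities are the poles of $\Lambda_s$ itself, which from \eqref{multiplier-Lambda} sit at $z=-2j$ and $z=n+2s\ell$, $j,\ell\in\bN$; your claimed left obstruction at $z=n(1-s)$ is in fact a \emph{zero} of $\Lambda_s$ and is irrelevant for holomorphy, while the right edge $z=n$ happens to be correct. The true left edge is not dictated by the multiplier at all but by the half-plane $\re z>\tfrac{n(2-s)}{2}$ coming from the norm, so the provable strip is $\{\tfrac{n(2-s)}{2}<\re z<n\}$ (width $\tfrac{ns}{2}$, as in the paper), whereas your larger strip $(n(1-s),n)$ — note $n(1-s)<\tfrac{n(2-s)}{2}$ — is not justified by your argument and is not obtainable from the $\dag$-norm alone.
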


\begin{proof}
First assume that $u$ is smooth of compact support outside the origin (so that $\mathcal M u(z)$ is holomorphic on $\mathbb C$),  and set $v:=\Phi^{-1} u$, that is $\mathcal Mv(z)=\Lambda_s^{-1}(z)\mathcal Mu(z)$. We have that $\mathcal Mv(z)$ is a meromorphic function with (possibly) simple poles given by those of $\Lambda_s^{-1}(z)$, given in \eqref{poles-Lambda}.

For $z=\sigma+\lambda i$,
\begin{equation*}
|\mathcal Mv(z)|\leq \int_0^\infty r^{\sigma-1} |v(r)|\,dr\leq \left(\int_0^\infty r^{2\sigma-1+ns-2n} e^{-\frac{r^{2s}}{4}}
	\,dr\right)^{\frac{1}{2}}
\left(\int_0^\infty |v(r)|^2 e^{\frac{r^{2s}}{4}}
	r^{n(2-s)-1}\,dr\right)^{\frac{1}{2}}
\end{equation*}
By our definition of the space $\mathds L^2_\dag$, we know that
\begin{equation*}
\int_0^\infty
	|v(r)|^2\,
	e^{\frac{r^{2s}}{4}}
	r^{n(2-s)-1}
\,dr<\infty,
\end{equation*}
so $\mathcal M v$ is bounded in the half-plane $\{\Real z>n\frac{2-s}{2}\}$. In particular, it is analytic.

Now, writing $\mathcal Mu(z)=\Lambda_s(z)\mathcal Mv(z)$, and taking into account that the poles of $\Lambda_s(z)$ are located at
$z_j=-2j$, $j\in\mathbb N$, and $z'_\ell=n+2s\ell$, $\ell\in\mathbb N$, we deduce that $\mathcal Mu(z)$ is holomorphic on the strip
$\{n\frac{2-s}{2}<\Real z<n\}$, which has width $\frac{n}{2}s$,
as desired.

Finally, for functions $u$ that are limits of $u_n$ with respect to  the $\|\cdot\|_\dag$ norm, we realize that the above argument also works for each $u_n$ and for differences $u_n-u$, thus showing that $\mathcal Mu_n$ is a sequence of holomorphic functions on the strip which converge uniformly. Thus the limit is also holomorphic (this is a classical result originally due to Weierstrass, see \cite{Krantz}).\\
\end{proof}

\subsection{A fractional version of Gevrey--Sobolev spaces}\label{subsection:Gevrey}

Now we would like to characterize $\mathds L^2_\dag$ in terms of known function spaces.


Gevrey classes were introduced in \cite{Gevrey} as an intermediate space between $\mathcal C^\infty$ and real analytic functions. We could not find any  canonical reference in the literature except for  \cite{Rodino}. These spaces very often appear in the context of hyperbolic equations; for instance,  a classical book is \cite[Page 281]{Hormander} for the study of wave front sets in PDEs. They have been also recently used in the context of the Euler  \cite{Ionescu-Jia} and Navier--Stokes equations \cite{Bedrossian-Germain-Masmoudi1}. The precise notion is given below:

\begin{defi}
We say that $u$ belongs to the {\em Gevrey--Sobolev} class $\mathcal G_p$ if there exists a constant $C>0$ such that
\begin{equation}\label{Gevrey}
\|u\|_{H^m} \leq C^{1+m}(1+m)^{pm},
\end{equation}
for every $m=0,1,\ldots$
\end{defi}
A different ---but very much related--- definition is given by the following norm:
\begin{equation*}
\|u\|_{\mathcal G_p}^2=\sum_{m=0}^{\infty} \frac{C^m}{m!}\int(1+|\xi|^2)^{m/p}|\widehat u(\xi)|^2\,d\xi.
\end{equation*}

We will show first that,  for $n=1$, $\mathds L^2_\dag$ is a fractional version of a Gevrey--Sobolev space.
For this, we remark that, formally,
\begin{equation*}
\|u\|_\dag^2=\sum_{m=0}^\infty \frac{1}{4^m m!}\int_0^\infty \rho^{2m}|u^\dag(\rho^s)|^2\rho^{n-1}\,d\rho.
\end{equation*}
In Fourier variables, restricting to $n=1$ for simplicity,
\begin{equation*}
\int_0^\infty
\rho^{2m}|u^\dag(\rho^s)|^2\,d\rho
=\int_0^\infty \left|\frac{d^m}{d\vartheta^m}u^\sharp(\vartheta)\right|^2\,d\vartheta
=\int_0^\infty \left|\frac{d^m}{d\vartheta^m}\widehat{u}(\vartheta^{1/s})\right|^2\,d\vartheta,
\end{equation*}
and (using the change $\vartheta^{1/s}=\zeta$)
\begin{equation*}
\frac{d}{d\vartheta}\widehat u(\vartheta^{1/s})=\frac{1}{s}\zeta^{1-s}\frac{d}{d\zeta}\widehat u(\zeta),
\quad\ldots\quad
\frac{d^m}{d\vartheta^m}\widehat{u}(\vartheta^{1/s})
=\frac{1}{s^m}\left(\zeta^{1-s}\frac{d}{d\zeta}\right)^m\widehat u(\zeta).
\end{equation*}
%
This yields
\begin{equation}\label{Zemannian1}
\begin{split}
\|u\|_\dag^2&=\sum_{m=0}^\infty \frac{1}{4^m m!s^{2m}}\int_0^\infty \left|\left(\zeta^{1-s}\frac{d}{d\zeta}\right)^m\widehat u(\zeta)\right|^2\,s\zeta^{s-1}\,d\zeta\\
&=\sum_{m=0}^\infty \frac{1}{4^m m!s^{2m}}\int_0^\infty \left|\mathcal F \left\{\big((-\Delta)^{\frac{1-s}{2}} r\big)^m u\right\}(\zeta)\right|^2\,s\zeta^{s-1}\,d\zeta
\end{split}
\end{equation}
which, inspired by \eqref{Gevrey}, could be taken as our definition of a \emph{fractional Gevrey--Sobolev} norm.

As a side remark, the \emph{Zemanian} space $\mathcal Z_\nu$ is defined as the set of $\mathcal C^\infty$ functions $\omega(\zeta)$, $\zeta>0$, such that all the quantities
\begin{equation*}
\gamma_{m,k}(\omega):=\sup_{\zeta>0}\left|\zeta^m\left(\frac{1}{\zeta}\frac{d}{d\zeta}\right)^k(\zeta^{-\nu-1/2}\omega(\zeta))\right|,\quad m,k\in\mathbb N,
\end{equation*}
are finite (compare to our \eqref{Zemannian1}). Zemanian spaces are the appropriate function spaces to define the distributional Hankel transform as an automorphism (see, for instance, the note \cite{Stempak} and the references therein).

\medskip

We will also give the interpretation of the   space $\mathds L^2_\dag$ in Mellin variables. For this, we take  again a Taylor expansion of the exponential weight. The trick is that powers of $r$ become translations in Mellin variables, instead of derivatives. Set
\begin{equation*}\label{sigma-m}
\sigma_m:=sm+\frac{n(2-s)}{2}.
\end{equation*}
Then the $\mathds L^2_\dag$ norm is written as
\begin{equation*}\label{norm-Melin}\begin{split}
\|u\|^2_\dag
&=\int_0^\infty
	|\Phi_s^{-1}u(r)|^2
	\,e^{\frac{r^{2s}}{4}}
	r^{n(2-s)-1}
\,dr=\sum_{m=0}^\infty \frac{1}{4^m m!}\int_0^\infty|\Phi_s^{-1}u(r)|^2
\, r^{2sm+n(2-s)-1}\,dr\\
&=\sum_{m=0}^\infty \frac{1}{4^m m!}\int_{\R}
	|\cM \Phi_s^{-1}u(\sigma_m+\lambda i)|^2
\,d\lambda
=\sum_{m=0}^\infty \frac{1}{4^m m!}\int_{\R}
	|\Lambda_s^{-1}(\sigma_m+\lambda i)|^2|\cM u(\sigma_m+\lambda i)|^2
\,d\lambda
\end{split}
\end{equation*}
where, for each $m$, we have used the isometry property of the Mellin transform \eqref{eq:M-isom-gen}.

Note that the integrand above may have poles (and also zeroes), which may require a higher order vanishing of $\mathcal M u$. These can be avoided, for instance, if $s$ is irrational. However, to obtain a  uniform bound for all $m$ is more delicate. We will show in  Section \ref{subsection:technicalities} that
\begin{equation}\label{horrible-estimate}
\abs{
	\Lambda_s^{-1}(\sigma_m+i\lambda)
}
	\asymp \delta_m+|\lambda|^{p_m},
\end{equation}
where $p_m$ is given in \eqref{pm} and $\delta_m$ in  \eqref{delta-m}. Looking at the asymptotic behavior as  $m\to \infty$ we conclude that
\begin{equation*}\label{Mellin-Gevrey}
\|u\|^2_\dag\asymp\sum_{m=0}^\infty \frac{1}{4^m m!}\int_{\mathbb R} (\delta_m+|\lambda|^{2m(1-s)})|\mathcal M u(sm+i\lambda)|^2\,d\lambda,
\end{equation*}
which again reminds us of a fractional Gevrey--Sobolev norm.

\medskip

Likewise, one may give an interpretation for  the  $\mathds H_\dag^1$ seminorm  \eqref{defi:seminorm}.
 For this, take the Taylor expansion of the exponential in the first step and then use the Mellin isometry property \eqref{eq:M-isom-gen},
\begin{equation*}
\begin{split}
\|u\|^2_{\dot{\mathds H}^1_\dag}&=\sum_{m=0}^\infty\frac{1}{4^m m!}\int_0^\infty |\partial_\rho u^\dag|^2 \rho^{2m+n-1}\,d\rho\\
&=\sum_{m=0}^\infty\frac{1}{4^m m!} \int_0^\infty \left|\mathcal M(\partial_\rho u^\dag)(m+\tfrac{n}{2}+\lambda i)\right|^2\,d\lambda.
\end{split}
\end{equation*}
Now observe that
\begin{equation*}
\mathcal M(\partial_\rho u^\dag)(z)=-(z-1)\mathcal M(u^\dag)(z-1),
\end{equation*}
so that
\begin{equation*}
\begin{split}
\|u\|^2_{\dot{\mathds H}^1_\dag}&=
\sum_{m=0}^\infty\frac{1}{4^m m!} \int_0^\infty \left\{\lambda^2+(m+\tfrac{n}{2}-1)^2\right\}\left|\mathcal M\{u^\dag(r)\}(m+\tfrac{n}{2}-1+\lambda i)\right|^2\,d\lambda\\
&=s\sum_{m=0}^\infty\frac{1}{4^m m!} \int_0^\infty \left\{\lambda^2+(m+\tfrac{n}{2}-1)^2\right\}
\left|\mathcal M\{u^\dag(r^s)\}\left(s(m+\tfrac{n}{2}-1+\lambda i)\right)\right|^2\,d\lambda\\
&=\sum_{m=0}^\infty\frac{1}{4^m m!} \int_0^\infty \left\{(\tfrac{\tilde\lambda}{s})^2+(m+\tfrac{n}{2}-1)^2\right\}
\left|\mathcal M\{u^\dag(r^s)\}\left(s(m+\tfrac{n}{2}-1)+\tilde\lambda i\right)\right|^2\,d\tilde\lambda
\end{split}
\end{equation*}
where we have used \eqref{eq:M-rescale} for the second equality and the change $\tilde\lambda=s\lambda$ for the third. Next, we drop tilde notation,  use \eqref{eq:M-trans} to translate and take into account relation \eqref{simple-relation},
\begin{equation*}
\begin{split}
\mathcal M\{u^\dag(r^s)\}\left(s(m+\tfrac{n}{2}-1)+\lambda i\right)&=
4^{\frac{n}{2}-\frac{n}{2s}}\mathcal M\{(\tfrac{r^{2s}}{4})^{\frac{n}{2}-\frac{n}{2s}}u^\dag(r^s)\}\left(\sigma_m-s+\lambda i\right)\\
&=4^{\frac{n}{2}-\frac{n}{2s}}\mathcal M\{\Phi^{-1}_s u\}\left(\sigma_m-s+\lambda i\right)\\
&=4^{\frac{n}{2}-\frac{n}{2s}} \Lambda_s^{-1}(\sigma_m-s+\lambda i)\mathcal Mu(\sigma_m-s+\lambda i).
\end{split}
\end{equation*}
We conclude that, up to multiplicative constant,
\begin{equation*}
\begin{split}
\|u\|^2_{\dot{\mathds H}^1_\dag}
&=\sum_{m=0}^\infty\frac{1}{4^m m!} \int_0^\infty \left\{(\tfrac{\lambda}{s})^2+(m+\tfrac{n}{2}-1)^2\right\}
\left|\Lambda_s^{-1}(\sigma_m-s+\lambda i)|^2\,|\mathcal Mu(\sigma_m-s+\lambda i)\right|^2\,d\lambda,
\end{split}
\end{equation*}
and then we can insert the estimate for $\Lambda_s^{-1}$  analogous to \eqref{horrible-estimate}.

We note here that $\|\cdot\|_{\dot{\mathds H}^1_\dag}$ is not our usual Sobolev $H^s$ fractional seminorm. This is a consequence of our construction and is partly  explained in Remark \ref{remark:interpret}.

\subsection{Technicalities}\label{subsection:technicalities}

Here we obtain upper and lower bounds for the multiplier $\Lambda_s^{-1}(\sigma_m+i\lambda)$, understanding carefully the dependence of $m$ as $m\to \infty$.

First, 
from \eqref{eq:multiplier-asymp}
we can check that, when $m\to \infty$, for $|\lambda|\gg 1$,
\begin{equation*}
\abs{
	\Lambda_s^{-1}(\sigma_m+i\lambda)
}
=2^{-(\frac1s-1)n}
	s^{-m}
	|\lambda|^{(1-s)(\frac{n}{2}-m)}
	(1+O(|\lambda|^{-1}))
\asymp
s^{-m}|\lambda|^{p_m},
\end{equation*}
where
\begin{equation}\label{pm}
p_m=n-\tfrac{n}{s}-\sigma_m+\tfrac{\sigma_m}{s}=(1-s)(m-\tfrac{n}{2}).
\end{equation}

However, the expression of $\Lambda_s$ may have poles/zeroes when $\lambda=0$. The following Lemma characterizes precisely its behavior at those points:

\begin{lemma} For each $a\in \mathbb R$ fixed, we have for $m\in\bN$ large,
\begin{equation}\label{delta-m}
\Lambda_s^{-1}(sm+a)
\asymp
\delta_m
:=\frac{
	\sin (
		\frac{\pi(n-a)}{2s}
		-\frac{m\pi}{2}
	)
}{
	\sin (
		\frac{\pi(n-a)}{2}
		-\frac{sm\pi}{2}
	)
}
\left(
	\frac{m}{es^{\frac{s}{1-s}}}
\right)^{(\frac1s-1)(sm+a-n)}.
\end{equation}
\end{lemma}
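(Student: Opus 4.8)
The plan is to substitute $z=sm+a$ into the explicit product \eqref{multiplier-Lambda}, invert it, and extract the behaviour in $m$ by Stirling's formula, treating the two Gamma factors whose arguments run to $-\infty$ with the reflection identity $\Gamma(w)\Gamma(1-w)=\pi/\sin(\pi w)$.

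First I would unwind \eqref{multiplier-Lambda} to
\[
\Lambda_s^{-1}(z)=2^{z(\frac1s-1)}\,
\frac{\Gamma\!\big(\tfrac{n-z}{2}\big)}{\Gamma\!\big(\tfrac z2\big)}\,
\frac{\Gamma\!\big(\tfrac n2-\tfrac n{2s}+\tfrac z{2s}\big)}{\Gamma\!\big(\tfrac{n-z}{2s}\big)}.
\]
Setting $z=sm+a$ — which is \emph{real}, so the branch subtleties of \Cref{rmk:exp} do not intervene — the four Gamma arguments become $w_1:=\tfrac{n-a}{2}-\tfrac{sm}{2}$, $\tfrac{sm}{2}+\tfrac a2$, $\tfrac m2+\tfrac{n(s-1)+a}{2s}$ and $w_2:=\tfrac{n-a}{2s}-\tfrac m2$; as $m\to\infty$ the middle two tend to $+\infty$, while $w_1,w_2\to-\infty$. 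Applying the reflection identity to $\Gamma(w_1)$ and $\Gamma(w_2)$ — which replaces them by $\pi\,(\sin(\pi w_j)\Gamma(1-w_j))^{-1}$ with $1-w_1,1-w_2\to+\infty$ — yields
\[
\Lambda_s^{-1}(sm+a)=2^{(sm+a)(\frac1s-1)}\,
\frac{\sin(\pi w_2)}{\sin(\pi w_1)}\,
\frac{\Gamma\!\big(\tfrac m2+\tfrac{n(s-1)+a}{2s}\big)\,\Gamma(1-w_2)}
{\Gamma\!\big(\tfrac{sm}{2}+\tfrac a2\big)\,\Gamma(1-w_1)}.
\]
The key point is that $\sin(\pi w_2)=\sin\!\big(\tfrac{\pi(n-a)}{2s}-\tfrac{m\pi}{2}\big)$ and $\sin(\pi w_1)=\sin\!\big(\tfrac{\pi(n-a)}{2}-\tfrac{sm\pi}{2}\big)$ are precisely the two sines appearing in \eqref{delta-m}, so they require no further estimation and no spurious cancellation between the two formulae can occur.

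Next I would feed the four remaining Gamma functions — all with real arguments tending to $+\infty$, hence far from every pole — into Stirling's asymptotics $\Gamma(x+\gamma)=\sqrt{2\pi}\,x^{\,x+\gamma-\frac12}e^{-x}(1+O(x^{-1}))$, with $x=\tfrac m2$ for two of them and $x=\tfrac{sm}{2}$ for the other two. The four $\sqrt{2\pi}$'s and the four exponentials collapse to $e^{-(1-s)m}$; writing $(\tfrac{sm}{2})^{\bullet}=s^{\bullet}(\tfrac m2)^{\bullet}$, collecting the powers of $\tfrac m2$, and using the identity $\tfrac{n(s-1)+a}{2s}+\big(1-\tfrac{n-a}{2s}\big)-\tfrac a2-\big(1-\tfrac{n-a}{2}\big)=\tfrac{(1-s)(a-n)}{s}$, the Gamma–ratio becomes $\big(\tfrac m2\big)^{(1-s)m+\frac{(1-s)(a-n)}{s}}\,s^{-(sm+a-\frac n2)}\,e^{-(1-s)m}\,(1+O(m^{-1}))$. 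Splitting $\big(\tfrac m2\big)^{\bullet}=m^{\bullet}2^{-\bullet}$ and merging that power of $2$ with the prefactor $2^{(sm+a)(\frac1s-1)}$, the net power of $2$ collapses to the $m$-independent constant $2^{(1-s)n/s}$, leaving
\[
\Lambda_s^{-1}(sm+a)=\frac{\sin(\pi w_2)}{\sin(\pi w_1)}\;
m^{(\frac1s-1)(sm+a-n)}\,e^{-(1-s)m}\,s^{-(sm+a-\frac n2)}\,2^{(1-s)n/s}\,(1+O(m^{-1})).
\]
Comparing with \eqref{delta-m}, expanded as $\big(\tfrac{m}{e s^{s/(1-s)}}\big)^{(\frac1s-1)(sm+a-n)}=m^{(\frac1s-1)(sm+a-n)}e^{-(1-s)m-\frac{(1-s)(a-n)}{s}}s^{-(sm+a-n)}$ (using $\tfrac{s}{1-s}\big(\tfrac1s-1\big)=1$), one reads off that $\Lambda_s^{-1}(sm+a)/\delta_m$ equals the nonzero $m$-independent constant $2^{(1-s)n/s}s^{-n/2}e^{(1-s)(a-n)/s}$ times $(1+O(m^{-1}))$; in particular the two sides have the same sign and comparable moduli, giving $\Lambda_s^{-1}(sm+a)\asymp\delta_m$ for $m$ large. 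When $sm+a$ happens to sit at a pole or a zero of $\Lambda_s^{-1}$ — that is, $sm+a\in\{n+2j:j\in\bN\}\cup\{n(1-s)-2s\ell:\ell\in\bN\}$, cf.\ \eqref{poles-Lambda} — one of the sines in \eqref{delta-m} degenerates accordingly and the estimate is read in the obvious degenerate sense; this exceptional set is empty for large $m$ as soon as $a$ avoids a discrete set (in particular whenever $s\notin\mathbb Q$).

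The only real work, and hence the place needing care, is the exponent bookkeeping in the last step: one must keep straight the $(m/2)$-powers versus the $m$-powers, the rescaling by $s$ hidden inside the ``$sm/2$'' Gamma functions, the exponentials $e^{-m/2}$ and $e^{-sm/2}$, the four $\sqrt{2\pi}$'s, the prefactor $2^{z(1/s-1)}$, and the constant $e^{-(1-s)(a-n)/s}$ buried inside $\delta_m$, so that the $m$-dependent parts of the two sides match exactly and the leftover is a genuine constant. There is no analytic subtlety beyond the reflection formula, precisely because $z=sm+a$ is real and the four Gamma functions surviving the reflection have positive arguments, so Stirling is applied uniformly and away from all poles.
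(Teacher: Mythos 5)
Your proof is correct and follows essentially the same route as the paper's: rewrite $\Lambda_s^{-1}(sm+a)$, apply Euler reflection to the two Gamma factors whose arguments tend to $-\infty$ so that the sine ratio of \eqref{delta-m} appears verbatim, then use Stirling on the four remaining positive-argument Gammas and match the powers of $m$, $s$, $2$ and $e$. The only difference is the multiplicative constant you end up with, $2^{(1-s)n/s}s^{-n/2}e^{(1-s)(a-n)/s}$, versus the paper's $2^{(\frac1s-1)n}s^{-\frac n2}$ (your bookkeeping is in fact the one consistent with the Stirling form \eqref{asymptotics-gamma0}), but this fixed factor is immaterial for the $\asymp$ statement.
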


\begin{proof}
Recall from \eqref{multiplier-Lambda} that
\begin{equation*}
\Lambda_s^{-1}(sm+a)
=2^{(\frac1s-1)(sm+a)}
\frac{
	\Gamma\left (
		\frac{
			n-a-sm
		}{2}
	\right )
}{
	\Gamma\left (
		\frac{
			sm+a
		}{2}
	\right )
}
\frac{
	\Gamma(
		\tfrac{n}{2}
		-\tfrac{n-a-sm}{2s}
	)
}{
	\Gamma(
		\tfrac{n-a-sm}{2s}
	)
}.
\end{equation*}
We use the relation $\Gamma(z)\Gamma(1-z)=\frac{\pi}{\sin(\pi z)}$ to have all the arguments inside the Gammas positive:
\begin{equation*}
\Lambda_s^{-1}(sm+a)
=2^{(\frac1s-1)(sm+a)}
\frac{
	\sin (\pi(\tfrac{n-sm-a}{2s}))
}{
	\sin (\pi(\tfrac{n-sm-a}{2}))
}
\frac{
	\Gamma(
		\tfrac{n}{2}-\tfrac{n-a}{2s}+\tfrac{m}{2}
	)
	\Gamma(
		1-\tfrac{n-a}{2s}
		+\tfrac{m}{2}
	)
}{
	\Gamma\left (
		\tfrac{a}{2}
		+\tfrac{sm}{2}
	\right )
	\Gamma\left (
		1-\tfrac{
			n-a
		}{2}
		+\tfrac{sm}{2}
	\right )
}
=:\delta_m^{(1)}\delta_m^{(2)}.
\end{equation*}
where we have defined
\begin{equation*}
\delta_m^{(1)}:=\frac{\sin (\pi(\tfrac{n-sm-a}{2s}))}{\sin (\pi(\tfrac{n-sm-a}{2}))},
	\qquad
\delta_m^{(2)}:=\Lambda_s^{-1}(sm+a)(\delta_m^{(1)})^{-1}.
\end{equation*}
Finally, expression \eqref{asymptotics-gamma0} for the Gamma function immediately gives
\begin{align*}
\delta_m^{(2)}
&=2^{(\frac1s-1)(sm+a)}
\dfrac{
	(\frac{m}{2})^{\frac{m}{2}+\frac{n}{2}-\frac{n-a}{2s}-\frac12}
	e^{-\frac{m}{2}-\frac{n}{2}+\frac{n-a}{2s}}
	\cdot
	(\frac{m}{2})^{\frac{m}{2}+1-\frac{n-a}{2s}-\frac12}
	e^{-\frac{m}{2}-1+\frac{n-a}{2s}}
}{
	(\frac{sm}{2})^{\frac{sm}{2}+\frac{a}{2}-\frac12}
	e^{-\frac{sm}{2}-\frac{a}{2}}
	\cdot
	(\frac{sm}{2})^{\frac{sm}{2}+1-\frac{n-a}{2}-\frac12}
	e^{-\frac{sm}{2}-1+\frac{n-a}{2}}
}
(1+O(m^{-1}))\\
&=2^{(\frac1s-1)n}
s^{-\frac{n}{2}+(n-sm-a)}
e^{(\frac1s-1)(n-sm-a)}
m^{(\frac1s-1)(sm+a-n)}
(1+O(m^{-1}))\\
&=2^{(\frac1s-1)n}s^{-\frac{n}{2}}
\left(
	\frac{m}{es^{\frac{s}{1-s}}}
\right)^{(\frac1s-1)(sm+a-n)}
(1+O(m^{-1})).
\end{align*}
This completes the estimate of $\delta_m$.
\end{proof}

We remark here that while $\delta_m^{(1)}$ has a trivial lower bound independent of $m$, estimating its upper bound is more delicate since, as $m\to\infty$, 
there may be a sequence of $k=k_m$ of even numbers such that
$n-ms-a\to -k\in\mathbb Z$, even if $s$ is irrational. 
We may try to give a sharper estimate for $\delta_m^{(1)}$, however this is a problem in the field of number theory which we do not attempt to consider here.

In any case, let us give here a flavour of the argument involved and consider the toy model  of giving a lower found for $\sin(ms)$ when  $s\not\in \mathbb Q$. It may happen that  $ms\to k\in\mathbb Z$, that is $\frac{k}{m}\to s\not\in \mathbb Q$. Thus we are looking at how to approximate an irrational number by rational numbers. If $\mu$ is the \emph{irrationality measure} of $s$, then fixed $\varepsilon>0$, for all sequences of $m$, $k$ large enough,
\begin{equation*}
\left|\frac{k}{m}-s\right|>\frac{1}{m^{\mu+\varepsilon}},
\end{equation*}
so that
\begin{equation}\label{irrationality-bound}
\sin(\pi(ms))>\frac{1}{m^{\mu+\varepsilon-1}}.
\end{equation}
For all $s$  it holds that $\mu\geq 2$. Almost every irrational have $\mu=2$, and this includes all algebraic numbers of degree $ >1$. However, there may be real numbers with
infinite $\mu$ (these are called Liouville numbers), for which an estimate such as \eqref{irrationality-bound} does not hold. For some background on this topic, see \cite{Borwein}.

\section{The eigenvalue problem via Mellin transform}\label{section:eigenvalue}

In this section we complete the proof of Theorem \ref{thm1}. The idea is that the eigenvalue problem \eqref{eigenvalue-problem} can be explicitly solved in terms of Mellin variables, for functions in the space $\mathds L^2_\dag(s)$. Although in the previous section we have strongly relied on the transformation \eqref{change} and the information available from the local case, now we will calculate the asymptotic behavior of all solutions \emph{without} any previous knowledge.




\subsection{Explicit eigenfunctions under transformations}\label{subsection:explicit-eigenfunctions}

 We take the Mellin transform of \eqref{eigenvalue-problem} to get
\begin{equation}\label{eq:20}
\cM\set{\Ds u(r)}(z)
=\Big(\nu+\frac{n}{2s}\Big)
\cM\set{u(r)}(z)
+\frac{1}{2s}
\cM\set{
	ru'(r)
}(z).
\end{equation}
By \Cref{prop:M-Ds} and \eqref{eq:M-elem-1},
\[
\Theta_s(z)\cM u(z-2s)
=(\nu+\tfrac{n-z}{2s})\cM u(z).
\]
Hence, recalling the symbol given in \eqref{eq:M-Ds-sym}, 
equation \eqref{eq:20} yields
\begin{equation}\label{eq:eigen-M}
2^{2s}
\dfrac{
	\Gamma\left (
	\frac{
		z
	}{2}
	\right )
}{
	\Gamma\left (
	\frac{
		n-z
	}{2}
	\right )
}
\dfrac{
	\Gamma\left (
	\frac{
		n+2s-z
	}{2}
	\right )
}{
	\Gamma\left (
	\frac{
		z-2s
	}{2}
	\right )
}
\cM u(z-2s)
=\left (
	\nu+\tfrac{n-z}{2s}
\right )
\cM u(z),
	\quad z\in\bC.
\end{equation}
A solution to this equation will be a (tentative) eigenfunction with eigenvalue $\nu$.

\begin{lemma}[In Mellin variable]\label{lemma:eigenfunction1}
For each $\nu\in\mathbb R$, the function
\begin{equation}\label{eq:eigen-M-sol}
	\widetilde{u}_\nu(z)
=
2^{z}
\frac{
	\Gamma\left (
	\nu+\tfrac{n-z}{2s}
	\right )
	\Gamma\left (
	\tfrac{z}{2}
	\right )
}{
	\Gamma\left (
	\tfrac{n-z}{2}
	\right )
}.
\end{equation}
is a solution of \eqref{eq:eigen-M}.
\end{lemma}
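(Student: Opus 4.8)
The plan is a direct verification: substitute the candidate \eqref{eq:eigen-M-sol} into the difference equation \eqref{eq:eigen-M} and reduce the whole identity to the functional equation $\Gamma(w+1)=w\Gamma(w)$ of the Gamma function. First observe that $\widetilde u_\nu$, being a ratio of Gamma functions, is a well-defined meromorphic function on $\bC$, so it makes sense to ask whether it satisfies \eqref{eq:eigen-M} as an identity of meromorphic functions.

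I would begin by computing the shifted function. Replacing $z$ by $z-2s$ in \eqref{eq:eigen-M-sol} and using $\frac{n-(z-2s)}{2s}=\frac{n-z}{2s}+1$ and $\frac{n-(z-2s)}{2}=\frac{n-z+2s}{2}$ gives
\[
\widetilde u_\nu(z-2s)
=2^{z-2s}\,
\frac{\Gamma\!\left(\nu+\tfrac{n-z}{2s}+1\right)\Gamma\!\left(\tfrac{z-2s}{2}\right)}{\Gamma\!\left(\tfrac{n-z+2s}{2}\right)}
=2^{z-2s}\,
\frac{\left(\nu+\tfrac{n-z}{2s}\right)\Gamma\!\left(\nu+\tfrac{n-z}{2s}\right)\Gamma\!\left(\tfrac{z-2s}{2}\right)}{\Gamma\!\left(\tfrac{n-z+2s}{2}\right)},
\]
where in the last step I used $\Gamma\!\left(\nu+\tfrac{n-z}{2s}+1\right)=\left(\nu+\tfrac{n-z}{2s}\right)\Gamma\!\left(\nu+\tfrac{n-z}{2s}\right)$.

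Next I insert this into the left-hand side of \eqref{eq:eigen-M}. The prefactor $2^{2s}$ combines with $2^{z-2s}$ to give $2^z$; the factor $\Gamma\!\left(\tfrac{n+2s-z}{2}\right)$ from the symbol $\Theta_s$ cancels against $\Gamma\!\left(\tfrac{n-z+2s}{2}\right)$ in the denominator of $\widetilde u_\nu(z-2s)$; and $\Gamma\!\left(\tfrac{z-2s}{2}\right)$ in the numerator of $\widetilde u_\nu(z-2s)$ cancels against the $\Gamma\!\left(\tfrac{z-2s}{2}\right)$ in the denominator of $\Theta_s$. What survives is exactly
\[
2^{z}\,\frac{\Gamma\!\left(\tfrac z2\right)}{\Gamma\!\left(\tfrac{n-z}{2}\right)}\left(\nu+\tfrac{n-z}{2s}\right)\Gamma\!\left(\nu+\tfrac{n-z}{2s}\right)
=\left(\nu+\tfrac{n-z}{2s}\right)\,
2^{z}\,\frac{\Gamma\!\left(\nu+\tfrac{n-z}{2s}\right)\Gamma\!\left(\tfrac z2\right)}{\Gamma\!\left(\tfrac{n-z}{2}\right)}
=\left(\nu+\tfrac{n-z}{2s}\right)\widetilde u_\nu(z),
\]
which is precisely the right-hand side. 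Since all the manipulations are identities between meromorphic functions, they hold on all of $\bC$ by analytic continuation, completing the proof.

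I do not expect any real obstacle in this lemma itself: it is a purely algebraic check driven by the Gamma recursion. The genuine difficulty lies afterwards, not here — equation \eqref{eq:eigen-M} is a first-order difference equation with step $2s$, so its solution space is large (one may multiply $\widetilde u_\nu$ by an arbitrary $2s$-periodic function). Identifying which such solution actually corresponds to an eigenfunction of $L_s$ in $\mathds L^2_\dag$, and pinning down the admissible eigenvalues $\nu$, is the substantive part, carried out in the subsequent sections via the asymptotics \eqref{asymptotics-introduction}, Mellin inversion, and the holomorphy constraints on the strip.
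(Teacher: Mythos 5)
Your verification is correct and is essentially the same argument as the paper's: both reduce \eqref{eq:eigen-M} to the recursion $\Gamma(w+1)=w\Gamma(w)$, the paper merely phrasing it in the reverse direction by showing that $v(z):=\widetilde u_\nu(z)^{-1}\cM u(z)$ is $2s$-periodic and taking $v\equiv 1$. Your closing remark about the $2s$-periodic multiplicative ambiguity is exactly the point the paper's formulation makes explicit, and it is indeed resolved later (uniqueness in $\mathds L^2_\dag$ in \Cref{prop:uniq}).
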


\begin{remark}\label{remark:uniqueness}
We will show in \Cref{prop:uniq} that this solution is unique in $\mathds L^2_\dag$ for $n\geq 2$, and for $n=1$ provided that $\nu\in\bZ$.
\end{remark}

\begin{proof}[Proof of \Cref{lemma:eigenfunction1}]
The essence is to observe the $2s$-periodicity of an auxiliary function. Rewriting \eqref{eq:eigen-M} as
\begin{equation*}
2^{-(z-2s)}
2^{z}
\dfrac{
	\Gamma\left (
	\frac{
		z
	}{2}
	\right )
}{
	\Gamma\left (
	\frac{
		n-z
	}{2}
	\right )
}
\dfrac{
	\Gamma\left (
	\frac{
		n-(z-2s)
	}{2}
	\right )
}{
	\Gamma\left (
	\frac{
		z-2s
	}{2}
	\right )
}
\cM u(z-2s)
=
\dfrac{
	\Gamma\left (
		\nu+\tfrac{n-(z-2s)}{2s}
	\right )
}{
	\Gamma\left (
		\nu+\tfrac{n-z}{2s}
	\right )
}
\cM u(z),
\end{equation*}
we realize that 
\begin{equation*}
v(z)
:=2^{-z}
\dfrac{
	\Gamma\left (
		\tfrac{n-z}{2}
	\right )
}{
	\Gamma\left (
		\tfrac{z}{2}
	\right )
	\Gamma\left (
		\nu+\tfrac{n-z}{2s}
	\right )
}
\cM u(z),
	\quad z\in\bC,
\end{equation*}
satisfies
\begin{equation*}
v(z-2s)=v(z),
	\quad z\in\bC.
\end{equation*}
A solution of this equation is,  obviously, the constant one, and this completes our claim.
\end{proof}

Now we would like to take inverse Mellin transform to define the eigenfunction  $u_\nu(r)$ in Euclidean variable.  For this,
note that from \eqref{estimate-Gamma-2} and \eqref{estimate-Gamma-2s} we know that $|\widetilde{u}_{\nu}(\sigma+i\lambda)| \leq C_1(\sigma)|\lambda|^{C_2(\sigma)}e^{-\frac{\pi}{4s}|\lambda|}$ as $|\lambda|\to\infty$.  Then  one may define $u_\nu(r)$ for each $\nu\in\mathbb R$ by taking the inverse Mellin transform of \eqref{eq:eigen-M-sol}.   Uniqueness of the inverse is an issue only if $\nu\in\mathbb N$ (since other values do not provide admissible eigenfunctions); this will be considered in Section  \ref{subsection:uniqueness}.\\

Next, let us prove an interpretation of formula \eqref{eq:eigen-M-sol} in terms of Fourier (Hankel) transform, and which brings us back to \eqref{eigen-int}.

\begin{prop}[In Fourier variable]
\label{prop:fractional-Laguerre}
	The Hankel transform of the function $u_\nu(r)$ in $\mathbb R^n$,  for $\nu>-\frac{n-1}{4s}$, is given by
	\begin{equation*}
		\widehat{u}_\nu(\zeta)
		=\cF\set{u_\nu(r)}(\zeta)
		=2^{\frac{n}{2}+1}s\,
		e^{-\zeta^{2s}}
		\zeta^{2s\nu}.
	\end{equation*}
\end{prop}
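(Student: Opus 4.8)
The plan is to compute the Mellin transform of $\widehat{u}_\nu$ directly from the formula \eqref{eq:eigen-M-sol} for $\cM u_\nu=\widetilde u_\nu$, using the composition identity \eqref{eq:MF-M} of \Cref{lem:MF-M}, and then to recognise the answer as the Mellin transform of $2^{\frac n2+1}s\,e^{-\zeta^{2s}}\zeta^{2s\nu}$; uniqueness of the Mellin inversion (\Cref{cor:uniq}) will then give the claimed identity. No integral over Bessel functions needs to be evaluated.

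Concretely, I would first apply \Cref{lem:MF-M} with $u$ replaced by $u_\nu$: on the strip $0<\re z<\tfrac{n+1}{2}$,
\[
\cM\set{\widehat{u}_\nu}(z)
=2^{z-\frac n2}\frac{\Gamma(\tfrac z2)}{\Gamma(\tfrac{n-z}{2})}\,\widetilde u_\nu(n-z),
\qquad
\widetilde u_\nu(n-z)=2^{n-z}\,\frac{\Gamma\!\left(\nu+\tfrac{z}{2s}\right)\Gamma(\tfrac{n-z}{2})}{\Gamma(\tfrac z2)} ,
\]
so that after the cancellation $2^{z-\frac n2}\cdot 2^{n-z}=2^{\frac n2}$ and $\tfrac{\Gamma(z/2)}{\Gamma((n-z)/2)}\cdot\tfrac{\Gamma((n-z)/2)}{\Gamma(z/2)}=1$ one is left with $\cM\set{\widehat{u}_\nu}(z)=2^{\frac n2}\,\Gamma\!\left(\nu+\tfrac{z}{2s}\right)$. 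On the other side, starting from $\cM\set{e^{-r}}(z)=\Gamma(z)$, the composition rule \eqref{eq:M-rescale} gives $\cM\set{e^{-r^{2s}}}(z)=\tfrac{1}{2s}\Gamma(\tfrac{z}{2s})$ and the translation rule \eqref{eq:M-trans} then gives $\cM\set{\zeta^{2s\nu}e^{-\zeta^{2s}}}(z)=\tfrac{1}{2s}\Gamma\!\left(\nu+\tfrac{z}{2s}\right)$. Hence the two Mellin transforms coincide up to the factor $2^{\frac n2+1}s$. Since $\widetilde u_\nu$ is holomorphic in its strip and, by Stirling's formula applied to the dominant factor $\Gamma(\nu+\tfrac{n-z}{2s})$, decays like $e^{-\frac{\pi}{4s}|\lambda|}$ along any vertical line there (in particular faster than $|\lambda|^{-2}$), \Cref{cor:uniq} applies and yields $\widehat{u}_\nu(\zeta)=2^{\frac n2+1}s\,e^{-\zeta^{2s}}\zeta^{2s\nu}$.

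The one point that requires genuine care — and which I expect to be the only real obstacle — is the bookkeeping of the vertical strips. \Cref{lem:MF-M} is valid only on $0<\re z<\tfrac{n+1}{2}$, and its use above additionally requires $\widetilde u_\nu(n-z)$ to be an honest (absolutely convergent) Mellin transform of $u_\nu$ on the chosen line, i.e.\ $\widetilde u_\nu$ must be holomorphic near $\re(n-z)$; the obstruction is the pole of $\Gamma\!\left(\nu+\tfrac{n-w}{2s}\right)$ at $w=n+2s\nu$, which stays to the right of the strip $\{0<\re z<\tfrac{n+1}{2}\}$ precisely when $n+2s\nu>\tfrac{n+1}{2}$, that is $\nu>-\tfrac{n-1}{4s}$ — exactly the hypothesis of the Proposition. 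Once one fixes a line $\re z=\sigma$ inside the resulting admissible interval, all Mellin transforms appearing ($\cM u_\nu$, $\cM\{\widehat u_\nu\}$, and $\cM\{\zeta^{2s\nu}e^{-\zeta^{2s}}\}$) are holomorphic there with the exponential decay needed for \Cref{thm:M-inv} and \Cref{cor:uniq}, and the rest is a routine chain of the elementary identities of \Cref{prop:M-elem} together with \Cref{lem:MF-M}.
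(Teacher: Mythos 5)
Your proof is correct and is essentially the paper's own argument: one computes the Mellin transform of $\widehat u_\nu$ via \Cref{lem:MF-M} and the formula \eqref{eq:eigen-M-sol}, computes the Mellin transform of $2^{\frac n2+1}s\,e^{-\zeta^{2s}}\zeta^{2s\nu}$ (the paper does this by direct substitution rather than via \eqref{eq:M-trans}--\eqref{eq:M-rescale}, and evaluates everything at $n-z$ instead of $z$), finds both equal to $2^{\frac n2}\Gamma\big(\nu+\tfrac{z}{2s}\big)$, and concludes by \Cref{cor:uniq}. The only imprecision is in your strip bookkeeping: the relevant pole sits at $z=-2s\nu$ in the variable of $\cM\{\widehat u_\nu\}$ (equivalently at $n+2s\nu$ in the variable $n-z$ of $\widetilde u_\nu$, whose admissible range is $(\tfrac{n-1}{2},n)$, not $(0,\tfrac{n+1}{2})$), so what the argument actually needs is $-2s\nu<\tfrac{n+1}{2}$, i.e.\ $\nu>-\tfrac{n+1}{4s}$; since the stated hypothesis $\nu>-\tfrac{n-1}{4s}$ implies this, the proof is valid as written.
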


\begin{proof}
On the one hand, for ${\rm Re\,}(\nu+\tfrac{n-z}{2s})>0$, or ${\rm Re\,}z < n+2s\nu$,
\begin{align*}
\cM\set{
	2^{\frac{n}{2}+1}s\,e^{-\zeta^{2s}}
	\zeta^{2s\nu}
}(n-z)
&=2^{\frac{n}{2}}
\int_{0}^{\infty}
	e^{-\zeta^{2s}}
	\zeta^{2s\nu+n-z-2s}
	(2s\,\zeta^{2s-1}\,d\zeta)\\
&=2^{\frac{n}{2}}
\int_{0}^{\infty}
	e^{-\varrho}
	(\varrho^{\frac{1}{2s}})^{
		2s\nu+n-z-2s
	}
\,d\varrho
=2^{\frac{n}{2}}
\Gamma\left (
	\nu+\tfrac{n-z}{2s}
\right ).
\end{align*}
On the other hand, by \eqref{eq:MF-M} and \eqref{eq:eigen-M-sol}, for $0<{\rm Re\,}(n-z)<\frac{n+1}{2}$, or $\frac{n-1}{2}<{\rm Re\,}z<n$,
\begin{align*}
\cM\set{
	\cF u_\nu(\zeta)
}(n-z)
=2^{\frac{n}{2}-z}
\frac{
	\Gamma\left (
	\frac{
		n-z
	}{2}
	\right )
}{
	\Gamma\left (
	\frac{
		z
	}{2}
	\right )
}
\cM u_\nu(z)
=2^{\frac{n}{2}}
\Gamma\left (
	\nu+\tfrac{n-z}{2s}
\right ).
\end{align*}
By \Cref{cor:uniq} (on a strip on the right of ${\rm Re\,}z=\frac{n-1}{2}$), the result follows.
%
\end{proof}

Now we go back to the formula \eqref{eq:eigen-M-sol} for the (tentative) eigenfunction, and introduce the multiplier $\Lambda_s$  defined in \eqref{multiplier-Lambda}. For this we write
\begin{equation}\label{splitting}
\widetilde{u}_{\nu}(z)
:=
2^{z}
\frac{
	\Gamma\left (
	\nu+\tfrac{n-z}{2s}
	\right )
	\Gamma\left (
	\tfrac{z}{2}
	\right )
}{
	\Gamma\left (
	\tfrac{n-z}{2}
	\right )
}
=:\Lambda_s(z) \, \cU_{\nu}(z),
\end{equation}
where we have defined the function
\begin{equation}
	\label{eq:U-nu}
\cU_{\nu}(z):=2^{\frac{z}{s}}\dfrac{
	\Gamma(\nu+\tfrac{n-z}{2s})
}{
	\Gamma(\tfrac{n-z}{2s})
}
\Gamma(\tfrac{n}{2}-\tfrac{n}{2s}+\tfrac{z}{2s}).
\end{equation}
The splitting \eqref{splitting} has some advantages. Indeed, when $\nu\in\mathbb N$, the quotient of two Gamma functions in $\cU_\nu(z)$ simplifies to a rising factorial, allowing to get an explicit formula for its inverse Mellin transform in Lemma \ref{lem:U-k}. This is actually the reason for the introduction of the multiplier $\Lambda_s$ (and the associated $\Phi_s^{-1}$) in the definition of the norm $\|\cdot\|_\dag$.

We will show in Section \ref{sec:M-residue-new} below that eigenfuctions for the eigenvalue problem \eqref{eigenvalue-problem} that belong to the space $\mathds L^2_\dag$ are only those $\widetilde u_\nu$ with $\nu=k$, $k\in\mathbb N$ (indeed, look at the asymptotics from \eqref{U-asymptotics}).

\subsection{Fractional analogue of Hermite polynomials}\label{subsection:fractional-Hermite}

We are interested in the inverse Mellin transform (particularly on the line $\re z=\frac{n}{2}$) of the function $\cU_{\nu}$  given in \eqref{eq:U-nu}. While an explicit expression can be obtained via residues (\Cref{sec:M-residue-new}), we first present an more elementary approach for the special case where $\nu\in\bN$.

\begin{lemma}\label{lem:U-k}
For $\nu_k=k\in\mathbb N$ it holds that
	\begin{equation}\label{notation-eigenfunction}
		\cU_k(z)
		=\cM\set{\cL_k(\cdot)}(z),
	\quad \text{ for } \re(\tfrac{n-z}{2s})<\tfrac{n}{2},
	\end{equation}
	where we have defined
	\begin{equation}\label{eq:Hermite-M-inv}
		\cL_k(r):=
		2s\,k!\,
		\left (
			\tfrac{r^{2s}}{4}
		\right )^{\frac{n}{2}-\frac{n}{2s}}
		e^{-\frac{r^{2s}}{4}}
		L_k^{(\frac{n-2}{2})}
		\left (
			\tfrac{r^{2s}}{4}
		\right ).
	\end{equation}
Here $L_k^{(\alpha)}$ is the generalized Laguerre polynomial of degree $k$ with parameter $\alpha$, as defined in \Cref{section:Hermite}.
\end{lemma}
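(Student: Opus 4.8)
The plan is to prove $\eqref{notation-eigenfunction}$ by computing the Mellin transform of $\cL_k$ directly from its definition $\eqref{eq:Hermite-M-inv}$ and matching the result with the closed form $\eqref{eq:U-nu}$ of $\cU_k$. Note first that for $\nu=k\in\bN$ the ratio of Gamma functions in $\cU_k$ collapses to a rising factorial, so $\cU_k(z)=2^{z/s}\,\big(\tfrac{n-z}{2s}\big)_k\,\Gamma\!\big(\tfrac n2-\tfrac n{2s}+\tfrac z{2s}\big)$ is a polynomial in $z$ times a single Gamma factor; this is what makes an elementary verification possible. I would change variables $t=\tfrac{r^{2s}}{4}$ in
\[
\cM\set{\cL_k}(z)=\int_0^\infty r^{z}\,\cL_k(r)\,\frac{dr}{r},
\]
using $r^{z}=2^{z/s}t^{z/2s}$ and $\tfrac{dr}{r}=\tfrac1{2s}\tfrac{dt}{t}$ (with the principal branch for the complex powers, consistently with \Cref{rmk:exp}). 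The prefactor $2s\,k!$ in $\cL_k$ absorbs the Jacobian $\tfrac1{2s}$ up to the factor $k!$, leaving
\[
\cM\set{\cL_k}(z)=2^{z/s}\,k!\int_0^\infty t^{\beta-1}e^{-t}L_k^{(\frac{n-2}{2})}(t)\,dt,
\qquad \beta:=\tfrac n2-\tfrac n{2s}+\tfrac z{2s}.
\]

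The key input is the classical Laguerre moment identity
\[
\int_0^\infty t^{\beta-1}e^{-t}L_k^{(\alpha)}(t)\,dt
=\frac{\Gamma(\beta)\,\Gamma(\alpha+1+k-\beta)}{k!\,\Gamma(\alpha+1-\beta)},
\qquad \re\beta>0,
\]
which can be quoted from a standard table, or derived in a self-contained way: either from the generating function $\sum_{k\ge0}L_k^{(\alpha)}(t)w^{k}=(1-w)^{-\alpha-1}e^{-tw/(1-w)}$ together with $\int_0^\infty t^{\beta-1}e^{-t/(1-w)}\,dt=\Gamma(\beta)(1-w)^{\beta}$ and the binomial expansion of $(1-w)^{\beta-\alpha-1}$; or from the finite sum $L_k^{(\alpha)}(t)=\sum_{j=0}^{k}(-1)^{j}\binom{k+\alpha}{k-j}\tfrac{t^{j}}{j!}$ followed by a Chu--Vandermonde summation of the resulting ${}_2F_1(-k,\beta;\alpha+1;1)$. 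Applying this with $\alpha=\tfrac{n-2}{2}$, so that $\alpha+1=\tfrac n2$, $\alpha+1-\beta=\tfrac{n-z}{2s}$ and $\alpha+1+k-\beta=k+\tfrac{n-z}{2s}$, while $\Gamma(\beta)=\Gamma\!\big(\tfrac n2-\tfrac n{2s}+\tfrac z{2s}\big)$, the two factors of $k!$ cancel and one reads off
\[
\cM\set{\cL_k}(z)
=2^{z/s}\,\frac{\Gamma\!\big(k+\tfrac{n-z}{2s}\big)}{\Gamma\!\big(\tfrac{n-z}{2s}\big)}\,
\Gamma\!\big(\tfrac n2-\tfrac n{2s}+\tfrac z{2s}\big)
=\cU_k(z),
\]
as claimed.

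It then remains to pin down the range of $z$. The hypothesis $\re\beta>0$ of the moment identity reads $\re z>n(1-s)$, equivalently $\re\big(\tfrac{n-z}{2s}\big)<\tfrac n2$, which is precisely the stated strip; and this is also exactly the region in which the Mellin integral converges absolutely, since near $r=0$ one has $\cL_k(r)\asymp r^{-n(1-s)}$ (using $L_k^{(\frac{n-2}{2})}(0)\ne0$), while the factor $e^{-r^{2s}/4}$ kills all powers as $r\to\infty$. I do not expect a genuine obstacle here: the content is the Laguerre moment formula plus careful bookkeeping of the Gamma arguments and of the convergence strip; the one mildly delicate point is maintaining a consistent choice of branch for $r^{z}$ and $4^{z/2s}$ when $z$ is complex, which is handled exactly as in \Cref{rmk:exp}.
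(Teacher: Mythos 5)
Your proof is correct and follows essentially the same route as the paper: after the substitution $t=\tfrac{r^{2s}}{4}$ (which the paper performs via the Mellin operational rules \eqref{eq:M-mult-exp}--\eqref{eq:M-rescale} and you by direct change of variables), everything reduces to the moment integral $\int_0^\infty t^{\beta-1}e^{-t}L_k^{(\frac{n-2}{2})}(t)\,dt$ with $\beta=\tfrac n2-\tfrac n{2s}+\tfrac z{2s}$, and your Gamma bookkeeping and convergence strip $\re z>n(1-s)$, i.e.\ $\re(\tfrac{n-z}{2s})<\tfrac n2$, match the statement. The only difference is how that identity is established --- the paper derives it from the Rodrigues formula by integrating by parts $k$ times, whereas you quote the classical Laguerre moment formula (or derive it via the generating function or Chu--Vandermonde) --- an immaterial variation; note also that the branch issue you flag does not actually arise here, since $r,t>0$ and only the real logarithm enters.
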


\begin{proof}
The idea is that the rising factorial translates to derivatives of a monomial under Mellin transform. 
Keeping this in mind, using the so-called Rodrigues formula for Laguerre polynomials
\begin{equation*}
	L_k^{(\alpha)}( r )
	=\frac{1}{k!}
	 r ^{-\alpha}
	e^ r
	\frac{d^k}{d r ^k}
	( r ^{\alpha+k}e^{- r }),
\end{equation*}
we compute, for $\alpha\in \R$,
\begin{align*}
\cM\set{
	k!\,e^{- r }L_k^{(\alpha)}( r )
}(y)
&=\cM\set{
	k!\, r ^{\alpha}e^{- r }L_k^{(\alpha)}( r )
}(y-\alpha)
=\cM\set{
	\dfrac{d^k}{d r ^k}
	( r ^{\alpha+k}e^{- r })
}(y-\alpha)\\
&=\int_{0}^{\infty}
	 r ^{y-\alpha-1}
	\dfrac{d^k}{d r ^k}
	( r ^{\alpha+k}e^{- r })
\,d r
=(-1)^k\int_{0}^{\infty}
	\dfrac{d^k}{d r ^k}
	( r ^{y-\alpha-1})
	 r ^{\alpha+k}e^{- r }
\,d r \\
&=
(-1)^k (y-\alpha-1)
\cdots (y-\alpha-k)
\int_{0}^{\infty}
	 r ^{y-1}e^{- r }
\,d r
\\
&=\dfrac{
	\Gamma(k+\alpha+1-y)
}{
	\Gamma(\alpha+1-y)
}
\Gamma(y),
\end{align*}
for $\re y>0$. Choosing  $\alpha=\frac{n-2}{2}$,
\begin{equation*}
\cM\set{
	k!\,
	e^{- r }
	L_k^{(\frac{n-2}{2})}( r )
}(y)
=\dfrac{
	\Gamma(k+\frac{n}{2}-y)
}{
	\Gamma(\frac{n}{2}-y)
}
\Gamma(y).
\end{equation*}
Now we make the change of variable
\begin{equation*}
\label{change-variable}\tfrac{n-z}{2s}=\tfrac{n}{2}-y,
\end{equation*}
which implies
\begin{equation*}
\cM\set{
	k!\,
	e^{- r }
	L_k^{(\frac{n-2}{2})}( r )
}(\tfrac{n}{2}-\tfrac{n}{2s}+\tfrac{z}{2s})
=\dfrac{
	\Gamma(k+\tfrac{n-z}{2s})
}{
	\Gamma(\tfrac{n-z}{2s})
}
\Gamma(\tfrac{n}{2}-\tfrac{n}{2s}+\tfrac{z}{2s}),
	\quad \re(\tfrac{n-z}{2s})<\tfrac{n}{2}.
\end{equation*}
Recalling \eqref{eq:U-nu} with $\nu=k$ we obtain
\begin{equation*}
\cU_{k}(z)
=2^{\frac{z}{s}}
\cM\set{
	k!\,
	e^{- r }
	L_k^{(\frac{n-2}{2})}( r )
}(\tfrac{n}{2}-\tfrac{n}{2s}+\tfrac{z}{2s}).
\end{equation*}
Now, using \eqref{eq:M-mult-exp}--\eqref{eq:M-rescale},
\begin{align*}
\cU_{k}(z)
&=2^{\frac{z}{s}}
	\cM\set{
		2s\,k!\,
		e^{- r ^{2s}}
		L_k^{(\frac{n-2}{2})}( r ^{2s})
	}(ns-n+z)\\
&=2^{\frac{z}{s}}
	\cM\set{
		2s\,k!\,
		 r ^{-n(1-s)}
		e^{- r ^{2s}}
		L_k^{(\frac{n-2}{2})}( r ^{2s})
	}(z)\\
&=
	\cM\set{
		2s\,k!\,
		2^{\frac{n(1-s)}{s}}
		 r ^{-n(1-s)}
		e^{-\frac{ r ^{2s}}{4}}
		L_k^{(\tfrac{n-2}{2})}(\tfrac{ r ^{2s}}{4})
	}(z),
\end{align*}
for $\re(\tfrac{n-z}{2s})<\tfrac{n}{2}$, as desired.
\end{proof}

 Recalling the splitting \eqref{splitting}, we will denote the eigenfunctions for \eqref{eigenvalue-problem}  by
\begin{equation*}
e_k(r):=\Phi_s \cL_k(r), \quad k\in\mathbb N.
\end{equation*}
%
In addition, thanks to the orthogonality properties of the Laguerre polynomials, we have:

\begin{lemma} \label{lemma:orthogonal}
	Any two eigenfunctions $e_k$, $e_j$, $k\neq j$,
	are orthogonal with respect to $\angles{\cdot,\cdot}_{\dag}$.
\end{lemma}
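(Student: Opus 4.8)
The plan is to unwind the definitions and reduce the statement to the classical orthogonality of the generalized Laguerre polynomials. First I would use that $e_k=\Phi_s\cL_k$ together with the fact that $\Phi_s^{-1}$ inverts $\Phi_s$ — on the Mellin side the two act as the reciprocal multipliers $\Lambda_s^{-1}(z)$ and $\Lambda_s(z)$, so their composition is the identity once the inverse Mellin transform is taken along a vertical line in the strip $\{n(1-s)<\re z<n\}$, which avoids every pole listed in \eqref{poles-Lambda} — to conclude $\Phi_s^{-1}e_k=\cL_k$. Substituting this into the defining scalar product \eqref{space-introduction} and using that $\cL_k$ is real-valued (the Laguerre polynomials have real coefficients), one gets
\[
\angles{e_k,e_j}_\dag
=\int_0^\infty
	\cL_k(r)\,\cL_j(r)\,
	e^{\frac{r^{2s}}{4}}\,
	r^{n(2-s)-1}
\,dr.
\]

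Next I would make the change of variables $\rho=\tfrac{r^{2s}}{4}$. Plugging in the explicit formula \eqref{eq:Hermite-M-inv}, the two exponential factors $e^{-r^{2s}/4}$ coming from $\cL_k\cL_j$ cancel against the Gaussian-type weight $e^{r^{2s}/4}$ down to a single $e^{-\rho}$, while the remaining powers of $r$ — the two factors $(\tfrac{r^{2s}}{4})^{\frac n2-\frac n{2s}}$, the Jacobian weight $r^{n(2-s)-1}$, and the differential of the substitution — collapse to exactly $\rho^{\frac{n-2}{2}}\,d\rho$, up to a nonzero constant $c_{n,s}$ depending only on $n$ and $s$. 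The identity then reads
\[
\angles{e_k,e_j}_\dag
=c_{n,s}\int_0^\infty
	\rho^{\frac{n-2}{2}}\,
	e^{-\rho}\,
	L_k^{(\frac{n-2}{2})}(\rho)\,
	L_j^{(\frac{n-2}{2})}(\rho)
\,d\rho.
\]

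Finally I would invoke the orthogonality relation \eqref{Laguerre-orthogonality} for the generalized Laguerre polynomials with parameter $\alpha=\tfrac{n-2}{2}$: the last integral is a nonzero multiple of $\delta_{k,j}$, hence vanishes whenever $k\neq j$, which is the assertion. I do not anticipate a genuine obstacle here; the computation is routine, and the only points deserving attention are the bookkeeping in the substitution $\rho=\tfrac{r^{2s}}{4}$ (one must verify that the various powers of $r$ really do combine into precisely the Laguerre weight $\rho^{(n-2)/2}$, so that no stray factor spoils the orthogonality) and the observation, used implicitly in the first step, that $\Phi_s^{-1}\Phi_s$ genuinely acts as the identity on $\cL_k$ — which holds since $\cM\cL_k=\cU_k$ is holomorphic around $\re z=\tfrac n2$, where both $\Lambda_s$ and $\Lambda_s^{-1}$ are holomorphic and nonvanishing.
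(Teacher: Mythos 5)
Your proof is correct and follows essentially the same route as the paper: write $\Phi_s^{-1}e_k=\cL_k$, substitute into the $\dagger$ scalar product, change variables $\rho=\tfrac{r^{2s}}{4}$ (the powers do collapse to the Laguerre weight $\rho^{\frac{n-2}{2}}$), and conclude by the orthogonality relation \eqref{Laguerre-orthogonality} with $\alpha=\tfrac{n-2}{2}$. The only slight imprecision is your closing parenthetical: for $s\le\tfrac12$ the line $\re z=\tfrac n2$ lies outside the strip $n(1-s)<\re z<n$ and $\cU_k$ may even have a pole there, but this is immaterial since $\Phi_s^{-1}e_k=\cL_k$ already follows from the definition $e_k=\Phi_s\cL_k$ together with the multiplier inversion along any line inside that strip, exactly as you stated earlier in the argument.
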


\begin{proof}
Substituting the formula for $\cL_j(r)$ from \eqref{eq:Hermite-M-inv} and changing variable to
$\rho=\frac{r^{2s}}{4}$ we have
\begin{equation*}
\begin{split}
	\langle e_j,e_k\rangle_\dag
	&=\int_0^\infty
		\Phi_s^{-1}e_j(r)\,
		\overline{\Phi_s^{-1}e_k(r)}
		\,e^{\frac{r^{2s}}{4}}
		r^{n(2-s)-1}\,dr
	=
	\int_0^\infty
		\cL_j(r)\,\cL_k(r)
		\,e^{\frac{r^{2s}}{4}}
		r^{n(2-s)-1}\,dr\\
&=2s\, j!k!\,2^{\frac{n}{s}}
	\int_0^\infty
		L_j^{(\frac{n-2}{2})}(\rho)
		L_k^{(\frac{n-2}{2})}(\rho)
		\,e^{-\rho}
		\rho^{\frac{n}{2}-1}
	\,d\rho
=2s(k!)^2 \, 2^{\frac{n}{s}}
\frac{
	\Gamma(k+\frac{n}{2})
}{
	k!\Gamma(\frac{n}{2})
}
\delta_{k,j}.
\end{split}
\end{equation*}
In the last equality we have used  \eqref{Laguerre-orthogonality}, taking into account that Laguerre polynomials $\{L^{(\alpha)}_k(\rho)\}_k$ are an orthogonal family with respect to the weight $\rho^{\alpha}e^{-\rho}$, for $\alpha=\frac{n-2}{2}$.\\
\end{proof}

%


\subsection{Mellin inversion and Kummer functions}\label{sec:M-residue-new}

The ultimate goal of this Subsection is to prove \eqref{asymptotics-introduction}, which motivates the choice of function spaces. Indeed,  in these asymptotics we have exponential decay if and only if $\nu\in \mathbb N$. For this,  we need to compute the inverse Mellin transform of $\cU_\nu(z)$, denoted by $U_\nu$, for $\nu\in\mathbb R$. This is the most technical part of the paper.

\begin{thm}\label{thm:u-nu}
Assume that  $\nu+\tfrac{n}{2}$ is not a non-positive integer. Then
	\begin{equation*}
		\cU_\nu(z)
		=
		\cM\set{U_\nu(r)}(z),
	\end{equation*}
for
\begin{equation}\label{U_nu}
\begin{split}
U_\nu(r)
&=2s\dfrac{
	\Gamma\left (
	\nu+\frac{n}{2}
	\right )
}{
	\Gamma\left (
	\frac{n}{2}
	\right )
}
\left (
	\frac{r^{2s}}{4}
\right )^{\frac{n}{2}-\frac{n}{2s}}
e^{-\frac{r^{2s}}{4}}
M\left (
	-\nu;
	\frac{n}{2};
	\frac{r^{2s}}{4}
\right )\\
&\qquad
-2s\sum_{\ell=0}^{\floors{-\nu-\frac{n}{2}}}
\dfrac{
	\sin(\pi\nu)
}{
	\pi
}
\dfrac{
	\Gamma(\nu+1+\ell)
	\Gamma(\nu+\tfrac{n}{2}+\ell)
}{\ell!}
\left (
	\frac{r^{2s}}{4}
\right )^{-\nu-\frac{n}{2s}-\ell}.
\end{split}
\end{equation}
Moreover, we have the following asymptotic behavior  as $r\to+\infty$,
\begin{equation}\label{U-asymptotics}
\begin{split}
&U_k(r)
\sim (-1)^k \, 2s\,e^{-\frac{r^{2s}}{4}}
	\left (
		\frac{r^{2s}}{4}
	\right )^{k+\frac{n}{2}-\frac{n}{2s}}
\quad \text{ for } \quad
\nu=k\in\bN,\\
&U_\nu(r)
\asymp
	\left (
		\frac{r^{2s}}{4}
	\right )^{-\nu-\frac{n}{2s}}
\quad \text{ for } \quad
\nu\notin \bN \quad \text{ and } \nu \leq -\frac{n}{2}-1.
\end{split}
\end{equation}
\end{thm}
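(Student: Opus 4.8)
The plan is to compute the inverse Mellin transform $U_\nu:=\cM^{-1}_\sigma\{\cU_\nu\}$ of the function $\cU_\nu(z)$ of \eqref{eq:U-nu} by pushing the vertical contour all the way to the left and summing residues. I first fix the line of integration. Since $\cU_\nu$ is $2^{z/s}$ times a quotient of three Gamma functions, the Stirling-type asymptotics \eqref{estimate-Gamma-2}--\eqref{estimate-Gamma-2s} give that, away from its poles, $\cU_\nu(\sigma+i\lambda)$ decays like $e^{-\frac{\pi}{4s}|\lambda|}$ as $|\lambda|\to\infty$ and super-exponentially as $\re z\to-\infty$; its poles are located at $z=n+2s(\nu+j)$ (from $\Gamma(\nu+\tfrac{n-z}{2s})$) and at $z=n(1-s)-2s\ell$ (from $\Gamma(\tfrac n2-\tfrac n{2s}+\tfrac z{2s})$), $j,\ell\in\bN$, with the first family partially cancelled by the zeros of $\Gamma(\tfrac{n-z}{2s})$ exactly when $\nu\in\bZ$. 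A short computation shows that, precisely because $\nu+\tfrac n2$ is assumed not to be a non-positive integer, there is a pole-free vertical strip of positive width adjacent to $\re z=n(1-s)$ having to its left exactly the (possibly empty) set of first-family poles $z=n+2s(\nu+\ell)$, $\ell=0,\dots,\floors{-\nu-\tfrac n2}$, and all of the second family. I take $\sigma$ in that strip; for $\nu+\tfrac n2>0$ this reduces to the region used in \Cref{lem:U-k}, where no pole of the first family lies to the left.

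Next I push the contour to $\re z=-N$ and let $N\to\infty$; the super-exponential decay of $\cU_\nu$ as $\re z\to-\infty$ kills the remainder, so $U_\nu(r)$ is the sum of the residues of $r^{-z}\cU_\nu(z)$ at all poles lying to the left of $\sigma$, which I split into two families. For the poles $z=n(1-s)-2s\ell$, $\ell\in\bN$: computing the residues with $\Res_{w=-\ell}\Gamma(w)=(-1)^\ell/\ell!$, rewriting $\tfrac{\Gamma(\nu+n/2+\ell)}{\Gamma(n/2+\ell)}=\tfrac{\Gamma(\nu+n/2)}{\Gamma(n/2)}\tfrac{(\nu+n/2)_\ell}{(n/2)_\ell}$, and invoking Kummer's transformation $M(\nu+\tfrac n2;\tfrac n2;-x)=e^{-x}M(-\nu;\tfrac n2;x)$ with $x=\tfrac{r^{2s}}4$, their sum is a convergent power series in $x$ that I recognize as exactly the first (Kummer) term of \eqref{U_nu}. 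For the finitely many poles $z=n+2s(\nu+\ell)$, $\ell=0,\dots,\floors{-\nu-\tfrac n2}$ (which, after cancellation, are the only surviving ones of this family when $\nu\in\bZ_{<0}$): here the residue carries a factor $1/\Gamma(-\nu-\ell)$, which I turn, via the reflection formula $\Gamma(-\nu-\ell)\Gamma(1+\nu+\ell)=\pi/\sin(\pi(-\nu-\ell))$, into an expression involving $\sin(\pi\nu)\,\Gamma(1+\nu+\ell)$; the residues then assemble into precisely the finite correction sum in \eqref{U_nu}. This yields \eqref{U_nu}. When $\nu=k\in\bN$ there are no poles of the second family to the left, the sum is empty, and by \Cref{rmk:kummer} the Kummer term collapses to $\cL_k$, so we recover \Cref{lem:U-k} as a consistency check.

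For the asymptotics \eqref{U-asymptotics} I insert the classical large-$x$ expansion $M(-\nu;\tfrac n2;x)=\tfrac{\Gamma(n/2)}{\Gamma(-\nu)}e^{x}x^{-\nu-n/2}\sum_{\ell\ge0}\tfrac{(\nu+n/2)_\ell(1+\nu)_\ell}{\ell!}x^{-\ell}+(\text{exponentially small})$ into the first term of \eqref{U_nu}. Using the reflection formula once more, the $\ell$-th coefficient of the resulting expansion matches, up to sign, the $\ell$-th subtracted correction term, so that the two pieces in \eqref{U_nu} interact exactly as recorded: for $\nu\notin\bN$ with $\nu\le-\tfrac n2-1$ the leading behavior survives with coefficient proportional to $\sin(\pi\nu)\,\Gamma(\nu+1)\,\Gamma(\nu+\tfrac n2)\neq0$, giving $U_\nu(r)\asymp(\tfrac{r^{2s}}4)^{-\nu-\frac n{2s}}$, while for $\nu=k\in\bN$ the identity $\Gamma(-k)=\infty$ suppresses the algebraic part and $M(-k;\tfrac n2;x)=\tfrac{k!\,\Gamma(n/2)}{\Gamma(k+n/2)}L_k^{(\frac{n-2}{2})}(x)\sim\tfrac{\Gamma(n/2)}{\Gamma(k+n/2)}(-x)^k$ gives $U_k(r)\sim(-1)^k\,2s\,e^{-r^{2s}/4}(\tfrac{r^{2s}}4)^{k+\frac n2-\frac n{2s}}$.

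I expect the main obstacle to be the second step: pinning down the pole-free strip, counting exactly which poles of $\Gamma(\nu+\tfrac{n-z}{2s})$ lie to its left -- where the hypothesis $\nu+\tfrac n2\notin\bZ_{\le0}$ is essential and the count by $\floors{\cdot}$ must be justified -- handling the cancellations with the zeros of $\Gamma(\tfrac{n-z}{2s})$ when $\nu\in\bZ$, and tracking every sign so that the two residue families assemble into precisely \eqref{U_nu}. The remaining ingredients -- the super-exponential decay estimate that justifies pushing the contour to $-\infty$, and the re-summation of the first-family residues into a Kummer function via its transformation formula -- are routine but still require care.
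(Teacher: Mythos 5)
Your overall strategy coincides with the paper's: fix an inversion line just to the right of $\re z=n(1-s)$, close the contour to the left, sum the residues of $r^{-z}\cU_\nu(z)$ over the two families of poles, resum the residues at $z=n(1-s)-2s\ell$ into the Kummer term via $M(\nu+\tfrac n2;\tfrac n2;-\rho)=e^{-\rho}M(-\nu;\tfrac n2;\rho)$, and read off the asymptotics from the large-argument expansion of $M$. Working directly with $\cU_\nu(z)$ instead of first rescaling to the Barnes-type integral $I_{\sigma,\nu,b}$ of Proposition \ref{prop:I-alpha} is only a cosmetic difference. One caveat: the contour-closing step you call routine is precisely what occupies Lemmas \ref{lem:abs-sin} and \ref{lem:I-alpha-2}; since the second family of poles marches off to $-\infty$ along the real direction, you cannot push arbitrary vertical lines $\re z=-N$, but must thread between the poles (the paper uses arcs of radii $R_m=\tfrac12+2m$ together with a lower bound for $|\sin(\pi R_m e^{i\theta})|$), and the interplay between the growth of $r^{-z}$ and the decay of $1/\Gamma$ has to be made quantitative; this should be spelled out rather than asserted.

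The genuine gap is exactly the step you defer (``tracking every sign''), and it affects the conclusion for $\nu\notin\bN$. A careful computation of the residue at $z=n+2s(\nu+\ell)$ gives $+\,2s\,\tfrac{\sin(\pi\nu)}{\pi}\,\tfrac{\Gamma(\nu+1+\ell)\Gamma(\nu+\frac n2+\ell)}{\ell!}\big(\tfrac{r^{2s}}{4}\big)^{-\nu-\frac n{2s}-\ell}$, because $1/\Gamma(-\nu-\ell)=-(-1)^{\ell}\tfrac{\sin(\pi\nu)}{\pi}\Gamma(1+\nu+\ell)$; that is, the finite correction enters with the sign opposite to the one in \eqref{U_nu}. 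With this sign it \emph{cancels} the first $\floors{-\nu-\frac n2}+1$ algebraic terms of the Kummer expansion, whose $\ell$-th coefficient is $-\tfrac{\sin(\pi\nu)}{\pi}\tfrac{\Gamma(\nu+1+\ell)\Gamma(\nu+\frac n2+\ell)}{\ell!}$, rather than reinforcing them; so your claim that the leading behavior $\big(\tfrac{r^{2s}}{4}\big)^{-\nu-\frac n{2s}}$ survives with coefficient proportional to $\sin(\pi\nu)\Gamma(\nu+1)\Gamma(\nu+\tfrac n2)$ does not follow from your own computation. Indeed it cannot: the inversion line $\re z=\sigma_z>n(1-s)$ yields for free $|U_\nu(r)|\le C\,r^{-\sigma_z}$, whereas $\big(\tfrac{r^{2s}}{4}\big)^{-\nu-\frac n{2s}}=c\,r^{-2s\nu-n}$ with $2s\nu+n\le n(1-s)-2s<\sigma_z$ when $\nu\le-\tfrac n2-1$, so the rate in the second line of \eqref{U-asymptotics} is incompatible with $U_\nu$ being that inverse Mellin transform; after the cancellation the leading behavior is the first uncancelled term, of order $\big(\tfrac{r^{2s}}{4}\big)^{-\nu-\frac n{2s}-\floors{-\nu-\frac n2}-1}$, and only the qualitative dichotomy (exponential decay if and only if $\nu\in\bN$) survives. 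Note also that your consistency check against \Cref{lem:U-k} is blind to this issue, since $\sin(\pi k)=0$ kills the correction sum for $\nu=k\in\bN$; your treatment of the pole count, of the cancellations when $\nu\in\bZ$, of the second-family residues, and of the integer case is fine. The non-integer case must therefore be redone with the sign fixed, which forces a change either in the minus sign of \eqref{U_nu} or in the stated rate of \eqref{U-asymptotics} --- the same delicate reflection-formula step appears in Lemma \ref{lem:I-alpha-1}, against which your computation should be checked line by line.
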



\begin{remark}
If $\nu+\tfrac{n}{2}$ is a non-positive integer (in particular $\nu \leq -\tfrac{n}{2}<0$), then  $0,-1,\dots,\nu+\tfrac{n}{2}$ are all double poles with zero residue. Similar computations imply that
\[
U_\nu(r)
=2s
\sum_{j=-\nu-\frac{n}{2}+1}^{\infty}
\dfrac{
	\Gamma(\nu+\tfrac{n}{2}+j)
}{
	\Gamma(\tfrac{n}{2}+j)
}
\dfrac{(-1)^j}{j!}
\left (
	\frac{r^{2s}}{4}
\right )^{j+\frac{n}{2}-\frac{n}{2s}}.
\]
\end{remark}

\normalcolor

Recall that
\[
U_\nu(r)
=\cM_{2s\sigma+n(1-s)}^{-1}
\set{\cU_{\nu}(z)}(r)
=\cM_{2s\sigma+n(1-s)}^{-1}
\set{
	4^{\frac{z}{2s}}
	\frac{
		\Gamma\left (
		\nu+\tfrac{n-z}{2s}
		\right )
		\Gamma\left (
			\frac{n}{2}
			-\frac{n}{2s}
			+\tfrac{z}{2s}
		\right )
	}{
		\Gamma\left (
		\tfrac{n-z}{2s}
		\right )
	}
}(r).
\]
Here $\sigma\in\R$ is the real part of the vertical line along which the inverse Mellin transform is computed. It determines the asymptotic behavior of the resulting function as $r\to 0$ and as $r\to\infty$. For now we keep it free.

Note that for simplicity of notations later, we have written $2s\sigma+n(1-s)$ instead of $\sigma$ at the first place. In fact, using
\eqref{eq:M-inv-trans}, \eqref{eq:M-inv-exp} and \eqref{eq:M-inv-rescale}, we have
\begin{align*}
\cM_{2s\sigma+n(1-s)}^{-1}
\set{
	\cU_\nu(z)
}(r)
&=2s\,\cM_{\sigma+\frac{n}{2s}-\frac{n}{2}}^{-1}
\set{
	4^z
	\dfrac{
		\Gamma\left (
			\nu+\tfrac{n}{2s}-z
		\right )
		\Gamma(\tfrac{n}{2}-\tfrac{n}{2s}+z)
	}{
		\Gamma\left (
			\tfrac{n}{2s}-z
		\right )
	}
}(r^{2s})\\
&=2s\,\cM_{\sigma+\frac{n}{2s}-\frac{n}{2}}^{-1}
\set{
	\dfrac{
		\Gamma\left (
			\nu+\tfrac{n}{2s}-z
		\right )
		\Gamma(\tfrac{n}{2}-\tfrac{n}{2s}+z)
	}{
		\Gamma\left (
			\tfrac{n}{2s}-z
		\right )
	}
}\left (\frac{r^{2s}}{4}\right )\\
&=2s\,\left (
	\frac{r^{2s}}{4}
\right )^{\frac{n}{2}-\frac{n}{2s}}
\cM_{\sigma}^{-1}
\set{
	\dfrac{
		\Gamma\left (
			\nu+\tfrac{n}{2}-z
		\right )
		\Gamma(z)
	}{
		\Gamma\left (
			\tfrac{n}{2}-z
		\right )
	}
}\left (\frac{r^{2s}}{4}\right ).
\end{align*}
Therefore, for $b=\frac{n}{2}$ and any $\sigma$ such that $\sigma\notin -\bN$ and $\nu+b-\sigma \notin -\bN$, it suffices to compute the integral
\begin{align*}
I_{\sigma,\nu,b}(\rho):=
\cM_{\sigma}^{-1}
\set{
	\dfrac{
		\Gamma\left (
		\nu+b-z
		\right )
		\Gamma(z)
	}{
		\Gamma\left (
		b-z
		\right )
	}
}(\rho)
=\dfrac{1}{2\pi i}
\int_{\sigma-i\infty}^{\sigma+i\infty}
	\rho^{-z}
	\dfrac{
		\Gamma\left (
			\nu+b-z
		\right )
		\Gamma(z)
	}{
		\Gamma\left (
			b-z
		\right )
	}
\,dz,
\end{align*}
for $b=\frac{n}{2}$. Let us state the final result.

\begin{prop}\label{prop:I-alpha}
Let $b>0$, $\sigma,\nu\in\R$. If neither of $\sigma$, $\nu+b$, $\nu+b-\sigma$ is a non-positive integer, then
\begin{align*}
I_{\sigma,\nu,b}(\rho)
&=\dfrac{
	\Gamma\left (
		\nu+b
	\right )
}{
	\Gamma\left (
		b
	\right )
}
M\left (
	\nu+b;
	b;
	-\rho
\right )
-\sum_{j=0}^{\floors{-\sigma}}
	\dfrac{
	\Gamma(\nu+b+j)
}{
	\Gamma(b+j)
}
\dfrac{(-1)^j}{j!}
\rho^{j}\\
&\qquad
	-\sum_{\ell=0}^{\floors{-\nu-b+\sigma}}
\dfrac{
	\sin(\pi \nu)
}{
	\pi
}
\dfrac{
	\Gamma(\nu+1+\ell)
	\Gamma(\nu+b+\ell)
}{
	\ell!
}
\rho^{-\nu-b-\ell}.
	\end{align*}
\end{prop}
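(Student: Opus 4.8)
\textbf{Proof plan for Proposition \ref{prop:I-alpha}.}
The plan is to evaluate the Mellin--Barnes integral
\[
I_{\sigma,\nu,b}(\rho)=\dfrac{1}{2\pi i}
\int_{\sigma-i\infty}^{\sigma+i\infty}
	\rho^{-z}
	\dfrac{
		\Gamma\left (\nu+b-z\right )\Gamma(z)
	}{
		\Gamma\left (b-z\right )
	}\,dz
\]
by the standard technique of shifting the contour and summing residues. First I would note that the integrand has poles coming from $\Gamma(z)$ at $z=-j$, $j\in\bN$, and poles coming from $\Gamma(\nu+b-z)$ at $z=\nu+b+\ell$, $\ell\in\bN$; the factor $1/\Gamma(b-z)$ is entire and contributes zeros (at $z=b+m$, $m\in\bN$) but no poles. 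The hypotheses that $\sigma$, $\nu+b$, and $\nu+b-\sigma$ are not non-positive integers guarantee that the vertical line $\re z=\sigma$ avoids all poles and that the two families of poles are disjoint, so all residues are simple.

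\textbf{Closing to the left.} To capture the behavior as $\rho\to 0$ (where $\rho^{-z}$ is controlled for $\re z$ large) one should move the contour to $\re z\to-\infty$, picking up $2\pi i$ times the residues at $z=-j$ for $j=0,1,\dots$. Using $\Res_{z=-j}\Gamma(z)=(-1)^j/j!$ and the reflection/functional identities to simplify $\Gamma(\nu+b+j)/\Gamma(b+j)$, the residue sum is
\[
\sum_{j=0}^{\infty}\dfrac{\Gamma(\nu+b+j)}{\Gamma(b+j)}\dfrac{(-1)^j}{j!}\rho^{j}
=\dfrac{\Gamma(\nu+b)}{\Gamma(b)}\,M(\nu+b;b;-\rho),
\]
which is exactly the series definition of Kummer's function $M(a;b;x)=\sum_{j\ge0}\frac{(a)_j}{(b)_j}\frac{x^j}{j!}$ with $a=\nu+b$, $x=-\rho$. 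The crossing of exactly $\floors{-\sigma}+1$ of these poles (those with $-j\ge\sigma$, i.e. $j\le-\sigma$) as we pass from $\re z=\sigma$ to the far left is what produces the correction $\sum_{j=0}^{\floors{-\sigma}}$ subtracted in the statement: the left-closure identity reads $I_{\sigma,\nu,b}=(\text{full left sum})-(\text{poles with }j\le-\sigma\text{ already on the left of }\sigma)$, i.e. one isolates the genuinely analytic piece plus the finitely many monomials that the choice of $\sigma$ excludes. Dually, closing to the \emph{right} (appropriate as $\rho\to\infty$) picks up the residues at $z=\nu+b+\ell$; with $\Res_{z=\nu+b+\ell}\Gamma(\nu+b-z)=-(-1)^\ell/\ell!$ and $\Gamma(\nu+b-z)\Gamma(z)/\Gamma(b-z)$ evaluated via the reflection formula $\Gamma(w)\Gamma(1-w)=\pi/\sin(\pi w)$ applied to the $\Gamma(b-z)$ in the denominator, one gets residues proportional to $\frac{\sin(\pi\nu)}{\pi}\Gamma(\nu+1+\ell)\Gamma(\nu+b+\ell)\rho^{-\nu-b-\ell}/\ell!$, matching the last sum with $\ell$ running up to $\floors{-\nu-b+\sigma}$. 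Equating the two evaluations (or, more cleanly, moving the contour all the way to one side and bookkeeping which of the two pole families lies between $\sigma$ and the new contour) yields the stated formula.

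\textbf{Convergence and the main obstacle.} The analytic backbone requires two estimates: (i) the integrand decays like $e^{-c|\im z|}$ along vertical lines, by Stirling's asymptotics for the ratio of Gammas --- here one checks that the $\Gamma(z)$ in the numerator and $1/\Gamma(b-z)$ combine so that the exponential rates from $\im z\to\pm\infty$ do not cancel adversely; this is the same type of bound already invoked for $\widetilde u_\nu$ after \Cref{lemma:eigenfunction1} (``$|\widetilde u_\nu(\sigma+i\lambda)|\le C_1|\lambda|^{C_2}e^{-\frac{\pi}{4s}|\lambda|}$''), so the horizontal segments in the contour-shift rectangle vanish in the limit; and (ii) the half-line sums of residues converge --- the left sum is the (entire) Kummer series, and the right sum converges for $\rho$ large (indeed it is an asymptotic/convergent tail since consecutive terms carry an extra factor $\rho^{-1}$ times bounded ratios of Gammas). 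I expect the main obstacle to be purely combinatorial-bookkeeping rather than analytic: getting the indices of the two finite correction sums exactly right --- i.e. verifying that precisely the poles $z=-j$ with $0\le j\le\floors{-\sigma}$ and $z=\nu+b+\ell$ with $0\le\ell\le\floors{-\nu-b+\sigma}$ are the ones ``between'' the line $\re z=\sigma$ and the respective closures --- and then massaging the residues at $z=\nu+b+\ell$ through the reflection formula into the displayed $\sin(\pi\nu)$ form without sign errors. Once the residue computations and the index ranges are pinned down, the identity follows by Cauchy's theorem together with the vanishing of the horizontal parts, and the asymptotics \eqref{U-asymptotics} then drop out by reading off the leading term in each regime (the Kummer function's large-argument expansion $M(a;b;x)\sim \frac{\Gamma(b)}{\Gamma(a)}e^x x^{a-b}$ gives the $e^{-r^{2s}/4}$ factor after undoing the substitution $\rho=r^{2s}/4$, and the reduction to Laguerre polynomials for $\nu=k\in\bN$ explains the distinguished exponential decay).
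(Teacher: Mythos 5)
Your general strategy (evaluate the Mellin--Barnes integral by residues, with a left closure producing the Kummer series and a finite $j$-correction reflecting the position of the line $\re z=\sigma$) is the paper's strategy, and the $j$-part of your bookkeeping is essentially right, apart from the slip that the poles $z=-j$ with $j\le\floors{-\sigma}$ are not ``crossed'': they lie to the \emph{right} of the line, are simply not enclosed, and that is why the enclosed sum equals the full series $\frac{\Gamma(\nu+b)}{\Gamma(b)}M(\nu+b;b;-\rho)$ minus the initial segment. The genuine gap is your derivation of the second, finite $\ell$-sum. You attribute it to closing the contour to the \emph{right} and ``equating the two evaluations''. That route fails: (i) as $\re z\to+\infty$ the integrand $\rho^{-z}\Gamma(z)\Gamma(\nu+b-z)/\Gamma(b-z)$ behaves like $|\Gamma(z)|\,|z|^{\nu}\rho^{-\re z}$ up to bounded factors, i.e.\ it grows super-exponentially, so no right-hand arc or segment contribution vanishes; consistently, the full right-hand residue series $\sum_{\ell\ge0}\frac{\Gamma(\nu+1+\ell)\Gamma(\nu+b+\ell)}{\ell!}\rho^{-\nu-b-\ell}$ has term ratios of size $\ell/\rho$ and diverges for every $\rho$ (it is only an asymptotic expansion), contrary to your claim that it ``converges for $\rho$ large''; and (ii) even formally, a right closure would collect the poles $z=\nu+b+\ell$ with $\re z>\sigma$, i.e.\ $\ell>\sigma-\nu-b$ --- the complementary, infinite range --- not the range $0\le\ell\le\floors{-\nu-b+\sigma}$ appearing in the statement.

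The correct mechanism, and what the paper actually does, is a \emph{single} left closure: besides the poles of $\Gamma(z)$, the finitely many poles of $\Gamma(\nu+b-z)$ at $z=\nu+b+\ell$ with $\nu+b+\ell<\sigma$ (nonempty only when $\nu+b<\sigma$, i.e.\ $\nu$ sufficiently negative) also lie to the left of the line and are picked up by the same contour; their residues, after Euler reflection applied to $\Gamma(-\nu-\ell)$ (coming from $1/\Gamma(b-z)$ at $z=\nu+b+\ell$), produce exactly the $\frac{\sin(\pi\nu)}{\pi}\Gamma(\nu+1+\ell)\Gamma(\nu+b+\ell)\rho^{-\nu-b-\ell}/\ell!$ terms with $0\le\ell\le\floors{-\nu-b+\sigma}$, as in \Cref{lem:I-alpha-1}. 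Your parenthetical remark --- move the contour all the way to one side and bookkeep which poles of \emph{both} families lie between $\sigma$ and the new contour --- is the right fix, but you would still have to supply the vanishing of the left closure at infinity, which in the paper is the main technical work (\Cref{lem:I-alpha-2} and \Cref{lem:abs-sin}: Stirling estimates combined with a lower bound for $|\sin(\pi R_m e^{i\theta})|$ along the sequence $R_m=\frac12+2m$ that threads between the poles of $\Gamma(z)$); so the principal obstacle is analytic, not the combinatorial bookkeeping you anticipate.
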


\begin{remark}\label{rmk:kummer}
We make the following remarks in the previous formula:
\begin{itemize}
\item 	We understand that the finite summation in $j$ is empty $\sigma>0$, and so is that in $\ell$ if $\nu+b-\sigma>0$.

\item The Kummer transformation states that
\[
M(a,b,z)=e^{z}M(b-a,b,-z).
\]
Thus the first term can be rewritten as
\[
\dfrac{
	\Gamma\left (
	\nu+b
	\right )
}{
	\Gamma\left (
	b
	\right )
}
e^{-\rho}
M\left (
	-\nu;
	b;
	\rho
\right ),
\]
showing that the alternating sign yields an exponential decay (this also justifies the choice of the symbol $b$.)

\item The generalized Laguerre polynomial is equal to the Kummer function with first parameter a non-positive integer:
\[
L_k^{(b-1)}(\rho)
=\dfrac{
	\Gamma(b+k)
}{
	\Gamma(b)k!
}
M(-k,b,\rho).
\]
Thus, when $\nu=k\in\bN$, putting $b=\tfrac{n}{2}$, we have 
\[
\dfrac{
	\Gamma\left (
		k+\frac{n}{2}
	\right )
}{
	\Gamma\left (
		\frac{n}{2}
	\right )
}
e^{-\rho}
M\left (
	-k;
	\tfrac{n}{2};
	\rho
\right )
=k!\,e^{-\rho}L_k^{\left (\tfrac{n-2}{2}\right )}(\rho).
\]
\item As $\rho\to+\infty$\footnote{In fact, the same is valid for $\rho\in\bC$ as $|\rho|\to\infty$ with $\tfrac{3\pi}{2}< \arg \rho \leq \tfrac{\pi}{2}$.},
\begin{align*}
\dfrac{
	\Gamma\left (
	\nu+b
	\right )
}{
	\Gamma\left (
	b
	\right )
}
e^{-\rho}
M\left (
	-\nu;
	b;
	\rho
\right )
&\sim
\Gamma\left (
	\nu+b
\right )
e^{-\rho}
\left (
	\dfrac{1}{\Gamma(-\nu)}
	e^{\rho} \rho^{-\nu-b}
	+\dfrac{1}{\Gamma(\nu+b)}
	(-\rho)^{\nu}
\right )\\
&\sim
	\dfrac{
		\Gamma(\nu+b)
	}{
		\Gamma(-\nu)
	}
	\rho^{-\nu-b}
	+e^{-\rho}(-\rho)^{\nu},
\end{align*}
Thus, $I_{\sigma,\nu,b}(\rho)$ has a exponential decay if and only if $\nu=k\in\bN$. Otherwise, $I_{\sigma,\nu,b}(\rho)$ has a power type behavior of order $\rho^{(-\nu-b) \vee \floors{-\sigma}}$.
\end{itemize}
\end{remark}

\begin{proof}[Proof of \Cref{thm:u-nu}]
Use \Cref{prop:I-alpha} with $\sigma\in(0,1)$ which is less than the distance between $\nu+b$ and $\bZ$.
\end{proof}

This $I_{\sigma,\nu,b}(\rho)$ is closely related to what is known as the Barnes--Mellin integral, where the poles of the Gamma functions in the numerator are contained in disjoint half planes. Its evaluation is based on a standard application of the residue theorem with the contour given by the a line segment on $\re z=\sigma$ and the (left) circular arc with radius $R\to+\infty$ where $\re z<\sigma$. More precisely, let
\[
\theta_\sigma=O\left (\frac{\sigma}{R}\right )
\]
be the angle of inclination from the imaginary axis such that $R\cos(\tfrac{\pi}{2}-\theta_\sigma)=\sigma$, $R\sin(\tfrac{\pi}{2}-\theta_\sigma)>0$. Note that $\theta_\sigma>0$ for $\sigma>0$. Let
\[
\cC=\cC_R \cup \cC_\sigma
\]
be a closed contour oriented counter-clockwise with
\begin{align*}
\cC_R
&=\set{
	z=Re^{i\theta}
	:\theta\in[
		\tfrac{\pi}{2}-\theta_\sigma,
		\tfrac{3\pi}{2}+\theta_\sigma
	]
},\\
\cC_\sigma
&=\set{
	z=\sigma+\lambda i
	:\lambda \in[
		R\sin(\tfrac{3\pi}{2}+\theta_\sigma),
		R\sin(\tfrac{\pi}{2}-\theta_\sigma)
	]
}.
\end{align*}
(Here all three paths depend on both $R$ and $\sigma$ despite their notations.) Denote the integrand by
\[
f(z)
=\rho^{-z}
\dfrac{
	\Gamma\left (
	\nu+b-z
	\right )
	\Gamma(z)
}{
	\Gamma\left (
	b-z
	\right )
}.
\]
By the residue theorem,
\begin{equation}\label{eq:I-alpha}
\begin{split}
I_\sigma(\rho)
&=\lim_{R\to\infty}
	\frac{1}{2\pi i}
	\int_{\cC_\sigma}
		f(z)
	\,dz\\
&=\lim_{R\to\infty}
\left (
	\sum_{z_j \text{ pole inside } \cC}
		\Res(f,z_j)
	-\frac{1}{2\pi i}
	\int_{\cC_R}
		f(z)
	\,dz
\right )
=:\lim_{R\to\infty}
(I_{\sigma,R,1}+I_{\sigma,R,2}).
\end{split}
\end{equation}

\begin{lem}\label{lem:I-alpha-1}
If $\nu+b$ is not a non-positive integer, then
\begin{align*}
I_{\sigma,R,1}
&=\sum_{j\in\bN \cap (-\sigma,R)}
	\rho^{j}
	\dfrac{
		\Gamma(\nu+b+j)
	}{
		\Gamma(b+j)
	}
	\dfrac{(-1)^j}{j!}\\
&\qquad
-\sum_{
	\ell\in\bN\cap(
		-\nu-b-R,
		-\nu-b+\sigma
	)
}
	\rho^{-\nu-b-\ell}
	\dfrac{
		\sin(\pi(\nu+\ell))
	}{
		\pi
	}
	\Gamma(\nu+1+\ell)
	\Gamma(\nu+b+\ell)
	\dfrac{(-1)^{\ell}}{\ell!}.
\end{align*}
\end{lem}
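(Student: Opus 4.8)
The plan is to identify and classify the poles of the integrand $f(z)$ that lie inside the contour $\cC$, then sum the corresponding residues. The function $f(z)=\rho^{-z}\Gamma(\nu+b-z)\Gamma(z)/\Gamma(b-z)$ has two families of candidate singularities coming from the two Gamma functions in the numerator. First, $\Gamma(z)$ has simple poles at $z=-j$, $j\in\bN$, with residue $(-1)^j/j!$; these lie in the left half-plane $\re z<\sigma$ and are enclosed precisely when $-j\in(-\sigma,R)$, i.e.\ $j\in\bN\cap(-\sigma,R)$ (for $R$ large the upper cutoff is harmless, but we keep it to be faithful to the finite contour). At such a point $f$ contributes $\rho^{j}\,\Gamma(\nu+b+j)/\Gamma(b+j)\cdot(-1)^j/j!$, which is exactly the first sum. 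Here one uses that $\nu+b$ is not a non-positive integer so that $\Gamma(\nu+b+j)$ is finite and $\Gamma(b-z)^{-1}$ is entire, hence contributes no cancellation.

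Second, $\Gamma(\nu+b-z)$ has simple poles where $\nu+b-z=-\ell$, $\ell\in\bN$, i.e.\ at $z=\nu+b+\ell$, with residue $-(-1)^{\ell}/\ell!$ in the variable $\nu+b-z$ (the sign flips because the argument is $-z$). These points lie to the left of $\re z=\sigma$ when $\nu+b+\ell<\sigma$, and inside $\cC_R$ when $\nu+b+\ell>-R$ roughly; combining gives $\ell\in\bN\cap(-\nu-b-R,-\nu-b+\sigma)$. Evaluating $f$ there, the factor $\rho^{-z}$ becomes $\rho^{-\nu-b-\ell}$, $\Gamma(z)$ becomes $\Gamma(\nu+b+\ell)$, and $1/\Gamma(b-z)=1/\Gamma(-\nu-\ell)$; using the reflection formula $\Gamma(-\nu-\ell)^{-1}=\Gamma(\nu+1+\ell)\sin(\pi(\nu+\ell+1))/\pi=-\Gamma(\nu+1+\ell)\sin(\pi(\nu+\ell))/\pi$ converts this into $\dfrac{\sin(\pi(\nu+\ell))}{\pi}\Gamma(\nu+1+\ell)\Gamma(\nu+b+\ell)$, and collecting the signs (residue sign $\times$ reflection sign) produces the $(-1)^\ell/\ell!$ with the overall minus sign displayed in the statement. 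This is the second sum.

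Finally one must check that these two families do not collide inside $\cC$: a common pole would require $-j=\nu+b+\ell$ for some $j,\ell\in\bN$, i.e.\ $\nu+b=-(j+\ell)$ a non-positive integer, which is excluded by hypothesis. Hence all enclosed poles are simple and the residue sum is exactly $I_{\sigma,R,1}=\sum_{z_j\text{ inside }\cC}\Res(f,z_j)$, giving the claimed formula. The only genuinely delicate bookkeeping is matching the signs from the reflection formula and the $-z$ argument; the pole-location inequalities and the non-collision check are routine. (The complementary estimate $I_{\sigma,R,2}\to 0$, i.e.\ the decay of $f$ on the large arc $\cC_R$, is handled separately via the asymptotics of the Gamma quotient and is not part of this lemma.)
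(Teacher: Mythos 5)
Your strategy is the same as the paper's (residue theorem on the closed contour, two families of simple poles, Euler reflection to rewrite $1/\Gamma(-\nu-\ell)$), and most of your bookkeeping is right: the location intervals, the residues $(-1)^j/j!$ at $z=-j$, the residue $-(-1)^\ell/\ell!$ of $\Gamma(\nu+b-z)$ at $z=\nu+b+\ell$, the non-collision check, and your reflection formula $1/\Gamma(-\nu-\ell)=-\frac{\sin(\pi(\nu+\ell))}{\pi}\Gamma(\nu+1+\ell)$ are all correct. The step that fails is the final sign collection: the two minus signs you list (one from the $-z$ argument, one from the reflection formula) multiply to a \emph{plus}, so the residue of $f$ at $z=\nu+b+\ell$ is
\[
+\,\rho^{-\nu-b-\ell}\,\frac{\sin(\pi(\nu+\ell))}{\pi}\,\Gamma(\nu+1+\ell)\,\Gamma(\nu+b+\ell)\,\frac{(-1)^{\ell}}{\ell!}
\;=\;+\,\rho^{-\nu-b-\ell}\,\frac{\sin(\pi\nu)}{\pi}\,\frac{\Gamma(\nu+1+\ell)\,\Gamma(\nu+b+\ell)}{\ell!},
\]
which is the opposite of the corresponding term in the statement; your assertion that ``collecting the signs produces the overall minus sign displayed in the statement'' is exactly the point where the argument breaks. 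A concrete check: take $\nu=-\tfrac32$, $b=1$, $\sigma=\tfrac14$, $\ell=0$, so $z_0=-\tfrac12$; directly, $\Res(f,-\tfrac12)=\rho^{1/2}\,\Gamma(-\tfrac12)/\Gamma(\tfrac32)\cdot(-1)=+4\rho^{1/2}$, whereas the displayed second sum contributes $-4\rho^{1/2}$.

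Be aware that the paper's own proof arrives at the printed sign only because it applies the reflection identity as $1/\Gamma(-\nu-\ell)=+\frac{\sin(\pi(\nu+\ell))}{\pi}\Gamma(\nu+1+\ell)$, i.e.\ without the minus that you correctly kept; so a careful execution of your plan does not prove the statement as displayed, but the version with the second sum entering with the opposite sign. That corrected sign is also forced structurally: since the integrand is absolutely integrable on $\re z=\sigma$, one has $|I_{\sigma,\nu,b}(\rho)|\lesssim \rho^{-\sigma}$ as $\rho\to\infty$, which is compatible with \Cref{prop:I-alpha} only if the $\rho^{-\nu-b-\ell}$ terms of the second sum \emph{cancel} (rather than reinforce) the leading algebraic terms of the large-$\rho$ expansion of the Kummer term recalled in \Cref{rmk:kummer}; this propagates to the power-type asymptotics in \eqref{U-asymptotics}. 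So either redo the last multiplication and state the result with the corrected sign (flagging the discrepancy with the printed formula), or identify explicitly which reflection identity you are using; as written, the sign-collection step is the gap.
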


\begin{lem}\label{lem:I-alpha-2}
There exists a sequence $R_m\to+\infty$ such that
\[
\lim_{m\to\infty}
I_{\sigma,R_m,2}
=0.
\]
In fact, we can take $R_m=\frac12+2m$.
\end{lem}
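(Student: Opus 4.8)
The plan is to prove the stated vanishing by a crude "supremum times length" estimate. Since $\cC_R$ is an arc of the circle $|z|=R$, its length is at most $2\pi R$, so it is enough to produce radii $R_m\to\infty$ for which $\cC_{R_m}$ avoids every pole of the integrand $f(z)=\rho^{-z}\Gamma(\nu+b-z)\Gamma(z)/\Gamma(b-z)$ and $\sup_{z\in\cC_{R_m}}|f(z)|=o(R_m^{-1})$; then $I_{\sigma,R_m,2}=-\frac{1}{2\pi i}\int_{\cC_{R_m}}f\,dz\to0$. The choice $R_m=\tfrac12+2m$ does both. Its only real points are $\pm R_m\notin\bZ$, which avoids the poles of $\Gamma(z)$ at the non-positive integers (the remaining poles of $f$, at the real points $z=\nu+b+\ell$, $\ell\in\bN$, are likewise avoided for every $m$ provided $\nu+b\notin\tfrac12+\bZ$, and in the exceptional case one replaces the offset $\tfrac12$ by another number in $(0,1)$). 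Moreover, writing $z=x+iy$ on $|z|=R_m$, a short computation shows that $x$ stays at distance $\geq\tfrac14$ from $\bZ$ whenever $|y|<1$, so that $|\sin\pi z|^2=\sin^2(\pi x)+\sinh^2(\pi y)$ gives the uniform lower bounds $|\sin\pi z|\geq c_0>0$ on $\cC_{R_m}$ and $|\sin\pi z|\geq\tfrac14 e^{\pi|y|}$ when $|y|\geq1$. I will also use that $\re z\leq\sigma<b=\tfrac n2$ on $\cC_R$ (this covers all applications, where $\sigma\in(0,1)$).

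Next I would bound $|f(z)|=\rho^{-\re z}\,\bigl|\Gamma(\nu+b-z)/\Gamma(b-z)\bigr|\,|\Gamma(z)|$ factor by factor on $\cC_{R_m}$, parametrised by $z=R_m e^{i\theta}$. The first factor is $O(1)$ where $\re z\geq0$ and at most $e^{(-\re z)(\log\rho)_+}$ where $\re z\leq0$. For the second, $b-z$ has $\re(b-z)\geq b-\sigma>0$, hence lies in a fixed sector off the negative reals, and Stirling's quotient asymptotics give $\bigl|\Gamma(\nu+b-z)/\Gamma(b-z)\bigr|=|b-z|^{\nu}(1+O(|z|^{-1}))\leq C(1+|z|)^{|\nu|}$. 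The delicate factor is $|\Gamma(z)|$, which I would handle by splitting $\cC_{R_m}$. On the two short arcs near $\theta=\tfrac\pi2$ and $\theta=\tfrac{3\pi}2$ where $0\leq\re z\leq\sigma$, Stirling applies to $\Gamma(z)$ directly; since there $\arg z$ and $\im z$ have a common sign with $|\arg z|\geq\tfrac\pi4$ and $|\im z|\geq\tfrac12 R_m$, it yields $|\Gamma(z)|\leq C R_m^{\sigma}e^{-\pi R_m/8}$. On the bulk of the arc, where $\re z<0$, I would instead use the reflection formula $\Gamma(z)=\pi/(\sin\pi z\,\Gamma(1-z))$: now $\re(1-z)\geq1$, Stirling applies to $\Gamma(1-z)$, and since $\im(1-z)$ and $\arg(1-z)$ share a sign with $|\arg(1-z)|<\tfrac\pi2$, together with $\log|1-z|=\log R_m+O(R_m^{-1})$ one obtains $|\Gamma(1-z)|\geq c R_m^{\re(1-z)-\frac12}e^{-\re(1-z)}e^{-\frac\pi2|\im z|}$; combined with the lower bounds for $|\sin\pi z|$ this gives the uniform estimate $|\Gamma(z)|\leq C R_m^{\frac12-\re(1-z)}e^{\re(1-z)}e^{-\frac\pi2|\im z|}$.

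Finally I would assemble the three bounds. On the short arcs they immediately give $|f(z)|\leq C R_m^{|\nu|+\sigma}e^{-\pi R_m/8}=o(R_m^{-1})$. On the bulk, setting $t:=-\re z\in(0,R_m]$, so that $\re(1-z)=1+t$ and $|\im z|=\sqrt{R_m^2-t^2}$, multiplying the three bounds and taking logarithms yields
\[
\log|f(z)|\ \leq\ C+\bigl(|\nu|-\tfrac12\bigr)\log R_m+t\bigl(1+(\log\rho)_+-\log R_m\bigr)-\tfrac\pi2\sqrt{R_m^2-t^2}.
\]
For $m$ large the coefficient of $t$ is negative, so that term is $\leq0$; and the sum of the last two terms tends to $-\infty$ uniformly in $t\in(0,R_m]$, the square-root term (of size $\gtrsim R_m$) dominating for $t\leq R_m/2$ and the $t$-term (of size $\sim -R_m\log R_m$) dominating for $t>R_m/2$. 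Hence $\log|f(z)|+\log R_m\to-\infty$ uniformly on $\cC_{R_m}$, which is the claim. The main obstacle is precisely this uniformity over the arc: the super-exponential Stirling decay $R_m^{-\re(1-z)}$ is effective only where $\re z$ is very negative, whereas near the imaginary-axis endpoints of the arc it is the factor $1/|\sin\pi z|$ — equivalently the large value of $|\im z|$ — that supplies the necessary decay, and one must check that these two mechanisms together beat both the polynomial factors and the potentially large $\rho^{-\re z}$.
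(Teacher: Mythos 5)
Your argument is correct and follows essentially the same route as the paper: you use Euler's reflection formula to trade $\Gamma(z)$ on the left-hand part of the arc for $\pi/(\sin(\pi z)\Gamma(1-z))$, your two-case lower bound for $|\sin(\pi z)|$ along $R_m=\tfrac12+2m$ is exactly the content of \Cref{lem:abs-sin}, and the Stirling estimates combining the $e^{-\frac{\pi}{2}|\im z|}$ decay near the imaginary directions with the super-exponential factor $R_m^{\frac12-\re(1-z)}e^{\re(1-z)}$ in the far-left direction reproduce the paper's single parameterized bound. The only differences are organizational (splitting the arc into the caps $0\le\re z\le\sigma$ and the bulk, and using a supremum-times-length estimate instead of bounding the reflected integral directly), plus your caveat about $\nu+b\in\tfrac12+\bZ$, which is harmless but in fact unnecessary, since for $m$ large the arc stays at distance at least $R_m-|\nu+b|$ from every pole of $\Gamma(\nu+b-z)$.
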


To prove \Cref{lem:I-alpha-2} we need the following Lemma.

\begin{lem}\label{lem:abs-sin}
There exists a sequence $R_m\to+\infty$ such that for any $\theta\in[-\tfrac{\pi}{2}-\theta_\sigma,\tfrac{\pi}{2}+\theta_\sigma]$,
\[
|\sin(\pi R_m e^{i\theta})|
\geq ce^{\pi R_m|\sin\theta|},
\]
for an absolute constant $c>0$ that can be taken as $c=(2e)^{-1}$.
\end{lem}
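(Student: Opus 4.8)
The plan is to reduce the estimate on $|\sin(\pi R_m e^{i\theta})|$ to a bound on the distance from $R_m e^{i\theta}$ to the nearest integer, uniformly in $\theta$. First I would write, for $z = x + iy$ with $x,y\in\R$,
\[
|\sin(\pi z)|^2
= \sin^2(\pi x) + \sinh^2(\pi y)
= \sin^2(\pi x) \cosh^2(\pi y) + \cos^2(\pi x)\sinh^2(\pi y),
\]
so that in particular $|\sin(\pi z)| \geq |\sinh(\pi y)| \geq \tfrac12 e^{\pi|y|} - \tfrac12 e^{-\pi|y|}$, which already gives the desired bound $|\sin(\pi z)| \geq c e^{\pi|y|}$ with $c = (2e)^{-1}$ \emph{as soon as} $|y| \geq 1$. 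Writing $z = R_m e^{i\theta}$ so that $y = R_m \sin\theta$, we get $|y| \geq 1$ precisely when $R_m|\sin\theta| \geq 1$, and then the claimed inequality holds for free. So the only region requiring care is the strip where $|y| = R_m|\sin\theta| < 1$, i.e. $\theta$ near $0$ or near $\pm\pi$ — but the statement only ranges over $\theta \in [-\tfrac\pi2 - \theta_\sigma, \tfrac\pi2 + \theta_\sigma]$, so only $\theta$ near $0$ matters (recall $\theta_\sigma = O(\sigma/R) \to 0$, so for $R$ large the interval is contained in a small neighbourhood of $[-\tfrac\pi2,\tfrac\pi2]$).

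In the remaining regime $R_m|\sin\theta| < 1$, the factor $\cosh^2(\pi y) \geq 1$ and $e^{\pi|y|} \leq e^{\pi}$, so it suffices to bound $|\sin(\pi x)|$ from below by an absolute constant, where $x = R_m\cos\theta$. Here is where the choice $R_m = \tfrac12 + 2m$ enters: I would show that for $\theta$ with $R_m|\sin\theta| < 1$ one has $\cos\theta$ close to $1$, hence $x = R_m\cos\theta$ close to $R_m = \tfrac12 + 2m$, and since $\tfrac12 + 2m$ is at distance exactly $\tfrac12$ from the integers, a quantitative continuity estimate keeps $x$ bounded away from $\bZ$. Precisely: $R_m|\sin\theta| < 1$ forces $|\sin\theta| < 1/R_m$, so $\cos\theta > \sqrt{1 - 1/R_m^2}$ and
\[
R_m - x = R_m(1 - \cos\theta) \leq R_m\big(1 - \sqrt{1 - 1/R_m^2}\big) = \frac{1}{R_m(1+\sqrt{1-1/R_m^2})} \leq \frac{1}{R_m},
\]
so $x \in (R_m - \tfrac1{R_m}, R_m]$. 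For $R_m \geq 4$, say, this interval has length $\leq \tfrac14$ and lies within $[R_m - \tfrac14, R_m]$, hence at distance $\geq \tfrac14$ from $\bZ$; therefore $|\sin(\pi x)| \geq \sin(\tfrac{\pi}{4}) = \tfrac{1}{\sqrt2}$. Combining, $|\sin(\pi R_m e^{i\theta})| \geq |\sin(\pi x)|\cosh(\pi y) \geq \tfrac{1}{\sqrt2} \geq (2e)^{-1} e^{\pi} \geq (2e)^{-1} e^{\pi R_m|\sin\theta|}$ in this regime, which closes the argument.

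The main obstacle — really the only subtle point — is organizing the case split so that the bound is genuinely \emph{uniform} in $\theta$ over the whole interval $[-\tfrac\pi2 - \theta_\sigma, \tfrac\pi2 + \theta_\sigma]$ and with a constant independent of $m$ and $\sigma$; once one notices that the offending region $|y| < 1$ forces $x$ into a tiny neighbourhood of the half-integer $R_m$, everything is elementary. I would also remark that the freedom in choosing the sequence $R_m$ (any $R_m$ staying a fixed distance from $\bZ$, growing to infinity, works) is exactly what is needed later in \Cref{lem:I-alpha-2}, where one integrates $f(z)$ over the circular arc $\cC_{R_m}$ and uses this lower bound on $|\sin(\pi z)|$ — via the reflection formula $\Gamma(b-z) = \pi / (\sin(\pi(b-z))\Gamma(1-b+z))$ — to control the factor $1/\Gamma(b-z)$ and show the arc integral vanishes.
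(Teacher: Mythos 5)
Your route is essentially the paper's: the same identity $|\sin(x+iy)|^2=\sin^2 x+\sinh^2 y$ (you also use its equivalent form $\sin^2 x\cosh^2 y+\cos^2 x\sinh^2 y$), the same dichotomy according to whether $R_m|\sin\theta|$ is large or small, and the same sequence $R_m=\tfrac12+2m$. Your treatment of the near-axis regime via the distance of $x=R_m\cos\theta$ to $\bZ$ (in place of the paper's expansion $\sin(\pi R_m\cos\theta)=1+O(R_m^{-1})$) is a clean quantitative variant and is correct as far as it goes, as is the restriction to $\theta$ near $0$ coming from the range $[-\tfrac\pi2-\theta_\sigma,\tfrac\pi2+\theta_\sigma]$.

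There is, however, a genuine flaw in how you close the second case. The final chain $\tfrac{1}{\sqrt2}\geq (2e)^{-1}e^{\pi}$ is false: $(2e)^{-1}e^{\pi}=\tfrac12 e^{\pi-1}\approx 4.3$, while $\tfrac1{\sqrt2}\approx 0.71$. The root of the problem is your threshold: splitting at $R_m|\sin\theta|=1$ (rather than at $\pi R_m|\sin\theta|=1$, as the paper does) allows the target $e^{\pi R_m|\sin\theta|}$ to be as large as $e^{\pi}\approx 23$ in the complementary regime, and since $|\sin(\pi x)|\leq 1$, no absolute lower bound on $|\sin(\pi x)|$ alone can dominate $(2e)^{-1}e^{\pi R_m|\sin\theta|}$ there; so the reduction ``it suffices to bound $|\sin(\pi x)|$ from below by an absolute constant'' is not valid as stated. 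Fortunately the repair is one line and uses only what you already have: do not discard the factor $\cosh(\pi y)$, but bound it by $\cosh(\pi y)\geq\tfrac12 e^{\pi|y|}$, which gives $|\sin(\pi R_m e^{i\theta})|\geq |\sin(\pi x)|\cosh(\pi y)\geq \tfrac{1}{2\sqrt2}e^{\pi R_m|\sin\theta|}\geq \tfrac{1}{2e}e^{\pi R_m|\sin\theta|}$. Alternatively, move the case split to $\pi R_m|\sin\theta|\leq 1$ as in the paper: then $e^{\pi R_m|\sin\theta|}\leq e$, so $\tfrac1{\sqrt2}\geq\tfrac12=(2e)^{-1}e$ closes that regime, while in the other regime $\sinh t\geq\tfrac{1-e^{-2}}{2}e^{t}\geq\tfrac{1}{2e}e^{t}$ for $t\geq1$ still gives the claim. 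With either fix your argument is complete and matches the paper's proof in all essentials, including its later use in \Cref{lem:I-alpha-2}.
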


\begin{proof}[Proof of \Cref{lem:abs-sin}]
Recalling the elementary formula $|\sin(x+iy)|^2=\sin^2(x)+\sinh^2(y)$ for $x,y\in\R$,
we have
\[
|\sin(\pi Re^{i\theta})|
=\sqrt{
	\sin^2(\pi R\cos\theta)
	+\sinh^2(\pi R\sin\theta)
}.
\]
When $\pi R|\sin\theta| \geq 1$,
\[
|\sin(\pi Re^{i\theta})|
\geq \sinh(\pi R|\sin\theta|)
\geq \tfrac12 e^{\pi R|\sin\theta|}.
\]
When $\pi R|\sin\theta| \leq 1$, $\theta=O(R^{-1})$ because $\theta\in[-\tfrac{\pi}{2}-\theta_\sigma,\tfrac{\pi}{2}+\theta_\sigma]$. Then
\[
\pi R\cos\theta
=\pi R(1+O(\theta^2))
=\pi R+O(R^{-1}).
\]
Thus, if we take $R_m=\tfrac12+2m \gg 1$ such that $\sin(\pi R_m)=1$, then
\[
\sin(\pi R\cos\theta)
=\sin(\pi R+O(R^{-1}))
=1+O(R^{-1})
\geq \tfrac12
\geq \tfrac{1}{2e}
	e^{\pi R|\sin\theta|}.
\qedhere
\]
\end{proof}

\begin{proof}[Proof of \Cref{lem:I-alpha-2}]
By the parameterization,
\[
I_{\sigma,R,2}
=\frac{1}{2\pi i}
\int_{\frac{\pi}{2}-\theta_\sigma}
^{\frac{3\pi}{2}+\theta_\sigma}
	\rho^{-Re^{i\theta}}
	\dfrac{
		\Gamma\left (
			\nu+b-Re^{i\theta}
		\right )
		\Gamma(Re^{i\theta})
	}{
		\Gamma\left (
			b-Re^{i\theta}
		\right )
	}
	Rie^{i\theta}
\,d\theta.
\]
Recall that the Gamma function decays in the left half-plane except at the poles (and that is why a sequence of $R$ will be chosen). Thus it is convenient to reflect
\[
Re^{i\theta}\Gamma(Re^{i\theta})
=\Gamma(1-(-Re^{i\theta}))
=\dfrac{\pi}{\sin(-\pi Re^{i\theta})}
\dfrac{1}{
	\Gamma(-Re^{i\theta})
}.
\]
Thus, we have
\begin{align*}
I_{\sigma,R,2}
&=\frac{1}{2\pi i}
\int_{\frac{\pi}{2}-\theta_\sigma}
	^{\frac{3\pi}{2}+\theta_\sigma}
	\rho^{-Re^{i\theta}}
	\dfrac{
		\pi
	}{
		\sin(-\pi Re^{i\theta})
	}
	\dfrac{
		\Gamma\left (
			\nu+b-Re^{i\theta}
		\right )
	}{
		\Gamma\left (
			b-Re^{i\theta}
		\right )
	}
	\dfrac{1}{
		\Gamma\left (
			-Re^{i\theta}
		\right )
	}
	i
\,d\theta\\
&=\frac{1}{2\pi}
\int_{-\frac{\pi}{2}-\theta_\sigma}
	^{\frac{\pi}{2}+\theta_\sigma}
	\rho^{Re^{i\theta}}
	\dfrac{
		\pi
	}{
		\sin(\pi Re^{i\theta})
	}
	\dfrac{
		\Gamma\left (
		\nu+b+Re^{i\theta}
		\right )
	}{
		\Gamma\left (
		b+Re^{i\theta}
		\right )
	}
	\dfrac{1}{
		\Gamma\left (
			Re^{i\theta}
		\right )
	}
\,d\theta.
\end{align*}
We estimate it in absolute value, using the Stirling approximation
\[
\log\Gamma(z+A)=(z+A-\tfrac12)\log z-z+\log\sqrt{2\pi}+O(|z|^{-1}),
	\quad
|\arg z|<\pi-\eps,
\]
for fixed $A\in \bC$ as $|z|\to\infty$.

First, it is immediate that for $A,B\in\bC$,
\[
\log\dfrac{\Gamma(z+A)}{\Gamma(z+B)}=(A-B)\log z+O(|z|^{-1}),
\]
hence
\[
\abs{
	\dfrac{
		\Gamma\left (
			\nu+b+Re^{i\theta}
		\right )
	}{
		\Gamma\left (
			b+Re^{i\theta}
		\right )
	}
}
=R^\nu(1+O(R^{-1})).
\]
Secondly, with $z=Re^{i\theta}$, 
\begin{align*}
\log \Gamma(Re^{i\theta}+a)
&=(R\cos\theta+a-\tfrac12+iR\sin\theta)(\log R+i\theta)
	-R\cos\theta-iR\sin\theta
	+\log\sqrt{2\pi}
	+O(R^{-1})\\
&=\left (
	R\log R\cos\theta
	-R(\cos\theta+\theta\sin\theta)
	+(a-\tfrac12)\log R
	+\log\sqrt{2\pi}
	+O(R^{-1})
\right )\\
&\qquad
+i\left (
	R\log R\sin\theta
	-R(\sin\theta-\theta\cos\theta)
	+(a-\tfrac12)\theta
	+O(R^{-1})
\right ),
\end{align*}
so
\begin{align*}
\abs{\Gamma(Re^{i\theta})}
=\exp\left (
	R\log R\cos\theta
	-R(\cos\theta+\theta\sin\theta)
	+(a-\tfrac12)\log R
	+\log\sqrt{2\pi}
	+O(R^{-1})
\right ).
\end{align*}
Thirdly, by \Cref{lem:abs-sin},
\[
\abs{
	\sin(\pi Re^{i\theta})
}
=\sqrt{
	\sin^2(\pi R|\cos\theta|)
	+\sinh^2(\pi R|\sin\theta|)
}
\gtrsim
	e^{\pi R|\sin\theta|},
\]
along a sequence $R=R_m$. Putting it all together,
\begin{align*}
&\quad\;
\abs{
	\rho^{R_m e^{i\theta}}
	\dfrac{
		\pi
	}{
		\sin(\pi R_m e^{i\theta})
	}
	\dfrac{
		\Gamma\left (
			\nu+b+R_m e^{i\theta}
		\right )
	}{
		\Gamma\left (
			b+R_m e^{i\theta}
		\right )
	}
	\dfrac{1}{
		\Gamma\left (
			R_m e^{i\theta}
		\right )
	}
}
\\
&\lesssim
	R_m^{\nu}
	\cdot e^{-\pi R_m|\sin\theta|}
	\cdot \rho^{R_m\cos\theta}
	\cdot e^{-R_m\log\frac{R_m}{e}\cos\theta+R_m\theta \sin\theta +O(\log R_m)}\\
&\lesssim
	e^{-R_m\left (
		\log\frac{R_m}{e\rho}\cos\theta
		+\pi|\sin\theta|-\theta\sin\theta
		+O(\tfrac{\log R_m}{R_m})
	\right )}\\
&\lesssim
	e^{-R_m\left (
		\log\frac{R_m}{e\rho}\cos\theta
		+\tfrac{\pi}{2}|\sin\theta|
		+o(1)
	\right )},
\end{align*}
where in the last step we have used the fact $|\theta|\leq \tfrac{\pi}{2}+O(R_m^{-1})$. Now, since the most negative value for $\log\tfrac{R_m}{\eps\rho}\cos\theta$ is $O(\tfrac{\sigma}{R_m}\log\tfrac{R_m}{\eps\rho})=o(1)$, we conclude that
\begin{align*}
|I_{\sigma,R_m,2}|
&\lesssim
\int_{-\frac{\pi}{2}-\theta_\sigma}
^{\frac{\pi}{2}+\theta_\sigma}
	e^{-cR_m}
\,d\theta
\to 0
\end{align*}
as $m\to\infty$.
\end{proof}

\begin{proof}[Proof of \Cref{lem:I-alpha-1}]
By definition,
\begin{align*}
	I_{\sigma,R,1}
	&=\sum_{z_j \text{ pole inside } \cC}
	\Res\left (
		\rho^{-z}
		\dfrac{
			\Gamma\left (
			\nu+b-z
			\right )
			\Gamma(z)
		}{
			\Gamma\left (
			b-z
			\right )
		},
		z_j
	\right ).
\end{align*}
Since $\nu+\frac{n}{2s}$ is not a non-positive integer, the integrand $f(z)$ only admits simple poles, which come from $\Gamma(z)$ at $z=-j \in (-\bN) \cap (-R,\sigma)$, and from $\Gamma(\nu+b-z)$ at $\nu+b-z=-\ell\in -\bN$ with $z\in (-R,\sigma)$, i.e. at $z=\nu+b+\ell \in (\nu+b+\bN) \cap (-R,\sigma)$. (Note that the latter case can happen only when $\nu<-n$.) The residues are given, respectively, by
	\begin{align*}
	\Res\left (
		\rho^{-z}
		\dfrac{
			\Gamma(z)
			\Gamma(\nu+b-z)
		}{
			\Gamma(b-z)
		},
		-j
		\right )
		&=
		\rho^{j}
		\dfrac{
			\Gamma(\nu+b+j)
		}{
			\Gamma(b+j)
		}
		\Res(\Gamma,-j)
		=\rho^{j}
		\dfrac{
			\Gamma(\nu+b+j)
		}{
			\Gamma(b+j)
		}
		\dfrac{(-1)^j}{j!},
	\end{align*}
and
	\begin{align*}
	\Res&\left (
		\rho^{-z}
		\dfrac{
			\Gamma(z)
			\Gamma(\nu+b-z)
		}{
			\Gamma(b-z)
		},
		\nu+b+\ell
		\right )\\
		&=
		\rho^{-\nu-b-\ell}
		\dfrac{
			\Gamma(\nu+b+\ell)
		}{
			\Gamma(-\nu-\ell)
		}
		\Res\left(
		\Gamma(\nu+b-z),
		\nu+b+\ell
		\right)\\
		&=\rho^{-\nu-b-\ell}
		\dfrac{
			\Gamma(\nu+b+\ell)
		}{
			\Gamma(-\nu-\ell)
		}
		\cdot (-1)
		\Res\left(
		\Gamma,-\ell
		\right)\\
		&=\rho^{-\nu-b-\ell}
		\dfrac{
			\sin(\pi(\nu+\ell))
		}{
			\pi
		}
		\Gamma(\nu+1+\ell)
		\Gamma(\nu+b+\ell)
		\dfrac{(-1)^{\ell+1}}{\ell!}\\
		&=-\rho^{-\nu-b-\ell}
		\dfrac{
			\sin(\pi\nu)
		}{
			\pi
		}
		\Gamma(\nu+1+\ell)
		\Gamma(\nu+b+\ell)
		\dfrac{1}{\ell!}.
	\end{align*}
	Therefore,
	\begin{align*}
		I_{\sigma,R,1}
		&=\sum_{j\in\bN \cap (-\sigma,R)}
		\rho^{j}
		\dfrac{
			\Gamma(\nu+b+j)
		}{
			\Gamma(b+j)
		}
		\dfrac{(-1)^j}{j!}\\
		&\qquad
		-\sum_{\ell\in\bN\cap (-\nu-b-R, -\nu-b+\sigma)}
		\rho^{-\nu-b-\ell}
		\dfrac{
			\sin(\pi\nu)
		}{
			\pi
		}
		\Gamma(\nu+1+\ell)
		\Gamma(\nu+b+\ell)
		\dfrac{1}{\ell!},
	\end{align*}
as desired.
\end{proof}

\begin{proof}[Proof of \Cref{prop:I-alpha}]
From \eqref{eq:I-alpha}, \Cref{lem:I-alpha-1} and \Cref{lem:I-alpha-2}, it follows that
\begin{align*}
I_{\sigma,\nu,b}(\rho)
&=
\sum_{j=0 \vee \ceils{-\sigma}}^{\infty}
\dfrac{
	\Gamma(\nu+b+j)
}{
	\Gamma(b+j)
}
\dfrac{(-1)^j}{j!}
\rho^{j}\\
&\qquad
-\sum_{\ell=0}^{\floors{-\nu-b+\sigma}}
\dfrac{
	\sin(\pi\nu)
}{
	\pi
}
\dfrac{
	\Gamma(\nu+1+\ell)
	\Gamma(\nu+b+\ell)
}{
	\ell!
}
\rho^{-\nu-b-\ell}.
\end{align*}
It remains to write the summation in $j$ into two: the first part from $0$ to $\infty$, minus the second from $0$ to $\floors{-\sigma}$. Since by definition of the rising factorial $a^{(j)}=a(a+1)\cdots(a+j-1)$, $\Gamma(a+j)=\Gamma(a)a^{(j)}$, the summation in $j\in\bN$ can be written in terms of the Kummer (or confluent hypergeometric) function
\[
M(a,b,z)
={}_1F_1(a;b;z)
=\sum_{j=0}^{\infty}
	\dfrac{
		a^{(j)}
	}{
		b^{(j)}
	}
	\dfrac{
		z^j
	}{
		j!
	},
\]
with $a=\nu+b$ 
and $z=-\rho$.
\end{proof}

\subsection{Eigenfunctions in Euclidean variable}

When $s\in(\frac12,1]$, the eigenfunctions can be expressed in terms of the Fox--Wright $\Psi$-function, a generalization of the hypergeometric function which allows non-integer increments. When $s\in(0,\frac12)$, a similar formula is obtained as a Puiseux series. The case $s=\frac12$ is critical and Fourier inversion is used instead (of Mellin inversion).

\begin{thm}[Explicit eigenfunctions]\label{thm:explicit}
Let $s\in(0,1]$. In the case $s\neq\frac12$, we assume  $\nu>-\frac{n}{2s}$ and consider the Mellin inversion along the $\re z=\sigma=0^+$. Then, the eigenfunctions
\begin{equation*}
u_\nu(r):=
\begin{cases}
\cM_{2\sigma}^{-1}\set{\widetilde{u}_\nu(z)}(r),
	& s\in(\frac12,1],\,r\geq 0,\\
\cM_{2\sigma}^{-1}\set{\widetilde{u}_\nu(z)}(r)
=\cF^{-1}\set{\widehat{u}(\zeta)}(r),
	& s=\frac12,\,0\leq r<1,\\
\cM_{2s\sigma}^{-1}\set{\widetilde{u}_\nu(z)}(r),
	& s\in(0,\frac12),\,r>1.
\end{cases}
\end{equation*}
are given explicitly for $r\in(0,+\infty)$ in terms of special functions by
\begin{equation}
u_\nu(r)
=\begin{dcases}
2\,{}_1\Psi_{1}\left(
\begin{array}{c}
(\nu+\tfrac{n}{2s},\tfrac{1}{s})
\\
(\tfrac{n}{2},1)
\end{array}
;-\frac{r^2}{4}
\right)
	& s\in(\tfrac12,1],\\
2^{-\frac{n-2}{2}}
\dfrac{\Gamma(n+\nu)}{\Gamma(\tfrac{n}{2})}
\Hyperg\big(
	\tfrac{n+\nu}{2},
	\tfrac{n+\nu+1}{2};
	\tfrac{n}{2};
	-r^2
\big)
	& s=\tfrac12,\\
-2s
	\left(
		\frac{2}{r}
	\right)^{2s\nu+n}
	{}_1\Psi_{1}\left(
		\begin{array}{c}
		(s\nu+\tfrac{n}{2},s)
		\\
		(-s\nu,-s)
		\end{array};\,
		-\Big(\frac{2}{r}\Big)^{2s}
	\right)
	& s\in(0,\tfrac12).
\end{dcases}\label{eq:expl-eig-func}
\end{equation}
\end{thm}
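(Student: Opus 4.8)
The plan is to compute the inverse Mellin transform of $\widetilde u_\nu(z) = 2^{z}\,\Gamma\!\big(\nu+\tfrac{n-z}{2s}\big)\,\Gamma\!\big(\tfrac z2\big)\big/\Gamma\!\big(\tfrac{n-z}{2}\big)$ from \Cref{lemma:eigenfunction1} directly by the residue theorem, in exactly the spirit of \Cref{sec:M-residue-new} but applied to $\widetilde u_\nu$ instead of $\cU_\nu$ (so that the factor $\Phi_s$ is built in and no transport through $\Lambda_s$ is needed). The meromorphic function $r^{-z}\widetilde u_\nu(z)$ has two families of simple poles: $z=-2j$, $j\in\bN$, from $\Gamma(\tfrac z2)$, and $z=n+2s\nu+2s\ell$, $\ell\in\bN$, from $\Gamma(\nu+\tfrac{n-z}{2s})$. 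The hypothesis $\nu>-\tfrac n{2s}$ (equivalently $n+2s\nu>0$) puts the second family strictly to the right of $\re z=0^{+}$, so closing the contour to the left encircles only the first family, while closing to the right encircles only the second. One may alternatively derive everything from $u_\nu=\Phi_s U_\nu$ with $U_\nu$ as in \Cref{thm:u-nu}, but the direct residue computation gives the closed forms below at once.

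First I would settle the arc estimates, which is where the trichotomy in $s$ comes from. Using the Stirling asymptotics of the Gamma function recorded in the Appendix (as in \Cref{lem:asymp-mult}), combined with $|r^{-z}|=r^{-\re z}$ and the device of taking a sequence $R_m=\tfrac12+2m$ to keep $|\sin(\pi R_m e^{i\theta})|$ bounded below exactly as in \Cref{lem:abs-sin}--\Cref{lem:I-alpha-2} (both pole families of $\widetilde u_\nu$ accumulate with bounded gaps, so this is genuinely needed), one estimates $|\widetilde u_\nu(R_m e^{i\theta})|$ on the large semicircles: the quotient $\Gamma(\nu+\tfrac{n-z}{2s})/\Gamma(\tfrac{n-z}{2})$ grows super-exponentially as $\re z\to-\infty$, while $\Gamma(\tfrac z2)\,2^{z}$ decays there, and the net rate is governed by the sign of $2-\tfrac1s$. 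The outcome is: for $s\in(\tfrac12,1]$ the left arcs vanish for every $r>0$; for $s\in(0,\tfrac12)$ they blow up on the left but the right arcs vanish as soon as $r>1$; for $s=\tfrac12$ the Mellin arc estimate is exactly critical, and there I would abandon Mellin inversion and invert instead the explicit Hankel transform $\widehat u_\nu(\zeta)=2^{n/2}e^{-\zeta}\zeta^{\nu}$ provided by \Cref{prop:fractional-Laguerre} (valid for $\nu>-\tfrac{n-1}{4s}$).

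Then I would evaluate the residues. At $z=-2j$ one obtains $\operatorname*{Res}=2(-1)^{j}\frac{\Gamma(\nu+\frac n{2s}+\frac js)}{\Gamma(\frac n2+j)\,j!}\big(\tfrac{r^{2}}{4}\big)^{j}$, and summing gives $u_\nu(r)=2\,{}_1\Psi_{1}\big(\begin{smallmatrix}(\nu+\frac n{2s},\ \frac1s)\\(\frac n2,\ 1)\end{smallmatrix};-\tfrac{r^{2}}{4}\big)$; the convergence parameter of this Fox--Wright series is $\Delta=1+1-\tfrac1s=2-\tfrac1s$, so it is entire for $s>\tfrac12$ and has radius of convergence $\tfrac14$, i.e.\ $r<1$, at $s=\tfrac12$, matching the domains in the statement. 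At $z=n+2s\nu+2s\ell$, with the clockwise orientation of the right contour, one gets $u_\nu(r)=-2s\,(2/r)^{2s\nu+n}\,{}_1\Psi_{1}\big(\begin{smallmatrix}(s\nu+\frac n2,\ s)\\(-s\nu,\ -s)\end{smallmatrix};-(2/r)^{2s}\big)$; here $\Delta=1-2s>0$, so this Puiseux-type series is entire and, established a priori only for $r>1$, represents $u_\nu$ on all of $(0,\infty)$ by analytic continuation. For $s=\tfrac12$, the Hankel integral $2^{n/2}\int_0^\infty e^{-\zeta}J_{\frac{n-2}{2}}(r\zeta)\zeta^{\frac n2+\nu}\,d\zeta$ is classical (e.g.\ \cite{Gradshteyn-Ryzhik}) and equals $2^{-\frac{n-2}{2}}\tfrac{\Gamma(n+\nu)}{\Gamma(n/2)}\Hyperg\big(\tfrac{n+\nu}{2},\tfrac{n+\nu+1}{2};\tfrac n2;-r^2\big)$, completing the list; the Legendre duplication formula and the Kummer identities recorded in \Cref{rmk:kummer} and \Cref{section:Hermite} reconcile the three expressions and show that the first one degenerates to the Laguerre eigenfunctions $e_k=\Phi_s\cL_k$ when $\nu=k\in\bN$.

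The main obstacle I expect is the uniform arc estimate of Step 2: one must track the Stirling asymptotics of a product of three Gamma factors whose arguments scale with $\tfrac1{2s}$, $\tfrac12$ and $\tfrac12$ respectively, along large semicircles, and combine this with the $r$-dependence $r^{-\re z}$ to pin down precisely the threshold $r=1$ in the regime $s<\tfrac12$ and the borderline nature of $s=\tfrac12$; the choice $R_m=\tfrac12+2m$ keeping $|\sin(\pi R_m e^{i\theta})|\gtrsim e^{\pi R_m|\sin\theta|}$ is essential since the poles of $\widetilde u_\nu$ cluster near the negative real axis. A secondary but real nuisance is the bookkeeping of overall constants and signs: in the $s<\tfrac12$ case $\Gamma(-s\nu-s\ell)$ changes sign infinitely often (the oscillation is absorbed into the Fox--Wright series), and the normalization of $\cM^{-1}$ must be matched with that of $\cF^{-1}$ through \Cref{prop:fractional-Laguerre} at $s=\tfrac12$.
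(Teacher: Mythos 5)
Your overall architecture is essentially the paper's: the trichotomy in $s$ comes from the same arc estimates, the critical case $s=\tfrac12$ is handled exactly as in \Cref{lem:eigen-s-crit} by inverting the Hankel transform of \Cref{prop:fractional-Laguerre} via Gradshteyn--Ryzhik, and the supercritical case by summing the residues of the right-hand pole family. The one real difference is in the subcritical case $s\in(\tfrac12,1]$: you sum the residues at $z=-2j$ directly (your residue values and the convergence exponent $2-\tfrac1s$, with radius $r<1$ at $s=\tfrac12$, are correct), which obliges you to prove the left-arc estimate yourself (only the poles of $\Gamma(\tfrac z2)$, spaced by $2$, lie on that side, so an odd sequence of radii suffices), whereas the paper rescales the integral, identifies it as the Fox $H$-function $H^{1,1}_{1,2}$, hence as ${}_1\Psi_1$, and outsources the series representation and the decay as $r\to\infty$ to Fox and Wright. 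Your route is more self-contained at the cost of the Stirling estimate on the arcs; both are legitimate.

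There is, however, a concrete problem at the one step in the supercritical case that you assert without computation: ``with the clockwise orientation of the right contour, one gets $u_\nu(r)=-2s(2/r)^{2s\nu+n}\,{}_1\Psi_1(\cdots)$''. Carrying out your own plan, the residue of $r^{-z}\widetilde u_\nu(z)$ at $z_\ell=n+2s(\nu+\ell)$ is
\begin{equation*}
\Res\bigl(r^{-z}\widetilde u_\nu(z),z_\ell\bigr)
=-2s\,\frac{(-1)^\ell}{\ell!}
\left(\frac{2}{r}\right)^{n+2s(\nu+\ell)}
\frac{\Gamma\left(s\nu+\tfrac n2+s\ell\right)}{\Gamma(-s\nu-s\ell)},
\end{equation*}
the factor $-2s$ coming from the chain rule $\tfrac{d}{dz}\bigl(\nu+\tfrac{n-z}{2s}\bigr)=-\tfrac1{2s}$; the clockwise orientation then contributes a second minus sign, $u_\nu=-\sum_\ell\Res$, and the two cancel, giving $+2s\,(2/r)^{2s\nu+n}\,{}_1\Psi_1(\cdots)$ rather than the $-2s$ you display. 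So as written your justification does not yield the formula you claim: you have either double-counted the orientation or dropped the chain-rule factor. Note that the paper's own derivation in \Cref{lem:eigen-s-small} reaches $-2s$ by the complementary bookkeeping (it keeps the residue factor $\tfrac{(-1)^{j+1}}{j!}$ but equates the inverse Mellin transform with $+\sum\Res$ for a contour closed to the right), so you cannot settle the sign by matching the displayed statement; settle it independently. A clean arbiter is $\nu=0$: the $\ell=0$ coefficient vanishes since $1/\Gamma(0)=0$, and by \Cref{prop:fractional-Laguerre} $u_0$ is a positive multiple of the fractional heat kernel, whose tail $\asymp r^{-n-2s}$ is positive; since $\Gamma(-s)<0$, the leading $\ell=1$ term is positive precisely for the $+2s$ normalization. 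Redo this step with all signs displayed explicitly (and reconcile the outcome with the statement) before relying on the third case of \eqref{eq:expl-eig-func}.
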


Notice that $u_\nu(r)$ is analytic up to $r=0$ exactly when $s\in[\frac12,1]$. Here ${}_1\Psi_{1}$ denotes the Fox--Wright $\Psi$-function and $\Hyperg$ denotes the Gauss hypergeometric function. The derivations will be carried out case by case. For the series representations, see \Cref{lem:eigen-s-large}, \Cref{lem:eigen-s-crit} and \Cref{lem:eigen-s-small} respectively.

\begin{lemma}[Subcritical case]
\label{lem:eigen-s-large}
Let $s\in(\tfrac12,1]$ and $\nu>-\tfrac{n}{2s}$. The eigenfunctions $u_\nu$, obtained by the inverse Mellin transform of $\widetilde{u}_\nu$ on $\re z=\sigma$ with $0<\sigma<s\nu+\tfrac{n}{2}$, are given explicitly by the Fox--Wright function
\begin{align*}
u_{\nu}(r)
:=
\cM_{2\sigma}^{-1}\set{\widetilde{u}_\nu(z)}(r)
&=2\,{}_1\Psi_{1}\left(
\begin{array}{c}
(\nu+\tfrac{n}{2s},\tfrac{1}{s})
\\
(\tfrac{n}{2},1)
\end{array}
;-\frac{r^2}{4}
\right)
=
2\sum_{j=0}^{\infty}
	\dfrac{
		\Gamma(\nu+\tfrac{n}{2s}+\frac{j}{s})
	}{
		\Gamma(\tfrac{n}{2}+j)
		j!
	}
\left(-\frac{r^2}{4}\right)^{j}.
\end{align*}
As $r\to+\infty$, $u_{\nu}(r)$ decays algebraically.
\end{lemma}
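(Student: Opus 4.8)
The plan is to compute the Mellin inversion
\[
u_\nu(r)=\cM_{2\sigma}^{-1}\set{\widetilde u_\nu(z)}(r)
=\frac{1}{2\pi i}\int_{2\sigma-i\infty}^{2\sigma+i\infty}r^{-z}\,\widetilde u_\nu(z)\,dz
\]
by the residue theorem, closing the contour to the left. First I would record the analytic structure of $\widetilde u_\nu(z)=2^{z}\Gamma(\nu+\tfrac{n-z}{2s})\Gamma(\tfrac z2)/\Gamma(\tfrac{n-z}{2})$: since $\nu>-\tfrac{n}{2s}$ it is meromorphic with poles only at $z=-2j$ ($j\in\bN$, from $\Gamma(\tfrac z2)$) and at $z=n+2s\nu+2s\ell$ ($\ell\in\bN$, from $\Gamma(\nu+\tfrac{n-z}{2s})$); in particular it is analytic in the strip $0<\re z<n+2s\nu$, which contains the line $\re z=2\sigma$ because $\sigma<s\nu+\tfrac n2$ gives $2\sigma<n+2s\nu$. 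Combining the reflection formula with the Stirling asymptotics \eqref{asymptotics-gamma0}, one checks that $|\widetilde u_\nu(\sigma'+i\lambda)|\le C(\sigma')|\lambda|^{M(\sigma')}e^{-\frac{\pi}{4s}|\lambda|}$ as $|\lambda|\to\infty$, uniformly for $\sigma'$ in compact subsets of the strip, so the integral converges absolutely and \Cref{thm:M-inv} applies.

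Next I would close to the left along the counter-clockwise contour $\cC_{R_m}=\cC_{R_m}^{\mathrm{arc}}\cup\cC_{2\sigma,R_m}$, consisting of the segment of $\re z=2\sigma$ with $|\im z|\le R_m$ and the left circular arc $|z|=R_m$, $\re z\le 2\sigma$, where $R_m=2m+1\to\infty$ is chosen so that the arc avoids the poles $z=-2j$. The key step is to show $\int_{\cC_{R_m}^{\mathrm{arc}}}r^{-z}\widetilde u_\nu(z)\,dz\to 0$. On the arc one writes $\Gamma(\tfrac z2)=\pi/(\sin(\tfrac{\pi z}{2})\Gamma(1-\tfrac z2))$; an argument identical to \Cref{lem:abs-sin} bounds $|\sin(\tfrac{\pi z}{2})|$ from below along $R=R_m$, and a Stirling computation for $z=Re^{i\theta}$, $\theta\in[\tfrac\pi2,\tfrac{3\pi}{2}]$, gives
\[
\abs{\dfrac{\Gamma(\nu+\tfrac{n-z}{2s})}{\Gamma(1-\tfrac z2)\,\Gamma(\tfrac{n-z}{2})}}
=\exp\!\left(R\cos\theta\,\log R\,\bigl(1-\tfrac{1}{2s}\bigr)+O(R)\right),
\]
whose exponent is $\le -cR\log R$ on the bulk $\{|\cos\theta|\ge\delta\}$, since there $\cos\theta\le 0$ and $1-\tfrac1{2s}>0$ — this is exactly where $s>\tfrac12$ enters, and for $s\le\tfrac12$ the estimate fails, which is why the critical and supercritical ranges are treated by Fourier inversion or a right-closure instead. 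On the bulk this super-exponential decay dominates $|r^{-z}|\le r^{R}$; on the two short arcs near $\theta=\pm\tfrac\pi2$ one has $|\im z|\asymp R$ and the vertical-line bound $e^{-\frac{\pi}{4s}|\im z|}$ beats $|r^{-z}|\le e^{\delta R|\log r|}$ for $\delta$ small depending on $r$. Since the arc length is $O(R_m)$, the arc integral vanishes. This is the main obstacle, and it mirrors \Cref{lem:I-alpha-2}.

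By the residue theorem, $u_\nu(r)=\sum_{j\ge0}\Res_{z=-2j}\bigl(r^{-z}\widetilde u_\nu(z)\bigr)$. Using $\Res_{z=-2j}\Gamma(\tfrac z2)=\tfrac{2(-1)^j}{j!}$ and the regularity of the remaining factors at $z=-2j$,
\[
\Res_{z=-2j}\bigl(r^{-z}\widetilde u_\nu(z)\bigr)
=r^{2j}\,2^{-2j}\,\dfrac{\Gamma(\nu+\tfrac{n}{2s}+\tfrac js)}{\Gamma(\tfrac n2+j)}\cdot\dfrac{2(-1)^j}{j!}
=2\,\dfrac{\Gamma(\nu+\tfrac{n}{2s}+\tfrac js)}{\Gamma(\tfrac n2+j)\,j!}\Bigl(-\tfrac{r^2}{4}\Bigr)^{j},
\]
and summing over $j$ gives exactly $2\,{}_1\Psi_{1}$ with the stated parameters; the series has infinite radius of convergence in $r^2$ by the ratio test (using $\tfrac1s<2$ for $s>\tfrac12$), so it genuinely represents $u_\nu$. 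For the algebraic decay as $r\to+\infty$ I would instead shift the same integral to the right, from $\re z=2\sigma$ to $\re z=\sigma'$ with $\sigma'\in(n+2s\nu,\,n+2s\nu+2s)$; the horizontal connecting segments vanish as $\im z\to\pm\infty$ by the vertical-line decay, so $u_\nu(r)=\Res_{z=n+2s\nu}\bigl(r^{-z}\widetilde u_\nu(z)\bigr)+O(r^{-\sigma'})$. Since the residue equals $-2s\,2^{n+2s\nu}\,\tfrac{\Gamma(\tfrac n2+s\nu)}{\Gamma(-s\nu)}\,r^{-(n+2s\nu)}$ and $n+2s\nu>0$, we obtain $u_\nu(r)=O(r^{-(n+2s\nu)})$, i.e.\ algebraic decay; when $\nu\in\bN$ (or $s\nu\in\bN$) this leading residue degenerates and one passes to the next pole, still yielding algebraic decay, consistently with \Cref{thm:u-nu}.
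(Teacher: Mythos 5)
Your proposal is correct, but it proves the lemma by a genuinely different route than the paper. The paper's proof is a short recognition argument: it uses \eqref{eq:M-inv-exp} and \eqref{eq:M-inv-rescale} to scale $\cM_{2\sigma}^{-1}\{\widetilde u_\nu\}(r)$ into $2\,\cM_\sigma^{-1}\bigl\{\Gamma(\nu+\tfrac{n}{2s}-\tfrac zs)\Gamma(z)/\Gamma(\tfrac n2-z)\bigr\}(\tfrac{r^2}{4})$, identifies this by definition as a Fox $H$-function $H^{1,1}_{1,2}$, rewrites it as the Fox--Wright ${}_1\Psi_1$ series, and obtains the algebraic decay by citing Wright's asymptotic theorem. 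You instead compute the inversion directly: left closure along radii $R_m=2m+1$, with the arc killed by the Stirling balance $R\cos\theta\log R\,(1-\tfrac{1}{2s})$ (which is where $s>\tfrac12$ enters, exactly as you say) together with a $\sin(\pi z/2)$ lower bound and the $e^{-\frac{\pi}{4s}|\im z|}$ decay near the imaginary axis; then the residues of $\Gamma(z/2)$ at $z=-2j$ reproduce the series term by term, including the factor $2$. For the decay you shift the line to the right past $z=n+2s\nu$, getting the explicit leading order $r^{-(n+2s\nu)}$. This is essentially the machinery the paper itself deploys in \Cref{thm:u-nu}, \Cref{lem:abs-sin}, \Cref{lem:I-alpha-2} and in the supercritical case \Cref{lem:eigen-s-small}, so your argument is more self-contained (no appeal to Wright's theorem) and yields a quantitative decay rate, at the cost of being longer than the paper's citation-based proof. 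Two small slips that do not affect the conclusion: shifting the contour to the right gives $u_\nu(r)=-\Res_{z=n+2s\nu}\bigl(r^{-z}\widetilde u_\nu(z)\bigr)+O(r^{-\sigma'})$ (minus sign, by orientation); and the leading residue degenerates precisely when $s\nu\in\bN$ (the pole of $\Gamma(\nu+\tfrac{n-z}{2s})$ is cancelled by the pole of $\Gamma(\tfrac{n-z}{2})$ in the denominator, equivalently $1/\Gamma(-s\nu)=0$), not for every $\nu\in\bN$: for $\nu=k\ge1$ with $sk\notin\bN$ the term $r^{-(n+2sk)}$ genuinely survives, consistent with the polynomial tails of $e_k^{(s)}$.
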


\begin{remark}
Observe that the series converges absolutely for all $r\in\R$ if $s>\tfrac12$. When $s=\tfrac12$, convergence is guaranteed only for $r<1$.

While the condition $\nu>-\frac{n}{2s}$ is violated, extra residues need to be taken into account, in a way similar to the proof of \Cref{lem:eigen-s-small}. In particular, the resulting eigenfunction would be singular at the origin.
\end{remark}

\begin{remark}
We note the following special cases:
\begin{itemize}
\item When $s=1$,
\[
u_{\nu}(r)
=
M\big(\nu+\tfrac{n}{2};\tfrac{n}{2};-\tfrac{r^2}{4}\big)
=
e^{-\frac{r^2}{4}}M(-\nu;\tfrac{n}{2};\tfrac{r^2}{4}),
\]
where $M={}_1F_{1}$ is the Kummer function.
\item When $s=1$ and $\nu=k\in\bN$,
\[
u_{k}(r)
=
e^{-\frac{r^2}{4}}
M\big(
	{-k};
	\tfrac{n}{2};
	\tfrac{r^2}{4}
\big)
=
\dfrac{
	\Gamma\left (
		\frac{n}{2}
	\right )
	k!
}{
	\Gamma\left (
		k+\frac{n}{2}
	\right )
}
e^{-\frac{r^2}{4}}
L_k^{ (\frac{n-2}{2} )}
\big(\tfrac{r^2}{4}\big),
\]
where $L_k^{(\frac{n-2}{2})}$ is the generalized Laguerre polynomial.
\end{itemize}
\end{remark}

\begin{proof}[Proof of \Cref{lem:eigen-s-large}]
We first scale away the factors using \eqref{eq:M-inv-exp} and \eqref{eq:M-inv-rescale},
\begin{align*}
\cM_{2\sigma}^{-1}\set{\widetilde{u}_\nu(z)}(r)
&=2\cM_{\sigma}^{-1}\set{
	\frac{
		\Gamma(\nu+\tfrac{n}{2s}-\frac{z}{s})
		\Gamma(z)
	}{
		\Gamma(\tfrac{n}{2}-z)
	}
}
\left (
	\frac{r^2}{4}
\right ).
\end{align*}
For simplicity, write $\rho=\frac{r^2}{4}$. Recall that by the inverse Mellin transform \eqref{eq:M-inv-2},
\begin{align*}
\cM_{\sigma}^{-1}\set{
	\frac{
		\Gamma(\nu+\tfrac{n}{2s}-\frac{z}{s})
		\Gamma(z)
	}{
		\Gamma(\tfrac{n}{2}-z)
	}
}(\rho)
&=\dfrac{1}{2\pi i}
\int_{\sigma-i\infty}^{\sigma+i\infty}
	\frac{
		\Gamma(\nu+\tfrac{n}{2s}-\frac{z}{s})
		\Gamma(z)
	}{
		\Gamma(\tfrac{n}{2}-z)
	}
	\rho^{-z}
\,dz\\
&=:H_{1,2}^{1,1}\left(
\rho\,
\Big|
\begin{array}{cc}
(1-\nu-\tfrac{n}{2s},\tfrac{1}{s}) &
\\
(0,1) & (1-\tfrac{n}{2},1)
\end{array}
\right)
\end{align*}
is, by definition, a special case of the Fox $H$-function \cite{Fox} (notice that $\sigma+i\R$ separates the poles of $\Gamma(\nu+\tfrac{n}{2s}-\tfrac{z}{s})$ from those of $\Gamma(z)$ since $\nu+\tfrac{n}{2s}>0$). This is in turn (note the parameter $(0,1)$ which corresponds to the simple $\Gamma(z)$) identified as the Fox--Wright $\Psi$-function \cite[p. 50]{Srivastava-Manocha}
\begin{align*}
H_{1,2}^{1,1}\left(
\rho\,
\Big|
\begin{array}{cc}
(1-\nu-\tfrac{n}{2s},\tfrac{1}{s}) &
\\
(0,1) & (1-\tfrac{n}{2},1)
\end{array}
\right)
&\,=
{}_1\Psi_{1}\left(
\begin{array}{c}
(\nu+\tfrac{n}{2s},\tfrac{1}{s})
\\
(\tfrac{n}{2},1)
\end{array}
;-\rho
\right)
:=
\sum_{j=0}^{\infty}
	\dfrac{
		\Gamma(\nu+\tfrac{n}{2s}+\frac{j}{s})
	}{
		\Gamma(\tfrac{n}{2}+j)
	}
\dfrac{(-\rho)^j}{j!}.
\end{align*}
We thus conclude that
\begin{align*}
\cM_{2\sigma}^{-1}\set{\widetilde{u}_\nu(z)}(r)
&=2\,{}_1\Psi_{1}\left(
\begin{array}{c}
(\nu+\tfrac{n}{2s},\tfrac{1}{s})
\\
(\tfrac{n}{2},1)
\end{array}
;-\frac{r^2}{4}
\right)
=
2\sum_{j=0}^{\infty}
	\dfrac{
		\Gamma(\nu+\tfrac{n}{2s}+\frac{j}{s})
	}{
		\Gamma(\tfrac{n}{2}+j)
		j!
	}
\left(-\frac{r^2}{4}\right)^{j}.
\end{align*}
The decay of $u_{\nu}(r)$ as $r\to+\infty$ follows from \cite[Theorem 4]{Wright}.
\end{proof}

\begin{lemma}[Critical case]
\label{lem:eigen-s-crit}
Let $s=\frac12$. Then
\begin{align*}
u_\nu(r)
&=2^{-\frac{n-2}{2}}
\dfrac{\Gamma(n+\nu)}{\Gamma(\tfrac{n}{2})}
\Hyperg\big(
	\tfrac{n+\nu}{2},
	\tfrac{n+\nu+1}{2};
	\tfrac{n}{2};
	-r^2
\big)\\
&=\Gamma(n+\nu)
	r^{-\frac{n-2}{2}}
	\big(\sqrt{r^2+1}\big)^{-(\frac{n}{2}+1+\nu)}
	P^{-\frac{n-2}{2}}_{\frac{n}{2}+\nu}
	\Big(
		\frac{1}{\sqrt{r^2+1}}
	\Big).
\end{align*}
Here $\Hyperg$ is the Gauss hypergeometric function and $P^{-\mu}_{\nu}$ ($\mu,\nu\in\R$) denotes the associated Legendre functions of the first kind \cite[8.704]{Gradshteyn-Ryzhik} which satisfies the differential equation
\[
(1-z^2)\frac{d^2u}{dz^2}
-2z\frac{du}{dz}
+\left (
	\nu(\nu+1)-\frac{\mu^2}{1-z^2}
\right )u
=0,
\]
and is more explicitly given by
\[
P^{\mu}_{\nu}(x)
=\frac{1}{\Gamma(1-\mu)}
\left(
	\frac{1+x}{1-x}
\right)^{\frac{\mu}{2}}
\Hyperg\Big(
	{-\nu},
	\nu+1;
	1-\mu;
	\frac{1-x}{2}
\Big),
	\qquad \text{for all }|x|<1.
\]
\end{lemma}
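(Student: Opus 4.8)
The plan is to sidestep the Mellin inversion used in the subcritical and supercritical regimes --- which is exactly where the critical exponent causes trouble, since at $s=\tfrac12$ the Fox--Wright series of \Cref{lem:eigen-s-large} converges only for $r<1$ --- and instead to recover $u_\nu$ on the Fourier side. By \Cref{prop:fractional-Laguerre} with $s=\tfrac12$, the Hankel transform of $u_\nu$ in $\R^n$ is the elementary function $\widehat u_\nu(\zeta)=2^{\frac n2+1}\cdot\tfrac12\cdot e^{-\zeta}\zeta^{\nu}=2^{\frac n2}e^{-\zeta}\zeta^{\nu}$, an exponential times a pure power. First I would invoke the inverse Hankel transform \eqref{Hankel} to write
\[
u_\nu(r)=2^{\frac n2}\,r^{\frac{2-n}{2}}\int_0^\infty J_{\frac{n-2}{2}}(\zeta r)\,e^{-\zeta}\,\zeta^{\frac n2+\nu}\,d\zeta ,
\]
the integral being absolutely convergent precisely when $\tfrac n2+\nu>-\tfrac n2$, i.e.\ $\nu>-n=-\tfrac{n}{2s}$, because then the Bessel factor is integrable at $\zeta=0$ while the exponential controls the tail.

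Next I would evaluate this inner integral, which is a Laplace (Lipschitz--Hankel) integral $\int_0^\infty e^{-pt}J_{\frac{n-2}{2}}(at)\,t^{\mu-1}\,dt$ with $p=1$, $a=r$ and $\mu=\tfrac n2+\nu+1$. Its value is classical, \cite[6.621.1]{Gradshteyn-Ryzhik}, valid for $\re(\mu+\tfrac{n-2}{2})=\re(n+\nu)>0$ and $\re p=1>|\im a|=0$; after using $\mu+\tfrac{n-2}{2}=n+\nu$ and $\tfrac{n-2}{2}+1=\tfrac n2$, it equals a Gauss hypergeometric function $\Hyperg\big(\tfrac{n+\nu}{2},\tfrac{n+\nu+1}{2};\tfrac n2;-r^2\big)$ times $\Gamma(n+\nu)/\Gamma(\tfrac n2)$ and an elementary prefactor. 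Multiplying through by $2^{n/2}r^{(2-n)/2}$ and collapsing the powers of $2$ and of $r$ then yields the first displayed formula for $u_\nu$ (up to the overall scalar by which the eigenfunction is only defined anyway).

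Finally, for the second representation I would pass to the associated Legendre function via its hypergeometric form: with $\mu=-\tfrac{n-2}{2}$ (so $1-\mu=\tfrac n2$), $\nu_{\mathrm L}=\tfrac n2+\nu$, and $x=(r^2+1)^{-1/2}$, the identity quoted in the statement expresses $P^{-\frac{n-2}{2}}_{\frac n2+\nu}(x)$ through $\Hyperg(-\nu_{\mathrm L},\nu_{\mathrm L}+1;\tfrac n2;\tfrac{1-x}{2})$. Unwinding the prefactor via $\tfrac{1+x}{1-x}=\tfrac{(\sqrt{r^2+1}+1)^2}{r^2}$ and applying the quadratic transformation of ${}_2F_1$ that sends the argument $\tfrac{1-x}{2}=\tfrac{\sqrt{r^2+1}-1}{2\sqrt{r^2+1}}$ to $-r^2$, one matches this with the first formula, picking up exactly the powers $r^{-\frac{n-2}{2}}(\sqrt{r^2+1})^{-(\frac n2+1+\nu)}$; the differential equation satisfied by $P^\mu_\nu$ is standard \cite[8.704]{Gradshteyn-Ryzhik}. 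The only genuine work, and the main source of friction, will be the bookkeeping: locating the precise quadratic transformation of the hypergeometric function, tracking all powers of $2$, powers of $\sqrt{r^2+1}$ and Gamma factors consistently, and checking that the classical table evaluations remain valid up to the boundary $\nu=-n$ of the admissible range.
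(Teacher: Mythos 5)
Your proposal is essentially the paper's own proof: at $s=\tfrac12$ the paper likewise writes $u_\nu(r)=r^{-\frac{n-2}{2}}\int_0^\infty J_{\frac{n-2}{2}}(\zeta r)\,\zeta^{\frac n2+\nu}e^{-\zeta}\,d\zeta$ (the inverse Hankel transform of $\widehat u_\nu$ from \Cref{prop:fractional-Laguerre}) and evaluates it by \cite[6.621 1.]{Gradshteyn-Ryzhik}, which is exactly your route. The extra bookkeeping you anticipate (the hypergeometric/Legendre conversion, convergence for $\nu>-n$) is correct and is simply left implicit in the paper.
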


\begin{proof}
The inverse Hankel transform of $\widehat{u}_\nu(\zeta)$ is
\begin{align*}
u_\nu(r)
=r^{-\frac{n-2}{2}}
\int_{0}^{\infty}
	J_{\frac{n-2}{2}}(\zeta r)
	\zeta^{\frac{n}{2}+\nu}
	e^{-\zeta}
\,d\zeta,
\end{align*}
which is evaluated according to \cite[6.621 1.]{Gradshteyn-Ryzhik}.
\end{proof}

\begin{remark}\label{rmk:LegendreP}
We list some useful properties of $P^{-\mu}_{\nu}(x)$, for $|x|<1$.

\begin{itemize}
\item When $\mu,\nu\in\bN$,
	$P^{-\mu}_{\nu}$ and $P^{\mu}_{\nu}$ are related by $P^{-\mu}_{\nu}(x)=(-1)^{\mu}\frac{(\nu-\mu)!}{(\nu+\mu)!}P^{\mu}_{\nu}(x)$. For general parameters $\mu,\nu\in\R$, a similar relation holds \cite[8.737 1.]{Gradshteyn-Ryzhik}.
\item For $\mu,\nu\in\R$, $P^{\mu}_{-\nu-1}(x)=P^{\mu}_{\nu}(x)$.
\item For $\mu+\nu\in\bZ$, $P^{\mu}_{\nu}(-x)=(-1)^{\mu+\nu}P^{\mu}_{\nu}(x)$. In the general case $\mu+\nu\in\R$, see \cite[8.737 2.]{Gradshteyn-Ryzhik}.
\item
	When $\mu,\nu\in\bN$, the Rodrigues formula reads
	\[
	P^{\mu}_{\nu}(x)
	=\dfrac{(-1)^{\mu+\nu}}{2^{\nu}\nu!}
		(1-x^2)^{\frac{\mu}{2}}
	\dfrac{d^{\mu+\nu}}{dx^{\mu+\nu}}
		(1-x^2)^{\nu}.
	\]
	In particular, $P^{\mu}_{\nu}(x)$ is a closed algebraic expression involving powers of $x$ and $\sqrt{1-x^2}$.
\end{itemize}
\end{remark}

\begin{lemma}[Eigenfunctions in even dimensions]
Let $n\geq 2$ be even, $s=\frac12$, $\nu=k\in\bN$. Then:
\begin{itemize}
\item
	$u_k(r)$ is a closed algebraic expression involving powers of $r$ and $\sqrt{r^2+1}$.
\item
	The orthogonality condition holds for $k+l$ even:
	\[
	\int_{0}^{\infty}
		u_k(r)u_l(r)
		\big(\sqrt{r^2+1}\big)^{k+l+n-1}
		r^{n-1}
	\,dr
	=\frac{(k+1)!(n+k-1)!}{n+2k+1}\delta_{kl}.
	\]
\end{itemize}
\end{lemma}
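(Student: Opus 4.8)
The plan is to reduce both assertions to standard facts about associated Legendre functions, starting from the closed form of the eigenfunctions supplied by \Cref{lem:eigen-s-crit}. Since $n$ is even, $\mu:=\frac{n-2}{2}$ and $K:=\frac{n}{2}+k$ are non-negative integers with $K-\mu=k+1\ge 1$, and for $s=\frac12$, $\nu=k$ that lemma gives
\begin{equation*}
u_k(r)=\Gamma(n+k)\,r^{-\mu}\bigl(\sqrt{r^2+1}\bigr)^{-(\frac{n}{2}+1+k)}\,P^{-\mu}_{K}\!\Bigl(\tfrac{1}{\sqrt{r^2+1}}\Bigr),\qquad r>0.
\end{equation*}

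For the first bullet I would combine the identity $P^{-\mu}_{K}(x)=(-1)^{\mu}\frac{(K-\mu)!}{(K+\mu)!}P^{\mu}_{K}(x)$ with the Rodrigues formula of \Cref{rmk:LegendreP}, which presents $P^{\mu}_{K}(x)$ as $(1-x^2)^{\mu/2}$ times a polynomial in $x$. Substituting $x=(r^2+1)^{-1/2}$, so that $\sqrt{1-x^2}=r(r^2+1)^{-1/2}$ for $r\ge 0$, turns each power of $x$ into a power of $\sqrt{r^2+1}$, while $(1-x^2)^{\mu/2}=r^{\mu}(\sqrt{r^2+1})^{-\mu}$ exactly cancels the prefactor $r^{-\mu}$. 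Hence $u_k$ is a finite linear combination of powers of $\sqrt{r^2+1}$ --- in particular a closed algebraic expression in $r$ and $\sqrt{r^2+1}$, regular at $r=0$.

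For the orthogonality identity, the first step is to insert this formula for $u_k$ and $u_l$ (writing $L:=\frac{n}{2}+l$) and collect exponents: the explicit $r$-factors give $r^{-2\mu}\cdot r^{n-1}=r$, and the $\sqrt{r^2+1}$-factors give $(\sqrt{r^2+1})^{-(\frac n2+1+k)-(\frac n2+1+l)+(k+l+n-1)}=(r^2+1)^{-3/2}$, so that with $x=(r^2+1)^{-1/2}$,
\begin{equation*}
\int_{0}^{\infty}u_k(r)u_l(r)\bigl(\sqrt{r^2+1}\bigr)^{k+l+n-1}r^{n-1}\,dr=\Gamma(n+k)\Gamma(n+l)\int_{0}^{\infty}r(r^2+1)^{-3/2}P^{-\mu}_{K}(x)P^{-\mu}_{L}(x)\,dr.
\end{equation*}
The change of variable $x=(r^2+1)^{-1/2}$ satisfies $r(r^2+1)^{-3/2}\,dr=-dx$ and maps $r:0\to\infty$ to $x:1\to 0$, so the integral equals $\Gamma(n+k)\Gamma(n+l)\int_{0}^{1}P^{-\mu}_{K}(x)P^{-\mu}_{L}(x)\,dx$. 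When $k+l$ is even the integrand is even in $x$: by the parity relation in \Cref{rmk:LegendreP}, $P^{-\mu}_{K}(-x)P^{-\mu}_{L}(-x)=(-1)^{2\mu+K+L}P^{-\mu}_{K}(x)P^{-\mu}_{L}(x)$, and $2\mu+K+L=2n-2+k+l\equiv k+l \pmod 2$. Thus $\int_{0}^{1}=\frac12\int_{-1}^{1}$, and I would conclude with the classical orthogonality relation
\begin{equation*}
\int_{-1}^{1}P^{-\mu}_{K}(x)P^{-\mu}_{L}(x)\,dx=\frac{2}{2K+1}\frac{(K-\mu)!}{(K+\mu)!}\,\delta_{KL},
\end{equation*}
which follows from the standard formula for $\int_{-1}^{1}P^{\mu}_{K}P^{\mu}_{L}$ via $P^{-\mu}_{K}=(-1)^{\mu}\frac{(K-\mu)!}{(K+\mu)!}P^{\mu}_{K}$. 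Inserting $K-\mu=k+1$, $K+\mu=n+k-1$, $2K+1=n+2k+1$, $\delta_{KL}=\delta_{kl}$, and using $\Gamma(n+k)=(n+k-1)!$ to simplify the constant when $k=l$, yields precisely $\frac{(k+1)!(n+k-1)!}{n+2k+1}\,\delta_{kl}$.

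The exponent bookkeeping, the change of variables and the convergence of the integral (which is $O(r^{-2})$ at infinity and $O(r^{n-1})$ at the origin) are routine. The one point requiring care --- and the main obstacle --- is fixing the exact normalisation of the Legendre orthogonality relation for negative order $-\mu$, and noting that it is precisely the parity argument that forces the hypothesis $k+l$ even, since for $k+l$ odd the integral over $[0,1]$ has no reason to vanish; both ingredients can be extracted from \cite{Gradshteyn-Ryzhik}.
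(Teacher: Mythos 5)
Your proof is correct and follows essentially the same route as the paper: the closed form of $u_k$ from \Cref{lem:eigen-s-crit}, the Rodrigues formula of \Cref{rmk:LegendreP} for the first bullet, and for the second the Legendre orthogonality on $[-1,1]$ from \cite{Gradshteyn-Ryzhik}, reflection of the upper index, the parity argument restricting to $[0,1]$ (which is exactly what forces $k+l$ even), and the substitution $x=(r^2+1)^{-1/2}$. The only difference is that you run the computation forward from the $r$-integral while the paper runs it backward from the Legendre orthogonality, and you make explicit the exponent bookkeeping that the paper leaves to the reader.
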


\begin{proof}
The first item follows from \Cref{rmk:LegendreP}. For the second, we use \cite[7.122, 7.123]{Gradshteyn-Ryzhik} to see that
\[
\int_{-1}^{1}
	P^{\frac{n-2}{2}}_{\frac{n}{2}+k}(x)
	P^{\frac{n-2}{2}}_{\frac{n}{2}+l}(x)
\,dx
=\dfrac{2}{n+2k+1}\dfrac{(n+k-1)!}{(k+1)!}\delta_{kl}.
\]
Reflecting the upper index,
\[
\int_{-1}^{1}
	P^{-\frac{n-2}{2}}_{\frac{n}{2}+k}(x)
	P^{-\frac{n-2}{2}}_{\frac{n}{2}+l}(x)
\,dx
=\dfrac{2}{n+2k+1}\dfrac{(k+1)!}{(n+k-1)!}\delta_{kl}.
\]
Writing the integral on $[0,1]$, we have
\[
\big(1+(-1)^{k+l}\big)
\int_{0}^{1}
	P^{-\frac{n-2}{2}}_{\frac{n}{2}+k}(x)
	P^{-\frac{n-2}{2}}_{\frac{n}{2}+l}(x)	
\,dx
=\dfrac{2}{n+2k+1}\dfrac{(k+1)!}{(n+k-1)!}\delta_{kl}.
\]
This is nontrivial for $k+l$ even, and we change variable to $x=\frac{1}{\sqrt{r^2+1}}$ which yields
\[
\int_{0}^{\infty}
	P^{-\frac{n-2}{2}}_{\frac{n}{2}+k}
	\Big(
		\frac{1}{\sqrt{r^2+1}}
	\Big)
	P^{-\frac{n-2}{2}}_{\frac{n}{2}+l}
	\Big(
		\frac{1}{\sqrt{r^2+1}}
	\Big)
	\frac{r}{\big( \sqrt{r^2+1}\big)^3}
\,dr
=\dfrac{1}{n+2k+1}\dfrac{(k+1)!}{(n+k-1)!}\delta_{kl}.
\]
Writing this in terms of $u_k$ and $u_l$, the result follows.
\end{proof}

\begin{lemma}
As $s\to (\frac12)^+$,
\[
\cM_{2\sigma}^{-1}\set{u_\nu(z)}(r)
\to C\cF^{-1}\set{\widehat{u}(\zeta)}(r),
	\qquad \forall r\in[0,1),
\]
for some explicit constant $C$.
\end{lemma}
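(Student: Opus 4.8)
The plan is to work from the explicit series representation of the subcritical eigenfunction established in \Cref{lem:eigen-s-large},
\[
\cM_{2\sigma}^{-1}\set{\widetilde u_\nu(z)}(r)
=2\sum_{j=0}^{\infty}
\dfrac{\Gamma\big(\nu+\tfrac{n}{2s}+\tfrac{j}{s}\big)}{\Gamma\big(\tfrac{n}{2}+j\big)\,j!}
\Big(-\tfrac{r^2}{4}\Big)^{j},
\qquad s\in(\tfrac12,1],
\]
and to let $s\to(\tfrac12)^+$ term by term. Since $\tfrac1s\to2$ and $\Gamma$ is continuous, the $j$-th coefficient tends to $\Gamma(n+\nu+2j)/\big(\Gamma(\tfrac n2+j)\,j!\big)$, so the formal limit is $2\sum_{j\ge0}\Gamma(n+\nu+2j)\big(-\tfrac{r^2}{4}\big)^j/\big(\Gamma(\tfrac n2+j)\,j!\big)$, whose radius of convergence in $r$ is exactly $1$ --- which is precisely why the conclusion is stated on $[0,1)$. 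Two things then remain: (i) to justify exchanging $\lim_{s\to(1/2)^+}$ with the infinite sum, and (ii) to identify the limiting series with the $s=\tfrac12$ eigenfunction of \Cref{lem:eigen-s-crit}.

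For (i), I would fix $r\in[0,1)$ and pick $r_0\in(r,1)$. Applying the Stirling asymptotics \eqref{asymptotics-gamma0} to the three Gamma factors, the modulus of the $j$-th summand equals, up to a factor that is polynomial in $j$ and bounded uniformly for $s$ in a compact interval, $\exp\!\big(j\log j\,(\tfrac1s-2)+j\log(\tfrac{r^2}{4}\,s^{-1/s}e^{2-1/s})+O(\log j)\big)$. The decisive point is that $\tfrac1s-2\le0$ whenever $s\ge\tfrac12$, so the super-exponential factor $j^{(\frac1s-2)j}$ never exceeds $1$; meanwhile the base $\tfrac{r^2}{4}\,s^{-1/s}e^{2-1/s}$ is continuous in $s$ with value $r^2$ at $s=\tfrac12$, hence stays below $r_0^2<1$ for $s$ in a one-sided neighbourhood $[\tfrac12,\tfrac12+\delta]$. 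Thus the $j$-th summand is dominated, uniformly in such $s$, by $C\,j^{A}r_0^{2j}$, which is summable, so dominated convergence for series applies. The finitely many small-$j$ terms are harmless, the argument $\nu+\tfrac{n}{2s}$ of the first Gamma staying in the admissible range $\nu>-\tfrac{n}{2s}$ throughout $[\tfrac12,\tfrac12+\delta]$ for $\delta$ small.

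For (ii), I would invoke the Legendre duplication formula $\Gamma(2w)=2^{2w-1}\pi^{-1/2}\Gamma(w)\Gamma(w+\tfrac12)$ with $w=\tfrac{n+\nu}{2}+j$, which converts the limiting series into $\tfrac{2^{\,n+\nu}}{\sqrt\pi}\sum_{j\ge0}\Gamma(\tfrac{n+\nu}{2}+j)\Gamma(\tfrac{n+\nu+1}{2}+j)(-r^2)^j/\big(\Gamma(\tfrac n2+j)\,j!\big)$, i.e. $\tfrac{2^{\,n+\nu}}{\sqrt\pi}\,\tfrac{\Gamma(\frac{n+\nu}{2})\Gamma(\frac{n+\nu+1}{2})}{\Gamma(n/2)}\,\Hyperg\big(\tfrac{n+\nu}{2},\tfrac{n+\nu+1}{2};\tfrac n2;-r^2\big)$; applying duplication once more to the prefactor collapses this to $\tfrac{2\,\Gamma(n+\nu)}{\Gamma(n/2)}\,\Hyperg\big(\tfrac{n+\nu}{2},\tfrac{n+\nu+1}{2};\tfrac n2;-r^2\big)$, which is $2^{n/2}$ times the expression for $u_\nu$ at $s=\tfrac12$ given in \Cref{lem:eigen-s-crit}. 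Since that expression equals $\cF^{-1}\set{\widehat u(\zeta)}(r)$ by the very definition in \Cref{thm:explicit}, this yields the claim with the explicit constant $C=2^{n/2}$.

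The step I expect to be the main obstacle is (i): $s=\tfrac12$ sits on the boundary of the region of unconditional convergence of the Fox--Wright series, so one must keep careful track of the $s$-dependence inside the Stirling estimates rather than simply invoking absolute convergence, and it is exactly the sign condition $\tfrac1s-2\le0$ for $s\ge\tfrac12$ that makes the uniform domination go through.
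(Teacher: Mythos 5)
Your proposal is correct and follows essentially the same route as the paper: pass to the limit term by term in the Fox--Wright series of \Cref{lem:eigen-s-large}, apply the Legendre duplication formula to recognize the Gauss hypergeometric series, and match it against the critical-case expression of \Cref{lem:eigen-s-crit}, which gives $C=2^{n/2}$. The only difference is that you supply the uniform Stirling-based domination (exploiting $\tfrac1s-2\le 0$ and the base tending to $r^2<r_0^2<1$) to justify interchanging the limit $s\to(\tfrac12)^+$ with the infinite sum, a step the paper performs formally; this added justification is sound.
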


\begin{proof}
By legendre duplication formula, we have
\begin{align*}
\cM_{2\sigma}^{-1}\set{u_\nu(z)}(r)
&\to
2\sum_{j=0}^{\infty}
	\dfrac{
		\Gamma(\nu+n+2j)
	}{
		\Gamma(\tfrac{n}{2}+j)
		j!
	}
\left(-\frac{r^2}{4}\right)^{j}
\\
&=
\sum_{j=0}^{\infty}
	2^{\nu+n+2j}\pi^{-\frac{1}{2}}
	\frac{
		\Gamma(\frac{\nu+n}{2}+j)
		\Gamma(\frac{\nu+n+1}{2}+j)
	}{
		\Gamma(\frac{n}{2}+j)
		j!
	}
	\left(
		-\frac{r^2}{4}
	\right)^{j}\\
&=\frac{2^{\nu+n}}{\sqrt{\pi}}
\frac{
	\Gamma(\frac{\nu+n}{2})
	\Gamma(\frac{\nu+n+1}{2})
}{
	\Gamma(\frac{n}{2})
}
\Hyperg\big(
	\tfrac{n+\nu}{2},
	\tfrac{n+\nu+1}{2};
	\tfrac{n}{2};
	-r^2
\big)\\
&=
\dfrac{2\Gamma(n+\nu)}{\Gamma(\tfrac{n}{2})}
\Hyperg\big(
	\tfrac{n+\nu}{2},
	\tfrac{n+\nu+1}{2};
	\tfrac{n}{2};
	-r^2
\big)\\
&=
2^{-\frac{n-2}{2}}
\dfrac{\Gamma(n+\nu)}{\Gamma(\tfrac{n}{2})}
\Hyperg\big(
	\tfrac{n+\nu}{2},
	\tfrac{n+\nu+1}{2};
	\tfrac{n}{2};
	-r^2
\big),
\end{align*}
which is $\cF^{-1}\set{\widehat{u}(\zeta)}(r)$ up to a constant.
\end{proof}

\begin{lemma}[Supercritical case]
\label{lem:eigen-s-small}
Let $s\in(0,\frac12)$, $\nu>-\frac{n}{2s}$. The eigenfunctions $u_\nu(r)$, obtained by the inverse Mellin transform of $\widetilde{u}_\nu(z)$ on $\re z=\sigma$ with $0<\sigma<s\nu+\tfrac{n}{2}$, are given explicitly by the Fox--Wright function
\begin{align*}
u_\nu(r)
:=\cM_{2s\sigma}^{-1}\set{\widetilde{u}_\nu(z)}(r)
&=-2s
	\left(
		\frac{2}{r}
	\right)^{2s\nu+n}
	{}_1\Psi_{1}\left(
		\begin{array}{c}
		(s\nu+\tfrac{n}{2},s)
		\\
		(-s\nu,-s)
		\end{array};\,
		-\Big(\frac{2}{r}\Big)^{2s}
	\right)\\
&=2s
	\sum_{j=0}^{\infty}
	(-1)^{j+1}
	\dfrac{
		\Gamma(s\nu+\frac{n}{2}+sj)
	}{
		\Gamma(-s\nu-sj)
		j!
	}
	\Big(\frac{2}{r}\Big)^{n+2s(\nu+j)}.
\end{align*}
\end{lemma}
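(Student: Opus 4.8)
The plan is to compute $u_\nu$ by residues, following the route of the subcritical \Cref{lem:eigen-s-large} but closing the Mellin contour on the \emph{opposite} side. First, using the scaling identities \eqref{eq:M-inv-exp}--\eqref{eq:M-inv-rescale} exactly as in that lemma, one reduces to the same kernel:
\[
u_\nu(r)
=\cM_{2s\sigma}^{-1}\set{\widetilde u_\nu(z)}(r)
=2\,\cM_{\sigma}^{-1}\set{
\frac{\Gamma\!\left(\nu+\tfrac{n}{2s}-\tfrac{z}{s}\right)\Gamma(z)}{\Gamma\!\left(\tfrac{n}{2}-z\right)}
}\!\left(\tfrac{r^2}{4}\right),
\]
so the task is to invert this kernel along a line $\re z=\sigma$ with $0<\sigma<s\nu+\tfrac{n}{2}$. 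For $s\in(0,\tfrac12)$ the relevant singularities are the simple poles of $\Gamma(\nu+\tfrac{n}{2s}-\tfrac{z}{s})$ at $z_\ell=s\nu+\tfrac{n}{2}+s\ell$, $\ell\in\bN$; the hypothesis $\nu>-\tfrac{n}{2s}$ guarantees $0<\sigma<z_0$, so the line separates these from the poles of $\Gamma(z)$. Closing to the \emph{left} would only collect the $\Gamma(z)$-poles and produce a divergent Puiseux series (Wright's criterion: $1+1-\tfrac1s=2-\tfrac1s<0$), so one must close to the right.

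Granting that the closing arc contributes nothing in the limit (see below), one obtains $\cM_{\sigma}^{-1}(\cdots)(\rho)=-\sum_{\ell\ge0}\Res_{z=z_\ell}$ at $\rho=\tfrac{r^2}{4}$. Each residue is elementary: with $w=\nu+\tfrac{n}{2s}-\tfrac{z}{s}$ one has $\tfrac{dz}{dw}=-s$ and $\Res_{w=-\ell}\Gamma(w)=\tfrac{(-1)^\ell}{\ell!}$, so $\Res_{z=z_\ell}\Gamma(\nu+\tfrac{n}{2s}-\tfrac{z}{s})=\tfrac{(-1)^{\ell+1}s}{\ell!}$; substituting $\tfrac{n}{2}-z_\ell=-s(\nu+\ell)$, $\Gamma(z_\ell)=\Gamma(s\nu+\tfrac{n}{2}+s\ell)$ and $\rho^{-z_\ell}=(\tfrac{2}{r})^{n+2s(\nu+\ell)}$ gives the $\ell$-th term. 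Summing over $\ell$, and recognising $\sum_{\ell\ge0}\tfrac{\Gamma(s\nu+\frac{n}{2}+s\ell)}{\Gamma(-s\nu-s\ell)\,\ell!}\,w^\ell$ as the Fox--Wright function ${}_1\Psi_{1}\!\left[\begin{smallmatrix}(s\nu+\frac{n}{2},\,s)\\(-s\nu,\,-s)\end{smallmatrix};w\right]$ with $w=-(\tfrac{2}{r})^{2s}$ (the bottom increment being $-s$ precisely because the poles collected march to the right), then yields the asserted formula after the prefactor $2$. This series converges for every $r>0$ by Wright's criterion, since $1+(-s)-s=1-2s>0$, and its leading term $(\tfrac{2}{r})^{n+2s\nu}$ reproduces the algebraic decay as $r\to+\infty$ recorded in \eqref{U-asymptotics}.

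The main obstacle is the arc estimate, the supercritical counterpart of \Cref{lem:I-alpha-2}. I would close along radii $R_m=s\nu+\tfrac{n}{2}+s(m+\tfrac12)$, so that $\{R_m e^{i\theta}\}$ avoids every $z_\ell$ and, near $\theta=0$, $\nu+\tfrac{n}{2s}-\tfrac{R_m\cos\theta}{s}$ stays a fixed distance from $\bZ$. Where $\Gamma(\nu+\tfrac{n}{2s}-\tfrac{z}{s})$ has argument near the negative real axis one reflects it via $\Gamma(w)=\pi/(\sin(\pi w)\Gamma(1-w))$: this trades it for $1/\Gamma(1-\nu-\tfrac{n}{2s}+\tfrac{z}{s})$, whose argument lies in the right half-plane where Stirling applies, plus a factor $1/\sin\!\big(\pi(\nu+\tfrac{n}{2s}-\tfrac{z}{s})\big)$ that is bounded above along $R_m$ by $C e^{-\frac{\pi}{s}R_m|\sin\theta|}$, exactly by the mechanism of \Cref{lem:abs-sin}. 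Combining the Stirling expansions of $\Gamma(z)$, $1/\Gamma(\tfrac{n}{2}-z)$ and $1/\Gamma(1-\nu-\tfrac{n}{2s}+\tfrac{z}{s})$, the exponent of the integrand on the arc is controlled by $\big(2-\tfrac1s\big)R_m\cos\theta\log R_m+\pi R_m|\sin\theta|\big(1-\tfrac1s\big)+O(R_m)$; since $s<\tfrac12$ makes both $2-\tfrac1s$ and $1-\tfrac1s$ negative, this tends to $-\infty$ uniformly in $\theta$ (and for $r>1$ the leftover factor $r^{-\re z}\le 1$ where $\re z\ge 0$ is harmless), so the arc integral vanishes along $R_m$. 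Carrying out this reflection-and-sine-bound cleanly, with a radius choice that keeps the sine away from its zeros along the whole arc, is the delicate part; it is also exactly where the hypothesis $s\in(0,\tfrac12)$ enters and what forces closing to the right. (The restriction $r>1$ is merely an artifact of the crude bound used on $r^{-\re z}$; a sharper treatment would give the identity for all $r>0$, matching the unrestricted convergence of the series.)
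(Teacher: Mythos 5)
Your route is the same as the paper's: reduce by the Mellin scaling identities \eqref{eq:M-inv-exp}--\eqref{eq:M-inv-rescale} (the paper rescales by $a=2s$, landing on the kernel $\Gamma(\nu+\tfrac{n}{2s}-z)\Gamma(sz)/\Gamma(\tfrac n2-sz)$ at $\rho=\tfrac{r^{2s}}{4}$, you rescale by $a=2$ and keep the subcritical kernel at $\rho=\tfrac{r^2}{4}$ --- equivalent, though note your inversion line then sits at $\re z=s\sigma$, which is harmless since any line in $(0,s\nu+\tfrac n2)$ gives the same function), close the Barnes integral to the right, collect the simple poles of the Gamma factor marching to the right, and identify the resulting series as ${}_1\Psi_1$. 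Your residue values are correct, and your arc discussion is a genuine fleshing-out of the paper's one-sentence assertion, via reflection and the sine bound along radii avoiding the poles in the spirit of \Cref{lem:abs-sin}/\Cref{lem:I-alpha-2}; it also correctly isolates where $s<\tfrac12$ enters, and your remark that the $r>1$ restriction is removable is right.

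However, there is a concrete sign inconsistency you must resolve. You (correctly) state that a right closure is clockwise, so $\cM_{\sigma}^{-1}(\cdots)(\rho)=-\sum_{\ell}\Res_{z=z_\ell}$; carrying your own residues through this convention gives
\begin{equation*}
u_\nu(r)=2s\sum_{\ell=0}^{\infty}(-1)^{\ell}\,\dfrac{\Gamma(s\nu+\tfrac n2+s\ell)}{\Gamma(-s\nu-s\ell)\,\ell!}\Big(\tfrac2r\Big)^{n+2s(\nu+\ell)}
=+\,2s\Big(\tfrac2r\Big)^{2s\nu+n}\,{}_1\Psi_{1}\left(\begin{array}{c}(s\nu+\tfrac n2,s)\\(-s\nu,-s)\end{array};\,-\Big(\tfrac2r\Big)^{2s}\right),
\end{equation*}
which is the \emph{negative} of "the asserted formula'' you claim to reach (the paper's proof sums the residues \emph{without} the orientation minus, which is how the displayed $-2s\,{}_1\Psi_1$ arises). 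You cannot keep the clockwise minus and also land on the displayed sign, so as written your argument ends in a contradiction you do not notice. To settle it independently: the toy inversion $\cM_{\sigma}^{-1}\set{\Gamma(a-z)}(\rho)=\rho^{-a}e^{-1/\rho}$ (poles at $z=a+j$, residues $\rho^{-a-j}\tfrac{(-1)^{j+1}}{j!}$) forces $\cM^{-1}=-\sum\Res$ for a right closure; and for $\nu=0$ the function $u_0$ is, by \Cref{prop:fractional-Laguerre}, a positive multiple of the $2s$-stable density, whose tail is $+C\,r^{-n-2s}$ with $C>0$, while the $j=0$ term vanishes ($1/\Gamma(0)=0$) and the $j=1$ coefficient is positive only in the $+2s\,{}_1\Psi_1$ version, since $\Gamma(-s)<0$. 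So your orientation bookkeeping is the correct one, and what your computation actually proves is the identity with the opposite overall sign to the one displayed in the statement (and in \eqref{eq:expl-eig-func}); state and justify that explicitly instead of asserting agreement.
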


\begin{proof}
As in the subcritical case $s\in(\frac12,1)$, we first scale away the factors using \eqref{eq:M-inv-exp} and \eqref{eq:M-inv-rescale},
\begin{align*}
\cM_{2s\sigma}^{-1}\set{\widetilde{u}_\nu(z)}(r)
&=2s\cM_{\sigma}^{-1}\set{
	\frac{
		\Gamma(\nu+\tfrac{n}{2s}-z)
		\Gamma(sz)
	}{
		\Gamma(\tfrac{n}{2}-sz)
	}
}
\left (
	\frac{r^{2s}}{2^{2s}}
\right ).
\end{align*}
Write $\rho=2^{-2s}r^{2s}$. Recall that by the inverse Mellin transform \eqref{eq:M-inv-2},
\begin{align*}
\cM_{\sigma}^{-1}\set{
	\frac{
		\Gamma(\nu+\tfrac{n}{2s}-z)
		\Gamma(sz)
	}{
		\Gamma(\tfrac{n}{2}-sz)
	}
}(\rho)
&=\dfrac{1}{2\pi i}
\lim_{R\to\infty}
\int_{\sigma-iR}^{\sigma+iR}
	\frac{
		\Gamma(\nu+\tfrac{n}{2s}-z)
		\Gamma(sz)
	}{
		\Gamma(\tfrac{n}{2}-sz)
	}
	\rho^{-z}
\,dz.
\end{align*}
Using Stirling's and Euler's reflection formulae, we can show that the integrand decays along the right semicircle like $R^{-(\frac{1}{2s}-1)R\cos\theta}e^{O(R)}$, for $\theta=\arg z\in(-\frac{\pi}{2},\frac{\pi}{2})$, as $R=|z|\to\infty$.  By residue theorem, the contour (on the right half plane) encloses the (simple) poles of $\Gamma(\nu+\frac{n}{2s}-z)$, namely $\nu+\frac{n}{2s}+j$, $j\in\bN$, at which the residues are
\begin{align*}
\Res&\left (
	\dfrac{
		\Gamma(\nu+\frac{n}{2s}-z)
		\Gamma(sz)
	}{
		\Gamma(\frac{n}{2}-sz)
	}
	\rho^{-z},
	\nu+\tfrac{n}{2s}+j
	\right )\\
	&=
	\dfrac{
		\Gamma(s\nu+\frac{n}{2}+sj)
	}{
		\Gamma(-s\nu-sj)
	}
	\rho^{-\nu-\frac{n}{2s}-j}
	\Res\left(
		\Gamma(\nu+\tfrac{n}{2s}-z),
		\nu+\tfrac{n}{2s}+j
	\right)
	\\
	&=
	\dfrac{
		\Gamma(s\nu+\frac{n}{2}+sj)
	}{
		\Gamma(-s\nu-sj)
	}
	\rho^{-\nu-\frac{n}{2s}-j}
	\cdot (-1) \Res(\Gamma,-j)
	\\
	&=
	\dfrac{
		\Gamma(s\nu+\frac{n}{2}+sj)
	}{
		\Gamma(-s\nu-sj)
	}
	\rho^{-\nu-\frac{n}{2s}-j}
	\dfrac{(-1)^{j+1}}{j!}.
\end{align*}
Therefore,
\begin{align*}
\cM_{\sigma}^{-1}\set{
	\frac{
		\Gamma(\nu+\tfrac{n}{2s}-z)
		\Gamma(sz)
	}{
		\Gamma(\tfrac{n}{2}-sz)
	}
}(\rho)
&=\sum_{j=0}^{\infty}
\Res\left (
	\dfrac{
		\Gamma(\nu+\frac{n}{2s}-z)
		\Gamma(sz)
	}{
		\Gamma(\frac{n}{2}-sz)
	}
	\rho^{-z},
	\nu+\tfrac{n}{2s}+j
	\right )\\
&=-\dfrac{1}{\rho^{\nu+\frac{n}{2s}}}
\sum_{j=0}^{\infty}
	\dfrac{
		\Gamma(s\nu+\frac{n}{2}+sj)
	}{
		\Gamma(-s\nu-sj)
		j!
	}
	\Big({-}\frac{1}{\rho}\Big)^j\\
&=-\dfrac{1}{\rho^{\nu+\frac{n}{2s}}}
	{}_1\Psi_{1}\left(
		\begin{array}{c}
		(s\nu+\tfrac{n}{2},s)
		\\
		(-s\nu,-s)
		\end{array};\,
		-\frac{1}{\rho}
	\right).
\end{align*}
We conclude that
\begin{align*}
u_\nu(r)
:=\cM_{2s\sigma}^{-1}\set{\widetilde{u}_\nu(z)}(r)
&=-2s
	\left(
		\frac{2}{r}
	\right)^{2s\nu+n}
	{}_1\Psi_{1}\left(
		\begin{array}{c}
		(s\nu+\tfrac{n}{2},s)
		\\
		(-s\nu,-s)
		\end{array};\,
		-\Big(\frac{2}{r}\Big)^{2s}
	\right)\\
&=-2s
	\left(
		\frac{2}{r}
	\right)^{2s\nu+n}
\sum_{j=0}^{\infty}
	\dfrac{
		\Gamma(s\nu+\frac{n}{2}+sj)
	}{
		\Gamma(-s\nu-sj)
		j!
	}
	\Big({-}\frac{2}{r}\Big)^{2sj},
\end{align*}
as desired.
\end{proof}

\subsection{Uniqueness for a non-local ODE}\label{subsection:uniqueness}

Here we prove the uniqueness claim for solutions of the eigenvalue equation \eqref{eigenvalue-problem} in the space $\mathds L^2_\dag$ (Remark \ref{remark:uniqueness}). Contrary to the local case $s=1$, where one may calculate the indicial roots of an ODE (see, for instance, the characterization in \cite[Proposition 2.2]{Mizoguchi}), in the non-local setting $s\in(0,1)$ uniqueness is a non-trivial issue. Here we use a complex variable approach.

\begin{prop}[Uniqueness]
\label{prop:uniq}
Let $s\in(0,1)$, $n\geq 2s$, $u\in\mathds L^2_\dag$ be a radially symmetric solution to \eqref{eigenvalue-problem}. Assume that $\nu\in\bZ$ in case $n=1$. Then, up to a scalar multiple,
\[
\cM u(z)=2^{z}\frac{\Gamma(\frac{z}{2})\Gamma(\nu+\frac{n-z}{2s})}{\Gamma(\frac{n-z}{2})}.
\]
\end{prop}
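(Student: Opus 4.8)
The plan is to take the Mellin transform of the eigenvalue equation and identify the precise functional-analytic constraints that single out the solution \eqref{eq:eigen-M-sol}. Starting from \eqref{eq:eigen-M}, any $u\in\mathds L^2_\dag$ has, by \Cref{lemma:u-holomorphic}, a Mellin transform $\cM u(z)$ holomorphic on a vertical strip of width at least $2s$ (in fact on $\{n\tfrac{2-s}{2}<\re z<n\}$); this is exactly the width needed so that the functional equation \eqref{eq:eigen-M} relates values of $\cM u$ on overlapping strips and determines $\cM u$ on a maximal strip by analytic continuation. The key observation, as in the proof of \Cref{lemma:eigenfunction1}, is that the auxiliary function
\[
v(z):=2^{-z}\frac{\Gamma\!\left(\tfrac{n-z}{2}\right)}{\Gamma\!\left(\tfrac{z}{2}\right)\Gamma\!\left(\nu+\tfrac{n-z}{2s}\right)}\cM u(z)
\]
satisfies $v(z-2s)=v(z)$, hence is $2s$-periodic. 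So uniqueness reduces to showing $v$ is constant.

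The hard part — and the main obstacle — is to upgrade "$2s$-periodic" to "constant" using only the membership $u\in\mathds L^2_\dag$. A $2s$-periodic holomorphic function on the whole plane that is bounded is constant, but a priori $v$ is only defined and holomorphic where $\cM u$ is (a strip of finite width), and $v$ inherits possible poles/zeros from the Gamma quotient. The strategy I would follow is: first, use the functional equation \eqref{eq:eigen-M} itself to continue $\cM u$, and hence $v$, to a meromorphic function on all of $\bC$ (each application of \eqref{eq:eigen-M} shifts the strip by $2s$, and since $v$ is periodic the continuation is consistent). Next, control the growth: from the $\mathds L^2_\dag$ bound one gets, via Cauchy–Schwarz against the weight $e^{r^{2s}/4}r^{n(2-s)-1}$ as in the proof of \Cref{lemma:u-holomorphic}, that $\cM u$ decays faster than any exponential $e^{-c|\im z|}$ on vertical lines inside the strip (indeed the weight has a super-exponential tail). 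Combined with the Stirling asymptotics \eqref{estimate-Gamma-2}–\eqref{estimate-Gamma-2s} for the Gamma factors, which themselves carry the factor $e^{-\frac{\pi}{4}|\lambda|-\frac{\pi}{4s}|\lambda|}$ computed in \Cref{lem:asymp-mult}, one checks that $v(z)$ is bounded on a full period-strip $\{\sigma_0\le\re z<\sigma_0+2s\}$, up to the finitely many poles of $1/\Gamma(z/2)$ and $1/\Gamma(\nu+\tfrac{n-z}{2s})$ and zeros of $\Gamma((n-z)/2)$ lying in it. One then argues these are removable: a genuine pole of $v$ would, by periodicity, produce infinitely many poles of $\cM u$ in the strip of holomorphy of $\cM u$, contradicting \Cref{lemma:u-holomorphic}; and the candidate zeros of $\cM u$ are forced by the same mechanism, so $v$ extends to a bounded entire $2s$-periodic function, hence constant by Liouville (after passing to the variable $w=e^{\pi i z/s}$, which makes $v$ a bounded entire function of $w$ on $\bC\setminus\{0\}$ with a removable singularity at $0$ and $\infty$).

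Finally, evaluating the constant fixes $\cM u(z)=c\,2^{z}\Gamma(\tfrac{z}{2})\Gamma(\nu+\tfrac{n-z}{2s})/\Gamma(\tfrac{n-z}{2})$, which is the claimed formula. The role of the dimensional hypothesis is this: when $n\ge 2$ the strip of holomorphy granted by \Cref{lemma:u-holomorphic} has width $\tfrac{n}{2}s\ge s$ — actually one needs width $>2s$ of \emph{meromorphic} continuation with controlled poles, which the functional equation supplies — whereas for $n=1$ the strip is too narrow and the periodic function $v$ need not be pinned down by an integrability class alone; imposing $\nu\in\bZ$ removes the extra freedom by making the relevant Gamma factor $\Gamma(\nu+\tfrac{n-z}{2s})$ have a controlled pole/zero structure that, together with \Cref{cor:uniq}, forces uniqueness of the inverse transform. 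I would close by invoking \Cref{cor:uniq} on a vertical line inside the common strip to conclude $u$ itself is uniquely determined, completing the proof.
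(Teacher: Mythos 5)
Your reduction to showing that the $2s$-periodic function $v$ is constant matches the paper's starting point, but the way you try to close the argument has a genuine gap, and it sits exactly where the hypotheses of the proposition are supposed to act. First, the growth control is wrong: membership in $\mathds L^2_\dag$ gives, via the Cauchy--Schwarz computation of \Cref{lemma:u-holomorphic}, only \emph{boundedness} of $\cM\{\Phi_s^{-1}u\}$ on vertical lines in a half-plane -- the super-exponential tail of the weight governs the admissible range of $\re z$, not decay in $\im z$ (decay in $\im z$ would reflect smoothness in the Emden--Fowler variable, and even bootstrapping through $\mathds H^k_\dag$ only yields polynomial decay). Since the Gamma quotient in \eqref{eq:eigen-M-sol} decays like $e^{-\frac{\pi}{4s}|\im z|}$ on vertical lines, dividing it out allows $v$ to grow exponentially in $|\im z|$; moreover $v$ is a priori only \emph{meromorphic}, because a pole of $v$ located at a zero of the prefactor (e.g.\ where $1/\Gamma(\nu+\tfrac{n-z}{2s})$ or the reciprocal Gamma factors vanish) produces no pole of $\cM u$ at all, so your ``infinitely many poles of $\cM u$'' removability argument does not go through. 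Consequently you cannot conclude that $v$ extends to a bounded entire periodic function, and plain Liouville is not available. A decisive sanity check: your argument as written never genuinely uses $n\geq 2$, nor $\nu\in\bZ$ when $n=1$ (the remarks about strip width do not capture their role), yet the paper's remark after the proof notes that for $n=1$, $\nu=\tfrac12$ a second admissible solution exists -- so any proof concluding constancy of $v$ without those hypotheses must be incorrect.

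For comparison, the paper's route is: bootstrap $u\in\mathds H^k_\dag$ for every $k$ via \Cref{lem:Ls-L1}; absorb the possible poles of $v$ by writing it as $\phi(w)/\sin(2\pi w)$ with $\phi$ holomorphic and $1$-periodic in $w=\tfrac{n-z}{2s}$; derive from the $\mathds H^k_\dag$ bounds an estimate $|\phi(w)|\leq Ce^{\frac{5\pi}{2}|\im w|}|\im w|^{C}$ valid for $\re w<\tfrac{n}{4}+\tfrac{k}{2}$; then in the uniformizing variable $\eta=e^{2\pi i w}$ apply a \emph{generalized} Liouville theorem to get $\phi(w)=a\sin(2\pi w)+b\cos(2\pi w)+c$, a three-parameter family rather than a constant. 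The hypotheses enter only at the final step: evaluating the boundedness constraint at $w=-\tfrac12$ gives $b=c$, and then either $w\in-\nu-\bN$ (possible when $\nu\in\bZ$) or $w=1$ with $k=2$ (possible when $n\geq2$, since then $\tfrac n4+\tfrac k2>1$) forces $b=0$. If you want to repair your proposal, you need this quantitative step -- or some substitute that still consumes the hypotheses -- rather than boundedness plus classical Liouville.
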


\begin{proof}
By \Cref{lem:Ls-L1}, $u\in \mathds L^2_\dag$ implies that $u\in \mathds H^k_\dag$ for all $k\in\bN$. We have seen in the proof of Lemma \ref{lemma:eigenfunction1} that, if set
\begin{equation*}
\mathcal Mu(z):=2^{z}\frac{\Gamma(\frac{z}{2})\Gamma(\nu+\frac{n-z}{2s})}{\Gamma(\frac{n-z}{2})}v(z),
\end{equation*}
then $v$ is a $2s$-periodic function on 
some strip.
Using the multiplier notation, this is equivalent to
\begin{equation*}
\mathcal M(\Phi_s^{-1}u)(z)=2^{\frac{z}{s}}\frac{\Gamma(\frac{n}{2}-\frac{n}{2s}+\frac{z}{2s})\Gamma(\nu+\frac{n-z}{2s})}{\Gamma(\frac{n-z}{2s})}v(z).
\end{equation*}
In order to handle the poles of  $\Gamma(\frac{n-z}{2s})$, we may rewrite this as
\begin{equation}\label{eq:bdd}
\mathcal M(\Phi_s^{-1}u)(z)
=2^{\frac{z}{s}}
\frac{\Gamma(\frac{n}{2}-\frac{n}{2s}+\frac{z}{2s})\Gamma(\nu+\frac{n-z}{2s})}{\Gamma(\frac{n-z}{2s})\sin(\frac{n-z}{2s}2\pi)}\phi(\tfrac{n-z}{2s}).
\end{equation}
for some $1$-periodic function $\phi$.

Note that, on the one hand, for $\tilde{z}\in \bC$ with $\re \tilde{z}=\sigma$,
\begin{equation}\label{eq:Hk-bd-1}
\begin{split}
\abs{\cM\set{\partial_\rho^k u^\dag(\rho)}(\tilde{z})}
&\leq \int_{0}^{\infty}
	\rho^{\sigma-1}
	\abs{\partial_\rho^k u^\dag(\rho)}
\,d\rho\\
&\leq
\left(
	\int_{0}^{\infty}
		\abs{\partial_\rho^k u^\dag(\rho)}^2
		e^{\frac{\rho^2}{4}}
		\rho^{n-1}
	\,d\rho
\right)^{\frac12}
\left(
	\int_{0}^{\infty}
		e^{-\frac{\rho^2}{4}}
		\rho^{1-n+2\sigma-2}
	\,d\rho
\right)^{\frac12}
<\infty,
\end{split}\end{equation}
as long as $u \in \mathds H_\dag^k$ and $\sigma>\frac{n}{2}$. On the other hand, transforming the derivatives
\[\begin{split}
\cM\set{\partial_\rho^k u^\dag(\rho)}(\tilde{z})
&=\cM\set{\rho\partial_\rho^k u^\dag(\rho)}(\tilde{z}-1)
=-(\tilde{z}-1)\cM\set{\partial_\rho^{k-1}u^\dag(\rho)}(\tilde{z}-1)\\
&=-(\tilde{z}-1)\cM\set{\rho\partial_\rho^{k-1}u^\dag(\rho)}(\tilde{z}-2)
=(\tilde{z}-1)(\tilde{z}-2)\cM\set{\partial_\rho^{k-2}u^\dag(\rho)}(\tilde{z}-2)\\
&=\cdots=(-1)^k(\tilde{z}-1)\cdots(\tilde{z}-k)\cM\set{u^\dag(\rho)}(\tilde{z}-k),
\end{split}\]
then using \eqref{simple-relation} to relate with $\Phi_s^{-1}u$,
\[\begin{split}
\cM\set{\partial_\rho^k u^\dag(\rho)}(\tilde{z})
&=(-1)^k\frac{\Gamma(\tilde{z})}{\Gamma(\tilde{z}-k)}
\cM\set{\left(\tfrac{\rho}{2}\right)^{\frac{n}{s}-n}\Phi_s^{-1}u(\rho^{\frac1s})}(\tilde{z}-k)\\
&=2^{n-\frac{n}{s}}s
(-1)^k\frac{\Gamma(\tilde{z})}{\Gamma(\tilde{z}-k)}
\cM\set{r^{n-ns}\Phi_s^{-1}u(r)}(s\tilde{z}-ks)\\
&=2^{n-\frac{n}{s}}s
(-1)^k\frac{\Gamma(\tilde{z})}{\Gamma(\tilde{z}-k)}
\cM\set{\Phi_s^{-1}(r)}(s\tilde{z}-ks+n-ns),
\end{split}\]
we can use \eqref{eq:bdd} by setting $z=s\tilde{z}-ks+n-ns$. In the variable $w=\frac{n-z}{2s}=\frac{k+n-\tilde{z}}{2}$, we have
\begin{equation}
\label{eq:Hk-bd-2}
\begin{split}
\cM\set{\partial_\rho^k u^\dag(\rho)}(k+n-2w)
&=2^{n-2w}s(-1)^k
\frac{
	\Gamma(k+n-2w)
}{
	\Gamma(n-2w)
}
\frac{
	\Gamma(\frac{n-2w}{2})
	\Gamma(\nu+w)
}{
	\Gamma(w)
	\sin(2\pi w)
}
\phi(w)\\
&=2\sqrt{\pi}s(-1)^k
\frac{
	\Gamma(k+n-2w)
	\Gamma(\nu+w)
}{
	\Gamma(\frac{n+1}{2}-w)
	\Gamma(w)
	\sin(2\pi w)
}
\phi(w),
\end{split}\end{equation}
where $\phi(w)$ is $1$-periodic, after using Legendre duplication formula. Combining \eqref{eq:Hk-bd-1}--\eqref{eq:Hk-bd-2} yields
\begin{equation}
\label{eq:Hk-bd-3}
\begin{split}
|\phi(w)|
&\leq
\frac{C(n,s,k)}{2\sigma-n}
\norm[\mathds H^k_\dag]{u}
\frac{
	\Gamma(\frac{n+1}{2}-w)
	\Gamma(w)
	\sin(2\pi w)
}{
	\Gamma(k+n-2w)
	\Gamma(\nu+w)
}<\infty.
\end{split}\end{equation}
whenever $\sigma=\re(k+n-2w)=\re \tilde{z}>\frac{n}{2}$, i.e. $\re w<\frac{n}{4}+\frac{k}{2}$. In fact,
\begin{equation*}
|\phi(w)|\leq C e^{\frac{5\pi}{2}|\lambda|}
|\lambda|^C
\quad\text{as}\quad |\lambda |:=|\im w|\to\infty,
\end{equation*}
in a band $\mathcal B:=\{\sigma_*-1<\re w <\sigma_*\}$ for some $\sigma_*<\frac{n}{4}+\frac{k}{2}$.

Now we make the change of variable
\begin{equation*}
\psi(\eta)=\phi(w),
	\qquad \text{ for } \qquad
\eta=e^{2\pi iw},
\end{equation*}
which maps the band $\mathcal B$ one-to-one into the complex plane. Moreover, $\psi(\eta)$ is an holomorphic function on $\mathbb C\setminus\{0\}$ which satisfies
\begin{equation*}
|\psi(\eta)|\leq
\begin{cases}
C|\eta|^{(\frac{5}{4})^+}
&\text{as } |\eta|\to \infty,\\
C|\eta|^{-(\frac{5}{4})^+}
&\text{as }|\eta|\to 0.
\end{cases}
\end{equation*}
By a generalization of Liouville's theorem, we conclude that
\begin{equation*}
\psi(\eta)=a\eta+b\eta^{-1}+c
\end{equation*}
for some constants $a,b,c\in\mathbb C$. We go back to the variable $w$ and write  trigonometrically,
\[
\phi(w)
=
	a\sin(2\pi w)
	+b\cos(2\pi w)
	+c.
\]
for some (renamed) $a,b,c\in \bC$. Therefore, \eqref{eq:Hk-bd-3} gives
\begin{equation*}
\abs{
a+
\dfrac{
	b\cos(2\pi w)
	+c
}{
	\sin(2\pi w)
}
}
\dfrac{
	|\Gamma(k+n-2w)|
	|\Gamma(\nu+w)|
}{
	|\Gamma(\frac{n+1}{2}-w)|
	|\Gamma(w)|
}
<\infty,
	\quad \text{ for } \re w<\tfrac{n}{4}+\tfrac{k}{2}.
\end{equation*}
Taking $w=-\frac12$, we get $b=c$, so we arrive (using trigonometry and Euler reflection) to
\[
\abs{
	a\sin(\pi w)+b\cos(\pi w)
}
\frac{
	|\Gamma(k+n-2w)|
	|\Gamma(\nu+w)|
	|\Gamma(1-w)|
}{
	|\Gamma(\frac{n+1}{2}-w)|
}
<\infty,
	\quad \text{ for } \re w<\tfrac{n}{4}+\tfrac{k}{2}.
\]
If $\nu\in\bZ$, putting $w\in -\nu-\bN$ negative gives $b=0$, as desired. If $n\geq 2$, we fix $k=2$ so that $\frac{n}{4}+\frac{k}{2}>1$, and putting $w=1$ again forces $b=0$.
\end{proof}

\begin{remark}
If $\nu\notin \bZ$ and $n=1$, it can happen that $a=0$, $b\neq 0$, $\nu=\frac12$, say.
\end{remark}

\section{Eigenvalues for the adjoint problem}\label{section:adjoint}

Now we consider the eigenvalue problem for the  adjoint operator, this is
\begin{equation}\label{problem-adjoint}
L_s^*w=(-\Delta)^s w+\frac{1}{2s}r w_r=\nu w,\quad w=w(r).
\end{equation}
We will characterize the $L^2$-dual space of $\mathds L^2_\dag$ in Section \ref{subsection:duality}. As a consequence,  having already a good description of the spectral properties for $L_s$ in our function space, the properties of its adjoint $L_s^*$ follow easily.

If, instead, one tries to use the arguments in Section \eqref{section:eigenvalue} directly, one encounters the need  of introducing homogeneous distributions (see, for instance, \cite{Kwasnicki} and the references therein). Some of these difficulties are already  present in the local case $s=1$, since eingenfunctions are polynomials
\begin{equation*}
\omega_{k}^{(1)}(r)=L_{k}^{(\frac{n-2}{2})}\big(\tfrac{r^{2}}{4}\big),
\end{equation*}
 with eigenvalues $\nu_k=k$, $k\in\mathbb N$, which do not have a well behaved Fourier/Mellin transform.

In the fractional case $s\in(0,1)$, we encounter the same obstacles, as we next explain. First, proceed as in the previous sections, taking Mellin transform of \eqref{problem-adjoint},
\begin{equation*}\label{equation-adjoint-mellin}
\Theta_{s}(\lambda)\,\mathcal  Mw(z-2s)=\left(\nu+\frac{z}{2s}\right)\mathcal M w(z).
\end{equation*}
Now we use the auxiliary function
\begin{equation*}
v(z)
:=2^{-z}
\dfrac{
	\Gamma\left (
		\tfrac{n-z}{2}
	\right )\Gamma\left (
		\nu+\tfrac{z}{2s}+1
	\right )
}{
	\Gamma\left (
		\tfrac{z}{2}
	\right )	
}
\cM w(z),
	\quad z\in\bC,
\end{equation*}
which implies
\begin{equation*}\label{simple-equation}
v(z-2s)-v(z)
=0.
\end{equation*}
Unfortunately, now we cannot conclude that  $v\equiv 1$, since homogeneous distributions are involved. In any case, let us write, formally,
\begin{equation*}
\widetilde w_\nu(z):=\mathcal Mw(z)=2^{z}
\dfrac{
	\Gamma\left (
		\tfrac{z}{2}
	\right )	
}
{
	\Gamma\left (
		\tfrac{n-z}{2}
	\right )\Gamma\left (
		\nu+\tfrac{z}{2s}+1
	\right )
}v_*(z)
\end{equation*}
Again, we split
\begin{equation*}\label{eq:60}
 \widetilde w_\nu(z)=\Lambda^*_s(z)\mathcal W_\nu(z),
\end{equation*}
where we have defined the multiplier $\Lambda^*_s(\lambda)$ by
\begin{equation}\label{eq:mult-adj}
\Lambda^*_s(z)
=2^{z}
\frac{
	\Gamma(\frac{z}{2})
}{
	\Gamma(\frac{n}{2}-\frac{z}{2})
}
2^{-\frac{z}{s}}
\frac{
	\Gamma(\frac{n}{2}-\frac{z}{2s})
}{
	\Gamma(\frac{z}{2s})
}.
\end{equation}
and the function
\begin{equation}\label{function-WWW}
\mathcal W_\nu(z):=2^{\frac{z}{s}}\frac{\Gamma\big(\frac{z}{2s}\big)}{\Gamma\big(\frac{n}{2}-\frac{z}{2s}\big)}
        \frac{1}{\Gamma(\frac{z}{2s}+1+\nu)}v_*(z).
\end{equation}
One may try to show the counterpart of Lemma \ref{lem:U-k} for the adjoint problem. For $\nu=k$, $k=0,1,\ldots$, up to multiplicative constant, the function in \eqref{function-WWW} is the  Mellin transform of a polynomial
\begin{equation}\label{formal}
\mathcal W_k=\mathcal M \mathfrak L_k,
\end{equation}
where we have defined
\begin{equation}\label{eq:50}
\mathfrak{L} _k(r)
=\frac{2s}{\Gamma(k+\frac{n}{2})}
L_k^{(\frac{n-2}{2})}(\tfrac{r^{2s}}{4}).
\end{equation}
Unfortunately, proving \eqref{formal} needs a serious immersion in distribution theory. Instead, in Section  \ref{subsection:duality} we will use duality to calculate the eigenfunctions of \eqref{problem-adjoint}.

\subsection{The multiplier}\label{section:multiplier-adjoint}

Similarly to the arguments in Section \ref{subsection:multiplier}, we define $\Phi^*_s$ to be the functional with Mellin multiplier given by the multiplier $\Lambda^*_s$ from \eqref{eq:mult-adj}. We will show that it has a simple formula using Fourier transform. Similarly to the above, we change variable $\vartheta=\zeta^s$ and define
\begin{equation*}\label{equation100000}
w^\ddag (\rho)
=\cF^{-1}\set{\vartheta^{\frac{n}{s}-n}
	{w^\sharp}(\vartheta)}(\rho)
=\rho^{\frac{2-n}{2}}
\int_{0}^{\infty }
	J_{\frac{n-2}{2}}(\zeta \rho)
	\zeta^{\frac{n}{2}}
	\zeta^{\frac{n}{s}-n}
	\widehat{w}(\zeta^{\frac1s})
\,d\zeta,
\end{equation*}
where $w^\sharp(\vartheta)=\widehat{w}(\vartheta^{\frac1s})$ as in \eqref{f-sharp}.
From here we can understand $w\mapsto w^\ddag$ as an integral operator
\begin{equation}\label{f-ddag}\begin{split}
w^\ddag(\rho)
&=
\int_0^\infty
	A^\ddag_s(\rho,\rho_1)
	w(\rho_1)
\,d\rho_1,
\end{split}
\end{equation}
where 
\begin{equation}\label{H-ddag}
A^\ddag_s(\rho,\rho_1)
=\rho^{\frac{2-n}{2}}
\rho_1^{\frac{n}{2}}
\int_0^\infty
	J_{\frac{n-2}{2}}(\zeta \rho)
	J_{\frac{n-2}{2}}(\zeta^{1/s}\rho_1)
	\zeta^{-\frac{n}{2}-\frac{n-2}{2s}+\frac{n}{s}}
\,d\zeta.
\end{equation}
Although \eqref{H-ddag} does not converge, we provide a formal interpretation,
using the well known formula
\begin{equation*}
\left(\frac{1}{\rho}\frac{d}{d\rho} \right)^m [\rho^\alpha J_\alpha(\rho) ]=\rho^{\alpha-m} J_{\alpha-m}(\rho),
\end{equation*}
for $m\in \bN$, which leads in our case to
\begin{equation*}
J_\frac{n-2}{2}(\zeta^{1/s}\rho_1)=\frac{1}{\rho_1^{\frac{n-2}{2}}}\left(\frac{1}{\rho_1} \frac{d}{d\rho_1}\right)^{m}
\left[ \rho_1^{\frac{n-2}{2}+m}J_{\frac{n-2}{2}+m}(\zeta^{1/s}\rho_1)\right]\frac{1}{\zeta^{m/s}},
\end{equation*}
so that the redefined kernel
\begin{equation*}
A^\ddag_s(\rho,\rho_1)
:=\rho^{\frac{2-n}{2}}
\rho_1
\left(\frac{1}{\rho_1} \frac{d}{d\rho_1}\right)^{m}
\left[ \rho_1^{\frac{n-2}{2}+m}\int_0^\infty
J_{\frac{n-2}{2}}(\zeta\rho) J_{\frac{n-2}{2}+m}(\zeta^{1/s}\rho_1)\zeta^{-\frac{n}{2}-\frac{n-2}{2s}+\frac{n}{s}-\frac{m}{s}}
\,d\zeta\right]
\end{equation*}
makes sense for $m\in\mathbb N$ large enough.

\medskip

The following Lemma states that $\Phi_s^*$ also encodes the change of variable \eqref{change}:

\begin{lemma}\label{lem:mult-FM-adjoint}
It holds
\begin{equation*}
(\Phi_s^*)^{-1}w(r)
=w^\ddag(r^s).
\end{equation*}
\end{lemma}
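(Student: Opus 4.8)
# Proof Proposal for Lemma \ref{lem:mult-FM-adjoint}

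The plan is to mirror the proof of Lemma \ref{lemma:equivalence-new}, taking Mellin transforms of both sides and identifying them on a suitable vertical line. First I would recall that by definition $(\Phi_s^*)^{-1}$ is the Mellin multiplier operator with symbol $\Lambda_s^*(z)^{-1}$, where $\Lambda_s^*$ is given in \eqref{eq:mult-adj}, so that
\begin{equation*}
\cM\set{(\Phi_s^*)^{-1}w}(z)
=\Lambda_s^*(z)^{-1}\cM w(z)
=2^{\frac{n}{s}-n}\,
2^{z}\frac{\Gamma(\frac{z}{2})}{\Gamma(\frac{n}{2}-\frac{z}{2})}
\;2^{-\frac{z}{s}}\frac{\Gamma(\frac{z}{2s})}{\Gamma(\frac{n}{2}-\frac{z}{2s})}\,\cM w(z)^{-1}\cdot\cM w(z),
\end{equation*}
(reading off $\Lambda_s^*(z)^{-1}$ directly from \eqref{eq:mult-adj}), and then apply the composition identity \eqref{eq:MF-M} for $\cM\circ\cF^{-1}$ exactly as in the proof of Lemma \ref{lemma:equivalence-new}. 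The key input is \eqref{eq:MF-M} in its two shifted/rescaled forms \eqref{eq:MF-M-1}--\eqref{eq:MF-M-2}: the factor $\frac{\Gamma(z/2)}{\Gamma((n-z)/2)}$ is precisely what converts $\cM w$ into $\cM\{\cF^{-1}\widehat w\}$ evaluated at $n-z$, and the factor $\frac{\Gamma(z/(2s))}{\Gamma(n/2-z/(2s))}$ together with \eqref{eq:M-rescale} accounts for the composition with the monomial $\zeta\mapsto\zeta^{1/s}$.

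The key steps, in order, are as follows. (i) Write $\cM\{(\Phi_s^*)^{-1}w\}(n-z)=\Lambda_s^*(n-z)^{-1}\cM w(n-z)$ and, using \eqref{eq:MF-M-1}, absorb the $\Gamma$-quotient involving $z/2$ into $\cM\{\cF^{-1}\widehat w\}(n-z)=2^{n/2-z}\frac{\Gamma(n/2-z/2)}{\Gamma(z/2)}\cM\widehat w(z)$, which by the definition of $\cF^{-1}$ is simply $2^{n/2-z}\frac{\Gamma(n/2-z/2)}{\Gamma(z/2)}\cM w(n-z)^{-1}$ — more carefully, it reproduces $\cM\{\widehat w(\zeta)\}(z)$ up to the explicit constant $2^{n/2}$, as in the fourth displayed line of the proof of Lemma \ref{lemma:equivalence-new}. (ii) Use \eqref{eq:M-rescale} to write $\cM\{\widehat w(\zeta)\}(z)=\frac1s\,\cM\{\widehat w(\zeta^{1/s})\}(z/s)=\frac1s\,\cM\{w^\sharp(\zeta)\}(z/s)$, recalling the definition of $w^\sharp$ from \eqref{f-sharp}. (iii) Apply \eqref{eq:MF-M-2} in the form $\cM\{\cF^{-1}w^\sharp\}(n-z/s)=2^{n/2-z/s}\frac{\Gamma(n/2-z/(2s))}{\Gamma(z/(2s))}\cM w^\sharp(z/s)$ to recognize the remaining $\Gamma$-quotient in $\vartheta$-variables; since $w^\ddag=\cF^{-1}\{\vartheta^{n/s-n}w^\sharp(\vartheta)\}$, one must first shift by the monomial $\vartheta^{n/s-n}$ using \eqref{eq:M-trans}, which is exactly the origin of the constant $2^{n/s-n}$. (iv) Collect all the explicit powers of $2$ and powers of $s$ and check they combine to the prefactor $1$ (not $2^{n/s-n}$ as in the $\dag$ case, because the weight structure for $\ddag$ differs — the $\ddag$ scalar product uses the plain $r^{sn-1}$ Jacobian without the extra $(\tfrac{r^{2s}}{4})^{n/2-n/(2s)}$ conjugation factor). (v) Finally, use \eqref{eq:M-rescale} once more to pass from $\cM\{w^\ddag(r)\}(\,\cdot\,)$ to $\cM\{w^\ddag(r^s)\}(\,\cdot\,)$, landing on the identity $\cM\{(\Phi_s^*)^{-1}w\}(n-z)=\cM\{w^\ddag(r^s)\}(n-z)$, and conclude by Mellin inversion (\Cref{thm:M-inv}) on a vertical line with $\re(n-z)$ chosen in the strip where all transforms are holomorphic and satisfy the requisite decay — this is legitimate by the same density/approximation argument used in the $\dag$ case and by \Cref{lemma:u-holomorphic} together with the asymptotics \eqref{eq:multiplier-asymp}.

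The main obstacle I expect is \textbf{bookkeeping of the strips of holomorphy and the decay hypotheses needed for Mellin inversion}. Unlike the $\dag$ case, where $\Phi_s^{-1}$ is a \emph{regularizing} (differentiation-type) multiplier, here $\Lambda_s^*(z)^{-1}$ behaves like $|\lambda|^{-(\frac1s-1)(n-\sigma)}$ by \eqref{eq:multiplier-asymp} (applied to $\Lambda_s$ with $\sigma\leftrightarrow n-\sigma$), so one must verify that $\cM w^\ddag$ genuinely decays fast enough along the relevant vertical line and that the $\vartheta^{n/s-n}$ conjugation in the definition of $w^\ddag$ does not push a pole of $\Gamma(z/(2s))$ or $\Gamma(n/2-z/(2s))$ onto the contour of integration. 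The cleanest route is to first prove the identity for $w$ smooth with compact support away from the origin (so $\cM w$ is entire and all manipulations are justified termwise on a common strip), carefully tracking the admissible range of $\re z$ at each application of \eqref{eq:MF-M-1}--\eqref{eq:MF-M-2} and \eqref{eq:M-rescale}, and then extend to general $w\in\mathds L^2_\ddag$ by density in the $\norm[\ddag]{\cdot}$ norm — the convergence of $(\Phi_s^*)^{-1}w_m$ and of $w_m^\ddag(r^s)$ in the appropriate weighted $L^2$ spaces being inherited from the local case via \eqref{eq:intro-Ls*-L1*}, exactly as in the concluding paragraph of the proof of \Cref{lemma:u-holomorphic}.
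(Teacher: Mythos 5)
Your plan is essentially the paper's own proof: write $\cM\set{(\Phi_s^*)^{-1}w}(z)=\Lambda_s^*(z)^{-1}\cM w(z)$, use \eqref{eq:MF-M} (in its shifted and rescaled forms) to trade $\cM w$ for $\cM\widehat{w}$, apply \eqref{eq:M-rescale} for the substitution $\zeta\mapsto\zeta^{1/s}$, insert the monomial $\zeta^{\frac{n}{s}-n}$ from the definition of $w^\ddag$ via \eqref{eq:M-trans}, apply \eqref{eq:MF-M} once more to recognize $\cF^{-1}$, rescale $r\mapsto r^s$, and conclude by Mellin inversion (\Cref{thm:M-inv}) on the strip $0<\re z<\tfrac{n+1}{2}s$; the paper carries out exactly this chain, working at $z$ rather than at $n-z$, and it too first treats nice $w$ and invokes the inversion theorem at the end.

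Two slips to repair when writing it out. First, your displayed formula for $\Lambda_s^*(z)^{-1}$ is not the reciprocal of \eqref{eq:mult-adj}: it should be $2^{-z}\frac{\Gamma(\frac{n}{2}-\frac{z}{2})}{\Gamma(\frac{z}{2})}\,2^{\frac{z}{s}}\frac{\Gamma(\frac{z}{2s})}{\Gamma(\frac{n}{2}-\frac{z}{2s})}$, with no $2^{\frac{n}{s}-n}$ prefactor and without the spurious $\cM w(z)^{-1}\cdot\cM w(z)$ factor. Second, the translation identity \eqref{eq:M-trans} produces no multiplicative constant, so the shift by $\vartheta^{\frac{n}{s}-n}$ is not ``the origin of the constant $2^{\frac{n}{s}-n}$'' — in the adjoint case no such constant appears anywhere, which is consistent with your own step (iv); in the $\dag$ case that constant comes from the factor $4$ in $\bigl(\tfrac{r^{2s}}{4}\bigr)^{\frac{n}{2}-\frac{n}{2s}}$, not from the monomial translation. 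With these corrections the computation closes exactly as in the paper.
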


\begin{proof}
Using \eqref{eq:M-trans}, \eqref{eq:M-rescale} as well as \eqref{eq:MF-M} in the form
\begin{equation*}\label{eq:MF-M-1-adj}
\cM\set{\cF^{-1}u(r)}(z)
=2^{-\frac{n}{2}}
	2^z
	\frac{
		\Gamma(\frac{z}{2})
	}{
		\Gamma(\frac{n-z}{2})
	}
	\cM u(n-z),
	\quad \text{ for } \quad
0<\re z<\tfrac{n+1}{2},
\end{equation*}
and
\begin{equation*}\label{eq:MF-M-2-adj}
\cM\set{\cF^{-1}u(r)}(\tfrac{z}{s})
=2^{-\frac{n}{2}}
	2^{\frac{z}{s}}
	\frac{
		\Gamma(\frac{z}{2s})
	}{
		\Gamma(\frac{n}{2}-\frac{z}{2s})
	}
	\cM u(n-\tfrac{z}{s}),
	\quad \text{ for } \quad
0<\re \tfrac{z}{s}<\tfrac{n+1}{2},
\end{equation*}
we compute
\begin{align*}
\cM\set{
	(\Phi_s^*)^{-1}w(r)
}(z)
&=
	2^{\frac{z}{s}}
	\frac{
		\Gamma(\frac{z}{2s})	
	}{
		\Gamma(\frac{n}{2}-\frac{z}{2s})
	}
	\cdot
	2^{-z}
	\frac{
		\Gamma(\frac{n}{2}-\frac{z}{2})
	}{
		\Gamma(\frac{z}{2})	
	}
	\cM w(z)\\
&=2^{-\frac{n}{2}}
	2^{\frac{z}{s}}
	\frac{
		\Gamma(\frac{z}{2s})	
	}{
		\Gamma(\frac{n}{2}-\frac{z}{2s})
	}
	\cM\set{
		\widehat{w}(\zeta)
	}(n-z)\\
&=2^{-\frac{n}{2}}
	2^{\frac{z}{s}}
	\frac{
		\Gamma(\frac{z}{2s})	
	}{
		\Gamma(\frac{n}{2}-\frac{z}{2s})
	}
	\frac{1}{s}
	\cM\set{
		\widehat{w}(\zeta^{\frac1s})
	}\left (
		\tfrac{n-z}{s}
	\right )\\
&=\frac{1}{s}
	\cdot
	2^{-\frac{n}{2}}
	2^{\frac{z}{s}}
	\frac{
		\Gamma(\frac{z}{2s})	
	}{
		\Gamma(\frac{n}{2}-\frac{z}{2s})
	}
	\cM\set{
		\zeta^{\frac{n}{s}-n}
		\widehat{w}(\zeta^{\frac1s})
	}\left (
		n
		-\tfrac{z}{s}
	\right )\\
&=\frac{1}{s}
	\cM\set{
		\cF^{-1}\set{
			\zeta^{\frac{n}{s}-n}
			\widehat{w}(\zeta^{\frac1s})
		}(r)
	}\left (
		\tfrac{z}{s}
	\right )\\
&=
	\cM\set{
		\cF^{-1}\set{
			\zeta^{\frac{n}{s}-n}
			\widehat{w}(\zeta^{\frac1s})
		}(r^s)
	}(z).
\end{align*}
Since these equalities are valid on the strip $0<\re z<\tfrac{n+1}{2}s$, \Cref{thm:M-inv} yields the result.
\end{proof}

As in \Cref{lem:Ls-L1}, the transformation $w\mapsto w^\ddag$ relates $L_s^*$ to $L_1^*$. Function spaces will be detailed below.
\begin{lemma}\label{lem:Ls-L1-adj}
If $w$ solves $L_s^*w=g$, then $L_1^* w^{\ddag}=g^{\ddag}$.
\end{lemma}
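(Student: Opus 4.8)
The statement to prove is Lemma~\ref{lem:Ls-L1-adj}: if $w$ solves $L_s^*w=g$, then $w^{\ddag}$ solves $L_1^* w^{\ddag}=g^{\ddag}$. The natural strategy is to mimic the proof of Lemma~\ref{lem:Ls-L1} almost verbatim, passing to Fourier variables, performing the change $\vartheta=\zeta^s$, and transforming back — with the only extra bookkeeping coming from the weight $\zeta^{\frac ns-n}$ built into the definition of $w^{\ddag}$, which is precisely what makes $\Phi_s^*$ different from $\Phi_s$.

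First I would take the Fourier (Hankel) transform of $L_s^*w=g$. Since under the Fourier transform $(-\Delta)^s$ becomes multiplication by $\zeta^{2s}$ and the drift term $\frac{1}{2s}r\partial_r w$ transforms (by the same computation as in the proof of Lemma~\ref{lem:Ls-L1}, i.e. integrating by parts so that $r\partial_r$ Mellin-dualizes correctly, or directly: $\cF\{x\cdot\nabla w\} = -\nabla\cdot(\xi\,\widehat w)= -n\widehat w - \xi\cdot\nabla\widehat w$) we obtain the radial ODE in $\zeta$
\begin{equation*}
\zeta^{2s}\widehat w(\zeta)
-\tfrac{1}{2s}\big(n\widehat w(\zeta)+\zeta\partial_\zeta\widehat w(\zeta)\big)
=\widehat g(\zeta),\quad \zeta>0,
\end{equation*}
which, written for $w^{\sharp}(\vartheta)=\widehat w(\vartheta^{1/s})$ after the substitution $\vartheta=\zeta^s$ (using $\zeta\partial_\zeta = s\,\vartheta\partial_\vartheta$), becomes
\begin{equation*}
\vartheta^{2}w^{\sharp}(\vartheta)
-\tfrac{1}{2}\big(\tfrac n s w^{\sharp}(\vartheta) + \vartheta\partial_\vartheta w^{\sharp}(\vartheta)\big)
=g^{\sharp}(\vartheta),\quad \vartheta>0.
\end{equation*}
This is not yet the Fourier side of $L_1^*$ because the zeroth-order coefficient is $\frac{n}{2s}$ rather than $\frac n2$. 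This discrepancy is exactly absorbed by the extra factor $\vartheta^{\frac ns-n}$ in the definition of $w^{\ddag}$: setting $\tilde w^{\sharp}(\vartheta):=\vartheta^{\frac ns-n}w^{\sharp}(\vartheta)$ one checks, using $\vartheta\partial_\vartheta(\vartheta^{a}f)=\vartheta^a(a f+\vartheta\partial_\vartheta f)$ with $a=\frac ns-n$, that $\tilde w^{\sharp}$ satisfies the conjugated equation
\begin{equation*}
\vartheta^{2}\tilde w^{\sharp}(\vartheta)
-\tfrac12\big(n\,\tilde w^{\sharp}(\vartheta)+\vartheta\partial_\vartheta \tilde w^{\sharp}(\vartheta)\big)
=\tilde g^{\sharp}(\vartheta),
\end{equation*}
where $\tilde g^{\sharp}(\vartheta)=\vartheta^{\frac ns-n}g^{\sharp}(\vartheta)$. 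Indeed, the shift $a$ contributes $-\tfrac12 a = -\tfrac12(\tfrac ns-n)$ to the zeroth-order coefficient, and $\tfrac{n}{2s}-\tfrac12(\tfrac ns-n)=\tfrac n2$, which is exactly the coefficient appearing in the Fourier transform of $L_1^*$.

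Finally I would take the inverse Fourier (Hankel) transform. Since $w^{\ddag}=\cF^{-1}\{\vartheta^{\frac ns-n}w^{\sharp}\} = \cF^{-1}\{\tilde w^{\sharp}\}$ by definition, and $(-\Delta)$ is the Fourier multiplier $\zeta^2=\vartheta^2$ while $\tfrac12\rho\partial_\rho$ dualizes to $-\tfrac12(n+\vartheta\partial_\vartheta)$, the last displayed equation transforms back to
\begin{equation*}
-\Delta w^{\ddag}+\tfrac{1}{2}\rho\,\partial_\rho w^{\ddag}=g^{\ddag},\quad \rho>0,
\end{equation*}
which is precisely $L_1^* w^{\ddag}=g^{\ddag}$. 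The same chain of identities can be phrased entirely on the Mellin side using the multiplier $\Lambda_s^*$ from \eqref{eq:mult-adj} and Lemma~\ref{lem:mult-FM-adjoint}, which gives $(\Phi_s^*)^{-1}w(r)=w^{\ddag}(r^s)$; I would include that as an alternative one-line verification. The only point requiring care — the ``main obstacle'', though a mild one — is the justification that all these formal manipulations (differentiation under the integral, the change of variables $\vartheta=\zeta^s$ in the Hankel integrals, and the integrations by parts implicit in transforming the drift term) are legitimate for $w$ in the relevant class; this is handled exactly as in Lemma~\ref{lem:Ls-L1}, by first proving the identity for $w$ smooth with compact support away from the origin (where every transform is entire and every integral converges absolutely) and then passing to the limit in the $\mathds L^2_\ddag$-norm, using that $\Phi_s^*$ is a quasi-isometry between the appropriate Mellin--Sobolev spaces.
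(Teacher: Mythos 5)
Your proposal is correct and follows essentially the same route as the paper: take the radial Fourier (Hankel) transform of $L_s^*w=g$, substitute $\vartheta=\zeta^s$ so that $\zeta\partial_\zeta=s\vartheta\partial_\vartheta$, observe that the weight $\vartheta^{\frac ns-n}$ in the definition of $w^\ddag$ exactly converts the zeroth-order coefficient $\tfrac{n}{2s}$ into $\tfrac n2$, and invert the transform. The paper's proof is a more compressed version of the same conjugation computation, so no further comparison is needed.
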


\begin{proof}
Similarly to \Cref{lem:Ls-L1}, in the radial Fourier variable $\zeta=|\xi|$ we have
\[
\zeta^{2s}\widehat{w}(\zeta)
-\frac{1}{2s}
\left(
	n\widehat{w}(\zeta)+\zeta\partial_\zeta \widehat{w}(\zeta)
\right)
=\widehat{g}(\zeta),
	\qquad \zeta>0.
\]
Putting $\vartheta=\zeta^s$, we have $\zeta\partial_{\zeta}=s\vartheta\partial_{\vartheta}$. Now $\widehat{w^\ddag}(\vartheta)=\vartheta^{\frac{n}{s}-n}\widehat{w}(\vartheta^{\frac1s})$ satisfies
\[
(n+\vartheta\partial_\vartheta)\widehat{w^\ddag}(\vartheta)
=\vartheta^{\frac{n}{s}-n}
\left(
	\frac{n}{s}+\vartheta\partial_\vartheta
\right)
\widehat{w}(\vartheta^{\frac1s}),
\]
and we conclude by multiplying both sides by $\vartheta^{\frac{n}{s}-n}$ and taking the inverse Fourier transform.
\end{proof}

\subsection{The scalar product}\label{subsection:dual-space}

We define the Hilbert space $\mathds L^2_\ddag$ by the scalar product
\begin{equation*}
\begin{split}
	\angles{w_1,w_2}_\ddag
	&=\int_0^\infty
		(\Phi_s^*)^{-1}w_1(r)
\overline{		(\Phi_s^*)^{-1}w_2(r)}
		\,e^{-\frac{r^{2s}}{4}}
		r^{sn-1}
	\,dr.
\end{split}
\end{equation*}
Thanks to Lemma \ref{lem:mult-FM-adjoint}, we can rewrite it as follows:
\begin{equation*}
\langle w_1, w_2\rangle_{\ddag}
=\int_0^\infty
	w_1^\ddag(r^s) \overline{w_1^\ddag(r^s)}
	e^{-\frac{r^{2s}}{4}}r^{sn-1}\,dr.
\end{equation*}
We can also give a nice integral representation (although it needs to be interpreted distributionally (recall Section \ref{section:multiplier-adjoint}):

\begin{prop}\label{prop:product-adjoint} It holds
\begin{equation}\label{formula-product-G}
\begin{split}
\langle w_1, w_2\rangle_\ddag=\int_0^\infty \int_0 ^\infty G^\ddag_s(r_1,r_2)w_1(r_1)w_2(r_2)\,dr_1\,dr_2
\end{split}
\end{equation}
for a kernel
\begin{equation}\label{formula-G}
\begin{split}
G_s^\ddag(r_1,r_2)= \int_{0}^{\infty }\int_{0}^{\infty }\mathcal K_s(\zeta_1,\zeta_2)(\zeta_1\zeta_2)^{\frac{n}{s}}J_{\frac{n-2}{2}}(\zeta_1^{1/s} r_1) r_1^\frac{n}{2}\zeta_1 ^{\frac{2-n}{2s}} J_{\frac{n-2}{2}}(\zeta_2^{1/s} r_2) r_2^\frac{n}{2}\zeta_2 ^{\frac{2-n}{2s}} \,d\zeta_1\,d\zeta_2.
\end{split}
\end{equation}
Here
\begin{equation}\label{kernel-product}
\mathcal K_s(\zeta_1,\zeta_2)=\frac{1}{2}(\zeta_1\zeta_2)^{-\frac{n}{2}}e^{-(\zeta_1^2+\zeta_2^2)}I_{\frac{n-2}{2}}\left(2\zeta_1\zeta_2\right).
\end{equation}
\end{prop}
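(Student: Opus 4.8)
As already flagged before the statement, this identity is of a formal nature: the kernels $A_s^\ddag$ in \eqref{H-ddag} and $G_s^\ddag$ in \eqref{formula-G} are not absolutely convergent, so they must be read distributionally through the $m$-fold differentiation device of Section \ref{section:multiplier-adjoint}, or else the identity must first be checked on a dense subclass --- for instance on those $w$ whose Hankel transform $\widehat w$ is smooth and compactly supported in $(0,\infty)$ --- and then extended by density using the isometric character of $\Phi_s^*$. On such a subclass the manipulations below are legitimate.

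The plan is as follows. First I would use Lemma \ref{lem:mult-FM-adjoint}, i.e. $(\Phi_s^*)^{-1}w(r)=w^\ddag(r^s)$, together with the change of variable $\rho=r^s$, to rewrite
\begin{equation*}
\angles{w_1,w_2}_\ddag
=\int_0^\infty
	w_1^\ddag(r^s)\overline{w_2^\ddag(r^s)}
	e^{-\frac{r^{2s}}{4}}r^{sn-1}
\,dr
=\frac1s\int_0^\infty
	w_1^\ddag(\rho)\overline{w_2^\ddag(\rho)}
	e^{-\frac{\rho^2}{4}}\rho^{n-1}
\,d\rho,
\end{equation*}
so that, up to the constant $\frac1s$, the pairing becomes the Gaussian-weighted $L^2_r(\mathbb R^n)$ inner product of $w_1^\ddag$ and $w_2^\ddag$. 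Next I would insert into $w^\ddag$ the Hankel representation \eqref{Hankel} of $w$ in the form $\widehat w(\zeta^{1/s})=\zeta^{\frac{2-n}{2s}}\int_0^\infty J_{\frac{n-2}{2}}(\zeta^{1/s}r)\,r^{n/2}w(r)\,dr$, which turns the formula for $w^\ddag$ recalled just before \eqref{f-ddag} into a double Hankel integral against $w$, carrying the $\zeta$-weight $\zeta^{\frac n2+\frac ns-n+\frac{2-n}{2s}}$.

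The key step is then to substitute these expressions for $w_1^\ddag$ and $w_2^\ddag$ into the displayed inner product and exchange the order of integration; the $\rho$-integration decouples and reduces to the Weber second exponential integral
\begin{equation*}
\int_0^\infty
	J_{\frac{n-2}{2}}(\zeta_1\rho)J_{\frac{n-2}{2}}(\zeta_2\rho)
	e^{-\frac{\rho^2}{4}}\rho
\,d\rho
=2\,e^{-(\zeta_1^2+\zeta_2^2)}I_{\frac{n-2}{2}}(2\zeta_1\zeta_2),
\end{equation*}
see \cite[6.633]{Gradshteyn-Ryzhik}, which for each fixed $\zeta_1,\zeta_2>0$ is absolutely convergent thanks to the Gaussian and to $\tfrac{n-2}{2}>-1$. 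Recognizing $\tfrac12(\zeta_1\zeta_2)^{-n/2}e^{-(\zeta_1^2+\zeta_2^2)}I_{\frac{n-2}{2}}(2\zeta_1\zeta_2)=\mathcal K_s(\zeta_1,\zeta_2)$ from \eqref{kernel-product} and regrouping the surviving powers of $\zeta_j$ with the Bessel factors $J_{\frac{n-2}{2}}(\zeta_j^{1/s}r_j)\,r_j^{n/2}\zeta_j^{\frac{2-n}{2s}}$ reproduces exactly the structure of \eqref{formula-product-G}--\eqref{formula-G}, up to a universal multiplicative constant, which is consistent since $\angles{\cdot,\cdot}_\ddag$ was only fixed up to such a constant.

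The main obstacle is precisely the interchange of integrations in the key step. As the paper itself observes, the weight $\zeta^{\frac ns-n}$ built into $w^\ddag$ grows polynomially --- it is the fractional-integration weight of Remark \ref{remark:interpret} --- so neither $A_s^\ddag$ nor $G_s^\ddag$ is an honest function and Fubini does not apply directly. Making the argument rigorous therefore requires either pairing the identity against a dense family of nice $w_1,w_2$ and passing to the limit through $\Phi_s^*$, or carrying the polynomial weight through the $m$-fold differentiation regularization of Section \ref{section:multiplier-adjoint} so that all the integrals converge before the orders are exchanged; the Weber integral evaluation above is insensitive to this regularization, since the Gaussian weight survives it unchanged.
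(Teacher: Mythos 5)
Your argument is correct and follows essentially the same route as the paper's proof: rewrite $\angles{w_1,w_2}_\ddag$ as the Gaussian-weighted inner product of $w_1^\ddag,w_2^\ddag$, insert the kernel representation \eqref{f-ddag}--\eqref{H-ddag}, exchange integrals formally, and evaluate the $\rho$-integral by the Weber second exponential integral (your \cite[6.633]{Gradshteyn-Ryzhik} is the same identity as the paper's \eqref{formula-watson} from Watson), recovering $\mathcal K_s$ up to a multiplicative constant. Your cautionary remarks about the distributional interpretation and the constant normalization match the paper's own treatment, so nothing essential is missing.
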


\begin{proof}
We first use  formula \eqref{f-ddag} to write, up to a constant factor,
\begin{equation*}\label{G-product}
\begin{split}
\langle w_1, w_2\rangle_\ddag
&=\int_0^\infty \int_0 ^\infty G^\ddagger_s(\rho_1,\rho_2)w_1(\rho_1)w_2(\rho_2)\,d\rho_1\,d\rho_2
\end{split}
\end{equation*}
for
\begin{equation*}
G^\ddag_s(\rho_1,\rho_2):=\int_0^\infty A^\ddag_s(\rho,\rho_1)A^\ddag_s(\rho,\rho_2)e^{-\frac{\rho^{2}}{4}}\rho^{n-1}\,d\rho,
\end{equation*}
We define
 \begin{equation*}
\mathcal K_s(\zeta_1,\zeta_2):=(\zeta_1\zeta_2)^{-\frac{n}{2}}\int_0^\infty J_{\frac{n-2}{2}}(\zeta_1 \rho)J_{\frac{n-2}{2}}(\zeta_2 r)e^{-\frac{\rho^{2}}{4}}\rho\,d\rho.
\end{equation*}
Then claim  \eqref{formula-G} follows directly from expression \eqref{H-ddag}.

 Now, to obtain a more explicit formula for $\mathcal K_s$ we use
the following formula from \cite[Section 13.31]{Watson}
\begin{equation}\label{formula-watson}
\int_0^\infty e^{-p^2 \rho^2} J_\nu(\zeta_1 \rho)J_\nu(\zeta_2 \rho)\rho\,d\rho=\frac{1}{2p^2} e^{-\frac{\zeta_1^2+\zeta_2^2}{4p^2}}I_\nu\left(\frac{\zeta_1\zeta_2}{2p^2}\right),\quad \re(\nu)>-1,\, |\argg\, p|<\frac{\pi}{4},
\end{equation}
which yields \eqref{kernel-product}.
\end{proof}

\begin{prop}
In the limit $s\to 1$,
\begin{equation*}
G_1^\ddag(r,r_2)=\tfrac{1}{2}e^{-\frac{r^{2}}{4}}r^{\frac{n}{2}-1}r_2^{\frac{n}{2}}\delta_{r=r_2}
\end{equation*}
as a function of $r$.
This shows, in particular, that one recovers the usual  scalar product in $L^2_r(\mathbb R^n,e^{-|x|^2/4})$ from the expression \eqref{formula-product-G}.
\end{prop}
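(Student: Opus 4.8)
The plan is to evaluate the kernel \eqref{formula-G}--\eqref{kernel-product} at $s=1$ and to carry out the two Bessel integrals in succession. Setting $s=1$, the powers of $\zeta_1,\zeta_2$ collapse, $(\zeta_1\zeta_2)^{n}\zeta_1^{\frac{2-n}{2}}\zeta_2^{\frac{2-n}{2}}\cdot(\zeta_1\zeta_2)^{-\frac n2}=\zeta_1\zeta_2$, so that, writing $\nu:=\tfrac{n-2}{2}$,
\[
G_1^\ddag(r_1,r_2)
=\tfrac12\,r_1^{\frac n2}r_2^{\frac n2}
\int_0^\infty\!\!\int_0^\infty
\zeta_1\zeta_2\,e^{-(\zeta_1^2+\zeta_2^2)}I_{\nu}(2\zeta_1\zeta_2)\,J_{\nu}(\zeta_1 r_1)J_{\nu}(\zeta_2 r_2)\,d\zeta_1\,d\zeta_2 .
\]
Here these integrals are understood in the distributional (iterated) sense already flagged in Section \ref{section:multiplier-adjoint}.

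First I would integrate in $\zeta_1$. By the companion of \eqref{formula-watson} carrying one Bessel $I$ and one Bessel $J$ (see \cite[6.633 2.]{Gradshteyn-Ryzhik}), for $\re\nu>-1$,
\[
\int_0^\infty e^{-\zeta_1^2}I_{\nu}(2\zeta_2\zeta_1)\,J_{\nu}(r_1\zeta_1)\,\zeta_1\,d\zeta_1
=\tfrac12\,e^{\zeta_2^2-\frac{r_1^2}{4}}\,J_{\nu}(r_1\zeta_2) .
\]
Substituting, the factor $e^{-\zeta_2^2}$ is cancelled exactly by $e^{\zeta_2^2}$, leaving
\[
G_1^\ddag(r_1,r_2)=\tfrac14\,r_1^{\frac n2}r_2^{\frac n2}\,e^{-\frac{r_1^2}{4}}\int_0^\infty J_{\nu}(r_1\zeta)J_{\nu}(r_2\zeta)\,\zeta\,d\zeta .
\]
The remaining integral is the Hankel closure relation $\int_0^\infty J_{\nu}(r_1\zeta)J_{\nu}(r_2\zeta)\,\zeta\,d\zeta=r_1^{-1}\delta(r_1-r_2)$, which is the distributional limit $p\to0^+$ of \eqref{formula-watson} together with $e^{-x}I_{\nu}(x)\sim(2\pi x)^{-1/2}$, and which is the Plancherel identity underlying \eqref{Hankel}. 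Hence
\[
G_1^\ddag(r,r_2)=\tfrac14\,e^{-\frac{r^2}{4}}\,r^{\frac n2-1}\,r_2^{\frac n2}\,\delta_{r=r_2},
\]
which is the asserted formula up to the overall multiplicative constant (recall that in \Cref{prop:product-adjoint} the kernel was fixed only up to a constant factor). Inserting this into \eqref{formula-product-G} and integrating out the $\delta$ merges $r_1^{n/2-1}r_2^{n/2}$ into $r^{n-1}$, so $\langle w_1,w_2\rangle_\ddag$ becomes a constant multiple of $\int_0^\infty w_1(r)w_2(r)\,e^{-r^2/4}r^{n-1}\,dr$, i.e.\ of the scalar product of $L^2_r(\mathbb R^n,e^{-|x|^2/4})$ in view of \eqref{eq:M-isom-1}.

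The one genuine obstacle is that $G_1^\ddag$ is a bona fide distribution: the factor $\mathcal K_1$ contains the exponentially growing $I_{\nu}(2\zeta_1\zeta_2)$, and once the Gaussians cancel the $\zeta$-integral no longer converges absolutely. The manipulations above must therefore be carried out inside a pairing against a test function $\phi\in C_c^\infty((0,\infty))$ in the $r_2$ variable (equivalently, against a regularizing Gaussian $e^{-\eps\zeta^2}$ removed at the end): the Hankel transform of $\phi$ decays faster than any power, which legitimizes Fubini in the double $\zeta$-integral and makes the closure relation rigorous. Once this bookkeeping is in place, the constants follow from the routine computation sketched above.
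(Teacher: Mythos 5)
Your proof is correct and follows essentially the same route as the paper: set $s=1$ in \eqref{formula-G}--\eqref{kernel-product}, integrate out $\zeta_1$ via the mixed $I$--$J$ Gaussian integral (which the paper obtains from \eqref{formula-watson} together with $I_\alpha(z)=i^\alpha J_\alpha(iz)$, exactly the identity you quote), and then apply the Hankel closure relation. Your more careful bookkeeping produces the prefactor $\tfrac14$ where the paper writes $\tfrac12$, a harmless discrepancy since, as you note, the kernel in this construction is only determined up to a multiplicative constant.
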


\begin{proof} Set $s=1$.
Let us integrate with respect to the variable $d\zeta_1$ in formula \eqref{formula-G}. For this, note that
\begin{equation*}
\int_{0}^{\infty }e^{-\zeta_1^2}I_{\frac{n-2}{2}}\left(2\zeta_1\zeta_2\right)J_{\frac{n-2}{2}}(\zeta_1 r_1) \zeta_1  \,d\zeta_1
=\tfrac{1}{2} e^{\zeta_2^2}e^{-\frac{r_1^2}{4}}J_{\frac{n-2}{2}}(\zeta_2 r_1).
\end{equation*}
The above formula follows from \eqref{formula-watson}, just taking into account that $I_\alpha(z)=i^\alpha J_\alpha(iz)$. Substituting into
\eqref{formula-G} we obtain
\begin{equation*}
G^\ddag_s(r_1,r_2)=\tfrac{1}{2}r_1^{\frac{n}{2}}r_2^{\frac{n}{2}}e^{-\frac{r_1^2}{4}}\int_0^\infty J_{\frac{n-2}{2}}(\zeta_2 r_2)J_{\frac{n-2}{2}}(\zeta_2 r_1)\zeta_2\,d\zeta_2.
\end{equation*}
The result follows by the orthogonality property of the Bessel functions, which is
\begin{equation*}
\int_0^\infty  J_{\alpha}(\zeta r_2)J_{\alpha}(\zeta r_1)\zeta\,d\zeta=\tfrac{1}{r_1}\delta_{r_1=r_2}.
\end{equation*}
\end{proof}

We can give the following (not-sharp) estimate which tells us that $\mathds L^2_\ddag$ can be related to more standard Sobolev spaces:

\begin{lemma}
Let $h\in |\cdot|^{2} L^1(\R_+)$.
Then
\begin{equation*}
\|w\|^2_{\ddag}\leq C\int_0^\infty z^{s-1+n-ns}\frac{1}{h(z^s)}\widehat w^2(z)\,dz.
\end{equation*}
\end{lemma}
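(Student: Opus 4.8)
The plan is to reduce the $\ddag$-norm to a Gaussian-weighted $L^2$-integral of $w^\ddag$, expand $w^\ddag$ through its Hankel transform, and then close the estimate with a single Cauchy--Schwarz step against a weight tuned to reproduce the right-hand side, using the quadratic Bessel integral \eqref{formula-watson} to absorb the remainder.

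To begin, by \Cref{lem:mult-FM-adjoint} one has $(\Phi_s^*)^{-1}w(r)=w^\ddag(r^s)$, so the substitution $\rho=r^s$ gives
\[
\|w\|_\ddag^2
=\int_0^\infty |w^\ddag(r^s)|^2\,e^{-\frac{r^{2s}}{4}}\,r^{sn-1}\,dr
=\frac1s\int_0^\infty |w^\ddag(\rho)|^2\,e^{-\frac{\rho^2}{4}}\,\rho^{n-1}\,d\rho .
\]
Since $\widehat{w^\ddag}(\zeta)=\zeta^{\frac{n}{s}-n}\widehat w(\zeta^{1/s})$, Hankel inversion \eqref{Hankel} and the change of variable $z=\zeta^{1/s}$ yield $w^\ddag(\rho)=s\,\rho^{\frac{2-n}{2}}\int_0^\infty J_{\frac{n-2}{2}}(z^s\rho)\,z^{\,n-\frac{ns}{2}+s-1}\,\widehat w(z)\,dz$. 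Applying Cauchy--Schwarz in $z$ with the splitting weight $z^{\,n+s-1}h(z^s)$ --- the exponent $n+s-1$ being chosen precisely so that the ``good'' factor becomes exactly the quantity
\[
B:=\int_0^\infty z^{\,s-1+n-ns}\,\frac1{h(z^s)}\,\widehat w^2(z)\,dz
\]
on the right-hand side of the Lemma --- one obtains $|w^\ddag(\rho)|^2\le s^2\rho^{2-n}B\int_0^\infty |J_{\frac{n-2}{2}}(z^s\rho)|^2 z^{\,n+s-1}h(z^s)\,dz$. Substituting this back, using $\rho^{2-n}\rho^{n-1}=\rho$ and Fubini, the inner $\rho$-integral is $\int_0^\infty\rho\,e^{-\rho^2/4}J_{\frac{n-2}{2}}(z^s\rho)^2\,d\rho$, which by \eqref{formula-watson} (with $\zeta_1=\zeta_2=z^s$, $p=\tfrac12$) equals $2e^{-2z^{2s}}I_{\frac{n-2}{2}}(2z^{2s})$; after the substitution $t=z^s$ this collapses to
\[
\|w\|_\ddag^2\le 2B\int_0^\infty t^{\frac{n}{s}}\,h(t)\,e^{-2t^2}\,I_{\frac{n-2}{2}}(2t^2)\,dt .
\]

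The main obstacle, and the only genuinely analytic point, is to show that this overhead integral is finite under the hypothesis on $h$. Here I would use the asymptotics $I_{\frac{n-2}{2}}(x)\sim (x/2)^{\frac{n-2}{2}}/\Gamma(\tfrac{n}{2})$ as $x\to0^+$ and $I_{\frac{n-2}{2}}(x)\sim e^x/\sqrt{2\pi x}$ as $x\to\infty$, which give $e^{-2t^2}I_{\frac{n-2}{2}}(2t^2)\lesssim\min\{t^{\,n-2},t^{-1}\}$ for all $t>0$: near $t=0$ the integrand is then $\lesssim t^{\frac{n}{s}+n-2}h(t)$, integrable since $\frac{n}{s}+n-2\ge 0$ (using $n\ge 2s$) and $h\in|\cdot|^2L^1(\R_+)$, while near $t=\infty$ it is $\lesssim t^{\frac{n}{s}-1}h(t)$, integrable thanks to the decay of $h$ built into the hypothesis; this yields the asserted estimate with $C$ depending on $n$, $s$ and $h$. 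I expect the behaviour at infinity to be the delicate part of the argument: there the Bessel factor decays only like $t^{-1}$, so the genuinely sharp requirement is $\int_0^\infty t^{n/s}h(t)\,e^{-2t^2}I_{\frac{n-2}{2}}(2t^2)\,dt<\infty$, and the class $|\cdot|^2L^1(\R_+)$ is best read --- consistently with the statement being declared not sharp --- as a convenient sufficient condition guaranteeing it.
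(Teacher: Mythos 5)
Your reductions are correct up to the inequality
\[
\|w\|_\ddag^2\;\le\;2B\int_0^\infty t^{\frac{n}{s}}\,h(t)\,e^{-2t^2}I_{\frac{n-2}{2}}(2t^2)\,dt,
\qquad
B=\int_0^\infty z^{\,s-1+n-ns}\,\frac{1}{h(z^s)}\,\widehat w^2(z)\,dz,
\]
(the use of \Cref{lem:mult-FM-adjoint}, the Hankel inversion, and \eqref{formula-watson} with $p=\tfrac12$ are all fine), but the final step fails: the overhead integral is \emph{not} finite for general $h\in|\cdot|^{2}L^1(\R_+)$. At infinity $e^{-2t^2}I_{\frac{n-2}{2}}(2t^2)\sim \frac{1}{2\sqrt{\pi}\,t}$, so your overhead behaves like $\int^\infty t^{\frac{n}{s}-1}h(t)\,dt$, while the hypothesis only provides $\int^\infty t^{-2}h(t)\,dt<\infty$; since $\frac{n}{s}-1>-2$, the former is strictly stronger and is not implied. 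Already the paper's model choice $h(\zeta)=\zeta^{1+\eps}(1+\zeta)^{-2\eps}$ makes your integrand of size $t^{\frac{n}{s}-\eps}$ at infinity, hence divergent, so the closing assertion that the class $|\cdot|^{2}L^1(\R_+)$ ``guarantees'' the needed finiteness is false and your constant $C$ is $+\infty$ under the stated hypothesis. (The behaviour near $t=0$ is indeed harmless, as you say.)

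The reason the paper's proof survives with this weak hypothesis is structural, and it is exactly the structure your argument discards. The paper keeps the bilinear, off-diagonal form $\|w\|_\ddag^2=\iint\mathcal K_s(\zeta_1,\zeta_2)\,\zeta_1^{\frac{n}{s}-n}\widehat w(\zeta_1^{1/s})\,\zeta_2^{\frac{n}{s}-n}\widehat w(\zeta_2^{1/s})\,d\zeta_1\,d\zeta_2$ and applies Cauchy--Schwarz symmetrically, pairing $\mathcal K_s^2$ with $h(\zeta_1)h(\zeta_2)$. Since $e^{-(\zeta_1^2+\zeta_2^2)}\le e^{-2\zeta_1\zeta_2}$ cancels the exponential growth of $I_{\frac{n-2}{2}}(2\zeta_1\zeta_2)$ in \eqref{kernel-product}, one has $\mathcal K_s\lesssim(\zeta_1\zeta_2)^{-1}$ globally, so the overhead factor is $\big(\int_0^\infty\zeta^{-2}h(\zeta)\,d\zeta\big)^2$, which is precisely the hypothesis. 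Your pointwise-in-$\rho$ Cauchy--Schwarz collapses the kernel onto the diagonal $\zeta_1=\zeta_2=z^s$, where the Gaussian beats the Bessel growth only by a factor $t^{-1}$ (your own $\min\{t^{n-2},t^{-1}\}$ bound), and the weight $z^{n+s-1}h(z^s)$ you are forced to put there --- forced because you calibrated the splitting to reproduce exactly the exponent $s-1+n-ns$ on the $\widehat w$ side --- is then too heavy. If you lighten the Bessel-side weight enough for the overhead to converge under $\int\zeta^{-2}h\,d\zeta<\infty$, the exponent on the $\widehat w$ side necessarily comes out larger, of order $2(n-ns)+s-1$, which is in fact what the paper's own symmetric Cauchy--Schwarz yields after the substitution $z=\zeta^{1/s}$. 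So the route you chose cannot close as stated without strengthening the hypothesis on $h$ (or enlarging the exponent on the right-hand side).
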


A typical choice for $h$ is $h(\zeta)=\zeta^{1+\eps}(1+\zeta)^{-2\eps}$, for some $\varepsilon>0$.

\begin{proof}
From \eqref{f-ddag} and the discussion in Proposition \ref{prop:product-adjoint} we also have
\begin{equation*}\label{equation301}
\langle w_1,w_2\rangle_\ddag=\int_0^\infty\int_0^\infty  \mathcal K_s(\zeta_1,\zeta_2) \zeta_1^{\frac{n}{s}-n}
	\widehat{w}_1(\zeta_1^{\frac1s})\zeta_2^{\frac{n}{s}-n}
	\widehat{w}_2(\zeta_2^{\frac1s}) \,d\zeta_1 \, d\zeta_2.
\end{equation*}
Let us introduce the auxiliary function $h$ in the statement of the Lemma and use Cauchy-Schwarz to estimate
\begin{equation}\label{equation20}
\begin{split}
\|w\|^2_\ddag&\leq \left(\int_0^\infty\int_0^\infty  \mathcal K_s^2(\zeta_1,\zeta_2)
h(\zeta_1)h(\zeta_2)\,d\zeta_1 \,d\zeta_2\right)^{1/2}\\
&\quad\cdot\left(\int_0^\infty\int_0^\infty  \frac{1}{h(\zeta_1)h(\zeta_2)}\left[\zeta_1^{\frac{n}{s}-n}\widehat{w}(\zeta_1^{\frac1s})\zeta_2^{\frac{n}{s}-n}\widehat{w}(\zeta_2^{\frac1s})\right]^2
\,d\zeta_1 \,d\zeta_2\right)^{1/2}.
\end{split}
\end{equation}
Taking into account that
\begin{equation*}
I_{\frac{n-2}{2}}(z)\sim \frac{1}{\sqrt{z}} e^{z}\quad \text{as} \quad z\to\infty,
\end{equation*}
while
\begin{equation*}
I_{\frac{n-2}{2}}(z)\sim z^{\frac{n-2}{2}}\quad \text{as} \quad z\to 0,
\end{equation*}
using formula \eqref{kernel-product} we can show that $\mathcal{K}_s^2(\zeta_1,\zeta_2)\leq C(\zeta_1\zeta_2)^{-2}$, thus
\begin{equation*}
\left(\int_0^\infty\int_0^\infty  \mathcal K_s^2(\zeta_1,\zeta_2)
h(\zeta_1)h(\zeta_2)\,d\zeta_1 \,d\zeta_2\right) \leq C.
\end{equation*}
We conclude from \eqref{equation20} that
\begin{equation*}
\|w\|_{\ddag}^2\leq C\left(\int_0^\infty \frac{1}{h^2(\zeta)}\left[\widehat{w}(\zeta^{\frac1s})\zeta^{\frac{n}{s}-n}\right]^2\,d\zeta\right).
\end{equation*}
Changing variable $z=\zeta^{\frac1s}$ we arrive to the desired conclusion.
\end{proof}

\subsection{Duality}\label{subsection:duality}
We define the duality pairing of $\mathds L^2_\dag$ and $\mathds L^2_\ddag$ by
\begin{equation}\label{eq:duality-Phi}
(u,w)_*
=
\int_0^\infty
	\Phi_s^{-1} u(r) (\Phi_s^*)^{-1} w(r)
\,dr.
\end{equation}

We will show that this pairing is just the usual $L_r^2(\mathbb R^n)$ product (up to multiplicative constant):

\begin{prop} Suppose $u$ and $w$ are functions such  that $\cM u(z)$ and $\cM w(n-z)$ are simultaneously well-defined in a strip around $\re z=\sigma$. Then
\begin{equation*}
(u,w)_*=\frac{2^{\frac{n}{s}-n}}{\cH^{n-1}(\bS^{n-1})}
\int_{\mathbb R^n}uw\,dx.
\end{equation*}
\end{prop}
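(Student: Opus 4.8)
The plan is to transfer the whole computation to the Mellin side, where $\Phi_s^{-1}$ and $(\Phi_s^*)^{-1}$ act simply as multiplication by $\Lambda_s^{-1}$ and $(\Lambda_s^*)^{-1}$, and then to exploit a clean cancellation between these two multipliers. Recall that $\cM\{\Phi_s^{-1}u\}(z)=\Lambda_s^{-1}(z)\cM u(z)$ and $\cM\{(\Phi_s^*)^{-1}w\}(z)=(\Lambda_s^*)^{-1}(z)\cM w(z)$, with $\Lambda_s$, $\Lambda_s^*$ as in \eqref{multiplier-Lambda}, \eqref{eq:mult-adj}. By hypothesis $\cM u$ is analytic in a strip about $\re z=\sigma$ and $\cM w$ in a strip about $\re z=n-\sigma$; after a harmless perturbation within these strips we may assume the line $\re z=\sigma$ misses the poles \eqref{poles-Lambda} of $\Lambda_s^{-1}$ and the line $\re z=n-\sigma$ misses the poles of $(\Lambda_s^*)^{-1}$, so that both transformed Mellin transforms are analytic and decay in $\im z$ along these lines (the decay of $\Lambda_s^{\pm1}$ along verticals coming from \Cref{lem:asymp-mult}). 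Applying the Mellin--Plancherel identity \eqref{eq:M-isom-gen} to \eqref{eq:duality-Phi} along the matched pair of lines $\re z=\sigma$ and $\re z=n-\sigma$ (whose Jacobian weight is $r^{n-1}$) then gives
\[
(u,w)_*
=\int_{\R}
\Lambda_s^{-1}(\sigma+\lambda i)\,(\Lambda_s^*)^{-1}(n-\sigma-\lambda i)\,
\cM u(\sigma+\lambda i)\,\cM w(n-\sigma-\lambda i)
\,d\lambda .
\]

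The crux of the proof is the algebraic identity
\[
\Lambda_s(z)\,\Lambda_s^*(n-z)=2^{\,n-\frac ns},\qquad z\in\bC ,
\]
equivalently $\Lambda_s^{-1}(z)(\Lambda_s^*)^{-1}(n-z)\equiv 2^{\frac ns-n}$. I would prove it by straight substitution in \eqref{eq:mult-adj}: using $\tfrac n2-\tfrac{n-z}2=\tfrac z2$ and $\tfrac n2-\tfrac{n-z}{2s}=\tfrac n2-\tfrac n{2s}+\tfrac z{2s}$, each of the four Gamma factors of $\Lambda_s^*(n-z)$ is the reciprocal of a Gamma factor of $\Lambda_s(z)$, so all Gammas cancel in the product, while the powers of $2$ collapse to $z-\tfrac zs+(n-z)-\tfrac{n-z}s=n-\tfrac ns$. (As a sanity check, at $s=1$ both multipliers are identically $1$ and the constant is $1$.)

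Inserting this identity into the displayed integral, the constant $2^{n/s-n}$ factors out and what remains is $\int_{\R}\cM u(\sigma+\lambda i)\,\cM w(n-\sigma-\lambda i)\,d\lambda$, which by \eqref{eq:M-isom-gen} read in the reverse direction along the same two lines equals $\int_0^\infty u(r)w(r)\,r^{n-1}\,dr$; polarizing the polar-coordinate form of \eqref{eq:M-isom-1} rewrites this as $\tfrac1{\cH^{n-1}(\bS^{n-1})}\int_{\R^n}uw\,dx$, which is exactly the assertion.

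The only genuinely delicate point is the bookkeeping in the first paragraph: one must produce a single vertical line $\re z=\sigma$, together with its reflection $\re z=n-\sigma$, on which $\cM u$ and $\cM w(n-\cdot)$ are simultaneously defined, which avoids every pole of $\Lambda_s^{\pm1}$ and $(\Lambda_s^*)^{\pm1}$, and along which the integrands are absolutely integrable so that the two passages through \eqref{eq:M-isom-gen} are legitimate. This is where the hypothesis on $\cM u$ and $\cM w(n-\cdot)$ is used; once such a line is fixed, the remainder is the Gamma-function cancellation above and two applications of the Mellin isometry.
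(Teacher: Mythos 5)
Your argument is correct and is essentially the paper's own proof run in the opposite direction: the paper starts from $\int_0^\infty u\,w\,r^{n-1}\,dr$, applies the Mellin isometry \eqref{eq:M-isom-gen} along the paired lines $\re z=\sigma$ and $\re z=n-\sigma$, inserts the same cancellation $\Lambda_s(z)\,\Lambda_s^*(n-z)=2^{\,n-\frac{n}{s}}$, and transforms back, whereas you start from $(u,w)_*$ and work towards the $\mathbb{R}^n$ integral. The only caveat (shared with the paper's own last step and its later use of the pairing) is that your first application of \eqref{eq:M-isom-gen} with $\sigma_1+\sigma_2=n$ tacitly reads \eqref{eq:duality-Phi} with the weight $r^{n-1}\,dr$, which is the intended convention even though the displayed definition omits it.
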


\begin{proof}
By the isometry property \eqref{eq:M-isom-gen} of the Mellin transform,
\begin{equation*}\begin{split}
(u,w)_{L^2(\mathbb R^n)}
&=
\int_0^\infty
	u w r^{n-1}
\,dr\\
&=\int_{-\infty}^\infty
	\cM u(\sigma+\lambda i)
	\cM w(n-\sigma-\lambda i)
\,d\lambda
=-i
\int_{\sigma-i\infty}^{\sigma+i\infty}
	\cM u(z)
	\cM w(n-z)
\,dz.
\end{split}\end{equation*}
We observe from \eqref{multiplier-Lambda} and \eqref{eq:mult-adj} that, for any $z=\sigma+i\lambda \in \bC$, it holds that
\begin{equation*}
\Lambda_s^*(n-z)\Lambda_s(z)
=2^{n-\frac{n}{s}},
\end{equation*}
so that we can write
\begin{equation*}\begin{split}
2^{-n+\frac{n}{s}}(u,w)_{L^2(\mathbb R^n)}
&=-i
\int_{\sigma-i\infty}^{\sigma+i\infty}
	\Lambda_s(z)^{-1}
	\cM u(z)
	\cdot
	\Lambda_s^*(n-z)^{-1}
	\cM w(n-z)
\,dz\\
&=-i
\int_{\sigma-i\infty}^{\sigma+i\infty}
	\cM\set{\Phi_s^{-1} u}(z)
	\cdot
	\cM\set{(\Phi_s^*)^{-1}w}(n-z)
\,dz\\
&=
\int_0^\infty
	\Phi_s^{-1} u(r)
	(\Phi_s^*)^{-1} w(r)
\,dr,
\end{split}\end{equation*}
as desired.
\end{proof}

As a consequence, one obtains that the eigenfunctions for \eqref{problem-adjoint} are those dual to the eigenfunctions from Theorem \ref{thm1}. In particular, this recover our formal guess \eqref{formal}.

\begin{cor}
The eigenfunctions  for \eqref{problem-adjoint} in the space $\mathds L^2_\ddag$ are given by
\begin{equation*}
\omega_k^{(s)}(r)=\Phi^*_s \mathfrak L_k(r),\quad k\in\mathbb N,
\end{equation*}
with eigenvalue $\nu_k=k$, $k\in\mathbb N$. In particular, we have found two dual basis of eigenfunctions: for $j\neq k$,
\begin{equation*}
(e_j, \omega_k)_*=0.
\end{equation*}

\end{cor}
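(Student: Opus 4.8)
The plan is to obtain the adjoint basis by transferring the eigenvalue problem \eqref{problem-adjoint} to the classical Ornstein--Uhlenbeck problem through the map $w\mapsto w^\ddag$, in complete analogy with the way Theorem \ref{thm1} was deduced for $L_s$ via $u\mapsto u^\dag$, and then to read off biorthogonality from the duality between $\mathds L^2_\dag$ and $\mathds L^2_\ddag$ established in the preceding Proposition. Concretely, I would set $\omega_k^{(s)}:=\Phi_s^*\mathfrak{L}_k$ with $\mathfrak{L}_k$ as in \eqref{eq:50} and verify three things: that $\omega_k^{(s)}$ is an eigenfunction of $L_s^*$ with eigenvalue $k$; that $\{\omega_k^{(s)}\}_{k\in\bN}$ is complete in $\mathds L^2_\ddag$; and that $(e_j^{(s)},\omega_k^{(s)})_*=0$ for $j\neq k$.

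For the eigenfunction property, I would first compute, using \Cref{lem:mult-FM-adjoint}, that
\[
(\omega_k^{(s)})^\ddag(\rho)=\big((\Phi_s^*)^{-1}\omega_k^{(s)}\big)(\rho^{1/s})=\mathfrak{L}_k(\rho^{1/s})=\frac{2s}{\Gamma(k+\frac n2)}L_k^{(\frac{n-2}{2})}\!\big(\tfrac{\rho^{2}}{4}\big),
\]
i.e. a nonzero multiple of the classical Ornstein--Uhlenbeck eigenfunction $\omega_k^{(1)}$, which satisfies $L_1^*\omega_k^{(1)}=k\,\omega_k^{(1)}$. I would then invoke \Cref{lem:Ls-L1-adj} in the reverse direction: each step of its proof (the radial Fourier transform, the substitution $\vartheta=\zeta^s$, and the multiplications by powers of $\vartheta$) is invertible, so ``$L_s^*w=g$'' and ``$L_1^*w^\ddag=g^\ddag$'' are equivalent, and applying the equivalence with $g=k\,\omega_k^{(s)}$ (for which $g^\ddag=k\,(\omega_k^{(s)})^\ddag$) gives $L_s^*\omega_k^{(s)}=k\,\omega_k^{(s)}$; this simultaneously makes rigorous the formal identity \eqref{formal}. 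For completeness, I would note that \Cref{lem:mult-FM-adjoint} together with the definition of $\|\cdot\|_\ddag$ makes $w\mapsto w^\ddag$ a (constant multiple of an) isometry from $\mathds L^2_\ddag$ onto the classical space $\mathds L^2_\ddag(1)=L^2_r(\mathbb R^n,e^{-|x|^2/4})$ carrying $\omega_k^{(s)}$ to a multiple of $\omega_k^{(1)}$, and then use the classical completeness of $\{\omega_k^{(1)}\}$ in $\mathds L^2_\ddag(1)$ recalled at the start of this section.

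For biorthogonality, I would use that, by the preceding Proposition, $(\cdot,\cdot)_*$ is a positive multiple of the $L^2(\mathbb R^n)$ inner product of radial functions, for which $L_s^*$ is by construction the adjoint of $L_s$; hence $(L_su,w)_*=(u,L_s^*w)_*$ on the dense subspaces of $\mathds L^2_\dag$ and $\mathds L^2_\ddag$ spanned by the respective eigenfunctions. Feeding in $u=e_j^{(s)}$, $w=\omega_k^{(s)}$ and using $L_se_j^{(s)}=j\,e_j^{(s)}$ from Theorem \ref{thm1} together with the eigenvalue relation above yields $(j-k)(e_j^{(s)},\omega_k^{(s)})_*=0$, hence orthogonality for $j\neq k$; and $(e_k^{(s)},\omega_k^{(s)})_*\neq0$ follows because otherwise $\omega_k^{(s)}$ would be $(\cdot,\cdot)_*$-orthogonal to the dense span of $\{e_j^{(s)}\}$, forcing $\omega_k^{(s)}=0$ and contradicting the completeness just proved. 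A normalization of the $\omega_k^{(s)}$ then gives $(e_j^{(s)},\omega_k^{(s)})_*=\delta_{jk}$.

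The step I expect to require the most care is not any single computation but the functional-analytic bookkeeping behind ``$(L_su,w)_*=(u,L_s^*w)_*$'': the eigenfunctions $e_j^{(s)}$ decay only polynomially while the $\omega_k^{(s)}$ grow (being $\Phi_s^*$ applied to polynomials in $r^{2s}$), so one must either restrict to the dense subspace spanned by finitely many eigenfunctions and pass to the limit, or work in a Schwartz-type class where the nonlocal operator $(-\Delta)^s$ can be moved across the integral, checking absolute convergence throughout. A secondary point is to be careful with the reversibility of \Cref{lem:Ls-L1-adj} (which holds, since each transformation involved is a bijection) and with the precise constants relating $(\cdot,\cdot)_*$, the $L^2(\mathbb R^n)$ pairing, and the $\int_0^\infty\,dr$ representations when one wants to make the diagonal pairing $(e_k^{(s)},\omega_k^{(s)})_*$ completely explicit via the Laguerre orthogonality relations.
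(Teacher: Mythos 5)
Your route for the biorthogonality has a genuine gap at exactly the step you flag, and the mitigations you propose do not close it. You want to use $(L_s e_j,\omega_k)_*=(e_j,L_s^*\omega_k)_*$ after identifying $(\cdot,\cdot)_*$ with the $L^2_r(\mathbb R^n)$ pairing, but that identification is not available for the pair $(e_j,\omega_k)$: the Proposition preceding the corollary requires $\cM e_j(z)$ and $\cM \omega_k(n-z)$ to be simultaneously defined on a common vertical strip, and $\omega_k=\Phi_s^*\mathfrak L_k$ grows polynomially (like $r^{2sk}$), so its Mellin transform has no classical strip of convergence at all --- this is precisely the obstruction the paper states at the beginning of \Cref{section:adjoint} (polynomial-type eigenfunctions force one into distribution theory, which is why duality is used instead). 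Concretely, $e_j^{(s)}$ decays only algebraically (cf.\ \Cref{lem:eigen-s-large}), so for $k$ large relative to $j$ the integral $\int_{\mathbb R^n}e_j\,\omega_k\,dx$ is not absolutely convergent; neither of your repairs addresses this, since the divergence occurs for the eigenfunctions themselves (restricting to finite spans does not change that, and the eigenfunctions are not in any Schwartz-type class). A repair is possible, but it amounts to proving the adjointness at the level of the transformed functions in the classical Gaussian-weighted setting, which is essentially a detour back to the computation the paper does.

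The paper's own proof avoids all of this: it evaluates $(e_j,\omega_k)_*$ directly from the definition \eqref{eq:duality-Phi}, namely $\int_0^\infty\mathcal L_j(r)\,\mathfrak L_k(r)\,dr$ with $\mathcal L_j$ as in \eqref{eq:Hermite-M-inv} and $\mathfrak L_k$ as in \eqref{eq:50}; the factor $e^{-\frac{r^{2s}}{4}}$ in $\mathcal L_j$ makes the integral absolutely convergent, and the substitution $\rho=\frac{r^{2s}}{4}$ reduces it to the Laguerre orthogonality \eqref{Laguerre-orthogonality}, giving $(e_j,\omega_k)_*=\frac{2s\,2^{n/s}}{\Gamma(n/2)}\,\delta_{jk}$ in one line. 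Your treatment of the eigenfunction property and of completeness --- transporting to the classical Ornstein--Uhlenbeck problem via $w\mapsto w^\ddag$, \Cref{lem:mult-FM-adjoint} and the reversibility of \Cref{lem:Ls-L1-adj} --- is consistent with the paper's framework and usefully fills in details the paper leaves implicit (in particular it recovers \eqref{formal}); but the orthogonality step should be replaced by the direct computation above rather than the $L^2$ adjointness argument.
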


\begin{proof}
Note that,  using \eqref{eq:Hermite-M-inv}, \eqref{eq:50}, \eqref{eq:duality-Phi} and \eqref{Laguerre-orthogonality}, 
\begin{equation*}\begin{split}
(e_j,\omega_k)_*
&=\int_0^\infty
	\mathcal L_j(r)
	\cdot
	\mathfrak L_k(r)
\,dr\\
&=\int_0^\infty
	2s\,j!
	\left (
		\tfrac{r^{2s}}{4}
	\right )^{\frac{n}{2}-\frac{n}{2s}}
	e^{-\frac{r^{2s}}{4}}
	L_j^{(\frac{n-2}{2})}
	\left (
		\tfrac{r^{2s}}{4}
	\right )
	\cdot
	\frac{2s}{\Gamma(k+\frac{n}{2})}\,
	L_k^{(\frac{n-2}{2})}(\tfrac{r^{2s}}{4})
	\,r^{n-1}
\,dr\\
&=\frac{2s\,j!}{\Gamma(k+\frac{n}{2})}
\int_0^\infty
	\rho^{\frac{n}{2}-\frac{n}{2s}}
	e^{-\rho}
	L_j^{(\frac{n-2}{2})}(\rho)
	\,L_k^{(\frac{n-2}{2})}(\rho)
	(4\rho)^{\frac{n}{2s}}
\frac{d\rho}{\rho}\\
&=
	\frac{
		2s\,2^{\frac{n}{s}}
	}{
		\Gamma(\frac{n}{2})
	}
\delta_{j,k}.
\end{split}\end{equation*}
\end{proof}

We finish with a quick calculation:

\begin{lemma} \label{lemma:adjoint-orthogonal}
	Any two eigenfunctions $\omega_k$, $\omega_j$, $k\neq j$,
	are orthogonal with respect to $\angles{\cdot,\cdot}_{\ddag}$.
\end{lemma}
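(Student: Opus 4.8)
\textbf{Proof proposal for Lemma \ref{lemma:adjoint-orthogonal}.} The plan is to mimic the computation in the proof of \Cref{lemma:orthogonal}, exploiting the fact that $(\Phi_s^*)^{-1}$ strips off the multiplier $\Lambda_s^*$ so that the scalar product $\angles{\cdot,\cdot}_\ddag$ reduces, after the change of variable $\rho=\tfrac{r^{2s}}{4}$, to an integral against the Laguerre weight $\rho^{\alpha}e^{-\rho}$ with $\alpha=\tfrac{n-2}{2}$. First I would recall from \Cref{subsection:dual-space} that, by Lemma \ref{lem:mult-FM-adjoint}, $(\Phi_s^*)^{-1}\omega_k(r)=\mathfrak L_k(r^s)$, and from the definition \eqref{eq:50} that $\mathfrak L_k(r)=\tfrac{2s}{\Gamma(k+\frac n2)}L_k^{(\frac{n-2}{2})}(\tfrac{r^{2s}}{4})$, so that $(\Phi_s^*)^{-1}\omega_k(r)=\tfrac{2s}{\Gamma(k+\frac n2)}L_k^{(\frac{n-2}{2})}(\tfrac{r^{2s}}{4})$ after composing with $r\mapsto r^s$. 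Hence
\begin{equation*}
\angles{\omega_j,\omega_k}_\ddag
=\int_0^\infty
\tfrac{2s}{\Gamma(j+\frac n2)}L_j^{(\frac{n-2}{2})}(\tfrac{r^{2s}}{4})\,
\tfrac{2s}{\Gamma(k+\frac n2)}L_k^{(\frac{n-2}{2})}(\tfrac{r^{2s}}{4})\,
e^{-\frac{r^{2s}}{4}}r^{sn-1}\,dr.
\end{equation*}

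Next I would perform the substitution $\rho=\tfrac{r^{2s}}{4}$, so that $r^{sn-1}\,dr=2^{\frac ns-1}s^{-1}\rho^{\frac n2-1}\,d\rho$, converting the integral into
\begin{equation*}
\angles{\omega_j,\omega_k}_\ddag
=\frac{4s\,2^{\frac ns-1}s^{-1}}{\Gamma(j+\frac n2)\Gamma(k+\frac n2)}
\int_0^\infty
L_j^{(\frac{n-2}{2})}(\rho)\,L_k^{(\frac{n-2}{2})}(\rho)\,e^{-\rho}\rho^{\frac n2-1}\,d\rho.
\end{equation*}
Then I would invoke the orthogonality relation \eqref{Laguerre-orthogonality} for the generalized Laguerre polynomials with parameter $\alpha=\tfrac{n-2}{2}$, which states that $\int_0^\infty L_j^{(\alpha)}(\rho)L_k^{(\alpha)}(\rho)e^{-\rho}\rho^{\alpha}\,d\rho=\tfrac{\Gamma(k+\alpha+1)}{k!}\delta_{j,k}$; since $\alpha+1=\tfrac n2$, this vanishes for $j\neq k$, giving the claim. (For $j=k$ one reads off the normalizing constant, but this is not needed for the orthogonality statement.)

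There is essentially no obstacle here: the only point requiring a little care is to keep track of the multiplicative constants coming from the two changes of variable $r\mapsto r^s$ and $\rho=\tfrac{r^{2s}}{4}$, together with the $2s/\Gamma(k+\frac n2)$ normalization in \eqref{eq:50}. Since the statement is only about orthogonality ($j\neq k$), even these constants are irrelevant; the proof is a one-line consequence of Lemma \ref{lem:mult-FM-adjoint} and the classical Laguerre orthogonality \eqref{Laguerre-orthogonality}, exactly parallel to Lemma \ref{lemma:orthogonal}.
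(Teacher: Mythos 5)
Your proposal is correct and follows essentially the same route as the paper: identify $(\Phi_s^*)^{-1}\omega_k=\mathfrak L_k$ (which holds directly because $\omega_k:=\Phi_s^*\mathfrak L_k$ by construction, rather than via \Cref{lem:mult-FM-adjoint}), substitute $\rho=\tfrac{r^{2s}}{4}$, and invoke the Laguerre orthogonality \eqref{Laguerre-orthogonality}. The only blemishes are the garbled intermediate claim $(\Phi_s^*)^{-1}\omega_k(r)=\mathfrak L_k(r^s)$ and the Jacobian constant $2^{\frac{n}{s}-1}s^{-1}$ (it should be $2^{n-1}s^{-1}$), both immaterial since the statement concerns only orthogonality for $j\neq k$.
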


\begin{proof} Given $j,k$ non-negative integers:
\begin{equation*}
\begin{split}
\langle \omega_j,\omega_k\rangle_{\ddag}
&=	\int_0^\infty \mathfrak L_j(r)\mathfrak L_k(r)\,e^{-\frac{r^{2s}}{4}}
		r^{sn-1}
	\,dr
	=\int_0^\infty L_j^{(\frac{n-2}{2})}(\tfrac{r^{2s}}{4})L_k^{(\frac{n-2}{2})}(\tfrac{r^{2s}}{4})\,e^{-\frac{r^{2s}}{4}}
		r^{sn-1}
	\,dr\\
&=2^n\int_0^\infty L_j^{(\frac{n-2}{2})}(\varrho)L_k^{(\frac{n-2}{2})}(\varrho)\varrho^{\frac{n-2}{2}}e^{-\varrho}\,d\varrho
=2^n\frac{
	\Gamma(k+\tfrac{n}{2})
}{
	k!\Gamma(\tfrac{n}{2})
}
\delta_{j,k},
\end{split}
\end{equation*}
by the orthogonality \eqref{Laguerre-orthogonality}
of the Laguerre polynomials $\{L^{(\alpha)}_k(\varrho)\}_k$ with respect to the weight $e^{-\varrho}\varrho^\alpha$, for $\alpha=\tfrac{n-2}{2}$.
\end{proof}

\appendix

\section{Hermite and Laguerre polynomials}\label{section:Hermite}

``Probabilist's'' Hermite polynomials are defined by
\begin{equation*}
He_k(x)=(-1)^k e^{\frac{1}{2}x^2}\frac{d^k}{dx^k}\left(e^{-\frac{1}{2}x^2}\right).
\end{equation*}
They form an orthogonal basis of the space $L^2(\mathbb R,e^{-\frac{x^2}{2}})$. It is well known that they are the eigenfunctions of the equation
\begin{equation*}
u''(x)-xu'(x)=-\lambda_k u(x),\quad x\in\mathbb R,
\end{equation*}%
with eigenvalues $\lambda_k:=k$, $k=0,1,2,\ldots$ In contrast, ``Physicist's'' Hermite polynomials are given by
\begin{equation*}
H_k(x)=(-1)^k e^{x^2}\frac{d^k}{dx^k}e^{-x^2}.
\end{equation*}

Here we will use a different normalization, given by
\begin{equation*}
H{e_k}^{[\alpha]}(x)=\alpha^{\frac{k}{2}}He_{k}\left(\frac{x}{\sqrt \alpha}\right)=\left(\frac{\alpha}{2}\right)^{\frac{k}{2}}H_{k}\left(\frac{x}{\sqrt{2\alpha}}\right).
\end{equation*}
For $\alpha=2$, these will be denoted by $\mathcal H_k(x)$, and they can be obtained by Rodrigues' formula
\begin{equation*}\label{Rodrigues}
\mathcal H_k(x)=(-1)^k2^k e^{\frac{1}{4}x^2}\frac{d^k}{dx^k}\left(e^{-\frac{x^2}{4}}\right).
\end{equation*}
We remark that the $\mathcal H_k(x)$ are an orthogonal basis of $L^2(\mathbb R,e^{-\frac{x^2}{4}})$.
We are interested in this particular normalization because the $\mathcal H(x)$ are
the eigenfunctions of
\begin{equation*}
u''(x)-\frac{1}{2}xu'(x)=-\frac{1}{2}\lambda u(x),\quad x\in\mathbb R,
\end{equation*}%
with eigenvalues $\lambda_k:=k$, $k=0,1,2,\ldots$ Note also that if we are interested in even eigenfunctions, functions, we need to restrict to the even integers.

Hermite polynomials are related to Laguerre polynomials, by the formulas
\begin{equation}\label{Hermite-Laguerre}
\begin{split}
H_{2k}(x) &=(-1)^{k}2^{2k}k!L_{k}^{(-1/2)}(x^{2}), \\
H_{2k+1}(x) &=(-1)^{k}2^{2k+1}k!xL_{k}^{(1/2)}(x^{2}),
\end{split}
\end{equation}%
where
$L_k^{(\alpha)}$ are the Laguerre polynomials, defined by
\begin{equation}\label{Laguerre}
L_{k}^{(\alpha )}(x) =\frac{\Gamma (k+\alpha +1)}{\Gamma (k+1)\Gamma
(\alpha +1)}M(-k,\alpha +1;x),
\end{equation}
and $M$ is Kummer's (confluent hypergeometric) function. An important property of the Laguerre polynomials is that they satisfy the orthogonality relation
\begin{equation}\label{Laguerre-orthogonality}
\int_{0}^{\infty }L_{k}^{(\alpha )}(x)L_{m}^{(\alpha )}(x)e^{-x}x^{\alpha
}\,dx=\frac{\Gamma (k+\alpha +1)}{\Gamma (k+1)\Gamma (\alpha +1)}\delta _{m,k}.
\end{equation}
In addition, they are a complete basis of the corresponding $L^2(0,\infty)$ space.

We will also need the following integration formula from \cite[6.643 4.]{Gradshteyn-Ryzhik}
\begin{equation*}\label{eq:J-exp}
\int_0^\infty
	\zeta^{k+\frac{n-2}{4}}
	e^{-\alpha \zeta}
	J_{\frac{n-2}{2}}(r\sqrt{\zeta})
\,d\zeta
=k! \left(\frac{r}{2}\right)^{\frac{n-2}{2}}
	e^{-\frac{r^2}{4\alpha}}
	\alpha^{-k-\frac{n-2}{2}-1}
	L_k^{(\frac{n-2}{2})}\left(
		\frac{r^2}{4\alpha}
	\right),
\end{equation*}
valid for $k+\frac{n-2}{2}>-1$. In particular,
\begin{equation}\label{eq:J-exp-2}
r^{-\frac{n-2}{2}}
\int_0^\infty
	\zeta^{2k+\frac{n}{2}}
	e^{-\zeta^2}
	J_{\frac{n-2}{2}}(r\zeta)
\,d\zeta
=2^{-\frac{n}{2}}k!
	e^{-\frac{r^2}{4}}
	L_k^{(\frac{n-2}{2})}\left(
		\frac{r^2}{4}
	\right).
\end{equation}


\section{Asymptotics of the Gamma function}\label{subsection:asymptotics-Gamma}

Using the asymptotics %
\begin{equation*}\begin{split}
\log \Gamma(z+\vartheta)
&=\bigl(z+\vartheta-\tfrac12\bigr)\log(z+\vartheta)
    -(z+\vartheta)
    +\tfrac12\log(2\pi)
    +O(|z|^{-1})\\
&= \big(z+\vartheta-\tfrac{1}{2}\big)\log z-z+\tfrac{1}{2}\log (2\pi
)+O(|z|^{-1}),
\end{split}
\end{equation*}
we find, as $\left\vert z\right\vert \rightarrow \infty $,%
\begin{equation}\label{asymptotics-gamma0}
\Gamma (z+\vartheta)=\sqrt{2\pi }z^{z+\vartheta-\frac{1}{2}}e^{-z}e^{O(|z|^{-1})},
\end{equation}
which is known as Stirling's approximation for the Gamma function.

In particular, if $\vartheta$ is real and $z=\frac{\lambda }{2}i$, then%
\begin{equation*}\label{eq:Gamma-asymp-half0}
\begin{split}
\Gamma \left(\tfrac{\lambda }{2}i+\vartheta\right)
&=\sqrt{2\pi }\left( \tfrac{\left\vert \lambda
\right\vert }{2}e^{i\frac{\pi }{2}\sign\lambda}\right) ^{i\frac{\lambda }{2}+\vartheta-\frac{1}{2
}}e^{-i\frac{\lambda }{2}}e^{O(\left\vert \lambda \right\vert ^{-1})}\\
&=
    \sqrt{2\pi}\,
    e^{-\frac{\pi}{4}\left\vert \lambda \right\vert}
    \left(
        \tfrac{\left\vert \lambda\right\vert}{2}
    \right)^{\vartheta-\frac{1}{2}}
    e^{i\left(
        \frac{\lambda}{2}\log\frac{|\lambda|}{2}
        -\frac{\lambda}{2}
        +(\vartheta-\tfrac12)\frac{\pi}{2}\sign\lambda
    \right)}
    \bigl(
        1+O(\left\vert \lambda \right\vert^{-1})
    \bigr),
\end{split}
\end{equation*}
and
\begin{equation}\label{estimate-Gamma-2}
\begin{split}
\abs{\Gamma \left(\tfrac{\lambda }{2}i+\vartheta\right)}
&=
    \sqrt{2\pi}\,
    e^{-\frac{\pi}{4}\left\vert \lambda \right\vert}
    \left(
        \tfrac{\left\vert \lambda\right\vert}{2}
    \right)^{\vartheta-\frac{1}{2}}
    \bigl(
        1+O(\left\vert \lambda \right\vert^{-1})
    \bigr).
\end{split}
\end{equation}
Replacing $\lambda$ by $\lambda/s$,
\begin{equation}\label{estimate-Gamma-2s}
\begin{split}
\abs{\Gamma \left(\tfrac{\lambda }{2s}i+\vartheta\right)}
&=
    \sqrt{2\pi}\,
    e^{-\frac{\pi}{4s}\left\vert \lambda \right\vert}
    \left(
        \tfrac{\left\vert \lambda\right\vert}{2s}
    \right)^{\vartheta-\frac{1}{2}}
    \bigl(
        1+O(\left\vert \lambda \right\vert^{-1})
    \bigr).
\end{split}
\end{equation}

\bigskip

\textbf{Acknowledgements}
H. C. has received funding from the Spanish Government under Grant CEX2019-000904-S funded by MCIN/AEI/10.13039/501100011033
and PID2020-113596GB-I00, and from the Swiss National Science Foundation under Grant
PZ00P2\_202012/1. In addition, M. F. and M. G. are supported by the Spanish government, grant numbers PID2020-113596GB-I00 and PID2023-150166NB-I00.

\end{document}